\documentclass[10pt]{article}
\usepackage[T1]{fontenc}
\usepackage[utf8]{inputenc}
\usepackage[english]{babel}
\usepackage[autostyle, english=american]{csquotes}
\MakeOuterQuote{"}

\usepackage{calligra}
 
\usepackage{xcolor}
\definecolor{reference}{rgb}{0.6,0.2,0.8}
\definecolor{citation}{rgb}{0.6,0.2,0.8}
\usepackage{url}
\usepackage{breakurl}
\usepackage{epigraph}
\setlength\epigraphwidth{.7\textwidth}

\usepackage[breaklinks,colorlinks,linktocpage,urlcolor=reference,linkcolor=reference,citecolor=citation]{hyperref}

\usepackage{tikz-cd}
\usetikzlibrary{babel}
\usepackage{verbatim}
\usepackage{mathrsfs}
\usepackage{mathtools}
\usepackage{dsfont}
\usepackage{bbold}
\usepackage{latexsym}
\usepackage{graphicx}
\usepackage{amscd,amssymb,amsmath,amsbsy,amsfonts,amsthm}
\usepackage[mathscr]{eucal}
\usepackage[all, 2cell]{xy}
\UseTwocells
\xyoption{2cell}{\UseTwocells}

\usepackage{enumerate}
\usepackage{tikz}
\usepackage[left=2.7cm, top=2.7cm, right=2.7cm, bottom=2.7cm]{geometry}
\usepackage{enumitem} 

\usepackage[backend=bibtex,style=alphabetic,sorting=nty,firstinits=true,maxbibnames=99,maxalphanames=99]{biblatex}

\addbibresource{references.bib}

\usepackage[capitalise]{cleveref}
\crefformat{equation}{(#2#1#3)}

\DeclareFontFamily{OT1}{pzc}{}
\DeclareFontShape{OT1}{pzc}{m}{it}{<-> s * [1.200] pzcmi7t}{}
\DeclareMathAlphabet{\mathpzc}{OT1}{pzc}{m}{it}

\usepackage{relsize}
\usepackage[bbgreekl]{mathbbol}
\DeclareSymbolFontAlphabet{\mathbb}{AMSb} 
\DeclareSymbolFontAlphabet{\mathbbl}{bbold}

\DeclareTextFontCommand{\emdef}{\it}


\newtheorem{thm}{Theorem}
\newtheorem*{thm*}{Theorem}
\newtheorem{lem}[thm]{Lemma}
\newtheorem{cor}[thm]{Corollary}
\newtheorem*{cor*}{Corollary}
\newtheorem{prop}[thm]{Proposition}
\newtheorem*{prop*}{Proposition}

\theoremstyle{definition}
\newtheorem{defn}[thm]{Definition}
\newtheorem{construction}[thm]{Construction}

\newtheorem{notation}[thm]{Notation}

\newtheorem{ex}[thm]{Example}

\newtheorem{rem}[thm]{Remark}

\newtheorem*{conj*}{Conjecture}

\newtheorem{warn}[thm]{Warning}
\newtheorem{assumption}[thm]{Assumption}

\newcommand{\todo}[1]{\textcolor{red}{TODO: #1}}

\definecolor{note_color}{rgb}{0.0,0.7,0.0}


\newcommand{\inj}{\hookrightarrow}
\newcommand{\surj}{\twoheadrightarrow}

\renewcommand{\phi}{\varphi}
\renewcommand{\epsilon}{\varepsilon}

\DeclareMathOperator{\Vect}{Vect}

\newcommand{\Sym}{\mr{Sym}}
\newcommand{\Mod}{\mscr{M}\mr{od}}
\newcommand{\LMod}{\mathrm{LMod}}

\DeclareMathOperator{\gr}{gr}

\DeclareMathOperator{\tr}{tr}




\renewcommand{\Re}{\mathrm{Re}}
\renewcommand{\Im}{\mathrm{Im}}



\DeclareMathOperator{\bBar}{Bar}


\newcommand{\Spc}{ {\mathscr S\mathrm{pc}} }
\newcommand{\Lat}{\mr{Lat}}

\DeclareMathOperator{\Spec}{Spec}
\DeclareMathOperator{\Spf}{Spf}

\DeclareMathOperator{\Perf}{Perf}

\newcommand{\mstack}{\mathpzc}

\DeclareMathOperator{\Stk}{{\mathscr{S}tk}}

\newcommand{\dR}{{\mathrm{dR}}}

\DeclareMathOperator{\Aff}{Aff}

\DeclareMathOperator{\Gal}{Gal}


\newcommand{\Set}{ {\mathrm{Set}} }

\newcommand{\Ab}{\mr{Ab}}

\newcommand{\Hom}{\mr{Hom}}
\DeclareMathOperator{\Ext}{Ext}
\DeclareMathOperator{\Tor}{Tor}

\newcommand{\End}{\mscr E\mr{nd}}

\DeclareMathOperator{\Aut}{Aut}
\DeclareMathOperator{\Map}{Map}

\newcommand{\Fun}{\mscr F\mr{un}}

\newcommand{\Sp}{\mscr{S}\mr{p}}

\DeclareMathOperator{\Ind}{Ind}
\DeclareMathOperator{\sInd}{sInd}

\DeclareMathOperator*{\colim}{colim}
\DeclareMathOperator*{\fib}{fib}

\DeclareMathOperator*{\cofib}{cofib}
\DeclareMathOperator*{\coker}{coker}

\DeclareMathOperator{\Alg}{Alg}
\DeclareMathOperator{\AAlg}{\mscr{A}\mr{lg}}

\DeclareMathOperator{\coAlg}{coAlg}

\DeclareMathOperator{\CAlg}{CAlg}
\DeclareMathOperator{\DAlg}{\mscr{DA}\mr{lg}}

\DeclareMathOperator{\ev}{ev}

\DeclareMathOperator{\Tot}{Tot}

\newcommand{\Bin}{\mr{Bin}}

\DeclareMathOperator{\BinAlg}{\mr{BinAlg}}
\DeclareMathOperator{\DBinAlg}{\mscr{D}\mscr{B}\mr{in}\mscr{A}\mr{lg}}
\DeclareMathOperator{\Id}{Id}
\newcommand{\PShv}{{\mathscr{PS}\mathrm{hv}}}
\newcommand{\hcoShv}{{\mr{h}\mscr C\mr{o}\mathscr S\mathrm{hv}}}
\newcommand{\CoPShv}{{\mscr C\mr{o}\mathscr{PS}\mathrm{hv}}}
\newcommand{\Shv}{{\mathscr S\mathrm{hv}}}
\newcommand{\hShv}{\mr{h}{\mathscr S\mathrm{hv}}}
\newcommand{\op}{{\mathrm{op}}}



\mathchardef\mdef="2D

\usepackage{enumerate}
\usepackage{tocloft}
\usepackage{textcomp}
\setlength{\cftbeforesecskip}{6pt}

\DeclareFontFamily{OT1}{pzc}{}
\DeclareFontShape{OT1}{pzc}{m}{it}{<-> s * [1.200] pzcmi7t}{}
\DeclareMathAlphabet{\mathpzc}{OT1}{pzc}{m}{it}

\DeclareMathOperator{\eexp}{\mathpzc{exp}}
\DeclareMathOperator*{\holim}{holim}

\newcommand{\cotimes}{\mathbin{\widehat\otimes}}

\newcommand{\ol}{\overline}

\newcommand{\mf}{\mathfrak}
\newcommand{\mc}{\mathcal}
\newcommand{\mbb}{\mathbb}
\newcommand{\mr}{\mathrm}

\newcommand{\mscr}{\mathscr}
\newcommand{\ul}{\underline}
\newcommand{\xra}{\xrightarrow}

\newcommand{\obl}{\mathrm{obl}}
\newcommand{\RG}{R\Gamma}

\newcommand{\ra}{\rightarrow}

\newcommand{\fg}{{\mathrm{fg}}}
\newcommand{\free}{{\mathrm{free}}}

\newcommand{\id}{{\mathrm{id}}}

\newcommand{\can}{{\mathrm{can}}}

\newcommand{\an}{{\mathrm{an}}}

\DeclareMathOperator{\Bbar}{Bar}
\DeclareMathOperator{\coBar}{Cobar}
\DeclareMathOperator{\Fin}{\mathscr Fin}

\usepackage{tikz}
\usetikzlibrary{matrix}

\newcommand{\Poly}{{\mathrm{Poly}}}
\newcommand{\sing}{{\mathrm{sing}}}
\newcommand{\fpqc}{{\mathrm{fpqc}}}

\newcommand{\tto}{{\xymatrix{\ar[r]&}}}

\let\emptyset\varnothing

\newcommand{\sa}[1]{\overset{#1}{\longrightarrow}}

\newcommand{\mbf}{\mathbf}
\newcommand{\hra}{\hookrightarrow}
\newcommand{\iso}{\simeq}

\newcommand{\bC}{\mathbb C}

\newcommand{\bQ}{\mathbb Q}
\newcommand{\bZ}{\mathbb Z}
\newcommand{\bR}{\mathbb R}

\newcommand{\cO}{\mathcal O}

\newcommand{\fin}{\mathrm{fin}}
\newcommand{\aug}{\mathrm{aug}}
\newcommand{\Op}{\mathrm{Op}}

\setcounter{section}{0}

\numberwithin{thm}{section}
\numberwithin{equation}{section}

\title{Derived binomial rings I: integral Betti cohomology of log schemes}
\author{Dmitry Kubrak, Georgii Shuklin and Alexander Zakharov}

\date{}

\begin{document}
\maketitle
\epigraph{O bin\'omio de Newton \'e t\~{a}o belo como a Vénus de Milo
	
	O que h\'a \'e pouca gente para dar por isso.
	
	\'o\'o\'o\'o — \'o\'o\'o\'o\'o\'o\'o\'o\'o — \'o\'o\'o\'o\'o\'o\'o\'o\'o\'o\'o\'o\'o\'o\'o (O vento l\'a fora)}{Poema de \'Alvaro de Campos}
\bigskip 

\begin{abstract}
We introduce and study a derived version $\mathbf L\mathrm{Bin}$ of the binomial monad on the unbounded derived category $\mathscr D(\mathbb Z)$ of $\mathbb Z$-modules. This monad acts naturally on singular cohomology of any topological space, and does so more efficiently than the more classical monad $\mathbf L\mathrm{Sym}_{\mathbb Z}$ or the monad given by free $\mbb E_{\infty}$-ring. We compute all free derived binomial rings on abelian groups concentrated in a single degree, in particular identifying $C^*_{\mathrm{sing}}(K(\mathbb Z,n),\mathbb Z)$ with $\mathbf L\mathrm{Bin}(\mathbb Z[-n])$ via a different argument than in \cite{Toen_geo} and \cite{Horel}. Using this, we show that the singular cohomology functor $C^*_{\mathrm{sing}}(-,\mathbb Z)$ induces a fully faithful embedding of the category of connected nilpotent spaces of finite type to the category of derived binomial rings, explicitly describing image of the subcategory of simply-connected spaces.

We then also define a version $\mathbf L \mathcal Bin_X$ of the derived binomial monad on the $\infty$-category of $\mathscr D(\mathbb Z)$-valued sheaves on a sufficiently nice topological space $X$. As an application we give a closed formula for the singular cohomology of an fs log complex analytic space $(X,\mathcal M)$:  namely we identify the pushforward $R\pi_*\underline{\mathbb Z}$ for the corresponding Kato-Nakayama space $\pi\colon X^{\log}\rightarrow X$ with the free coaugmented derived binomial ring on the 2-term exponential complex $\mathcal O_X\ra \mathcal M^{\gr}$. This gives an extension of Steenbrink's formula and its generalization in \cite{shuklin} to $\mathbb Z$-coefficients. 
\end{abstract}

\tableofcontents

\section{Introduction}
\subsection{Spoilers for the sequel}
This project grew as an attempt to explain in a reasonable way the isomorphism of graded algebras (\cite[Appendix A]{KubrakPrikhodko_HdR})
\begin{equation}\label{eq:KZ3_vs_BG_a}
H^*_{\sing}(K(\mbb Z,3),\mbb Z)\simeq \oplus_{*_1+*_2=*}H^{*_1}(B\mbb G_{a,\mbb Z},\mc O)_{*_2},
\end{equation}
somewhat strangely relating the coherent cohomology of the classifying stack $B\mbb G_a$ over $\mbb Z$ (purely algebraic data) with singular cohomology of the Eilenberg-MacLane space $K(\mbb Z,3)$ (topological data). Note that this isomorphism shifts cohomological degrees: here the extra $*_2$-grading on $H^{*_1}(B\mbb G_{a,\mbb Z},\mc O)$ is induced by the  action of $\mbb G_m$ on $\mbb G_a$ via quadratic rescaling\footnote{Meaning that the corresponding map $\mbb G_m\times \mbb G_a \ra \mbb G_a$ is given by $(t,x)\mapsto t^2x\in \mbb G_a$, see \cite[Appendix A]{KubrakPrikhodko_HdR} for more details.}.  One can also view $K(\mbb Z,3)$ as the higher classifying stack $B^3\mbb Z$ in the homotopy-theoretic setting; this way \eqref{eq:KZ3_vs_BG_a} relates cohomology of $B^3\mathbb Z$ and $B\mathbb G_a$ in an interesting way.

It turns out that the degree shifts can be eliminated by rewriting \eqref{eq:KZ3_vs_BG_a} as the identification of the cohomology of the free derived binomial ring\footnote{One of the main objectives of this paper is to define this notion.} $\mbf L\Bin(\mbb Z[-3])$ and the free derived divided power algebra $\mbf L\Gamma^*(\mbb Z[-3])$ (regraded via the d\'ecalage isomorphism), respectively.
There is a natural deformation of the functor $\mbf L\Bin$ to the functor $\mbf L\Gamma^*$, and isomorphism \eqref{eq:KZ3_vs_BG_a} can be explained by the fact that this deformation is constant (on the cohomology algebras level) when evaluated on the object $\mbb Z[-3]\in {\mscr D}(\mbb Z)$. In fact, one gets an even stronger identification $\mbf L\Bin(\mbb Z[-3])\simeq \mbf L\Gamma(\mbb Z[-3])$ as $\mbb E_3$-algebras. However, a detailed study of this deformation (and its generalizations) is exactly what we decided to defer to the sequel of this paper. So, without further ado, let us switch to describing the actual contents of the current text.

\subsection{Binomial rings and homotopy theory: a short historical overview}

Let us first recall the notion of a classical binomial ring, following the work of \cite{Hall} and a later revisit by Elliot \cite{Elliot}.

\begin{defn}\label{def:binomial ring}
	A commutative ring $B$ is called a \textit{binomial ring} if it is torsion free as a $\mbb Z$-module and for each $b\in B$ and $n\in \mbb N$ the binomial coefficient 
	$$
	{\tbinom{b}{n}}\coloneqq \tfrac{b(b-1)\cdots(b-n+1)}{n!}\in B\otimes_{\mbb Z}\mbb Q
	$$
	lies in $B\subset B\otimes_{\mbb Z}\mbb Q$.
\end{defn}	

While any $\mbb Q$-algebra obviously gives a binomial ring, it turns out that the condition in \Cref{def:binomial ring} imposes some rather strong restrictions on the arithmetic of $B$, for example the absolute Frobenius on $B/p$ is automatically trivial \cite[Theorem 4.1(4)]{Elliot}. In fact one can fully classify binomial rings as a certain subclass of torsion-free $\lambda$-rings: namely those with all Adams operations being trivial (see \cite{Wilkerson2}). The main examples are given by the ring of integers $\mbb Z$ and, more generally, the ring of $\mbb Z$-valued functions $\Map_\Set(S,\mbb Z)$ on a set $S$.

There has been some history of using binomial rings in homotopy theory; in one way or the other all records seem to be related to attempts of developing some relatively explicit algebraic models for $H\mbb Z$-localization of spaces, see e.g. \cite{Wilkerson}, \cite{Ekedahl}, as well as recent works, \cite{Toen_geo} and \cite{Horel}; for an explicit cochain level approach to homotopy 1-types, e.g. nilmanifolds, see also \cite{suciu2023cup}. The idea is usually to associate to $X\in \Spc$ "polynomial singular cochains" in a certain sense, and then endow them with a suitable homotopy algebraic structure to obtain a fully faithful embedding from some subcategory of spaces. For $H\mbb Q$-localization such desirable model was constructed by Sullivan \cite{Sullivan}, by associating to a simplicial set the corresponding DG-algebra $\Omega^*_{\mr{PL}}(X)$ of polynomial $\mbb Q$-differential forms; unfortunately, blindly replacing $\mbb Q$ with $\mbb Z$ in his construction fails immediately due to non-acyclicity of de Rham complex of $\mbb A^1$ over $\mbb Z$. 

While for general space $X\in\Spc$, say represented by a simplicial set $X_\bullet$, there does not seem to exist a nice explicit integral algebraic model generalizing $\Omega^*_{\mr{PL}}(X)$, there is a natural candidate in the case of an Eilenberg-MacLane space $K(A,n)$ where $A\in \Lat$ is a lattice\footnote{As usual by lattice we mean a finitely generated free abelian group.}.

Recall that  $K(A,n)\in\Spc$ can be computed as the (geometric realization of the) underlying simplicial set of the simplicial  abelian group $\mr{DK}(A[n])_\bullet$ that corresponds to the complex $A[n]$ via Dold-Kan correspondence. More explicitly, 
$$
\mr{DK}(A[n])_m\simeq \bigoplus_{\alpha:[m]\surj [n]} A \simeq A^{\oplus \binom{m}{n}}\in \Lat
$$
with naturally defined face and degeneration maps. Its singular cohomology $C^*_\sing(K(A,n),\mbb Z)$ then is explicitly described as the totalization of the cosimplicial algebra
$$
\Map_\Set(\mr{DK}(A[n])_\bullet, \mbb Z)=\Map_\Set(A^{\oplus \binom{\bullet}{n}},\mbb Z) 
$$
of all set-theoretic maps from $\mr{DK}(A[n])_\bullet$ to $\mbb Z$. Now, using the abelian group structure on $A$, one can consider a smaller cosimplicial algebra:
$$
\Map_{\mr{poly}}(\mr{DK}(A[n])_\bullet, \mbb Z)\subset  \Map_\Set(\mr{DK}(A[n])_\bullet, \mbb Z)
$$
given by those maps $\mr{DK}(A[n])_\bullet \ra \mbb Z$ that are \textit{polynomial} in the sense of difference operators (see \Cref{def:polynomial maps}), or, equivalently are represented by polynomials with rational coefficients. The key observation, which seems to be due to Ekedahl (\cite{Ekedahl}), is that if $n\ge 1$ one can also use $\Map_{\mr{poly}}(\mr{DK}(A[n])_\bullet, \mbb Z)$ to compute singular cohomology of $K(A,n)$, namely,
\begin{equation}\label{eq:singular cohomology vs polynomial maps}
C_\sing^*(K(A,n),\mbb Z)\simeq \Tot_{[\bullet]\in \Delta}\Map_{\mr{poly}}(\mr{DK}(A[n])_\bullet, \mbb Z).
\end{equation}
One can speculate that the cosimplicial algebra on the right is exactly the "polynomial cochains" on $K(A,n)$ in a certain sense.

Now let us indicate how binomial rings, or rather their derived version, naturally come in handy here. One first observes that for any lattice $A\in \Lat$, algebras $\Map_{\mr{poly}}(A, \mbb Z)$ and $\Map_{\Set}(A, \mbb Z)$ are binomial; moreover, $\Map_{\mr{poly}}(A, \mbb Z)$ is the free binomial ring\footnote{Meaning that for any binomial algebra $B$ the set of maps $\Map_{\BinAlg}(\Map_{\mr{poly}}(A, \mbb Z),B)$ is canonically identified with $\Map_{\Ab}(A^\vee, B)$ in the category of abelian groups.} on the $\mbb Z$-linear dual $A^\vee$. This allows to identify the right hand side in \Cref{eq:singular cohomology vs polynomial maps} with the free derived binomial ring on $\Tot(\mr{DK}(A[n])_\bullet^\vee)\simeq A^\vee[-n]$, giving canonical equivalence
$$
C_\sing^*(K(A,n),\mbb Z)\simeq \mbf L\Bin(A^\vee[-n])
$$
as (appropriately defined) binomial rings. This allows to effectively describe maps from $C_\sing^*(K(A,n),\mbb Z)$ considered as a derived binomial ring.
In particular, for any space $X$ the mapping space between derived binomial rings $C_\sing^*(K(A,n),\mbb Z)$ and $C_\sing^*(X,\mbb Z)$ is naturally identified with the mapping space between $X$ and $K(A,n)$ in the category of spaces. This ultimately allows to show (see e.g. \cite{Horel}, or \Cref{ssec:integral homotopy type theory}) that the functor $X\mapsto C_\sing^*(X,\mbb Z)$ from $\Spc^{\op}$ to $\DBinAlg$ is fully faithful when restricted to the subcategory spanned by nilpotent spaces
of finite type.

\subsection{Derived binomial rings: current work}
Hopefully having sufficiently motivated the notion of derived binomial  rings for the reader, let us now describe more concretely what we do in this paper. First of all, taking a slightly more general approach than \cite{Horel}, in \Cref{sec:derived binomial monad} we define the derived binomial monad $\mbf L\Bin$ on the full derived category ${\mscr D}(\mbb Z)$, rather than the coconnective part ${\mscr D}(\mbb Z)^{\ge 0}$. In doing so we use the approach of \cite{Arpon} via \textit{filtered monads}: namely we first define a filtered monad $\Bin^{\le *}$ on the category $\Lat$ of lattices, then extend it to ${\mscr D}(\mbb Z)^{\le 0}$ via the "animation" procedure. Using the polynomiality of individual functors $\Bin^{\le n}$, we then extend it to the full derived category ${\mscr D}(\mbb Z)$ following \cite{BM}, \cite{Arpon} (which in turn go back to \cite{Illusie_Cotangent_II}). We define filtered monad $\Bin^{\le *}$ in terms of additively polynomial maps between abelian groups, with a hope that our exposition will shed more light on the relation between our work and the work of Ekedahl \cite{Ekedahl}. This also allows to describe in more functorial way the individual functors $\Bin^{\le n}\colon \Lat \ra \Lat$. 

After the monad $\mbf L\Bin$ is constructed we define the $\infty$-category $\DBinAlg$ of derived binomial rings as the corresponding category of modules. When restricted to the subcategory $\Ab^{\mr{tf}}$ of torsion-free classical abelian groups, $\mbf L\Bin$ recovers the classical binomial monad $\Bin$ that was studied by Elliot in \cite{Elliot}. This way the category of classical binomial rings embeds fully faithfully into the category of derived ones. Let us note though that in general, differently from the classical case, being a derived binomial ring for a derived commutative ring becomes a structure, and not a property.

There are some immediate benefits of our approach in defining $\mbf L\Bin$: for example, by design, the forgetful functor $\DBinAlg\ra \mscr D(\mbb Z)$ commutes with sifted colimits\footnote{While the forgetful functor from $\DBinAlg$ to $\mbb E_\infty$-algebras in ${\mscr D}(\mbb Z)$ in fact commutes with all colimits.}. On the other hand, when restricted to a certain natural subcategory of $\Perf_{\mbb Z}^{\ge 0}$, $\mbf L\Bin$ can be identified with the right Kan extension from $\Lat$, which altogether makes it possible to efficiently compute in many cases. One also gets a derived binomial algebra structure on the singular cohomology $C^*_\sing(X,\mbb Z)$ simply by considering the homotopy limit $\holim_X \mbb Z$ in $\DBinAlg$.

After setting up the definition, in \Cref{sec:free derived binomial rings} we compute free derived binomial rings on classical abelian groups in a single cohomological degree. These guys are the building blocks of the category of derived binomial rings and cohomology of their underlying complexes can be seen as "binomial cohomological operations". Since $\mbf L\Bin\colon {\mscr D}(\mbb Z) \ra {\mscr D}(\mbb Z)$ commutes with filtered colimits, it is in fact enough to understand $\mbf L\Bin(A[n])$ for $A$ being a lattice or a finite abelian group. 

\begin{itemize} \item \underline{Classical case ($n=0$)}. The free binomial ring $\mbf L\Bin(\mbb Z)=\Bin(\mbb Z)$ is described explicitly as the subring
$$
\Bin(\mbb Z)\simeq \mbb Z[\tbinom{x}{n}]_{n\ge 1}\subset \mbb Q[x]
$$
generated by binomial coefficients
$$
\tbinom{x}{n}\coloneqq \tfrac{x(x-1)\ldots(x-n+1)}{n!}\in \mbb Q[x],
$$
and, more functorially, as the ring of polynomial maps $\Map_{\mr{poly}}(\mbb Z,\mbb Z)$. One can also identify $\Bin(\mbb Z)$ with the ring of functions on the "Hilbert additive group scheme" $\mbb H$, defined as Cartier dual to the formal multiplicative group $\widehat{\mbb G}_m$. In general, for a lattice $A\in \Lat$ one has a canonical isomorphism $\mbf L\Bin(A)\simeq \Map_{\mr{poly}}(A^\vee,\mbb Z)$, where $A^\vee$ is the dual lattice.

\item \underline{(Most of the) coconnective range}. Similarly to \Cref{eq:singular cohomology vs polynomial maps} in (the most of) coconnective range the free derived binomial rings have a nice topological interpretation, namely (\Cref{prop: description of free coconnective binomial algebras}) for $M\in \Perf^{\le -1}_{\mbb Z}$ one has a natural equivalence:
\begin{equation}\label{eq:free binomial things in the coconnective range}
\mbf L\Bin(M^\vee)\xra{\sim } C^*_\sing(K(M),\mbb Z),
\end{equation}
where $M^\vee\coloneqq R\Hom_{\mscr D(\mbb Z)}(M,\mbb Z)\in \Perf^{\ge 1}_{\mbb Z}$ is dual complex to $M$, and $K(M)\in \Spc$ is the (generalized) Eilenberg-MacLane space of $M$. In particular, for $M=\mbb Z[n]$ with $n\ge 1$ we get an equivalence 
$$
\mbf L\Bin(\mbb Z[-n])\xra{\sim } C^*_\sing(K(\mbb Z,n),\mbb Z).
$$ We give a different proof than in \cite{Toen_geo} and \cite{Horel}, formally reducing to the case $M=\mbb Z[-1]$, and then deforming $\mbf L\Bin(\mbb Z[-1])$ to the free divided power algebra $\mbf L\Gamma(\mbb Z[-1])$. 

Note that the isomorphism \Cref{eq:free binomial things in the coconnective range} computes $\mbf L\Bin(A[-n])$ where $A$ is a lattice and $n\ge 1$ or $A$ is a finite abelian group and $n\ge 2$. 

\item \underline{Strictly connective range}. It turns out that in the connective range all the interesting arithmetic of binomial rings that we saw before collapses. More precisely, one gets the following result, which might look somewhat surprising at first glance (\Cref{prop:free binomial algebra in strictly cocnnective setting}): for $M\in \Perf^{\le -1}_{\mbb Z}$ there is a natural equivalence:
$$
\mbf L\Bin(M) \xra{\sim} \mbf L\Sym_{\mbb Z}(M\otimes_{\bZ} \mbb Q).
$$
In particular, in this range the functor $M\mapsto \mbf L\Bin$ only depends on the tensor product $M\otimes_{\bZ} \mbb Q$. Consequently, for a finite abelian group $A$, $\mbf L\Bin(A[n])\simeq \mbb Z$ for any $n\ge 1$. In fact, it is also true that $\mbf L\Bin(A[n])\simeq \mbb Z$ for $n=0$ (\Cref{prop:Bin of torsion abelian group}). For a lattice $A$, via the above isomorphism, $\mbf L\Bin(A[n])$ can be non-canonically identified with the homology $C_*^\sing(K(A\otimes_{\bZ} \mbb Q,n),\mbb Z)$ of the $n$-th Eilenberg-MacLane space of the rationalization $A\otimes_{\bZ  } \mbb Q$.

\item \underline{$A[-1]$ for $A$ finite}. The above results leave out the only case $\mbf L\Bin(A[-1])$ for $A$ finite abelian. Here we show that $\mbf L\Bin(A[-1])$ is 1-cotruncated, namely ${\mbf L^{>1}\Bin(A[-1])}\simeq 0$, and that the only interesting information is given by the functor $A\mapsto \mbf L^1\Bin(A[-1])$, which we describe explicitly in \Cref{ssec:LBin(A[-1])}. As an example of our computation, we $(\mbb Z/p)^\times$-equivariantly identify 
$$\mbf L^1\Bin(\mbb Z/p[-1])\simeq K/\mc O_K$$
where $K=\mbb Q_p(\mu_p)$ is the $p$-th cyclotomic extension and $(\mbb Z/p)^\times$ acts by multiplication on the left and via the Galois group $\Gal(K/\mbb Q_p)$ on the right (see \Cref{rem:case A=Z/p}).
\end{itemize}

Using the equivalence \Cref{eq:free binomial things in the coconnective range} in \Cref{ssec:integral homotopy type theory}, similarly to Horel \cite{Horel}, we establish a variant of integral homotopy theory in our setting. Namely, we show that the functor 
$$
C^*_\sing(-,\mbb Z)\colon \Spc^\op \ra \DBinAlg
$$
is fully faithful when restricted to the full subcategory spanned by connected nilpotent spaces of finite type (see \Cref{cor:cohomology defines fully faithful embedding}). In particular, such a connected space $X$ can be reconstructed from $C^*_\sing(X,\mbb Z)$ with its derived binomial ring structure: namely, there is a natural equivalence $X\simeq \Map_{\DBinAlg} (C^*_\sing(X,\mbb Z),\mbb Z)$ (see \Cref{prop:X can be reconstructed from cochains}). Let us note that very similar results were also obtained recently by Antieau \cite{Antieau_Spherical} in a slightly different context. In \Cref{prop:image of simply-connected spaces} also describe the essential image of $C^*_\sing(-,\mbb Z)$ when restricted to the subcategory $\Spc^{1-\mr{cn},\mr{ft}}\subset \Spc$ of simply connected spaces of finite type: it is spanned by those coconnective derived binomial derived rings $B$ such that $H^i(B)$ is finitely generated for all $i$, $H^{0}(B)\simeq \mbb Z$, $H^1(B)=0$ and $H^2(B)$ is torsion free.

\begin{rem} As we discuss  in \Cref{ssec:interpretation as cohomology of a stack}, the above isomorphism 
	$\mbf L\Bin(\mbb Z[-n])\xra{\sim } C^*_\sing(K(\mbb Z,n),\mbb Z)$ can be interpreted as a comparison between  the cohomology of the structure sheaf of two higher classifying stacks: namely, $K(\mbb Z,n)$ and $K(\mbb H,n)$ (where $\mbb H\coloneqq \Spec(\Bin(\mbb Z))$). The comparison map comes naturally from the homomorphism of group schemes
	$$
	\iota\colon \ul{\mbb Z} \ra \mbb H
	$$
	that on the level of functions corresponds to the embedding $\Bin(\mbb Z)\simeq \Map_{\mr{poly}}(\mbb Z,\mbb Z)\subset \Map_{\Set}(\mbb Z,\mbb Z)$ of polynomial functions into all.
	
	This identification allows to interpret $C^*_\sing(K(\mbb Z,n),\mbb Z)$ in terms of an object of more algebro-geometric nature (namely, the higher stack $K(\mbb H,n)$).
	 One of the new structures that one gets from this, but which is not immediately seen topologically, is the deformation\footnote{Here $\mbb G_a^{\sharp}\coloneqq \Spec (\Gamma(\mbb Z))$ is the Cartier dual to formal additive group $\widehat{\mbb G}_a$. One gets a deformation $\mbb H\rightsquigarrow \mbb G_a^\sharp$ from the canonical deformation of $\widehat{\mbb G}_m$ to its formal Lie algebra, which is $\widehat{\mbb G}_a$.} from $K(\mbb H,n)$ to $K(\mbb G_a^{\sharp},n)$, which, as we will show in the sequel, turns out to be trivial on the level of cohomology of the structure sheaf with respect to some (e.g. $\mbb E_n$-algebra) structures.
\end{rem}

\subsection{Singular cohomology of log analytic spaces}\label{intro:singular cohomology of log-schemes}
The main new application of this paper is to integral singular cohomology of log analytic spaces over complex numbers.
Let us briefly recall what sort of structure this amounts to.

\begin{defn}[{\cite[Definition 1.1.1]{Kato-Nak-99}}]
	Recall  that a log (complex) analytic space $(X,\mathcal M)$ is given by:
	\begin{itemize}
		\item[-] a complex analytic space $X$;
		\item[-] a sheaf of commutative monoids $\mathcal M$;
		\item[-] a homomorphism $\alpha\colon \mathcal M\to \mathcal O_X$ such that $\alpha^{-1}(\cO^\times_X)\iso \cO^\times_X$.
	\end{itemize}
\end{defn}

We will assume that our log analytic spaces are \textit{fine and saturated (fs)}, this imposes some nice properties of the sheaf $\mc M$, as well as its group completion $\mc M^{\gr}$ (see (\ref{ass:fs log anal})). In particular, the stalks of $\ol{\mc M^{\gr}}\coloneqq \mc M^{\gr}/\mc O_X^\times$ (which control the non-trivial part of log-structure) are free finitely generated abelian groups.

The classical approach to define log Betti (=singular) cohomology of a log analytic space, due to Kato and Nakayama \cite{Kato-Nak-99}, is to associate to $(X,\mc M)$ a topological space $X^{\log}$ (\Cref{constr:Kato-Nakayama space}) and define log Betti cohomology $\RG^{\log}_{\mr{Betti}}(X,\bZ)$ as the singular cohomology $\RG_\sing(X^{\log}, \mbb Z)$ of $X^{\log}$. To give the reader a feeling of what $X^{\log}$ looks like, let us just say for now that there is a continuous proper map $\pi\colon X^{\log}\ra X$ and that the fiber $\pi^{-1}(x)$ over a point $x\in X$ is given by a compact real torus with dimension given by the rank of the fiber of $\ol{\mc M^{\gr}}$ at $x$.

However, one can also look for alternative, more "direct", formulas for the log Betti cohomology that would only directly involve the analytic space $X$ and the monoid $\mc M$, and would not use any auxiliary topological space. One variant of such construction was studied in great detail in \cite{shuklin} in the case of $\mbb Q$-coefficients. The key observation is that all necessary information in fact is captured by the 2-term exponential complex (\Cref{not:exp(a)})
$$
\eexp(\alpha)\coloneqq [\ \! \mc O_X \xra{\exp_{\mc M}} \mc M^{\gr}\ \!]
$$
of sheaves on $X$, where the map $\exp_{\mc M}$ is given by the composition of the exponential map $\mc O_X\ra \mc O_X^\times$ and the composite embedding $\mc O_X^\times \inj \mc M \inj \mc M^{\gr}$. One can show that the complex $\eexp(\alpha)$ gives an explicit model for $R^{\le 1}\pi_*\ul{\mbb Z}$ (\Cref{lem:exp is the first truncation}); moreover, even though the definition of $\eexp(\alpha)$ uses the analytic structure on $X$, in the case when $(X,\mc M)$ is the analytification of an fs log scheme, it is essentially algebraic in the sense that it comes as the Betti realization of a certain motivic sheaf (see \cite{shuklin} for more details).

To compute $\RG^{\log}_{\mr{Betti}}(X,\bZ)$ one can proceed in two steps: namely, first evaluate the pushforward $R\pi_* \ul{\mbb Z}$ and then compute its (derived) global sections after that. So the goal is to give a closed formula for the full derived pushforward $R\pi_* \ul{\mbb Z}$ in terms of its first truncation $R^{\le 1}\pi_*\ul{\mbb Z}\simeq \eexp(\alpha)$.

Let us sketch the case of $\mbb Q$-coefficients following \cite{shuklin} and then indicate what needs to be changed to get the case of $\mbb Z$-coefficients.
Denote by
$$
\eexp(\alpha)_\bQ:=\mathpzc{exp}(\alpha)\otimes \bQ
$$
the rationalization of $\eexp(\alpha)$. The identification $\ker(\exp_{\mc M})=\ul \bZ$ induces maps
$$\ul \bZ \ra \eexp(\alpha),  \qquad \ul{\mbb Q}\ra \eexp(\alpha)_{\mbb Q}$$
of complexes of sheaves on $X$.

Using the natural commutative algebra structure on $R\pi_*\ul{\mbb Q}$ the map $\eexp(\alpha)_\bQ\simeq R^{\le 1}\pi_*\ul{\mbb Q} \ra R\pi_*\ul{\mbb Q}$ extends to a commutative algebra map 
$$\Sym_{\mbb Q}^*(\eexp(\alpha)_\bQ) \dasharrow R\pi_*\ul{\mbb Q}$$ from the free commutative $\mbb Q$-algebra on $\eexp(\alpha)$. Moreover, its composition with the natural map $\Sym_{\mbb Q}^*(\ul{\bQ})\ra \Sym_{\mbb Q}^*(\eexp(\alpha)_\bQ)$ factors through $R^0\pi_*\ul{\bQ}\simeq \ul{\bQ}$ via the unique map whose restriction to $\Sym^1 (\ul{\bQ}) \simeq \ul{\bQ}$ gives the identity map  $\id_{\ul{\bQ}}\colon \ul{\bQ}\ra \ul{\bQ}$. Altogether, this produces a map 
$$
\Sym^{\mr{coaug}}_{\mbb Q}(\eexp(\alpha)_\bQ) \dasharrow R\pi_* \ul{\mbb Q}
$$
from the pushout\footnote{Or, in other words, from the tensor product $\Sym^{\mr{coaug}}_\bQ(\eexp(\alpha)_\bQ)\simeq \ul{\mbb Q} \otimes_{\Sym^*_{\bQ} (\ul{\bQ})}\Sym^*_{\bQ} (\eexp(\alpha)_\bQ)$.}
$$
\xymatrix{\ul{\mbb Q}\ar[r]& \Sym^{{\mr{coaug}}}_{\bQ}(\eexp(\alpha)_\bQ)\\
	\Sym^*_{{\bQ}} (\ul{\bQ})\ar[r]\ar[u]&  \Sym^*_{\bQ} (\eexp(\alpha)_\bQ)\ar[u],
}
$$
which induces an equvalence
\begin{equation}\label{intro_eq:Q-coefficients}
\Sym^{\mr{coaug}}_{\mbb Q}(\eexp(\alpha)_\bQ) \simeq R\pi_* \ul{\mbb Q}.
\end{equation}

 Once the map is constructed, the proof of the latter equivalence is not hard and, using proper base change, ultimately reduces to the fact that the natural map
 \begin{equation}\label{eq:map to circle over Q}
 \Sym_{\bQ}^*(\mbb Q[-1])\ra  \RG_\sing(S^1,\mbb Q)
 \end{equation}
is an equivalence. Let us also note that the formula \Cref{intro_eq:Q-coefficients} was first observed by Steenbrink (see \cite[Section 2, (2.8)]{Steenbrink} and \cite[Section 11.2.6]{PS_Hodge}) for log structures associated with normal crossing divisors.

However, if we now try to follow the same strategy to get a formula for $\mbb Z$-coefficients we immediately run into trouble. The problem is with the functor $\Sym$; even though there are different options for the commutative algebra context in which to consider $R\pi_* \ul{\mbb Z}$ as an algebra, the free algebras in all of them will be too large. For example, the free $\mbb E_\infty$-algebra functor now has a lot of non-trivial negative cohomology groups and, even if we correct that by considering the free derived commutative algebra functor $\mathbf L\Sym^*_{\mbb Z}$ defined by \cite{Arpon}, the analogously defined $\mathbf L\Sym^{\mr{coaug}}_{\mbb Z}(\eexp(\alpha))$ would still be too big. In the latter case the problem is that \Cref{eq:map to circle over Q} does not work with $\mbb Z$-coefficients: the natural map 
$$
\mathbf L\Sym_{\mbb Z}(\mbb Z[-1]) \ra \RG_\sing(S^1,\mbb Z)
$$
is far from being an equivalence\footnote{For an explicit description of $\mathbf L\Sym_{\mbb Z}(\mbb Z[-1])$ see \cite{KubrakPrikhodko_HdR}, where one first must identify $\mathbf L\Sym_{\mbb Z}(\mbb Z[-1])$ with $\RG(B\mbb G_a,\mc O)$.}, e.g. $H^2(\mathbf L\Sym_{\mbb Z}(\mbb Z[-1]))$ is not even a finitely generated abelian group.

Note now that $S^1\simeq K(\mbb Z,1)$ and by \Cref{eq:free binomial things in the coconnective range} the natural map 
$$
\mbf L\Bin(\mbb Z[-1]) \ra \RG_\sing(S^1,\mbb Z)
$$
\underline{\textit{is}} an equivalence. It thus might be natural to try to use the free binomial ring $\mathbf L\Bin$ in place of the free derived commutative algebra monad $\mathbf L\Sym_{\mbb Z}$. 

More precisely, in \Cref{ssec:sheaves of binomial algebras}, under some mild assumptions on the topological space $X$, we define a derived binomial monad $\mbf L\mc B{in}_X$ on the $\infty$-category $\Shv(X,{\mscr D}(\mbb Z))$ of ${\mscr D}(\mbb Z)$-valued sheaves on $X$. We identify sheaves of derived binomial rings with modules over $\mbf L\mc B{in}_X$, and for a (nice enough) map $\pi\colon Y\ra X$ of topological spaces we endow the pushforward $R\pi_*\ul\bZ$ with a natural module structure over $\mbf L\mc B{in}_X$. In \Cref{ssec:cohomology of KN-space} we then show that the natural map 
$$
\mbf L \mc B{in}^{\mr{coaug}}_X(\eexp(\alpha)) \dasharrow R\pi_*\ul\bZ,
$$ 
induced by $\eexp(\alpha)\ra R\pi_*\ul\bZ$, is an equivalence. Here, similar to what we had before, $\mbf L \mc B{in}^{\mr{coaug}}_X(\eexp(\alpha))$ is defined as the pushout 
$$
\xymatrix{\ul{\mbb Z}\ar[r]& \mbf L\mc B{in}_X^{\mr{coaug}}(\eexp(\alpha))\\
	\mbf L\mc B{in}_X(\ul{\bZ})\ar[r]\ar[u]^{\ev_1}&  \mbf L \mc B{in}_X(\eexp(\alpha)),\ar[u]
} 
$$
where the map $\ev_1\colon \mbf L\mc B{in}_X(\ul{\bZ}) \ra \ul{\bZ}$ is induced by the binomial algebra structure on $\mbb Z$. In fact, as we show in \Cref{ssec:LBin'} the functor $\mbf L\mc B{in}_X^{\mr{coaug}}$ can be considered as the free derived binomial algebra, but in the undercategory $\Shv(X,{\mscr D}(\mbb Z))_{\setminus \ul \bZ}$

 As the result we get a formula 
\begin{equation}\label{intro_eq:pushforward as Bin}
	R\pi_*\ul\bZ \simeq \mbf L \mc B{in}^{\mr{coaug}}_X(\mathpzc{exp}(\alpha)).
\end{equation}	
Taking global sections also leads to an equivalence
$$
\RG^{\log}_{\mr{Betti}}(X,\bZ)\iso \RG(X,\mbf L\mc B\mr{in}_X^{\mr{coaug}}(\eexp(\alpha)))
$$
expressing \textit{integral} log Betti cohomology directly in terms of $\eexp(\alpha)$, extending Steenbrink's formula, as well as its generalization in \cite{shuklin}, to the integral case.

\subsection{Cosheaf of spaces corresponding to $\pi\colon X^{\log} \ra X$ and its algebraic approximation $X^{\log}_{\mbb H}$}

In fact not only the log Betti cohomology of $X$ can be expressed via $\mathpzc{exp}(\alpha)$ using the functor $\mbf L\mc B\mr{in}_X^{\mr{coaug}}$, but also the essential homotopy-theoretic data underlying the map $\pi\colon X^{\log} \ra X$: more precisely, the cosheaf of spaces that it naturally defines. 

More precisely, let $[X^{\log},\pi]$ denote the the $(\infty,1)$-cosheaf of spaces on $X$ which sends $U$ to the underlying homotopy type of the preimage $\pi^{-1}(U)\in \Spc$. Our version of integral homotopy theory (\Cref{prop:X can be reconstructed from cochains}) implies that the cosheaf $[X^{\log},\pi]$ can be reconstructed from $\mathpzc{exp}(\alpha)$ as the cosheaf of spaces
$$
U\mapsto \Map_{\DBinAlg}(\mbf L\mc B\mr{in}_X^{\mr{coaug}}(\eexp(\alpha))(U), \mbb Z)
$$
(see \Cref{rem:reconstructing KN from exp}). 

As we show in \Cref{ssec:cosheaf shit} the cosheaf $[X^{\log},\pi]$ in fact also enjoys some better properties than the actual map of topological spaces $\pi\colon X^{\log} \ra X$: for example (\Cref{rem:gerbe structure on Y}), one can make sense of $[X^{\log},\pi]$ as a gerbe over cosheaf of abelian groups given by $(\ol{\mc M^{\gr}})^\vee$ defined by sending $U$ to $\ol{\mc M^{\gr}}(U)^\vee\coloneqq \Hom_{\Ab}(\ol{\mc M^{\gr}}(U),\mbb Z)$. Intuitively, one would like to say similarly that $X^{\log}$ itself is a gerbe over $\ol{\mc M^{\gr}}$ (or rather a torsor over $\ol{\mc M^{\gr}}\otimes_{\mbb Z} S^1$, which from homotopical point of view is the same as $\ol{\mc M^{\gr}}[1]$), but we don't know in what sense this could be made precise. The key tool which allows to interpret $[X^{\log},\pi]$  as a gerbe is \Cref{prop:local homotopy triviality of subanalytic maps} giving a certain enhancement of proper base change theorem for locally semi-analytic maps (\Cref{def:locally semi-analytic}).

The gerbe structure on $[X^{\log},\pi]$ also allows us to geometrize the equivalence \Cref{intro_eq:pushforward as Bin}, which we do in \Cref{ssec:geometrization}. Namely, one can embed the category of cosheaves of spaces on $X$ into the category of cosheaves with values in higher stacks via the natural colimit-preserving functor $\Spc\ra \Stk_{/\mbb Z}$. The group scheme homomorphism $i\colon \ul{\mbb Z} \ra \mbb H$ extends to the map of $\Stk_{/\mbb Z}$-valued cosheaves $(\ol{\mc M^{\gr}})^\vee\ra (\ol{\mc M^{\gr}})^\vee\!\otimes_{\mbb Z}\mbb H$ and we define the cosheaf of stacks $X^{\log}_{\mbb H}$ (which we propose to be the "algebraic version" of $\pi\colon X^{\log}\ra X$) as the induced $(\ol{\mc M^{\gr}})^\vee\!\otimes_{\mbb Z}\mbb H$-gerbe from $[X^{\log},\pi]$ via the above map. One has a natural map 
$$
i_X\colon [X^{\log},\pi] \ra X^{\log}_{\mbb H}
$$
and one obtains the isomorphism \Cref{intro_eq:pushforward as Bin} by applying the derived global sections $\RG(-,\mscr O)$ (\Cref{lem:main iso through cosheaves}) to $i_X$.

\begin{rem} The construction of the gerbe $X^{\log}_{\mbb H}$ is in fact more general and applies to maps $\pi\colon Y\ra X$ with the property that locally on $U\subset X$ the preimage $\pi^{-1}(U)$ is homotopy equivalent to a compact real torus. In the case when $\pi$ is given by toric fibration, the nature of the corresponding cosheaf of stacks $Y_{\mbb H}$ is somewhat analogous to the Hodge-Tate gerbe $X^{\mr{HT}}$ for a scheme over $\mbb F_p$ (see \cite{Bhatt_Lurie_prism_for_scheme}). 
	
	Namely, $p_{\mr{HT}}\colon X^{\mr{HT}}\ra X$ also represents a cosheaf of stacks that is a gerbe over the PD-envelope $T_X^\sharp$ of the tangent bundle, and the latter is Cartier dual to a a formal group given by the completion at 0 of the cotangent bundle. The trivialization of this gerbe implies the formality of the Frobenius pushforward of the de Rham complex $F_{X*}\Omega_{X,\dR}^*$. On the other hand, the first truncation $\tau^{\le 1}F_{X*}\Omega_{X,\dR}^*$ splits if and only if so does the gerbe $p_{DI}\colon X^{DI}\ra X$ of Frobenius-liftings, defined by Deligne-Illusie in \cite{DeligneIllusie}. 
	
	The cosheaf of stacks $Y_{\mbb H}\ra Y$ is a gerbe over $(R^1\pi_*\ul\bZ)^\vee\otimes_{\mbb Z} \mbb H$, which Cartier dual to a sheaf of formal groups $R^1\pi_*\ul\bZ\otimes_{\mbb Z} \widehat{\mbb G}_m$. Differently from Hodge-Tate stack, the gerbe $Y_{\mbb H}$ is trivialized once one splits $R^{\le 1}\pi_*\ul\bZ$; the formality of the whole pushforward $R\pi_*\ul\bZ$ is then controlled by the triviality of a certain deformation of cosheaves $(R^1\pi_*\ul\bZ)^\vee\otimes_{\mbb Z} \mbb H \rightsquigarrow (R^1\pi_*\ul\bZ)^\vee\otimes_{\mbb Z} \mbb G_a^\sharp$, after applying $\RG(-,\mscr O)$.
	
\end{rem}	

\begin{rem}
Unfortunately, it does not seem to be feasible that there is a lift of the binomial monad to the motivic level that would allow to associate an \textit{integral} motive to a log structure, similarly to the construction in \cite{shuklin} in the $\mbb Q$-coefficients case.  The main reason that we went through some analysis of the cosheaves $[X^{\log},\pi]$ and $X^{\log}_{\mbb H}$ is that we expect that their construction could have a motivic lift instead. We also hope that there should be a way to define the relative motivic cohomology of $X^{\log}_{\mbb H}$ over $X$ even in the non $\mbb A^1$-homotopy-invariant context (as in \cite{Iwasa-Annala}) with $\mbb H$ replaced by the Cartier dual to a certain motivic formal group law. We hope to proceed with the construction of these generalizations in the close future.
\end{rem}	

\subsection{Related and future work} 
\subsubsection*{Binomial rings, $\delta$-rings and integral models for spaces} The starting point for our project was To\"en's work \cite{Toen_geo} on Grothendieck's schematization problem; we then realized that the key computation of singular cohomology of $K(\mbb Z,n)$ could be also proved in another way by using the derived binomial ring structure. During the active phase of our project, the work \cite{Horel} by Horel came out, which used a similar idea (though with another proof) for the latter computation and which also established integral homotopy theory in the context of \textit{cosimplicial} binomial rings. The main difference with our work is that we use the whole derived category $\mscr D(\mbb Z)$ rather than the cosimplicial part and we also avoid a direct use of the language of model categories; considering applications to integral cohomology both approaches seem to work well. We should point out that the integral homotopy theory was essentially established already by Ekedahl in \cite{Ekedahl}, though the language of $\infty$-categories allows for an (arguably) cleaner theory and formulation of final results. We should also probably mention the beautiful work of Mandell \cite{Mandell} where he showed that two nilpotent spaces of finite type are weakly homotopy equivalent if and only if their singular $\mbb Z$-cochains are isomorphic as $\mbb E_\infty$-algebras; note however that fully-faithfulness of the functor $C^*_\sing(-,\mbb Z)$ breaks in this context. A related dual comonad on connective abelian groups in relation to spaces was also studied in. There is another recent approach to integral homotopy theory \cite{Yuan} by Yuan, which is much more technically involved; we don't know what is the precise relation between his work and derived binomial rings, though both approaches model in a way the "trivialization of Frobenii action".

A version of $\mbf L \Bin$ over $\mbb Z_p$ should be related to the "free $\delta$-ring monad" $\mbf L\Sym^{\delta}$ that was defined in \cite{Holeman}: more precisely, $\mbf L \Bin$ should be given by the maximal Frobenius-invariant quotient of $\mbf L\Sym^{\delta}$ in a certain sense. In particular, any derived binomial ring over $\mbb Z_p$ should have an underlying structure of derived $\delta$-ring. Given the main result \cite[Theorem 1.1.3]{Holeman} of \textit{loc.cit.} that realizes prismatic cohomology functor as a left adjoint of the reduction functor from derived $\delta$-rings to derived commutative rings, it would be interesting to also find a purely categorical interpretation of the \'etale comparison for the prismatic cohomology \cite[Theorem 1.8(4)]{BS_prisms} in terms of the category of derived binomial algebras over $\mbb Z_p$.

The relation of our work with derived $\delta$-rings was made even more apparent by a very recent work \cite{Antieau_Spherical} of Antieau, where yet another integral model for spaces was constructed, with the target category being the fixed-point category of derived $\lambda$-rings with respect to Adams operations (see \cite[Section 5]{Antieau_Spherical}). Clarifying the precise relation between this category and $\DBinAlg$ is one of the things that the first author is planning to do as a part of a joint project with Antieau, H\"ubner and Nuiten.
\subsubsection*{Betti cohomology of log analytic spaces}
Passing to the subject of log analytic spaces we should of course mention the work \cite{Steenbrink} of Steenbrink which discusses in some detail the case when the log structure is associated with a normal crossing divisor. We hope that our work can shed more light on section \cite[(2.8)]{Steenbrink}, where the integral part of the story is discussed very briefly. The complex Steenbrink writes down in \cite[(2.8)]{Steenbrink} can be naturally identified with $\mbf L\Gamma^{n}_{\mbb Z}(R^{\le 1}\pi_{*}\ul{\mbb Z})$; he shows that it compares with $R\pi_*\ul\bZ$ for $n\gg 0$ after  $-\otimes\mbb Q$. Our work provides an integral version of his result for any fs log analytic space: namely the full pushforward $R\pi_*\ul\bZ$ is given by $\mbf L\mc{B}in_X^{\mr{coaug}}(R^{\le 1}\pi_{*}\ul{\mbb Z})$. Despite his claim in the very end of  \cite[(2.8)]{Steenbrink} about the existence of a natural map $\mbf L\Gamma^{n}_{\mbb Z}(R^{\le 1}\pi_{*}\ul{\mbb Z}) \ra R\pi_*\ul\bZ$ before $-\otimes\mbb Q$ we do not immediately see it from the binomial perspective\footnote{Instead, we clearly see a map $$\cofib \left(\mbf L\mc Bin_X^{\le n-1} (R^{\le 1}\pi_{*}\ul{\mbb Z}) \xra{[1]-1} \mbf L\mc Bin_X^{\le n}(R^{\le 1}\pi_{*}\ul{\mbb Z})\right)\dashrightarrow \mbf L\mc{B}in_X^{\mr{coaug}}(R^{\le 1}\pi_{*}\ul{\mbb Z}),$$ (here $[1]$ is the class in $\Bin^{\le 1}(\mbb Z)$ corresponding to the image of 1 in $\id(\mbb Z)$ under the decomposition of functor $\Bin^{\le 1}$ as $\id\oplus \ul{\bZ}$)
	which indeed becomes an equivalence after tensoring with $\mbb Q$ when $n\gg 0$. The sheaf $\mbf L\Gamma^{n}_{\mbb Z}(R^{\le 1}\pi_{*}\ul{\mbb Z})$ can be identified with the similar cofiber, but with $\mbf L\mc{B}in_X$ replaced by $\mbf L\Gamma_{\mbb Z}$. However, since the latter is the associated graded of the former it is not clear a priori what the natural map is, unless we assume that there is some further splitting of $\mbf L\mc Bin_X$ as $\mbf L\Gamma_{\mbb Z}$.}. In any case, we propose that $\mbf L\Gamma^{n}_{\mbb Z}(R^{\le 1}\pi_{*}\ul{\mbb Z})$ for $n\gg 0$ might as well be simply replaced by the sheaf $\mbf L\mc{B}in_X^{\mr{coaug}}(R^{\le 1}\pi_{*}\ul{\mbb Z})$ since it directly computes the full pushforward $R\pi_{*}\ul{\mbb Z}$. In the case of a simple normal crossings divisor it thus would be nice to give a description of the weight filtration on $R\pi_*\ul{\mbb Z}$ in terms of a suitable explicit model for $\mbf L\mc{B}in^{\mr{coaug}}_X(R^1\pi_{*}\ul{\mbb Z})$, similarly to \cite[(2.8)]{Steenbrink}.

Besides that, in \cite{Achinger_Ogus} Achinger and Ogus in certain cases describe the differential $d_2$ on the $E_2$-page of the Leray-Serre spectral sequence for $\pi\colon X^{\log}\ra X$. It would be interesting to explain their computations through the binomial ring perspective: namely the truncations $R^{[i,i+1]}\pi_*\ul{\mbb Z}\coloneqq \tau_{\le i+1}\tau_{\ge i}R\pi_*\ul{\mbb Z}$, which control $d_2$, are described in terms of the functor $\Bin^{[i,i+1]}\coloneqq \Bin^{\le i+1}\!\!/\Bin^{\le i-1}$ which is a fairly explicit extension of $\Gamma^{i+1}_{\bZ}$ by $\Gamma^i_{\bZ}$, at least when restricted to classical abelian groups. We plan to investigate this relation in the close future. Also, in the sequel we are going to use the "binomial" interpretation of $R\pi_*\ul{\mbb Z}$ to prove some general formality results for $R\pi_*\ul{\mbb Z}$ in certain contexts: as discussed in \Cref{ex:when first truncations splits} this is directly controlled by the deformation of functors from $\Bin$ to $\Gamma_{\mbb Z}$.

\subsection{Acknowledgments} We would like to thank Piotr Achinger, Ahmed Abbes, Ben Antieau, Emma Brink, Dustin Clausen, Ofer Gabber, Sergei O. Ivanov, Ryomei Iwasa, Shizhang Li, Akhil Mathew, Chikara Nakayama, Arthur Ogus, Grisha Papayanov, Sasha Petrov, Artem Prikhodko, Arpon Raksit, Peter Scholze, Vova Sosnilo, Bernard Teissier, Bertrand To\"en, Burt Totaro, Antoine Touz\'e and Dmitry Vaintrob for useful discussions related to this project. We are grateful to Alexandra Utiralova for helpful comments on the earlier versions of the draft.  We would also like to especially thank Vadim Vologodsky, whose interest in this project served as a great motivation for us. Finally we would like to thank the anonymous referee for the valuable suggestions and comments.

The first author is grateful to IHES for providing excellent work conditions during the time that he was working on this project. The second author gratefully thanks the Faculty of Mathematics of the Higher School of Economics, during his employment at which the work on this article was carried out. The work of the second author was supported by the Russian Science Foundation, grant 21-11-00153. The study has been partially supported by the Laboratory Of Algebraic Geometry and its Applications funded within the framework of the HSE University Basic Research Program.

\section{Derived binomial monad}\label{sec:derived binomial monad}

Let $\Ab$ denote the category of abelian groups and let $\CAlg\coloneqq \CAlg(\Ab)$ be the category of classical commutative rings. The forgetful functor $ \CAlg\ra \Ab$ has a left adjoint given by the free commutative ring $\Sym_{\mbb Z}\coloneqq \oplus_{i\ge 0} \Sym_{\mbb Z}^i$. The composition of $\Sym_{\mbb Z}$ with the forgetful functor then defines a monad on $\Ab$ and the natural functor\footnote{Namely, given a commutative ring $A$ we have a natural ring homomorphism $\Sym_{\mbb Z}(A)\ra A$, which is given by the identity map $\id_A$ on $A\simeq \Sym_{\mbb Z}^1(A)$, and then extended by multiplicativity.} $\CAlg \ra \mr{Mod}_{\Sym_{\mbb Z}}(\Ab)$ to the category of modules over this monad is an equivalence: prescribing the commutative algebra structure on an abelian group $M$ is equivalent to giving a map of abelian groups $\Sym_{\mbb Z}(M) \ra M$ that defines the module structure over the above monad. 

The goal of this section is to define another monad on the category of abelian groups, namely the \textit{binomial monad} $\Bin$. Modules over $\Bin$ are given by \textit{binomial rings}: namely, torsion-free commutative rings that are closed under operations given by evaluation of binomial coefficients. We start by giving a careful account for the relation of $\Bin$ to (additively) polynomial maps between abelian groups in \Cref{ssec:polynomial maps}; this allows to define a natural filtered monad $\Bin^{\le *}$ on the category $\Lat$ of finitely generated free abelian groups. In \Cref{ssec:classical binomial algebras} we define the classical binomial monad $\Bin$ on the category $\Ab^{\mr{tf}}$ of torsion free abelian groups and compare modules over it with classical binomial rings, considered e.g. in \cite{Elliot}. In \Cref{ssec:polynomial functors and bla} we recall the necessary results of \cite{Arpon} which  in \Cref{ssec:derived binomial rings} allow us to construct a non-abelian derived extension $\mbf L\Bin$ of $\Bin$ to the whole derived category $\mscr D(\mbb Z)$; here we crucially use that functors $\Bin^{\le n}$ are polynomial. We define \textit{derived binomial rings} as modules over $\mbf L\Bin$ in $\mscr D(\mbb Z)$. Besides that in \Cref{ssec:derived binomial rings} we study some basic properties of derived binomial rings, relation to other derived algebraic contexts (e.g. derived commutative rings, or $\mbb E_\infty$-rings) and finally give some examples, including most importantly the singular cohomology $C^*_\sing(X,\mbb Z)$ of any space $X\in\Spc$ (\Cref{ex:cohomology of topological spaces as a derived binomial algebra}).

\subsection{Polynomial maps}\label{ssec:polynomial maps}

 Let $\Lat\subset \Ab$ be the subcategory of free finitely generated abelian groups (=lattices). Let $L\in \Lat$; then $\Sym_{\mbb Z}(L)$ naturally embeds into the ring $\Map(L^\vee, \mbb Z)$ of \textit{all} $\mbb Z$-valued functions on the dual lattice $L^\vee\coloneqq \Hom_{\mbb Z}(L,\mbb Z)$. However, there are more functions on $L^\vee$ that are polynomial in the following weaker sense.
 
 \begin{defn}[Polynomial maps, \cite{passi2}]\label{def:polynomial maps} Let $A,B$ be abelian groups. By induction on $n$ we can define what it means for a map $f\colon A\ra B$ (as sets) to be \textit{polynomial of degree $\le n$}. Namely
 	\begin{itemize}
 		\item $f$ is polynomial of degree $\le -1$ if $f$ is identically 0;
 		\item $f$ is polynomial of degree $\le n$ if for each $a\in A$ the map $D_af\colon A\ra B$ defined as 
 		$$
 		(D_af)(a')=f(a+a')-f(a')
 		$$
 		is polynomial of degree $\le n-1$.
 		A map $f$ is called \textit{polynomial} if it is polynomial of some degree $\le n$. 
  	\end{itemize}

  We denote $\Map_{\le n}(A,B)$ and $\Map_{\mr{poly}}(A,B)=\colim \Map_{\le n}(A,B)$ the sets of polynomial maps of degree $\le n$ and all polynomial maps from $A$ to $B$ correspondingly.

\end{defn}	

  \begin{warn} To avoid confusion we warn the reader that above notion of polynomial map is different from the notion of \textit{polynomial law} over $\mbb Z$ due to Robi \cite{roby}, which in this case will only give those maps which are represented by polynomials with \textit{integral} coefficients.
\end{warn}

\begin{rem}\label{rem:properties of polynomial functions} The following properties are straightforward to check from the definition:
	\begin{itemize}
		\item $f\colon A\ra B$ belongs to $\Map_{\le n}(A,B)$ if and only if for any $n+1$ elements $a_0,\ldots,a_n\in A$
		$$
		(D_{a_n}\ldots D_{a_0}f)\equiv 0.
	$$ 
		\item If $f\in \Map_{\le n}(A,B)$ and $g\in \Map_{\le m}(A,B)$ then $f+g\in  \Map_{\le \max(n,m)}(A,B)$. In particular each $\Map_{\le n}(A,B)$ is an abelian group.
		\item If $B$ has a ring structure then for $f\in \Map_{\le n}(A,B)$ and $g\in \Map_{\le m}(A,B)$ their product $f\cdot g$ lies in $\Map_{\le n+m}(A,B)$.
		\item If $f\in \Map_{\le n}(A,B)$ and $g\in \Map_{\le m}(B,C)$ then their composition $g\circ f$ lies in $\Map_{\le nm}(A,C)$. This gives a natural map of sets
		\begin{equation}\label{eq:composition of polynomial maps}
			\circ \colon \Map_{\le m}(B,C)\times \Map_{\le n}(A,B) \ra \Map_{\le nm}(A,C).
		\end{equation}	
	\end{itemize}
In particular, if $B=\mbb Z$ with its unique commutative ring structure, we get that $\Map_{\mr{poly}}(A,\mbb Z)$ has a natural structure of commutative ring.
\end{rem}	
  
We will be primarily interested in the case $B=\mbb Z$.
\begin{ex}\label{ex:examples of polynomial maps} Let us explicitly describe $\Map_{\le n}(A,\mbb Z)$ in low degrees:
	\begin{enumerate}
		\item $\Map_{\le 0}(A,B)$ is canonically identified with $B$, via the map $f\mapsto f(0)$. Indeed, the condition $f(a+a')-f(a')=0$ implies that $f(a)=f(0)$ for any $a\in A$. So $f\equiv {f(0)}$ is constant.
		
		\item \label{part:maps of degree less or equal than 1} $\Map_{\le 1}(A,B)$ canonically splits as $B \oplus \Hom_{\mbb Z}(A,B)$ via the map $f\mapsto (f(0), f- {f(0)})$. Indeed, to see that the map is well-defined, note that by part 1 we have that $D_af$ is a constant function for any $a\in A$. We also have $D_af(0)=f(a+0)-f(0)=f(a)-f(0)$. Thus, if we replace $f$ by $f'=f-f(0)$ we will get $D_a(f')$ is a constant function given by $f(a)$. Then we get $f'(a)=(D_af)(a')=f'(a+a')-f'(a')$ for any $a,a'\in A$ and so $f'$ is $\mbb Z$-linear.
	    It is also straightforward to check that $\mbb Z$-linear maps $\Hom_{\mbb Z}(A,B)$ are polynomial of degree less or equal to 1. Indeed, for any $a\in A$ and $\ell \in \Hom_{\mbb Z}(A,B)$ we have that $D_a\ell$ is the constant function given by $\ell (a)$.
	    
	    \item \label{part:from sym to polynomial}In the case $B=\mbb Z$ the embedding $A^\vee\coloneqq \Hom_{\mbb Z}(A,\mbb Z)\subset \Map_{\le 1}(A,\mbb Z)$ extends to an algebra homomorphism 
	    $$
	    \Sym_{\mbb Z} (A^\vee) \ra \Map_{\mr{poly}}(A,\mbb Z)
	    $$
	    that sends a polynomial in $A^\vee$ to the corresponding function on $A$ that it defines. Note that this maps restricts to $\Sym_{\mbb Z}^{\le n} (A^\vee) \ra \Map_{\le n}(A,\mbb Z)$ for every $n\ge 0$. Similarly, if $B=\mbb Q$ we get a map 
	    $$
	    \Sym_{\mbb Z}(A^\vee)\otimes \mbb Q\simeq \Sym_{\mbb Q}(\Hom_{\mbb Z}(A,\mbb Q)) \ra \Map_{\mr{poly}}(A,\mbb Q).
	    $$
	\end{enumerate}
\end{ex}

Recall that the category of affine group schemes over a given ring $R$ is antiequivalent to the category of commutative Hopf $R$-algebras (\cite[Section 2.3]{Jantzen_Representations}). Under this antiequivalence commutative group schemes correspond to those commutative Hopf algebras that are also cocommutative.

\begin{construction}[{Hopf algebra $\mbb Z[A]$}]\label{constr:Hopf algebra structure on Z[A]} Consider the functor $\mbb Z[-]\colon \Set\ra \Ab$ sending a set $I$ to the free abelian group $\mbb Z[I]$. Functor $\mbb Z[-]$ has a natural symmetric structure: one can naturally identify $\mbb Z[S_1\times S_2]$ with $\mbb Z[S_1]\otimes \mbb Z[S_2]$ via the unique $\mbb Z$-linear map that sends $[(s_1,s_2)]$ to $[s_1]\otimes [s_2]$.
	
	Any abelian group $A\in \Ab$ gives a Hopf algebra in $\Set$: the addition map $+\colon A\times A\ra A$ gives multiplication, $0\ra A$ the unit map, the diagonal map $\Delta\colon A\ra A\times A$ gives comultiplication with counit $A\ra 0$ and, finally, the antipode $A\ra A$ is given by multiplication by $-1$. This endows $\mbb Z[A]$ with the natural Hopf algebra structure. More explicitly, the induced multiplication is defined by $[a_1]\cdot [a_2]=[a_1+a_2]$, and the comultiplication is given by $\Delta([a])=[a]\otimes [a]$; we also have $1=[0]$. The augmentation ideal\footnote{By definition this is the kernel of the counit map $\mbb Z[A]\ra \mbb Z\simeq \mbb Z[0]$.} $I_A\subset \mbb Z[A]$ is generated by elements $([a]-1)_{a\in A}$. The group $I_A/I_A^2$ is naturally identified with $A$ via the map $a\mapsto [a]-1$.
	
	Note that in the case $A\in \Lat$ is free and finitely generated, the corresponding affine scheme $\Spec \mbb Z[A]$ is a torus, more precisely $\mbb G_m\otimes_{\mbb Z} A^\vee$. Since the latter is smooth over $\mbb Z$ we can identify $I_A^n/I_A^{n+1}$ with $\Sym^n_{\mbb Z}(I_A/I_A^2)\simeq \Sym_{\mbb Z}^n(A)$. 
	
	As usual, the $I_A$-adic completion $\mbb Z[A]^\wedge_{I_A}$ defines a topological Hopf algebra; the corresponding formal group scheme is identified with the formal completion $\widehat{\mbb G}_m\otimes_{\mbb Z} A^\vee$. The completed associated graded $\prod_{n=0}^\infty I_A^n/I_A^{n+1}$ with respect to $I_A$-adic filtration is identified with $\mbb Z[[A]]$; moreover, since for any Hopf algebra for $x\in I_A$ one has $\Delta(x)=x\otimes 1 + 1\otimes x \pmod{I_A\otimes I_A}$, one has  $\Delta\colon a\mapsto [a]\otimes 1 + 1\otimes [a]\in \Spf[[A]]\cotimes \Spf[[A]]$. In other words, the corresponding associated graded formal group scheme  is identified with the additive group $\widehat{\mbb G}_a \otimes_{\mbb Z} A^\vee$.
\end{construction}	 

	Following \cite{passi} one can naturally describe $\Map_{\mr{poly}}(A,\mbb Z)$ in terms of of the group algebra $\mbb Z[A]$ (or rather its topological dual):
\begin{construction}[{Description of $\Map_{\mr{poly}}(A,B)$ via the group algebra $\mbb Z[A]$}]\label{rem:description of polynomial maps}
By the universal property of $\mbb Z[A]$ we can extend any set-theoretic map $f\colon A\ra B$  to an abelian group homomorphism $\tilde f\colon \mbb Z[A] \ra B$. We claim that $f$ is polynomial of degree $\le n$ if and only if $f$ vanishes on $I_A^{n+1}$. Indeed, for any $x\in A$ one has 
$$
(D_{a_n}\ldots D_{a_0}f)(x)= \tilde f(([a_0]-1)([a_1]-1)\ldots([a_n]-1)[x])
$$ and so the condition $(D_{a_n}\ldots D_{a_0}f)=0$ is equivalent to $\tilde f(([a_0]-1)([a_1]-1)\ldots([a_n]-1)x)=0$ for all $x\in A$. This gives a description of $\Map_{\le n}(A,B)$ as $\Hom_{\mbb Z}(\mbb Z[A]/I_A^{n+1},B)$ and $\Map_{\mr{poly}}(A,B)\simeq \colim_n \Map_{\le n}(A,B)$ as 
$$
	\Map_{\mr{poly}}(A,B)\simeq \colim_n \Hom_{\mbb Z}(\mbb Z[A]/I_A^{n+1},B)
$$
In the case $B=\mbb Z$ this gives natural identifications
$\Map_{\le n}(A,\mbb Z)\simeq (\mbb Z[A]/I_A^{n+1})^\vee$ and 
\begin{equation}\label{eq:formula for polynomial maps through group algebra}
	\Map_{\mr{poly}}(A,\mbb Z)\simeq \colim_n \ (\mbb Z[A]/I_A^{n+1})^\vee.
\end{equation}	
	In other words, $\Map_{\mr{poly}}(A,\mbb Z)$ is naturally identified with distributions\footnote{See for example \cite[Section 7]{Jantzen_Representations}.} at 0 of $\Spec \mbb Z[A]$. In the case $A\in \Lat$, the algebra $\mbb Z[A]$ is smooth over $\mbb Z$ and the Hopf algebra structure on $\mbb Z[A]$ induces a Hopf algebra structure on $\colim_n (\mbb Z[A]/I_A^{n+1})^\vee$ (see \cite[Section 7.7]{Jantzen_Representations}).  The latter, by definition, is the Hopf algebra of distributions $\mr{Dist}(\mbb G_m\otimes_{\mbb Z}A^\vee)$ on the group scheme $\Spec \mbb Z[A]$ (which by \Cref{constr:Hopf algebra structure on Z[A]} is isomorphic to $\mbb G_m\otimes_{\mbb Z}A^\vee$). Equivalently, affine group scheme $\Spec (\Map_{\mr{poly}}(A,\mbb Z))$ is the Cartier dual of the formal group scheme $\widehat{\mbb G}_m\otimes_{\mbb Z} A^\vee$.
\end{construction}

\begin{rem}\label{rem:induced splitting in low degree}
	Note that unit and counit maps give a splitting $\mbb Z[A]\simeq \mbb Z \oplus I_A$, functorially in $A$. Under the isomorphism $A\ra I_A/I_A^2$ sending $a$ to $[a]-1$ this induces a splitting of $\mbb Z[A]/I_A^2$ as $\mbb Z\oplus A$. Then the resulting identification 
	$$
	\Map_{\le 1}(A,B)\simeq \Hom_{\mbb Z}(\mbb Z[A]/I_A^2, B)\simeq B\oplus \Hom_{\mbb Z}(A,B)
	$$
	is exactly the one we constructed in \Cref{ex:examples of polynomial maps}(\ref{part:maps of degree less or equal than 1}); indeed, a map $f$ is being send to a pair $(f(0),f-f(0))$, where  $f-f(0)\in \Hom_{\mbb Z}(A,B)$.
\end{rem}	

\begin{rem}[$\Map_{\mr{poly}}(A,\mbb Z)$ is a symmetric monoidal functor]\label{rem:polynomial maps are symmetric monoidal}
	Let $A\in \Lat$. Since the group scheme $\mbb G_m\otimes_{\mbb Z}A$  is smooth, and $\mbb G_m\otimes_{\mbb Z}(A_1\oplus A_2)\simeq (\mbb G_m\otimes_{\mbb Z} A_1) \times (\mbb G_m\otimes_{\mbb Z} A_2)$, one has (see e.g. \cite[Section 7.4,(2)]{Jantzen_Representations}) $$\mr{Dist}(\mbb G_m\otimes_{\mbb Z}(A_1\oplus A_2))\simeq \mr{Dist}(\mbb G_m\otimes_{\mbb Z}A_1) \otimes \mr{Dist}(\mbb G_m\otimes_{\mbb Z} A_2).$$
	Applying the above to $A_1^\vee,A_2^\vee\in \Lat$ in place of $A_1,A_2$ we get that the natural map 
	$$
	\Map_{\mr{poly}}(A_1,\mbb Z)\otimes \Map_{\mr{poly}}(A_2,\mbb Z) \ra \Map_{\mr{poly}}(A_1\oplus A_2,\mbb Z)
	$$
	induced by multiplication of pull-backs under projections $A_1\leftarrow A_1\oplus A_2 \rightarrow A_2$ is an isomorphism. This shows that the functor $\Map_{\mr{poly}}(-,\mbb Z)\colon \Lat^\op \ra \Ab$ is symmetric monoidal, and one can identify the Hopf algebra structure on $\Map_{\mr{poly}}(A,\mbb Z)$ for $A\in \Lat$ with the one induced by the canonical Hopf algebra structure on $A$ as an object of $\Lat^\op$ via the monoidal structure given by $\oplus$ (where multiplication is given by the (opposite to) the diagonal $\Delta\colon A\ra A\oplus A$, the comultiplication by the addition $A\oplus A\ra A$, et.c.).
\end{rem}	

\begin{rem}[Associated graded of $\Map_{\le *}(A,\mbb Z)$]\label{rem:associated graded for polynomial maps}
	Let $A\in \Lat$. The Hopf algebra $\Map_{\mr{poly}}(A,\mbb Z)$ has a filtration given by $\Map_{\le *}(A,\mbb Z)$. Identifying $\Map_{\le n}(A,\mbb Z)$ with $(\mbb Z[A]/I_A^{n+1})^\vee$ we see that the associated graded Hopf algebra $\gr_* \Map_{\le *}(A,\mbb Z)$ is Cartier dual to the completed associated graded $\prod_{n=0}^\infty I_A^n/I_A^{n+1}$. By \Cref{constr:Hopf algebra structure on Z[A]} the corresponding formal group scheme is identified with $\widehat{\mbb G}_a\otimes_{\mbb Z}A^\vee$, and the Cartier dual is given by $\Spec(\mr{Dist}(\mbb G_a \otimes_{\mbb Z}A^\vee))\simeq \Spec(\mr{Dist}(\mbb G_a))\otimes_{\mbb Z} A$. The group scheme $\Spec(\mr{Dist}(\mbb G_a))$ is often denoted $\mbb G_a^\sharp$. The Hopf algebra $\mc O(\mbb G_a^\sharp)$ is given by the free divided power algebra $\Gamma^*_{\mathbb Z}(\mbb Z)=\mbb Z[\frac{x^n}{n!}]_{n\ge 1}$, and more generally there is a natural isomorphism $\mc O(\mbb G_a^\sharp\otimes A)\simeq \Gamma_{\mathbb Z}^*(A^\vee)$. 
	
	Summarizing, we get an isomorphism of Hopf algebras 
	$$
	\gr_*\Map_{\le *}(A,\mbb Z)\simeq \Gamma_{\mathbb Z}^*(A^\vee).
	$$
\end{rem}

\begin{rem}[Map $\Sym_{\mbb Z}(A^\vee)\ra \Map_{\mr{poly}}(A,\mbb Z)$]\label{rem:map from Sym to polynomial maps} By \Cref{ex:examples of polynomial maps}(\ref{part:from sym to polynomial}) we have a natural filtered map $\Sym^{\le *}_{\mbb Z}(A^\vee) \ra \Map_{\le *}(A,\mbb Z)$. Since it is functorial in $A$, it is a Hopf algebra homomorphism (indeed, both Hopf algebra structures is induced by the natural Hopf algebra structure on $A^\vee$ as an object of $\Ab^\op$ with the monoidal structure given by $\oplus$). By passing to the associated graded we get a Hopf algebra map 
	$$
	\Sym^*_{\mbb Z}(A^\vee) \ra \Gamma^*_{\mbb Z}(A^\vee).
	$$
	By the universal property of $\Sym^*_{\mbb Z}$ the above map is uniquely extended from its restriction to $A^\vee\simeq \Sym^1_{\mbb Z} (A^\vee)$, which maps isomorphically to $A^\vee\simeq \Gamma^1_{\mbb Z}(A^\vee)$. In degree $n$, the resulting map
	$
	\Sym^n_{\mbb Z}(A^\vee) \ra \Gamma^n_{\mbb Z}(A^\vee)
	$ is identified with the \textit{norm map} $\Sym^n_{\mbb Z}(A^\vee)\simeq (A^\vee \otimes \ldots \otimes A^\vee)_{\Sigma_n}\ra (A^\vee \otimes \ldots \otimes A^\vee)^{\Sigma_n}\simeq \Gamma^n_{\mbb Z}(A^\vee)$ from $\Sigma_n$-coinvariants to $\Sigma_n$-invariants: namely one lifts an element $x\in ((A^\vee)^{\otimes n})_{\Sigma_n}$ to an element $\widetilde x\in (A^\vee)^{\otimes n}$ and maps it to the average $\sum_{\sigma\in \Sigma_n} \sigma(\widetilde x)\in ((A^\vee)^{\otimes n})^{\Sigma_n}$. In the case $A^\vee\simeq \mbb Z$ the above map is given by the natural embedding $\mbb Z[x]\ra \mbb Z[\frac{x^n}{n!}]_{n\ge 1}$.
	
\end{rem}	

\begin{lem}\label{lem:over Q polynomial functions are actually polynomials}
	Let $A\in \Lat$. The natural map 
	$$
	 \Sym_{\mbb Z}(A^\vee)\otimes \mbb Q\simeq \Sym_{\mbb Q}(\Hom_{\mbb Z}(A,\mbb Q)) \ra  \Map_{\mr{poly}}(A,\mbb Q)
	$$
	from \Cref{ex:examples of polynomial maps}(\ref{part:from sym to polynomial}) is an isomorphism. In other words, any $\mbb Q$-valued polynomial function in the sense of \Cref{def:polynomial maps} is a polynomial in linear functions.
\end{lem}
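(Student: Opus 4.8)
The plan is to deduce the lemma by passing to the associated graded of the polynomial-degree filtration, reducing the whole statement to the single fact that the norm map becomes invertible once all $n!$ are inverted. Both sides carry exhaustive filtrations — the source $\Sym_{\mbb Z}(A^\vee)\otimes\mbb Q$ by $\Sym^{\le *}_{\mbb Q}(A^\vee\otimes\mbb Q)$ and the target by $\Map_{\le *}(A,\mbb Q)$ — and by \Cref{rem:map from Sym to polynomial maps} the comparison map respects them. A filtered map between exhaustively filtered abelian groups that induces an isomorphism on associated graded is itself an isomorphism: one argues by induction on the filtration degree, applying the five lemma to the short exact sequences $0\to F_{\le n-1}\to F_{\le n}\to \gr^n\to 0$ (with base case $\Sym^{\le 0}_{\mbb Q}\simeq\mbb Q\xrightarrow{\sim}\mbb Q\simeq\Map_{\le 0}(A,\mbb Q)$), and then passes to the colimit over $n$. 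Hence it suffices to treat the associated graded.

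First I would check that tensoring with $\mbb Q$ commutes with everything in sight. For $A\in\Lat$ the group scheme $\Spec\mbb Z[A]$ is smooth, so each $\mbb Z[A]/I_A^{n+1}$ is a finitely generated free $\mbb Z$-module (\Cref{constr:Hopf algebra structure on Z[A]}); dualizing, $\Map_{\le n}(A,\mbb Z)\simeq(\mbb Z[A]/I_A^{n+1})^\vee$ is finitely generated free, and therefore $\Map_{\le n}(A,\mbb Q)\simeq\Hom_{\mbb Z}(\mbb Z[A]/I_A^{n+1},\mbb Q)\simeq\Map_{\le n}(A,\mbb Z)\otimes\mbb Q$ (\Cref{rem:description of polynomial maps}). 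Since $\mbb Q$ is flat over $\mbb Z$, tensoring the short exact sequences defining the filtration stays exact, so $\gr_*\Map_{\le *}(A,\mbb Q)\simeq\gr_*\Map_{\le *}(A,\mbb Z)\otimes\mbb Q\simeq\Gamma^*_{\mbb Z}(A^\vee)\otimes\mbb Q\simeq\Gamma^*_{\mbb Q}(A^\vee\otimes\mbb Q)$ by \Cref{rem:associated graded for polynomial maps}, while on the source the associated graded is $\Sym^*_{\mbb Q}(A^\vee\otimes\mbb Q)$.

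By \Cref{rem:map from Sym to polynomial maps}, under these identifications the degree-$n$ piece of the associated graded map is exactly the norm map $\Sym^n_{\mbb Q}(V)\to\Gamma^n_{\mbb Q}(V)$ for $V=A^\vee\otimes\mbb Q$, sending the class of a lift $\widetilde x\in V^{\otimes n}$ in the $\Sigma_n$-coinvariants to $\sum_{\sigma\in\Sigma_n}\sigma(\widetilde x)$ in the $\Sigma_n$-invariants. Over $\mbb Q$ this is an isomorphism: the averaging idempotent $e=\tfrac{1}{n!}\sum_{\sigma\in\Sigma_n}\sigma$ induces a canonical isomorphism $V^{\otimes n}_{\Sigma_n}\xrightarrow{\sim}(V^{\otimes n})^{\Sigma_n}$ (Maschke's theorem, $n!\in\mbb Q^\times$), and the norm map equals $n!$ times this isomorphism, hence is invertible. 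This verifies the isomorphism on associated graded and, by the filtered-isomorphism principle above, proves the lemma.

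The only genuine content is thus the invertibility of the norm map in characteristic zero; all the bookkeeping — that the two filtrations correspond and that the comparison map is filtered — has already been arranged in \Cref{rem:map from Sym to polynomial maps} and \Cref{rem:associated graded for polynomial maps}. The step that truly uses the hypothesis $A\in\Lat$ (rather than an arbitrary abelian group) is the freeness of $\mbb Z[A]/I_A^{n+1}$, which is what makes $\Map_{\le n}(A,\mbb Q)\simeq\Map_{\le n}(A,\mbb Z)\otimes\mbb Q$ hold and lets the associated-graded computation pass through $-\otimes\mbb Q$ cleanly; I expect this compatibility check, rather than the norm-map computation, to be the only place demanding care.
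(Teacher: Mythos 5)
Your proof is correct and follows essentially the same route as the paper's: both observe the map is filtered with exhaustive filtrations, identify $\Map_{\le n}(A,\mbb Q)\simeq\Map_{\le n}(A,\mbb Z)\otimes\mbb Q$ using finite generation of $\mbb Z[A]/I_A^{n+1}$, and reduce via the associated graded to the invertibility over $\mbb Q$ of the norm map $\Sym^n\to\Gamma^n$, inverted by averaging. The only cosmetic difference is that you spell out the filtered-isomorphism principle and invoke freeness (rather than mere finite generation) of $\mbb Z[A]/I_A^{n+1}$, neither of which changes the substance.
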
	

\begin{proof}
	By construction, the above map is filtered: $\Sym_{\mbb Q}^{\le n}(\Hom_{\mbb Z}(A,\mbb Q))$ lands in $\Map_{\le n}(A,\mbb Q)$. Both filtrations are exhaustive, so it would be enough to show that the associated graded map is an isomorphism. Note that $$\Map_{\le n}(A,\mbb Q)\simeq \Hom_{\mbb Z}(\mbb Z[A]/I_A^{n+1},\mbb Q)\simeq (\mbb Z[A]/I_A^{n+1})^\vee\otimes \mbb Q \simeq \Map_{\le n}(A,\mbb Z)\otimes \mbb Q,$$
	since $\mbb Z[A]/I_A^{n+1}$ is a finitely generated abelian group. By taking colimit we also get an identification $\Map_{\mr{poly}}(A,\mbb Q)\simeq \Map_{\mr{poly}}(A,\mbb Z)\otimes_{\mbb Z} \mbb Q$. By similar reasoning $\Hom_{\mbb Z}(A,\mbb Q)\simeq A^\vee\otimes \mbb Q$, and so the map in question is identified with the map $\Sym_{\mbb Z}(A^\vee)\ra \Map_{\mr{poly}}(A,\mbb Z)$ tensor $\mbb Q$. The map on the associated graded is then identified with 
	$$
	\Sym^*_{\mbb Z}(A^\vee)\otimes\mbb Q \ra \Gamma^*_{\mbb Z}(A^\vee)\otimes {\mbb Q},
	$$
	which is an isomorphism (in each degree $n$ the inverse to the norm map $((A^\vee)^{\otimes n})_{\Sigma_n}\ra ((A^\vee)^{\otimes n})^{\Sigma_n}$ is given by the natural projection $((A^\vee)^{\otimes n})^{\Sigma_n}\ra ((A^\vee)^{\otimes n})_{\Sigma_n}$ divided by $n!=|\Sigma_n|$).
\end{proof}	

\begin{rem}[$\Map_{\mr{poly}}(A,\mbb Z)$ as integer-valued polynomials]\label{rem:polynomial maps as integer-valued polynomials}
	By \Cref{lem:over Q polynomial functions are actually polynomials} we get that any polynomial $\mbb Q$-valued function on $A$ is a polynomial in $\Hom_{\mbb Z}(A,\mbb Q)$ (=linear functions) with rational coefficients. This way, $\Map_{\mr{poly}}(A,\mbb Z)\subset \Map_{\mr{poly}}(A,\mbb Q)$ is identified with the subring of those $\mbb Q$-polynomials in $\Hom_{\mbb Z}(A,\mbb Q)$ that take integer values on $A$. 
	
	Let us specialize to the case $A=\mbb Z$. The explicit description of integer-valued rational polynomials in one variable has been established more than a century ago by Polya in \cite{Polya}. Namely, let $x\in \Hom_{\mbb Z}(\mbb Z,\mbb Z)\simeq \mbb Z^\vee$ be the identity map. Then $\Map_{\mr{poly}}(\mbb Z,\mbb Z)$ is identified with the subring $\Sym_{\mbb Z}(\mbb Z^\vee)\otimes \mbb Q\simeq \mbb Q[x]$ generated over $\mbb Z$ by "binomial coefficients" $${x\choose n}\coloneqq \frac{x(x-1)\ldots (x-n+1)}{n!}\in \mbb Q[x].$$
	The corresponding function $\mbb Z\ra \mbb Z$ that it defines is given by the evaluation of the above binomial coefficient.
	
	More generally, $\Map_{\mr{poly}}(A,\mbb Z)\subset \Map_{\mr{poly}}(A,\mbb Q)\simeq \Sym_{\mbb Z}(A^\vee)\otimes \mbb Q$ is identified with the $\mbb Z$-subalgebra generated by the binomial coefficients ${[\ell]\choose n}\in \Sym_{\mbb Z}(A^\vee)\otimes \mbb Q$ where $[\ell]\in \Sym^1_{\mbb Z}(A^\vee)$ is the element which corresponds to a linear function $\ell\in A^\vee$ via the isomorphism $A^\vee\simeq \Sym^1_{\mbb Z}(A^\vee)$. 
\end{rem}	

\begin{rem}[Comparison with $\mr{Dist}(\mbb G_m)$]
Let us compare the identification $\Map_{\mr{poly}}(\mbb Z,\mbb Z)\simeq \mbb Z[\binom{x}{n}]_{n\ge 1}$ with description as algebra of distributions on $\mbb G_m$. Let us identify $\mbb Z[\mbb Z]\simeq \mbb Z[t,t^{-1}]$, where $[1]\mapsto t$; this way the augmentation ideal is identified with $(t-1)\subset \mbb Z[t,t^{-1}]$. Then, following \Cref{rem:induced splitting in low degree} and splitting $\mbb Z[t,t^{-1}]/(t-1)^2$ as $\mbb Z\cdot 1\oplus \mbb Z\cdot (t-1)$, we see that the identity function $x\in \Hom_{\mbb Z}(\mbb Z,\mbb Z)$ corresponds to a functional in $(\mbb Z[t,t^{-1}]/(t-1)^2)^\vee$ that is 0 on 1 and is equal to 1 on $t-1$. Then (see e.g. \cite[Section 7.8]{Jantzen_Representations}, there $x$ is denoted by $\delta_1$) one can identify $\binom{x}{n}$ with the functional in $(\mbb Z[t,t^{-1}]/(t-1)^{n+1})^\vee$ that is zero on $1,t-1,\ldots, (t-1)^{n-1}$ and is 1 on $(t-1)^n$. So, one can think of $\{\binom{x}{n}\}_{n\ge 0}$  (here we put $\binom{x}{0}\coloneqq 1$) as a dual basis to $\{(t-1)^n\}_{n\ge 0}$ (the latter of course is not a basis of $\mbb Z[t,t^{-1}]$, but is a topological basis of the completion ${\mbb Z[t,t^{-1}]}^\wedge_{(t-1)}$).
\end{rem}

\begin{rem}[{Hopf algebra structure on $\mbb Z[\binom{x}{n}]_{n\ge 1}$}]\label{rem:explicit Hopf algebra structure} Let us describe the Hopf algebra structure on $\Map_{\mr{poly}}(\mbb Z,\mbb Z)$ in terms of identification with $\mbb Z[\binom{x}{n}]_{n\ge 1}$. The comultiplication is induced by the addition map $\mbb Z\oplus \mbb Z\ra \mbb Z$, which dually corresponds to the diagonal embedding $\mbb Z^{\vee}\ra \mbb Z^{\vee}\oplus \mbb Z^\vee.$ Thus $x=\id_{\mbb Z}\in \mbb Z^\vee$ is being sent to $(x,x)$. This means $\Delta\colon \mbb Z[\tbinom{x}{n}]_{n\ge 1} \ra \mbb Z[\tbinom{x}{n}]_{n\ge 1}\otimes \mbb Z[\tbinom{x}{n}]_{n\ge 1}$ sends $x$ to $x\otimes 1 + 1\otimes x$; it then extends uniquely to $\mbb Z[\tbinom{x}{n}]_{n\ge 1}$ as: 
	$$
	\Delta\left(\binom{x}{n}\right )= \sum_{i+j=n, i,j\ge 0} \binom{x}{i}\otimes \binom{x}{j}
	$$
	The natural map $\Sym_{\mbb Z}((\mbb Z)^\vee)\ra \Map_{\mr{poly}}(\mbb Z,\mbb Z)$ from \Cref{ex:examples of polynomial maps}(3) is a map of Hopf algebras that is given explicitly as the embedding
	$$
	\mbb Z[x] \ra \mbb Z[\tbinom{x}{n}]_{n\ge 1}.
	$$
	\end{rem}
\begin{notation}
		Let $\mbb H\coloneqq \Spec (\Map_{\mr{poly}}(\mbb Z,\mbb Z)) \simeq \mr{Dist}(\mbb G_m)$; $\mbb H$ is called the "Hilbert additive group scheme" in \cite{Toen_geo}. Note that by the discussion in \Cref{rem:description of polynomial maps} for any $A\in \Lat$ one has a natural identification $\Spec (\Map_{\mr{poly}}(A,\mbb Z)) \simeq  \mr{Dist}(\mbb G_m\otimes_{\mbb Z} A)\simeq \mbb H\otimes_{\mbb Z}A^\vee$.
	\end{notation}		
	\begin{rem}\label{rem:the map G_a --> H modulo p}
	The homomorphism $\Sym_{\mbb Z}((\mbb Z)^\vee)\ra \Map_{\mr{poly}}(\mbb Z,\mbb Z)$ induces a map of affine group schemes $\mbb H\ra \mbb G_a$ over $\Spec \mbb Z$. By (the proof of) \Cref{lem:over Q polynomial functions are actually polynomials} we get that it is an isomorphism over the generic fiber $\Spec \mbb Q\subset \Spec \mbb Z$. However, modulo each prime $p$ this map is very far from being one. Indeed, over $\mbb F_p$ one has an identification $\widehat {\mbb G}_m\simeq \colim_n \mu_{p^n}$, so $\mbb H_{\mbb F_p}\simeq \ul{\mbb Z_p}\coloneqq \lim_n \ul{\mbb Z/p^n}$, and the map $\mbb H_{\mbb F_p}\ra \mbb G_{a,\mbb F_p}$ is given by the composition 
	$$
	\ul{\mbb Z_p} \surj \ul{\mbb Z/p} \ra \mbb G_{a,\mbb F_p},
	$$
	where the first map is the natural projection, and the second is the embedding of the constant group scheme given by $\mbb F_p$-points of $\mbb G_{a,\mbb F_p}$.
\end{rem}	

Let us finally record the last piece of data that we will need to define the binomial monad:
\begin{construction}[Canonical map  $\ev_A\colon A\mapsto \Map_{\le n}(A,\mbb Z)^\vee$]
For any $a\in A$ the association $f\mapsto f(a)$ defines a linear function on $\Map_{\le n}(A,\mbb Z)$. We claim that the corresponding map of sets $A\ra \Map_{\le n}(A,\mbb Z)^\vee$ is in fact polynomial of degree $\le n$. Indeed, via the identification  $\Map_{\le n}(A,\mbb Z)\simeq (\mbb Z[A]/I_A^{n+1})^\vee$ in \Cref{rem:description of polynomial maps} the $\mbb Z$-linearization $\mbb Z[A]\ra \Map_{\le n}(A,\mbb Z)^\vee$ factors through the natural linear evaluation map $\ev\colon \mbb Z[A]/I_A^{n+1}\ra ((\mbb Z[A]/I_A^{n+1})^\vee)^\vee$.

	 This gives a canonical class $\ev_A\in \Map_{\le n}(A,\Map_{\le n}(A,\mbb Z)^\vee)$. By \Cref{rem:properties of polynomial functions}(3) there is a natural composition map 
	 $$
	 \Map_{\le m}(\Map_{\le n}(A,\mbb Z)^\vee,\mbb Z) \times  \Map_{\le n}(A, \Map_{\le n}(A,\mbb Z)^\vee)\ra \Map_{\le nm}(A,\mbb Z),
	 $$ 
	 from which, plugging $\ev_A$ in the second variable we get natural in $A$ maps 
	$$
	 \Map_{\le m}(\Map_{\le n}(A,\mbb Z)^\vee,\mbb Z) \ra \Map_{\le nm}(A,\mbb Z).
	 $$
 Note that if we plug $A^\vee$ the left and right hand sides become more symmetric:
	 \begin{equation}\label{eq:monad structure}
 \Map_{\le m}(\Map_{\le n}(A^\vee,\mbb Z)^\vee,\mbb Z) \ra \Map_{\le nm}(A^\vee,\mbb Z).
	 \end{equation}

\end{construction}	

\subsection{Classical binomial monad}\label{ssec:classical binomial algebras}

We now can define free classical binomial rings.

\begin{defn}\label{def:free binomial algebra}
	Let $A\in \Lat$. We define the \textit{free binomial ring} $\Bin(A)$ to be $\Map_{\mr{poly}}(A^\vee,\mbb Z)$. The filtration of $\Bin(A)$ by $\Bin^{\le n}(A)\coloneqq \Map_{\le n}(A^\vee,\mbb Z)$ endows it with the filtered algebra structure. 
\end{defn}

We will denote by $\Bin(A)$ both the corresponding algebra and the underlying abelian group, hopefully not leading to any confusion.
\begin{rem}\label{rem:properties of Bin} Let $A\in \Lat$. Let us resummarize some properties of polynomial maps that we have established in previous section in terms of $\Bin$:
	
	\begin{enumerate}\item The filtration on $\Bin(A)$ given by $\Bin^{\le *}(A)$ has associated graded $\Gamma^*_{\mbb Z}(A)$, functorially in $A$ (\Cref{rem:associated graded for polynomial maps}). 
		
		\item \label{part:splitting off Bin0} One has $\Bin(0)\simeq \mbb Z$. Maps $0\ra A\ra 0$ split $\Bin(A)$ as $\mbb Z\oplus \ol{\Bin}(A)$ (the latter is the definition of $\ol{\Bin}(A)$). In particular, $\Bin^{\le 1}(A)$ splits as $\mbb Z\oplus A$ (see \Cref{rem:induced splitting in low degree}).
		
		\item There is a natural in $A$ filtered ring homomorphism $\Sym_{\mbb Z}^{\le *}(A) \ra \Bin^{\le *}(A)$ (see \Cref{ex:examples of polynomial maps}(3)). The induced map 
		$
		\Sym_{\mbb Z}^{i}(A) \ra \Gamma_{\mbb Z}^{i}(A) 
		$
		on the $i$-th graded piece is the norm map $(A^{\otimes i})_{\Sigma_i}\ra (A^{\otimes i})^{\Sigma_i}$ (see \Cref{rem:map from Sym to polynomial maps}). 
		
		\item \label{part:explicit descr of Bin A} $\Bin(A)$ naturally embeds into $\Sym_{\mbb Z}(A)\otimes \mbb Q$, and the image can be identified with the $\mbb Z$-subalgebra of $\Sym_{\mbb Z}(A)\otimes \mbb Q$ generated by binomial coefficients 
		$$
		\binom{[a]}{n}=\frac{[a]([a]-1)\ldots([a]-n+1)}{n!}\in \Sym_{\mbb Z}(A)\otimes \mbb Q,
		$$
		where $[a]$ is the image of $a\in A$ under the identification $A\simeq \Sym^1_{\mbb Z}(A)$ (see \Cref{rem:polynomial maps as integer-valued polynomials}).
		
		\item \label{part:monodicity}Functor $\Bin$ is symmetric monoidal: the natural filtered ring homomorphism $$\Bin(A_1)\otimes\Bin(A_2)\ra \Bin(A_1\oplus A_2)$$ is an isomorphism (see \Cref{rem:polynomial maps are symmetric monoidal}).
	\end{enumerate}	

\end{rem}

Unfortunately, functor $A\mapsto \Bin(A)$ does not preserve $\Lat$, so there is no way it would define a monad on it. Nevertheless, if $A\in \Lat$ then $\Bin^{\le n}(A)$ is still a free finitely generated abelian group, and \Cref{eq:monad structure} provides us with the natural in $A$ composition maps
$$
\Bin^{\le m}(\Bin^{\le n}(A)) \ra \Bin^{\le nm}(A).
$$
Such data gives rise to a \textit{filtered monad} in the sense of \cite{Arpon}.

Let $\mathbf{\mbb Z}^\times_{\ge 0}$ denote the symmetric monoidal category corresponding to the poset of non-negative integers endowed with the monoidal structure given by multiplication. Given an $\infty$-category $\mathscr C$ we endow the category of endofunctors $\End(\mathscr C)$ with the monoidal structure given by composition (see \cite[Section 4.7]{Lur_HA}).
\begin{defn}[{\cite[Definition 4.1.2]{Arpon}}]
	A \textit{filtered monad} $\mr T^{\le *}$ on an $\infty$-category $\mathscr C$ is the data of a lax-monoidal functor 
	$$
	\mr T^{\le *}\colon \mbb Z^\times_{\ge 0} \ra \End(\mathscr C).
	$$
	We denote by $\mr T^{\le n}\colon \mathscr C\ra \mathscr C$ the endofunctor corresponding to $n\in \mbb Z^\times_{\ge 0}$.
\end{defn}

Note that a lax-monoidal functor between monoidal categories in the classical sense gives rise to a lax-monoidal functor in the $\infty$-categorical setting between the corresponding monoidal $\infty$-categories.

\begin{ex}\label{ex:filtered monads} The following give natural examples of filtered monads on $\Lat$:
	\begin{enumerate}
		\item The natural transformations 
		$$
		\Bin^{\le n}\circ \Bin^{\le m} \ra \Bin^{\le nm}
		$$
	given by \Cref{eq:monad structure} for any $n,m\in \mbb Z^\times_{\ge 0}$ and $A\in \Lat$ endow the association $\Bin^{\le *}\colon n\ra \Bin^{\le n}\in \End(\Lat)$ with the structure of a filtered monad.
	\item \label{part:filtered monad Sym} The natural transformation $\Sym_{\mbb Z}\circ \Sym_{\mbb Z}\ra \Sym_{\mbb Z}$ given by monad structure on $\Sym_{\mbb Z}$ restricts to transformations
	$$
	\Sym^{\le n }\circ \Sym^{\le m}\ra  \Sym^{\le nm}
	$$
	that endow the association $\Sym^{\le *}\colon n\ra \Sym^{\le n}\in \End(\Lat)$ with the structure of a filtered monad.
	\item \label{part:morphism of filtered monads} The natural transformation $\Sym^{\le n}(A)\ra \Bin^{\le n}(A)$ (given by \Cref{ex:examples of polynomial maps}(\ref{part:from sym to polynomial}) for $A^\vee$) agrees with the composition and defines a map of filtered monads
	$$
	\Sym^{\le *} \ra \Bin^{\le *}.
	$$
	\end{enumerate}
\end{ex}

We will now show how filtered monad $\Bin^{\le *}$ defines a classical binomial monad $\Bin$ (considered, for example, in \cite{Elliot}) on the category $\Ab^{\mr{tf}}$ of torsion-free abelian groups. Note that one can identify the category $\Ab^{\mr{tf}}$ of torsion-free abelian groups with the ind-completion $\Ind(\Lat)$ of $\Lat$.

\begin{construction}[Binomial monad on $\Ab^{\mr{tf}}$]\label{constr:classical binomial monad} By the universal property of $\Ind(\Lat)$, for any $\infty$-category $\mathscr D$ that has filtered colimits, the restriction to $\Lat\subset \Ind(\Lat)\simeq \Ab^{\mr{tf}}$ induces an equivalence of $\infty$-categories
\begin{equation}\label{eq:universal property of Ind-completion}
\Fun_{\omega}(\Ab^{\mr{tf}}, \mathscr D) \simeq \Fun\! \left(\Lat, \mathscr D\right),
\end{equation}
where $\Fun_{\omega}$ denotes the category of filtered colimit preserving functors. The inverse functor of this equivalence is given by left Kan extension along $\Lat\subset \Ab^{\mr{tf}}$. In particular, given $F\in \Fun\! \left(\Lat, \mathscr D\right)$, the value of the corresponding functor $F\in \Fun_{\omega}(\Ab^{\mr{tf}}, \mathscr D)$ on a torsion-free abelian group $M$ can be computed as $\colim_i F(M_i)$ where $M=\colim M_i$ is any presentation of $M$ as a filtered colimit of $M_i\in \Lat$. By plugging $\mathscr D\coloneqq \Ind(\Lat)$, the equivalence (\ref{eq:universal property of Ind-completion}) restricts to a monoidal equivalence
 $$
\End_{\omega}^{\Lat}(\Ab^{\mr{tf}}) \simeq \End(\Lat),
 $$ 
 where $\End_{\omega}^{\Lat}$ denotes the category of filtered colimit preserving endofunctors that also preserve the subcategory $\Lat\subset \Ab^{\mr{tf}}$.
 In particular, a filtered monad $\Bin^{\le *}\colon \mbb Z^\times_{\ge 0}\ra \End(\Lat)$ gives rise to a filtered monad on $\Ab^{\mr{tf}}$, which we will continue to denote the same way. The corresponding endofunctors $\Bin^{\le n}\colon \Ab^{\mr{tf}}\ra \Ab^{\mr{tf}}$ commute with filtered colimits and this way $$\Bin\coloneqq \colim_{n\in \mbb Z^\times_{\ge 0}} \Bin^{\le n}\colon \Ab^{\mr{tf}}\ra\Ab^{\mr{tf}}$$ has a natural monad structure. Namely, the multiplication $\Bin\circ \Bin\ra \Bin$ is induced by the transformation
 $$
\colim_{n\in \mbb Z^\times_{\ge 0}} \Bin^{\le n}\circ \colim_{m\in \mbb Z^\times_{\ge 0}} \Bin^{\le m} \ra \colim_{m,n\in \mbb Z^\times_{\ge 0}} \Bin^{\le nm} \simeq \colim_{s\in  \mbb Z^\times_{\ge 0}} \Bin^{\le s}.
 $$
\end{construction}
\begin{rem}\label{rem:}
	Since for any $A\in \Lat$ the value of $\Bin(A)$ (defined as in \Cref{constr:classical binomial monad}) is obtained as a colimit of $\Bin^{\le n}(A)$ over $n$, one sees that it coincides with (the underlying abelian group) of  $\Bin(A)$ defined as in \Cref{def:free binomial algebra}. For general $A\in \Ab^{\mr{tf}}$ one can compute $\Bin(A)$ as $\colim_i \Bin(A_i)$ of all submodules $A_i\subset A$ such that $A_i\in \Lat$.
\end{rem}

\begin{rem}\label{rem:map from Sym to Bin}
	Note that $\Sym_{\mbb Z}\colon \Ab\ra \Ab$ restricts to a monad on $\Ab^{\mr{tf}}$ and that it is filtered colimit commuting: indeed, $\Sym_{\mbb Z}\simeq \oplus_{i=0}^\infty \Sym^i_{\mbb Z}$, where $\Sym^i_{\mbb Z}(A)=(A^{\otimes i})_{\Sigma_i}$ and both tensor power and coinvariants commute with filtered colimits. Thus $\Sym_{\mbb Z}=\colim_i \Sym_{\mbb Z}^{\le i}\colon \Ab^{\mr{tf}}\ra \Ab^{\mr{tf}}$ is obtained as left Kan extension from $\Lat$ and is realized as colimit of the filtered monad $\Sym^{\le *}_{\mbb Z}$ (\Cref{ex:filtered monads}(\ref{part:filtered monad Sym})) on $\Lat$. In particular the transformation $\Sym_{\mbb Z}^{\le *}\ra \Bin^{\le *}$ from \Cref{ex:filtered monads}(\ref{part:morphism of filtered monads}) induces a map
	$
	\Sym_{\mbb Z}\ra \Bin_{\mbb Z}
	$
	 of monads on $\Ab^{\mr{tf}}$.
\end{rem}	

\begin{rem}\label{rem:associated graded for Bin}
	Functor $\Gamma_{\mbb Z}\coloneqq \oplus_i \Gamma_{\mbb Z}^i \colon \Ab \ra \Ab$ with $\Gamma_{\mbb Z}^i(A)=(A^{\otimes i})^{\Sigma_i}$ also preserves the subcategory $\Ab^{\mr{tf}}$ and commutes with filtered colimits: indeed, $\Sigma_i$-invariants are computed by finite limit, which commutes with filtered colimits. Thus the restriction of $\Gamma_{\mbb Z}$ to $\Ab^{\mr{tf}}$ is also left Kan extended from $\Lat$. 
	
	Functor $\Bin$ comes with a natural exhaustive filtration $\Bin^{\le *}$. When restricted to $\Lat$, the associated graded $\oplus_i \gr_i\Bin_{|\Lat}$ is naturally identified with $\oplus_{i} (\Gamma_{\mbb Z}^i)_{|\Lat}$ (\Cref{rem:properties of Bin}), thus left Kan extending with get a similar identification $\oplus_i \gr_i\Bin\simeq \oplus_{i} \Gamma_{\mbb Z}^i$ as endofunctors of $\Ab^{\mr{tf}}$.
\end{rem}	
\begin{defn}\label{def:binomial algebra}
    A \textit{classical binomial ring} $A$ is a module over the binomial monad $\Bin$ in $\Ab^{\mr{tf}}$. In other words, it is a torsion-free abelian group $A$ together with a map $\Bin(A)\ra A$ that is compatible with the monad structure on $\Bin$. We will denote by $\BinAlg$ the category of binomial rings.
\end{defn}

\begin{rem}\label{rem: binomial rings are torsion free}More generally, one could define a monad $\Bin_{\Ab}$ on the whole category $\Ab$ of (not necessarily torsion free) abelian groups by left Kan extension along $\Lat\subset \Ab$, as in \Cref{constr:classical binomial monad}.  However, it turns out that any $\Bin_{\Ab}$-algebra in $\Ab$ will automatically be torsion-free as an abelian group, and a structure of $\Bin_{\Ab}$-algebra in $\Ab$ is equivalent to the structure of a $\Bin$-module in $\Ab^{\mr{tf}}$ (note that the restriction of $\Bin_{\Ab}$ to $\Ab^{\mr{tf}}$ agrees with $\Bin$ because both commute with filtered colimits).

Indeed, the category $\Ab$ also satisfies a universal property relative to $\Lat$: namely $\Ab$ is the so-called 1-sifted completion $\sInd(\Lat)$ (see \cite[Section 5.1.1, Example 5.3.1(3)]{Cesn-Sch}). It then follows\footnote{See \cite[Section 5.1.4]{Cesn-Sch}; by construction, in our case $\mathbf L\Bin(A)$ is the \textit{animation} of $\Bin_{\Ab}(A)$.} that $\Bin_{\Ab}$ will agree with the 0-th cohomology group of the derived binomial monad $\mathbf L\Bin$ (to be defined later in \Cref{ssec:polynomial functors and bla}); namely, $\Bin_{\Ab}(A)\simeq H^0(\mathbf L\Bin(A))$ for any $A\in \Ab$. By \Cref{rem:bin for classical abelian group} we see that the natural surjection $A\surj A^{\mr{tf}}$ to its maximal torsion-free quotient induces an isomorphism $\Bin_{\Ab}(A)\simeq \Bin_{\Ab}(A^{\mr{tf}})$, and then, by an argument analogous to \Cref{cor:homology of binomial ring}, it follows that any $\Bin_{\Ab}$-algebra $B$ must be torsion free. 
\end{rem}

\begin{rem}
	The map of monads $
	\Sym_{\mbb Z}\ra \Bin
	$ induces a functor $\BinAlg \ra \CAlg$; in particular any classical binomial ring has an underlying classical ring structure.
\end{rem}	

\begin{lem}\label{lem:map from Sym to Bin in the case of a vector space}
	Let $V\in \Vect_{\mbb Q}\subset \Ab^{\mr{tf}}$ be a $\mbb Q$-vector space. Then the map 
		$$
		\Sym_{\mbb Z}(V)\ra \Bin(V)
		$$
		is an isomorphism.
\end{lem}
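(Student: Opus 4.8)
The plan is to prove the isomorphism on associated graded for the canonical filtrations and then lift it to the total map.

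First I would observe that by \Cref{rem:map from Sym to Bin} the map $\Sym_{\mbb Z}\to \Bin$ is the colimit of the filtered morphism $\Sym^{\le *}_{\mbb Z}\to \Bin^{\le *}$, so it is compatible with the exhaustive filtrations $\Sym^{\le *}_{\mbb Z}(V)$ and $\Bin^{\le *}(V)$. By \Cref{rem:associated graded for Bin} one has $\gr_i\Bin\simeq \Gamma^i_{\mbb Z}$ as endofunctors of $\Ab^{\mr{tf}}$, and by \Cref{rem:properties of Bin}(3) together with \Cref{rem:map from Sym to polynomial maps} the map induced on the $i$-th graded piece is the norm map $N_i\colon \Sym^i_{\mbb Z}(V)\to \Gamma^i_{\mbb Z}(V)$.

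Next I would use that $V$ is a $\mbb Q$-vector space to invert $N_i$. For $i\ge 1$ both $\Sym^i_{\mbb Z}(V)=(V^{\otimes i})_{\Sigma_i}$ and $\Gamma^i_{\mbb Z}(V)=(V^{\otimes i})^{\Sigma_i}$ are $\mbb Q$-vector spaces, since $V^{\otimes_{\mbb Z} i}$ is already a $\mbb Q$-vector space and invariants and coinvariants are $\mbb Q$-linear subquotients. As recorded in the proof of \Cref{lem:over Q polynomial functions are actually polynomials}, over $\mbb Q$ the inverse of $N_i$ is $\tfrac{1}{i!}\pi_i$, where $\pi_i\colon \Gamma^i_{\mbb Z}\to \Sym^i_{\mbb Z}$ is the natural projection (the inclusion of invariants $(V^{\otimes i})^{\Sigma_i}\subset V^{\otimes i}$ followed by the projection to coinvariants); concretely $N_i\circ\pi_i=i!\cdot\id$ and $\pi_i\circ N_i=i!\cdot\id$, and $i!$ acts invertibly. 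For $i=0$ the map is $\id_{\mbb Z}$. Hence $N_i$ is an isomorphism for every $i$, i.e. the map $\Sym_{\mbb Z}(V)\to\Bin(V)$ is an isomorphism on each associated graded piece.

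Finally I would promote this to the total map by a filtration argument. Writing $V$ as a filtered colimit of lattices and using that $\Bin^{\le *}$, $\Sym^{\le *}$ and their graded pieces commute with filtered colimits (and that filtered colimits are exact), the filtrations sit in short exact sequences $0\to F^{\le n-1}\to F^{\le n}\to \gr_n\to 0$ on both sides. An induction on $n$ via the five lemma, with the graded case just established, shows that $\Sym^{\le n}_{\mbb Z}(V)\to \Bin^{\le n}(V)$ is an isomorphism for all $n$; passing to the colimit over $n$, which is exhaustive on both sides, yields the claimed isomorphism $\Sym_{\mbb Z}(V)\xrightarrow{\sim}\Bin(V)$. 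The main obstacle is bookkeeping rather than conceptual: one must make sure that the norm/projection identity and the identification $\gr_i\Bin\simeq\Gamma^i_{\mbb Z}$, which are natural transformations on $\Lat$, genuinely extend to $V$ via commutation with filtered colimits, and that the defining short exact sequences of the filtrations stay exact after these filtered colimits, which is what legitimizes the five lemma step.
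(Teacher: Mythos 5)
Your proof is correct and follows essentially the same route as the paper: reduce along the exhaustive filtrations to the associated graded pieces, identify the graded map with the norm map $\Sym^i_{\mbb Z}(V)\to\Gamma^i_{\mbb Z}(V)$, and invert it using that $i!$ acts invertibly on $\mbb Q$-vector spaces. The only difference is that you spell out the passage from graded isomorphism to total isomorphism (five lemma plus colimit), which the paper leaves implicit.
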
	
\begin{proof}
	The above map is filtered with respect to exhaustive filtrations given by $\Sym_{\mbb Z}^{\le *}(V)$ and $\Bin^{\le *}(V)$. Thus it would be enough to show that the map on the associated graded pieces is an isomorphism. There we have the norm map $\Sym_{\mbb Z}^{i}(V) \ra \Gamma_{\mbb Z}^{i}(V)$, which is an isomorphism since $V$ is a $\mbb Q$-vector space.
\end{proof}	

\begin{rem}\label{rem:map Sym to Bin after tensoring with Q}
	More generally, it is true that for any $A\in \Ab^{\mr{tf}}$ the map 
	$$
	\Sym_{\mbb Z}(A)\otimes \mbb Q \ra \Bin(A)\otimes \mbb Q
	$$
	is an isomorphism. Indeed, again by passing to associated graded we reduce to $\Sym_{\mbb Z}^{i}(A)\otimes \mbb Q \ra \Gamma_{\mbb Z}^{i}(A)\otimes \mbb Q$ being an isomorphism for any $i$. This gives a natural embedding $\Bin(A)\ra \Sym_{\mbb Z}(A) \otimes \mbb Q$ (by identifying the target with $\Bin(A)\otimes \mbb Q$). Note that its image can be identified with the $\mbb Z$-subalgebra generated by binomial coefficients $\binom{[a]}{n}= \frac{[a]([a]-1)\ldots([a]-n+1)}{n!}\in \Sym_{\mbb Z}(A) \otimes \mbb Q$ where $[a]\in \Sym^1_{\mbb Z}(A)\simeq A$ is the element corresponding to $a\in A$. Indeed, we have a commutative diagram 
	$$
	\xymatrix{ \colim_{\substack{A_i\subset A \\ A_i\in \Lat}} \Bin(A_i)\ar[r]^(.62)\sim\ar[d]& \Bin(A)\ar[d]\\
	\colim_{\substack{A_i\subset A \\ A_i\in \Lat}} \Sym_{\mbb Z}(A_i)\ar[r]^(.62)\sim& \Sym_{\mbb Z}(A)
}
	$$
	and for $A\in \Lat$ it is true by \Cref{rem:properties of Bin}(\ref{part:explicit descr of Bin A}).
\end{rem}

Let $A\in \CAlg(\Ab^{\mr{tf}})$ be a commutative ring in $\Ab^{\mr{tf}}$. Then $A$ embeds into $A\otimes \mbb Q$ and for any $a\in A$ the binomial coefficient $\binom{a}{n}\in A\otimes \mbb Q$ is well defined. We say that $A$ is \textit{binomially closed} if $\binom{a}{n}$ in fact lies in $A$ for all $a\in A$ and $n\ge 1$.
\begin{prop}\label{prop:binomial is a property and not a structure}
	Let $A\in \CAlg(\Ab^{\mr{tf}})$. Then $A$ has a binomial ring structure if and only if it is binomially closed, in which case the binomial ring structure is unique. 
\end{prop}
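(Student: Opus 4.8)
The plan is to exploit the injection $\iota\colon A\hookrightarrow A\otimes\mathbb Q$ (injective since $A$ is torsion-free) together with the fact that over $\mathbb Q$ the binomial and symmetric monads coincide. First I would record the key auxiliary observation: the $\mathbb Q$-algebra $A\otimes\mathbb Q$ carries a \emph{canonical} binomial structure. Indeed, $A\otimes\mathbb Q$ is a $\mathbb Q$-vector space, so by \Cref{lem:map from Sym to Bin in the case of a vector space} the map $\Sym_{\mathbb Z}(A\otimes\mathbb Q)\to \Bin(A\otimes\mathbb Q)$ is an isomorphism, and the commutative $\mathbb Q$-algebra structure on $A\otimes\mathbb Q$ (which is the same as a $\Sym_{\mathbb Z}$-module structure) supplies a binomial structure $\mu_{\mathbb Q}\colon\Bin(A\otimes\mathbb Q)\to A\otimes\mathbb Q$; this structure is unique by the same lemma. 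It is a ring homomorphism sending $[a]\mapsto a$, and hence sends $\binom{[a]}{n}\mapsto\binom{a}{n}$ computed in $A\otimes\mathbb Q$. Moreover, since $\Bin(A)\otimes\mathbb Q\simeq\Sym_{\mathbb Z}(A)\otimes\mathbb Q\simeq\Bin(A\otimes\mathbb Q)$ by \Cref{rem:map Sym to Bin after tensoring with Q}, the functoriality map $\Bin(\iota)\colon\Bin(A)\to\Bin(A\otimes\mathbb Q)$ is simply the rationalization and is a homomorphism of commutative rings.

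For the forward implication, suppose $A$ admits a binomial structure $\mu\colon\Bin(A)\to A$ lifting its given commutative ring structure. Rationalizing $\mu$ produces a binomial structure on the $\mathbb Q$-algebra $A\otimes\mathbb Q$ with the same underlying ring, which by the uniqueness above must coincide with $\mu_{\mathbb Q}$. Consequently, for each $a\in A$ and $n\ge 1$ the element $\mu\!\left(\binom{[a]}{n}\right)\in A$ maps under $\iota$ to $\mu_{\mathbb Q}\!\left(\binom{[a]}{n}\right)=\binom{a}{n}$; hence $\binom{a}{n}\in A$ and $A$ is binomially closed.

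For the converse and existence, assume $A$ is binomially closed and set $\nu\coloneqq\mu_{\mathbb Q}\circ\Bin(\iota)\colon\Bin(A)\to A\otimes\mathbb Q$. This is a ring homomorphism carrying each generator $\binom{[a]}{n}$ to $\binom{a}{n}\in A$; since by \Cref{rem:properties of Bin}(\ref{part:explicit descr of Bin A}) (see also \Cref{rem:map Sym to Bin after tensoring with Q}) the ring $\Bin(A)$ is generated by these binomial coefficients and $A\subset A\otimes\mathbb Q$ is a subring, the image of $\nu$ lands in $A$. I thus obtain $\mu\colon\Bin(A)\to A$ with $\iota\circ\mu=\nu$. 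The two module axioms $\mu\circ\eta_A=\id_A$ and $\mu\circ\Bin(\mu)=\mu\circ m_A$ (with $\eta,m$ the unit and multiplication of the monad) are then checked after composing with the monomorphism $\iota$: using naturality of $\eta$ and $m$ and the fact that $\mu_{\mathbb Q}$ satisfies these axioms, each identity reduces to the corresponding one for $\mu_{\mathbb Q}$. This yields a binomial structure whose underlying commutative ring is the original one (its degree-two part recovers the given multiplication). Uniqueness follows from the same embedding: any two lifts $\mu,\mu'$ rationalize to the unique $\mu_{\mathbb Q}$, so $\iota\circ(\mu-\mu')=0$, and injectivity of $\iota$ forces $\mu=\mu'$.

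The steps I expect to require the most care are the identification $\Bin(A)\otimes\mathbb Q\simeq\Bin(A\otimes\mathbb Q)$ and the verification of the associativity axiom for the restricted map $\mu$; both hinge on $\Bin$ being a functor valued in commutative rings whose rationalization and whose unit/multiplication transformations are ring maps, so that all the relevant diagrams can be tested inside the $\mathbb Q$-algebra $A\otimes\mathbb Q$.
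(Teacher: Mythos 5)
Your proof is correct and follows essentially the same route as the paper's: both exploit torsion-freeness to embed everything into $A\otimes\mbb Q$, where $\Bin$ and $\Sym_{\mbb Z}$ coincide, use the description of $\Bin(A)$ as the subring of $\Sym_{\mbb Z}(A)\otimes \mbb Q$ generated by the binomial coefficients $\binom{[a]}{n}$ to see that the structure map lands in $A$, and verify the module axioms and uniqueness after rationalization. The only cosmetic difference is that you package the rational comparison as a canonical binomial structure $\mu_{\mbb Q}$ on $A\otimes \mbb Q$ and invoke naturality of the monad maps, whereas the paper checks the same identities via its explicit commutative diagram and by identifying the two compositions $\Bin(\Bin(A))\to A$ with their $\Sym_{\mbb Z}$ counterparts after tensoring with $\mbb Q$.
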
	

\begin{proof}
	 Let $A$ be a binomial ring. Then we have a commutative diagram 
$$
\xymatrix{
	\Sym_{\mbb Z}(A) \ar[d]\ar[r]^(.55){m}& A\ar[d] \\
	\Bin(A) \ar[d]\ar[r]^(.55){m}& A\ar[d] \\
	\Sym_{\mbb Z}(A)\otimes \mbb Q \ar[r]^(.55){m}&A\otimes \mbb Q
}	
$$
(by identifying the bottom horizontal arrow with $\Bin(A)\otimes \mbb Q \ra A\otimes \mbb Q$) with all vertical arrows being embeddings. Identifying $\Bin(A)\subset \Sym_{\mbb Z}(A)\otimes \mbb Q$ with the $\mbb Z$-subalgebra generated by $\binom{[a]}{n}$ for $[a]\in \Sym^1_{\mbb Z}(A)$ we get that $\binom{a}{n} = m(\binom{[a]}{n})$ lies in $A$ for any $a\in A$ and $n$. Thus $A$ is binomially closed. We also get that the map $\Bin(A)\ra A$ is uniquely determined by the map $\Sym_{\mbb Z}(A)\ra A$, so the binomial ring structure is unique. 

Let now $A$ be a binomially closed torsion-free ring. We can restrict the map $\Sym_{\mbb Z}(A)\otimes \mbb Q \ra A\otimes \mbb Q$ to $\Bin(A)\subset \Sym_{\mbb Z}(A)\otimes \mbb Q$. Again, identifying $\Bin(A)\subset \Sym_{\mbb Z}(A)\otimes \mbb Q$ with the $\mbb Z$-subalgebra generated by $\binom{[a]}{n}$, we get that the resulting map $\Bin(A)\ra A\otimes \mbb Q$ in fact factors through $A$. To show that the resulting map $\Bin(A)\ra A$ defines a module structure over $\Bin$
 we need to check that the two compositions $\Bin(\Bin(A))\ra A$ are the same. Since both $\mbb Z$-modules are torsion-free this enough to check after tensoring with $\mbb Q$, where via \Cref{rem:map Sym to Bin after tensoring with Q} these maps can be identified with the analogous compositions $\Sym_{\mbb Z}(\Sym_{\mbb Z}(A))\ra A$ tensor $\mbb Q$. Thus the map $\Bin(A)\ra A$ above defines a binomial ring structure on $A$.

\end{proof}	

\begin{rem}
	The proof of \Cref{prop:binomial is a property and not a structure} also shows that the forgetful functor $\BinAlg\ra \CAlg(\Ab^{\mr{tf}})$ is a fully faithful embedding. One can describe the left adjoint explicitly as \text{binomial closure} $L_{\Bin}$; for a commutative torsion-free algebra $A$, $L_{\Bin}A$ is given by the $\mbb Z$-subalgebra $ L_{\Bin}(A)\subset A\otimes \mbb Q$ generated by the binomial coefficients $\binom{a}{n}\in A\otimes \mbb Q$. 
\end{rem}	

\begin{rem}\label{rem:alternative description of binomial algebras} 
 \Cref{prop:binomial is a property and not a structure} tells us in particular that the binomial ring structure on a commutative ring $A$ is a condition and not a structure: the data of a binomial ring is a commutative ring $A$ whose underlying abelian group is torsion-free and which is binomially closed. Note that this is exactly how binomial rings were introduced in \cite{Elliot}.
\end{rem}

Let us now give some natural examples of binomial rings.
\begin{ex}\label{ex:classical binomial rings}
\begin{enumerate}
    \item \label{part:Z as binomial ring}  Ring of integers $\mbb Z$ is torsion-free and binomially closed, so by the above it has a unique binomial ring structure. The map
    $$
    \Bin(\mbb Z)\ra \mbb Z,
    $$ corresponding to this module structure, is explicitly given by the ring homomorphism $\ev_1\colon \mbb Z[\binom{x}{n}]_{n\ge 1}\ra \mbb Z$, which sends $x$ to $1\in \mbb Z$ (here we identified $x$ with the class $[1]\in \mbb Z\subset \Bin^{\le 1}(\mbb Z)$). Note that this map sends $x\choose n$ for $n\ge 2$ to 0. 

    \item More generally, let $S$ be a set; then the ring $\Map_{\Set}(S,\mbb Z)$ of $\mbb Z$-valued functions on $S$ is torsion-free and binomially closed, and so has a unique binomial algebra structure.
    \item Any $\mbb Q$-algebra $A$ is a binomial ring. 
    \item Any torsion-free $\lambda$-ring $A$ whose Adams operations are trivial is a binomial ring \cite{Elliot}, and in fact any classical binomial ring comes this way.
    \end{enumerate}

\end{ex}

\subsection{Polynomial functors and their nonabelian derived extensions}\label{ssec:polynomial functors and bla}

In this section we extend the binomial monad further to the full derived category ${\mscr D}(\mbb Z)$ following \cite[Section 4]{Arpon} and \cite[Section 3]{BM} (which in turn go back \cite{Illusie_Cotangent_II}).

For a small $\infty$-category $\mscr C^0$ we denote by $\mathscr{P}_{\Sigma}(\mscr C^0)$ the full subcategory of the category $\mscr P(\mscr C^0)\coloneqq \Fun((\mscr C^0)^\op,\Spc)$ spanned by functors that preserve finite products (\cite[Definiton 5.5.8.8]{Lur_HTT}). We will call $\mathscr{P}_{\Sigma}(\mscr C^0)$ the \textit{sifted completion of $\mscr C^0$}. In the case $\mscr C^0=\Lat$ the sifted completion $\mathscr{P}_{\Sigma}(\Lat)$ is naturally identified with the category ${\mscr D}(\bZ)^{\le 0}\subset {\mscr D}(\bZ)$ of connective objects (see e.g. \cite[Example 5.1.6(2)]{Cesn-Sch}). Category $\mscr P_{\Sigma}(\mscr C^0)$ satisfies the following universal property: 
\begin{prop}[{\cite[Proposition 5.5.8.15]{Lur_HTT}}] \label{prop:maps from sifted completion}Let $\mathscr D$ be a $\infty$-category which admits all sifted colimits and let $\mathscr C\coloneqq \mathscr P_\Sigma(\mathscr C^0)$ be the sifted completion of a small $\infty$-category $\mathscr C^0$. Let $\Fun_\Sigma$ denote the $\infty$-category of functors that commute with sifted colimits. Then the restriction induces an equivalence of $\infty$-categories
	$$\Fun_\Sigma(\mathscr C,\mathscr D)\ra \Fun(\mathscr C^0,\mathscr D)$$
	 with the inverse functor given by the left Kan extension along $\mathscr C^0 \subset \mathscr C$
\end{prop}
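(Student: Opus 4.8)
The plan is to adapt the standard argument for the universal property of the free cocompletion $\mathscr P(\mathscr C^0)$ (\cite[Theorem 5.1.5.6]{Lur_HTT}) to the class of sifted diagrams. Write $j\colon \mathscr C^0\hra \mathscr C=\mathscr P_\Sigma(\mathscr C^0)$ for the (corestricted) Yoneda embedding; it is fully faithful, and it lands in $\mathscr P_\Sigma(\mathscr C^0)$ because the representable presheaf $\Map_{\mathscr C^0}(-,c)$ carries finite coproducts in $\mathscr C^0$ to products in $\Spc$. The functor in question is restriction $r\colon \Fun_\Sigma(\mathscr C,\mathscr D)\ra \Fun(\mathscr C^0,\mathscr D)$, $F\mapsto F\circ j$, and I would prove it is an equivalence by showing it is fully faithful and essentially surjective, exhibiting $\Lan_j$ as an inverse. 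Throughout I use that $\mathscr C^0=\Lat$ admits finite coproducts (given by $\oplus$); this is precisely what makes $\mathscr P_\Sigma$ the genuine sifted cocompletion rather than all of $\mathscr P(\mathscr C^0)$.

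First I would record two structural facts about $\mathscr C$, which form the technical core. (i) Sifted colimits in $\mathscr P_\Sigma(\mathscr C^0)$ are computed pointwise, i.e. the inclusion into $\mathscr P(\mathscr C^0)$ preserves them (this uses exactly that sifted colimits commute with finite products in $\Spc$, so that a pointwise sifted colimit of product-preserving presheaves is again product-preserving); consequently each evaluation $X\mapsto X(c)\simeq \Map_{\mathscr C}(j(c),X)$ preserves sifted colimits, so the representables $j(c)$ are \emph{projective}. (ii) For every $X\in \mathscr C$ the comma $\infty$-category $\mathscr C^0_{/X}\coloneqq \mathscr C^0\times_{\mathscr C}\mathscr C_{/X}$ is sifted, and the canonical map $\colim_{\mathscr C^0_{/X}} j\ra X$ is an equivalence. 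Siftedness of $\mathscr C^0_{/X}$ follows from the fact that $j$ preserves finite coproducts (a consequence of the product-preserving definition of $\mathscr P_\Sigma$): then $\mathscr C^0_{/X}$ inherits finite coproducts, and a category with finite coproducts is sifted since its diagonal is cofinal. The identification $X\simeq \colim_{\mathscr C^0_{/X}} j$ is a form of the co-Yoneda lemma (\cite{Lur_HTT}), restricted to $\mathscr P_\Sigma$, where the colimit is now sifted and computed pointwise and hence stays in $\mathscr P_\Sigma$.

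Granting (i)--(ii), full faithfulness of $r$ is formal: for $F,G\in \Fun_\Sigma(\mathscr C,\mathscr D)$ and $X\simeq \colim_{\mathscr C^0_{/X}} j(c)$, preservation of this sifted colimit gives $F(X)\simeq \colim F(j(c))$ and likewise for $G$, so a natural transformation $F\ra G$ is uniquely determined by, and freely built from, its restriction along $j$; density of $j$ upgrades this to the statement that $r$ is fully faithful. For essential surjectivity, given $F_0\colon \mathscr C^0\ra \mathscr D$ I would form the pointwise left Kan extension $F\coloneqq \Lan_j F_0$, with $F(X)\simeq \colim_{\mathscr C^0_{/X}} F_0$. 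This colimit exists in $\mathscr D$ precisely because $\mathscr C^0_{/X}$ is sifted and $\mathscr D$ admits sifted colimits, which is the only place the hypothesis on $\mathscr D$ is used. Since $j$ is fully faithful, the unit $F_0\xra{\sim} F\circ j$ is an equivalence, so $r(F)\simeq F_0$.

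The step I expect to be the main obstacle is verifying that $F=\Lan_j F_0$ actually preserves sifted colimits, so that $F\in \Fun_\Sigma(\mathscr C,\mathscr D)$ and $\Lan_j$ is a genuine inverse to $r$. For a sifted colimit $X\simeq \colim_{k\in K} X_k$ one must show $\colim_{\mathscr C^0_{/X}} F_0\simeq \colim_k \colim_{\mathscr C^0_{/X_k}} F_0=\colim_k F(X_k)$. The key input is projectivity from (i): since $\Map_{\mathscr C}(j(c),-)$ preserves the sifted colimit over $K$, every map $j(c)\ra X$ factors through some $X_k$, which lets one compare the indexing category $\mathscr C^0_{/X}$ with the Grothendieck construction of the diagram $k\mapsto \mathscr C^0_{/X_k}$ via a cofinal functor; the desired equivalence then follows since colimits commute with colimits. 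Establishing this cofinality carefully is the heart of the argument; once it is in place, $r$ and $\Lan_j$ are mutually inverse equivalences, completing the proof.
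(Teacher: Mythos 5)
First, a point of reference: the paper does not actually prove this proposition --- it is imported verbatim from \cite[Proposition 5.5.8.15]{Lur_HTT} (the citation is in the theorem header), so the comparison is necessarily with Lurie's proof. Your structural facts (i) and (ii) are correct and are precisely the supporting lemmas of \cite[\S 5.5.8]{Lur_HTT}: (i) is Proposition 5.5.8.10 there (sifted colimits of product-preserving presheaves, computed pointwise, remain product-preserving, whence the representables are projective), and (ii) is the lemma that $\mathscr C^0_{/X}$ is sifted when $\mathscr C^0$ admits finite coproducts, combined with the co-Yoneda lemma. Your standing assumption that $\mathscr C^0$ admits finite coproducts is consistent with Lurie's Definition 5.5.8.8 and with every instance used in the paper ($\Lat$, finite sets, $\Fin_*$). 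The full-faithfulness and essential-surjectivity steps are then standard Kan-extension formalism, as you indicate.

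The genuine gap is the one you flag yourself: you never prove that $F=\Lan_j F_0$ preserves sifted colimits, and the strategy you sketch does not yet amount to a proof. By Joyal's criterion, cofinality of $\theta\colon \int_K \mathscr C^0_{/X_k}\ra \mathscr C^0_{/X}$ requires every comma category $(c,g)\downarrow\theta$ to be \emph{weakly contractible}; projectivity of $j(c)$ only yields that every $g\colon j(c)\ra X$ factors through some $X_k$, i.e.\ that these commas are \emph{nonempty}. The contractibility is a homotopy-coherent uniqueness-of-factorization statement, and it carries essentially all of the content of the proposition (the cofinality of $\theta$ for all sifted $K$ is, universally in $F_0$ and $\mathscr D$, equivalent to what is being proved), so asserting it as a ``key input'' leaves the theorem's actual substance unestablished. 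A clean way to close the gap: treat cocomplete $\mathscr D$ first. There, $\Lan_{j'}F_0$ along the full Yoneda embedding $j'\colon \mathscr C^0\ra \mathscr P(\mathscr C^0)$ preserves \emph{all} small colimits by the universal property of presheaf categories \cite[Theorem 5.1.5.6]{Lur_HTT}; since $\mathscr P_\Sigma(\mathscr C^0)\subset \mathscr P(\mathscr C^0)$ is full, the comma categories computing the two Kan extensions coincide, so $\Lan_j F_0\simeq (\Lan_{j'}F_0)|_{\mathscr P_\Sigma(\mathscr C^0)}$, and by your fact (i) the inclusion preserves sifted colimits, so $\Lan_j F_0$ does as well. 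For $\mathscr D$ admitting only sifted colimits one then needs a further reduction --- e.g.\ a fully faithful, sifted-colimit-preserving embedding of $\mathscr D$ into a cocomplete category (presheaves on $\mathscr D$ carrying the sifted colimits of $\mathscr D$ to limits, with the attendant size bookkeeping) --- or one follows Lurie's argument directly. As written, your proposal establishes everything except this decisive step.
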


If $F\colon \Lat\ra \mathscr D$ is a functor, we will denote the corresponding sifted colimit preserving functor ${\mscr D}(\mbb Z)^{\le 0}\ra \mathscr D$ by $\mbf LF$ and call it the \textit{non-abelian (left) derived functor} of $F$. 

\begin{rem}\label{rem:from finitely generated modules to connective} By plugging $\mathscr C^0=\Lat$ and $\mathscr D={\mscr D}(\bZ)^{\le 0}$ from \Cref{prop:maps from sifted completion} we get a monoidal equivalence of categories
$$
\End_{\Sigma}^{\Lat}({{\mscr D}(\mbb Z)}^{\le 0})\xra{\sim} \End(\Lat),
$$
where $\End_{\Sigma}^{\Lat}({{\mscr D}(\mbb Z)}^{\le 0})\subset \End_{\Sigma}({{\mscr D}(\mbb Z)}^{\le 0})$ denotes the full subcategory of endofunctors that preserve $\Lat\subset {{\mscr D}(\mbb Z)}^{\le 0}$.
This way any filtered monad $\mathrm T^{\le *}$ on $\Lat$ gives rise to a filtered monad $\mbf L\mr T^{\le *}$ on ${\mscr D}(\mbb Z)^{\le 0}$.
\end{rem}

 As noted by Brantner, if a functor $F$ is polynomial (in the sense of \Cref{def:polynomial functors} below), it can be naturally extended further to the whole derived category ${\mscr D}(\bZ)$. Let us briefly recall the relevant setup, following \cite[Section 4.2]{Arpon}. The following definition is a categorified version of \Cref{def:polynomial maps}.
 \begin{defn}[Additively polynomial functors]\label{def:polynomial functors} Let $\mathscr A$ and $\mathscr B$ be additive $\infty$-categories and assume $\mscr B$ is idempotent-complete. For a functor $F\colon \mathscr A\ra \mathscr B$ and an object $X\in \mathscr A$ the difference functor $D_XF$ is defined as
$$D_XF(Y):=\mathrm{fib}(F(X\oplus Y)\to F(Y)).$$
Then $F$ is 
\begin{itemize}
	\item[1)] \textit{of degree 0} if $F$ is a constant functor;
	\item[2)] \textit{of degree $n$} if $D_XF$ is of degree $n-1$ for any object $X\in\mathscr A$;
	\item[3)] \textit{additively polynomial} if it is of degree $n$ for some $n\geq 0.$
\end{itemize} 
 \end{defn}
 \begin{rem}
     If functors $F,G\colon \mathscr A\ra \mathscr B$ are of degrees $n$ and $m$ correspondingly, their direct sum $F\oplus G$ is additively polynomial of degree $\max(n,m)$. If $\mscr B$ is stable or abelian this applies more generally for any extension of $G$ by $F$ in the category of functors $\Fun(\mscr A,\mscr B)$. Also, given $F\colon \mathscr A\ra \mathscr{B}$ and $G\colon \mathscr{B}\ra \mathscr{C}$ the composition $G\circ F$ is again additively polynomial of degree $mn$. 
 \end{rem}
\begin{ex}
    Functors $\otimes^n$, $\Sym_{\mbb Z}^{n}$, $\wedge_{\mbb Z}^n$, $\Gamma_{\mbb Z}^{n}$ on $\Lat$ are additively polynomial of degree $n$.
\end{ex}

 \begin{lem} \label{lem:Bin^n is polynomial}
	The functor 
	$$\Bin^{\le n}\colon \Lat \ra \Lat$$
	is additively polynomial of degree $n$.
\end{lem}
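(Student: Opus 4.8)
The plan is to prove the statement by induction on $n$, using the finite filtration of $\Bin^{\le n}$ whose graded pieces are the divided-power functors $\Gamma^i_{\mbb Z}$, together with the stability of additive polynomiality under extensions. Since $\Lat$ is not abelian, I would first regard $\Bin^{\le n}$ as a functor $\Lat\ra\Ab$; this is harmless for computing the degree, because for $X,Y\in\Lat$ the projection $X\oplus Y\to Y$ is split, so
\[
D_X\Bin^{\le n}(Y)=\ker\!\big(\Bin^{\le n}(X\oplus Y)\to\Bin^{\le n}(Y)\big)
\]
is a subgroup of the free finitely generated abelian group $\Bin^{\le n}(X\oplus Y)$, hence again lies in $\Lat$. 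Thus the difference functors computed in $\Ab$ agree with the ones relevant in $\Lat$, and the degree is unchanged by enlarging the target.

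Next I would invoke the functorial filtration $\Bin^{\le 0}\subset\Bin^{\le 1}\subset\cdots\subset\Bin^{\le n}$ together with the identification of its associated graded (\Cref{rem:properties of Bin}(1) and \Cref{rem:associated graded for polynomial maps}), which yields for each $i\ge 1$ a short exact sequence of functors $\Lat\ra\Ab$
\[
0\ra \Bin^{\le i-1}\ra \Bin^{\le i}\ra \Gamma^i_{\mbb Z}\ra 0,
\]
objectwise split since all three terms take values in $\Lat$. The base case is $\Bin^{\le 0}\simeq\mbb Z$ (the constant functor), which has degree $0=\deg\Gamma^0_{\mbb Z}$. For the inductive step, $\Gamma^i_{\mbb Z}$ is additively polynomial of degree $i$ (as recorded after \Cref{def:polynomial functors}), so by the remark following \Cref{def:polynomial functors}---the degree of an extension in an abelian target is the maximum of the degrees of its sub and quotient---the functor $\Bin^{\le i}$ has degree $\max(i-1,i)=i$, completing the induction at $i=n$.

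The only point requiring care, and the technical heart of the argument, is the exactness used in the extension step: for an objectwise-split short exact sequence of functors valued in $\Ab$, applying $D_X$ preserves exactness, since each $D_X$ is the kernel of a split surjection and the snake lemma applies objectwise. Iterating this shows $D_{X_0}\cdots D_{X_k}$ is exact, so the $(k+1)$-fold difference of $\Bin^{\le i}$ vanishes exactly when those of $\Bin^{\le i-1}$ and $\Gamma^i_{\mbb Z}$ both vanish; this pins the degree to the maximum, and in particular bounds it below by $\deg\Gamma^i_{\mbb Z}=i$, so that the degree is exactly $n$ rather than merely $\le n$. An alternative, more computational route would be to use the description $\Bin^{\le n}(A)\simeq(\mbb Z[A^\vee]/I_{A^\vee}^{n+1})^\vee$ from \Cref{rem:description of polynomial maps} and bound the difference functors directly, but the filtration argument is cleaner and reuses facts already established.
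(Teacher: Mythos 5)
Your proof is correct and is essentially the paper's own argument: both rest on the finite filtration of $\Bin^{\le n}$ with associated graded pieces $\Gamma^i_{\mbb Z}$, the fact that $\Gamma^i_{\mbb Z}$ is additively polynomial of degree $i$, and closure of additive polynomiality under extensions. Your extra step of passing to the target $\Ab$ (and checking the difference functors agree) is a worthwhile refinement rather than a deviation, since the paper's remark on extensions is stated only for stable or abelian targets and $\Lat$ is neither, a point the paper's one-line proof glosses over.
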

\begin{proof}
	By \Cref{rem:associated graded for Bin} the functor $\Bin^{\le n}$ has a finite filtration with the associated graded terms given by $\Gamma^i_{\mbb Z}$. Since $\Gamma_{\mbb Z}^i$ is additively polynomial of degree $i$, we get that $\Bin^{\le n}$ is polynomial of degree $n$.
\end{proof}

\begin{defn}
	For $n\ge 0$ let $\mathbf P_n$ denote the poset of subsets of the set $\{0,1,\ldots,n\}$. Let $\mathbf P_n^{\le m}$ (resp. $\mathbf P_n^{\ge m}$) denote the subcategory of $\mathbf P_n$ spanned by subsets of cardinality at most (resp. at least) $m$. An \textit{$n$-cube in an $\infty$-category $\mathscr C$} is a diagram $\chi\colon \mathbf P_n \ra \mathscr C$. An $n$-cube is called:
	\begin{itemize}
		\item \textit{coCartesian} if $\chi$ is a colimit diagram: namely, the natural map 
		$$
		\colim_{I\in \mathbf P_n^{\ge n}} \chi(I) \ra \chi(\{0,\ldots,n\})
		$$
		is an equivalence;
		\item \textit{Cartesian} if $\chi$ is a limit diagram: namely, the natural map
		$$
		\chi(\emptyset) \ra \lim_{I\in \mathbf P_n^{\ge 1}} \chi(I)
		$$
		is an equivalence;
		
		\item \textit{strongly coCartesian} if it is left Kan extended from from its restriction to $\mathbf P_n^{\ge 1}$.
	\end{itemize}
\end{defn}

\begin{rem}
	If $\mathscr C$ is stable then any $n$-cube is Cartesian if and only if it is coCartesian. 
\end{rem}

\begin{defn}[Excisively polynomial functors]
Let $\mathscr C$ be an $\infty$-category admitting finite colimits and let $\mathscr D$ be stable. Then a functor $F$ is called \textit{$n$-excisive} if it sends strongly coCartesian $n$-cubes to Cartesian $n$-cubes. A functor $F$ is called \textit{excisively polynomial} if it is $n$-excisive for some $n$.
\end{defn}

The relation between the two notions of polynomial functors is given by the following: 

\begin{prop}[{\cite[Proposition 3.34]{BM}, \cite[Proposition 5.10]{Johnson_McCarthy}}]\label{prop:polynomial vs excisive} Let $\mathscr C\coloneqq \mathscr P_{\Sigma}(\mathscr C^0)$ be the sifted completion of the subcategory $\mathscr C^0\subset \mathscr C$ of its compact projective objects and let $\mathscr D$ be an $\infty$-category admitting small colimits. Let $\mr F\colon \mathscr C^0\ra \mathscr D$ be an additively polynomial functor of degree $n$. Then the left derived functor $\mbf L\mr F\colon \mathscr C\ra \mathscr D$ (see \Cref{prop:maps from sifted completion}) is $n$-excisive.	
\end{prop}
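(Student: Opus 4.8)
The plan is to check $n$-excisiveness directly, namely that $\mbf L\mr F$ sends every strongly coCartesian $(n{+}1)$-cube in $\mscr C$ (a diagram indexed by $\mathbf P_n$) to a Cartesian cube in $\mscr D$. Two structural facts drive the argument. First, by \Cref{prop:maps from sifted completion} the derived functor $\mbf L\mr F$ is the left Kan extension of $\mr F$ along $\mscr C^0\subset\mscr C$, hence preserves sifted colimits. Second, since $\mscr D$ is stable, finite limits commute with sifted colimits in $\mscr D$: a finite product is a biproduct, hence a finite coproduct, and a fiber is a shift of a cofiber, so both are built from colimits; in particular the total fiber of a cube (a finite limit, which detects the Cartesian property in the stable setting) commutes with any sifted colimit.

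I would first reduce the statement to the vanishing of the $(n{+}1)$-st cross effect $\creff_{n+1}(\mbf L\mr F)$, the total fiber of the coproduct cube
\[
(M_0,\dots,M_n)\longmapsto\Big(I\mapsto\mbf L\mr F\big(\textstyle\bigoplus_{i\in I}M_i\big)\Big).
\]
By the commutation just noted, $\creff_{n+1}(\mbf L\mr F)$ preserves sifted colimits in each variable and restricts on $(\mscr C^0)^{n+1}$ to $\creff_{n+1}(\mr F)$; by the multivariable form of \Cref{prop:maps from sifted completion} it is therefore the multivariable derived functor of $\creff_{n+1}(\mr F)$ and vanishes as soon as the latter does. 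On the additive category $\mscr C^0$ coproducts are direct sums, so iterating the fiber in each cube direction identifies $\creff_{n+1}(\mr F)$ with the iterated difference functor $D_{X_0}\cdots D_{X_n}\mr F$ of \Cref{def:polynomial functors}; this vanishes identically because $\mr F$ is additively polynomial of degree $n$. Hence $\creff_{n+1}(\mbf L\mr F)\simeq 0$.

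The substantial step is to promote the vanishing of $\creff_{n+1}$ to genuine $n$-excisiveness, and this is where I expect the main difficulty. The issue is that the cross effect only probes the coproduct cubes, whose initial vertex is $0$, whereas an arbitrary strongly coCartesian cube has an initial vertex $Y$ and legs $Y\to Z_i$ that are far from coprojections (already a single map such as $\mbb Z\xrightarrow{2}\mbb Z$ of lattices is not a sifted colimit of coprojections, so one cannot simply resolve the legs away). I would argue by induction on $n$ using difference functors: splitting off one direction of an $(n{+}1)$-cube shows that $\tfib(\mbf L\mr F\circ\chi)$ agrees with the total fiber of the relative difference functor $X\mapsto\fib\big(\mbf L\mr F(X\sqcup_Y Z_0)\to\mbf L\mr F(X)\big)$ evaluated on the strongly coCartesian $n$-subcube in the remaining directions, while the identity $\creff_{n+1}(\mbf L\mr F)(M,-)\simeq\creff_n(D_M\mbf L\mr F)(-)$ forces every absolute difference $D_M\mbf L\mr F$ to have vanishing $n$-th cross effect, hence to be $(n{-}1)$-excisive by the inductive hypothesis. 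The remaining point — reconciling the \emph{relative} differences over a nonzero basepoint $Y$ with the \emph{absolute} ones controlled by the cross effect — is handled by resolving the initial vertex and the legs of $\chi$ as a sifted colimit of data from $\mscr C^0$ and invoking once more, in the stable category $\mscr D$, the commutation of $\tfib$ with that sifted colimit, thereby reducing every strongly coCartesian cube to the coproduct cubes already treated. This basepoint resolution is the technical heart of the comparison between the cross-effect and Goodwillie filtrations.
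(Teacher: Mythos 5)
Your first two paragraphs are sound, and your standing assumption that $\mscr D$ is stable, though absent from the statement, is harmless: the paper's own definition of $n$-excisiveness already presupposes a stable target, and that is the setting of the sources it cites. The genuine gap sits exactly where you place it, but your proposed repair does not work. Vanishing of $\creff_{n+1}(\mbf L\mr F)$ controls only cubes whose legs are coprojections $A\ra A\oplus B_i$; resolving the initial vertex and the legs of a strongly coCartesian cube by diagrams in $\mscr C^0$ produces cubes whose legs are still arbitrary maps of lattices, so nothing you have established applies to them, and the argument becomes circular — the cubes with vertices in $\mscr C^0$ are precisely as hard as the general ones. The induction via difference functors stalls at the same point: the relative difference $X\mapsto \fib(\mbf L\mr F(X)\ra \mbf L\mr F(X\sqcup_Y Z_0))$ agrees with an absolute one $D_{B_0}\mbf L\mr F$ only when the leg $Y\ra Z_0$ splits, which is what needs proving.

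The missing ingredient is a device that splits the legs up to sifted colimits — and, contrary to your parenthetical claim, such a device exists: the assertion that $\bZ\xra{\ 2\ }\bZ$ is not a sifted colimit of coprojections is false. In any additive $\infty$-category with geometric realizations, for a map $f\colon Y\ra Z$ the two-sided bar construction $\mathrm{Bar}(Y,Y,Z)_\bullet$, with $m$-simplices $Y\oplus Y^{\oplus m}\oplus Z$, admits an extra degeneracy and hence realizes to $Z$; the inclusion of the first summand defines a map of simplicial objects from the constant object $Y$, and on realizations it recovers $f$. Thus every map, in particular multiplication by $2$ on $\bZ$, is a geometric realization of coprojections $Y\inj Y\oplus(Y^{\oplus m}\oplus Z)$. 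Performing this in all legs simultaneously writes any strongly coCartesian $(n+1)$-cube as a levelwise geometric realization of direct-sum cubes $I\mapsto Y\oplus\bigoplus_{i\in I}B_i$, because the cube-generating left Kan extension along $\mathbf P_n^{\le 1}\subset \mathbf P_n$ preserves colimits. Since $\mbf L\mr F$ preserves geometric realizations and $\tfib$, being a finite limit, commutes with all colimits in the stable category $\mscr D$, the problem reduces to the vanishing of $\tfib_I\, \mbf L\mr F(Y\oplus\bigoplus_{i\in I}B_i)$; as a functor of $(Y,B_0,\dots,B_n)$ this is the multivariable derived functor of the iterated difference functor $(D_{B_0}\cdots D_{B_n}\mr F)(Y)=0$, so it vanishes by exactly the argument of your second paragraph with the extra variable $Y$ carried along. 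With this lemma inserted your proof closes. Note also that the paper gives no proof of its own but defers to \cite{BM} and \cite{Johnson_McCarthy}; their arguments run on precisely this simplicial (in Johnson--McCarthy, cotriple) resolution, which is the one idea your outline is missing.
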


\noindent This applies in particular to $\mathscr C\coloneqq {\mscr D}(\bZ)^{\le 0}\simeq \mathscr P_\Sigma(\Lat)$.

Assume now that $\mathscr C$ is stable, and that it has a $t$-structure given by $\mathscr C^{\le 0}\subset \mathscr C$. Excisively polynomial functors are often fully determined by their restriction to $\mathscr C^{\le 0}$. For brevity we will formulate the precise statement only in the case $\mathscr C\coloneqq {\mscr D}(\bZ)$, and only for sifted colimit commuting functors.

\begin{prop}\label{prop:from connective to all}Let $\mathscr D$ be an $\infty$-category that has small colimits and let $$\Fun_{\mr{e-poly}}^\Sigma({\mscr D}(\mbb Z),\mathscr D)\subset \Fun({\mscr D}(\mbb Z),\mathscr D)$$ denote the full subcategory spanned by excisively polynomial functors that commute with sifted colimits. Then the restriction to ${\mscr D}(\bZ)^{\le 0}\subset {\mscr D}(\bZ)$ induces an equivalence 
	$$
\Fun_{\mr{e-poly}}^\Sigma({\mscr D}(\bZ),\mathscr D) \xra{\sim} \Fun_{\mr{e-poly}}^\Sigma({\mscr D}(\bZ)^{\le 0},\mathscr D).
$$	
\end{prop}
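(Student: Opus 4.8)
The plan is to fix the excisiveness degree, prove the statement for $n$-excisive functors, and then pass to the (filtered) colimit over $n$, since a functor is excisively polynomial precisely when it is $n$-excisive for some $n$. So fix $n$ and let both sides consist of $n$-excisive, sifted-colimit-preserving functors. Such functors preserve filtered colimits, and every $M\in\mscr D(\mbb Z)$ is the filtered colimit $M\simeq\colim_k\tau_{\ge -k}M$ of its connective covers, with $\tau_{\ge -k}M\in\mscr D(\mbb Z)^{\le k}=\Omega^k\mscr D(\mbb Z)^{\le 0}$. Hence it suffices to control such functors along the increasing tower $\mscr D(\mbb Z)^{\le 0}\subseteq\mscr D(\mbb Z)^{\le 1}\subseteq\cdots$. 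Since $\mscr D(\mbb Z)^{\le k+1}=\Omega\,\mscr D(\mbb Z)^{\le k}$ and every $Y\in\mscr D(\mbb Z)^{\le k+1}$ is $Y\simeq\Omega\Sigma Y$ with $\Sigma Y\in\mscr D(\mbb Z)^{\le k}$, the whole problem reduces to a single ``desuspension step''.

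The engine is the following cube computation. For $Y\in\mscr D(\mbb Z)^{\le k+1}$ consider the $(n+1)$-cube $\chi_Y\colon\mathbf P_{n+1}\ra\mscr D(\mbb Z)$ with initial vertex $\chi_Y(\emptyset)=Y$, with $\chi_Y(\{i\})=0$ for every singleton, and left Kan extended from subsets of cardinality $\le 1$; this is by construction strongly coCartesian, and a direct computation of the iterated pushouts gives $\chi_Y(S)\simeq(\Sigma Y)^{\oplus(|S|-1)}$ for every nonempty $S$. In particular all non-initial vertices lie in $\mscr D(\mbb Z)^{\le k}$, and $\chi_Y$ is functorial in $Y$. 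Applying an $n$-excisive $F$ turns $\chi_Y$ into a Cartesian cube, whence a natural equivalence
$$F(Y)\;\xra{\sim}\;\lim_{\emptyset\ne S\subseteq\{0,\dots,n+1\}}F\big((\Sigma Y)^{\oplus(|S|-1)}\big).$$
This exhibits the value of $F$ on $\mscr D(\mbb Z)^{\le k+1}$ as an explicit finite limit of its values on $\mscr D(\mbb Z)^{\le k}$, naturally in $Y$.

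Full faithfulness of the restriction now follows by induction on $k$. Given $F_1,F_2$ and a natural transformation $\alpha_0\colon F_1|_{\mscr D(\mbb Z)^{\le 0}}\Rightarrow F_2|_{\mscr D(\mbb Z)^{\le 0}}$, the displayed formula — which holds for both $F_1$ and $F_2$ — forces $\alpha$ on $\mscr D(\mbb Z)^{\le k+1}$ to be the map induced on the limits by its already-constructed restriction to the vertices $(\Sigma Y)^{\oplus m}\in\mscr D(\mbb Z)^{\le k}$; naturality together with the uniqueness of maps into a limit make this extension both existent and unique, and the filtered-colimit presentation of a general $M$ propagates it to all of $\mscr D(\mbb Z)$. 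Running the same computation on mapping spaces rather than on $\pi_0$ upgrades this to an equivalence $\Map(F_1,F_2)\xra{\sim}\Map(F_1|_{\mscr D(\mbb Z)^{\le 0}},F_2|_{\mscr D(\mbb Z)^{\le 0}})$; here one uses that $\mscr D$ is stable, so that the finite limit above is simultaneously a finite colimit and therefore commutes with the sifted colimits in play.

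For essential surjectivity one must produce, from $G\in\Fun_{\mr{e-poly}}^\Sigma(\mscr D(\mbb Z)^{\le 0},\mscr D)$, an extension $\tilde G$ to $\mscr D(\mbb Z)$. The formula above is the obvious candidate: define $\tilde G$ on $\mscr D(\mbb Z)^{\le k+1}$ from $\tilde G$ on $\mscr D(\mbb Z)^{\le k}$ by the same finite limit, and set $\tilde G(M):=\colim_k\tilde G(\tau_{\ge -k}M)$ for general $M$; stability of $\mscr D$ guarantees that this preserves sifted colimits, and $\tilde G|_{\mscr D(\mbb Z)^{\le 0}}\simeq G$ by construction. The main obstacle is to check that the resulting $\tilde G$ is genuinely $n$-excisive on all of $\mscr D(\mbb Z)$ — that it carries \emph{every} strongly coCartesian $(n+1)$-cube, not only the special cubes $\chi_Y$, to a Cartesian one — and to assemble the inductive definition into a single coherent functor. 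The clean way to discharge this is not to build $\tilde G$ by hand but to set $\tilde G:=P_n(\Lan\,G)$, the Goodwillie $n$-excisive approximation of the left Kan extension of $G$ along $\mscr D(\mbb Z)^{\le 0}\hookrightarrow\mscr D(\mbb Z)$ (available by \cite[Section 6.1]{Lur_HA}): this is $n$-excisive by construction and commutes with sifted colimits, and one identifies its restriction to $\mscr D(\mbb Z)^{\le 0}$ with $G$ precisely by invoking that $G$ is already $n$-excisive there and comparing via the cube formula. Verifying this last identification, and the coherence of the whole construction, is the delicate technical heart of the argument; everything else is formal once the cube computation is in place.
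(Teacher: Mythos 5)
Your key computation is correct and well chosen: the strongly coCartesian cube with initial vertex $Y$, zeros at the singletons, and $(\Sigma Y)^{\oplus(|S|-1)}$ at the vertex $S$ is exactly the device by which an $n$-excisive functor is seen to be determined by its values one suspension closer to the connective range, and some version of it must underlie any proof of this statement. But be aware that the paper itself does not prove the proposition at all: its proof is the one-line citation of \cite[Proposition 4.2.15]{Arpon}, applied to $\mscr C\coloneqq \mscr D(\mbb Z)$ with its standard $t$-structure. So your attempt is a self-contained substitute for that reference, and it has to be judged as such — and as written it stops short at precisely the points that reference exists to handle.

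Concretely, there are two genuine gaps, both of which you flag but neither of which you close. First, essential surjectivity: you name the candidate $\tilde G = P_n(\Lan G)$ and assert that it "is $n$-excisive by construction and commutes with sifted colimits," but the second claim does not follow formally, because $\Lan G$ itself is badly behaved. For instance, for $G=\id_{\mscr D(\mbb Z)^{\le 0}}$ one has $\Lan G \simeq \Lan_{\Lat\subset \mscr D(\mbb Z)}(\iota)$, and since every mapping space $\Map(L,\mbb Z[-5])$ with $L\in\Lat$ is contractible (by $t$-structure orthogonality, only negative homotopy of the mapping spectrum survives), the slice $\Lat_{/\mbb Z[-5]}$ is equivalent to $\Lat$, whose terminal object $0$ is cofinal; hence $\Lan G(\mbb Z[-5])\simeq 0$. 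So all the desired properties of $\tilde G$ — agreement with $G$ on connectives, $n$-excisiveness, and sifted-colimit preservation (in particular commutation with colimits of diagrams that are not uniformly bounded, such as $M\simeq \colim_k \tau_{\ge -k}M$, which your bounded cube induction never reaches) — must be restored by $P_n$, and proving that they are restored is "the delicate technical heart" you defer; it is not a routine coherence check but the actual content of the cited result. Second, full faithfulness: defining a transformation on $\mscr D(\mbb Z)^{\le k+1}$ objectwise "by the limit formula" and invoking uniqueness of maps into a limit does not produce, or compare, natural transformations of $\infty$-functors. To make this step rigorous you would package the cube formula as a functor $R\colon \Fun^{n\mathrm{-exc}}(\mscr D(\mbb Z)^{\le k},\mscr D)\to \Fun(\mscr D(\mbb Z)^{\le k+1},\mscr D)$, $R(G)(Y)=\lim_{\emptyset\neq S}G((\Sigma Y)^{\oplus(|S|-1)})$ (legitimate, since the cube is functorial in $Y$), prove $F\xrightarrow{\ \sim\ }R(F|)$ for $n$-excisive $F$, and argue with this retraction — at which point $n$-excisiveness of $R(G)$ reappears as the unproved crux of surjectivity. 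A smaller point: your argument uses stability of $\mscr D$ throughout (Cartesian cubes, finite limits commuting with colimits), whereas the proposition as stated assumes only small colimits; that mismatch is partly the paper's own imprecision (excisiveness needs finite limits in the target), but a replacement proof must state and match the hypotheses under which the cited result actually holds.
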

\begin{proof}
	This is \cite[Proposition 4.2.15]{Arpon} for $\mathscr C\coloneqq {\mscr D}(\bZ)$ and the $t$-structure defined by ${\mscr D}(\bZ)^{\le 0}\subset {\mscr D}(\bZ)$.
\end{proof}

\begin{rem}
	The composition of an $n$-excisive and an $m$-excisive functor is $nm$-excisive, so excisively polynomial functors are closed under composition. In particular, $\End_{\mr{e-poly}}(\mathscr D)\subset \End(\mathscr D)$ and, more generally, $\End_{\mr{e-poly}}^\Sigma(\mathscr D)\subset \End_{\mr{e-poly}}(\mathscr D)$ have natural monoidal structures induced by composition.
\end{rem}

\begin{rem}
	The constant functor $\ul X\colon {\mscr D}(\bZ)^{\le 0}\ra \mscr D$ for $X\in \mscr D$ is $0$-excisive, and is the restriction of the constant functor $\ul X\colon {\mscr D}(\mbb Z) \ra \mscr D$. Thus, constant functors go to constant functors under the above equivalence. Also, in the case $\mscr D={\mscr D}(\mbb Z)$, the identity functor $\id_{{\mscr D}(\mbb Z)}$ is 1-excisive and restricts to the identity functor $\id_{{\mscr D}(\mbb Z)^{\le 0}}$ (composed with the embedding ${\mscr D}(\mbb Z)^{\le 0}\ra {\mscr D}(\mbb Z)$).
\end{rem}	

\begin{construction}[Derived monad associated to a polynomial monad]\label{constr:derived monads associated to polynomial monads}
	Let $\Poly({\mscr D}(\mbb Z))\subset \End({\mscr D}(\mbb Z))$ denote the full subcategory spanned by additively polynomial functors. By \Cref{rem:from finitely generated modules to connective} and \Cref{prop:polynomial vs excisive} the operation of taking the "left derived functor" induces a monoidal functor
	$$
\Poly(\Lat) \xra{\mbf L(-)} \End_{\mr{e-poly}}^\Sigma({\mscr D}(\bZ)^{\le 0}).
	$$ 
	Using the equivalence in \Cref{prop:from connective to all} we then further get a monoidal functor
	$$
	\Poly(\Lat) \xra{\mbf L(-)} \End_{\mr{e-poly}}^\Sigma({\mscr D}(\bZ)).
	$$
	In particular, having a filtered monad $\mr{T}^{\le *}\colon \mbb Z_{\ge 0}^\times \ra \End (\Lat)$ with the property that each endofunctor $\mr T^{\le n}$ is in fact additively polynomial, we obtain a filtered monad $$\mbf L\mr{T}^{\le *}\colon \mbb Z_{\ge 0}^\times \ra \End ({\mscr D}(\bZ)),$$ which we will call the \textit{filtered (full) derived monad} associated to $\mr{T}^{\le *}$.
\end{construction}

\begin{construction}\label{rem:colimit of filtered monad}
	By \cite[Proposition 4.1.4]{Arpon}, if we have a filtered monad $\mr{T}^{\le *}$ on an $\infty$-category $\mathscr C$ that admits small colimits, the corresponding colimit endofunctor $\mr{T}\coloneqq \colim_{n\in \mbb Z_{\ge 0}} \mr{T}^{\le n}\in \End(\mathscr C)$ has a natural structure of a monad on $\mathscr C$.
\end{construction}

Note that by \Cref{lem:Bin^n is polynomial} the filtered monad $\Bin^{\le *}\colon \mbb Z^\times_{\ge 0}\ra \End(\Lat)$ takes values in polynomial functors and thus fits into the framework of \Cref{constr:derived monads associated to polynomial monads}.
\begin{defn}[Derived binomial monad]
	The \textit{filtered derived binomial monad} $\mbf L\Bin^{\le *}$ on ${\mscr D}(\bZ)$ is defined as the full filtered derived monad associated to the filtered monad $\Bin^{\le *}$ on $\Lat$. The \textit{derived binomial monad} $\mbf L\Bin$ on ${\mscr D}(\bZ)$ is defined as the colimit of $\mbf L\Bin^{\le *}$ (see \Cref{rem:colimit of filtered monad}).
\end{defn}

Let us list some first properties of $\mbf L\Bin$, as well as recipes of how to compute it.

\begin{rem}[$\mbf L\Bin$ restricted to ${\mscr D}(\bZ)^{\le 0}$]\label{rem:LBin on connective part}
	By construction, the restrictions of $\mbf L\Bin$ and $\mbf L\Bin^{\le n}$ to ${\mscr D}(\bZ)^{\le 0}$ are obtained as the left Kan extensions of $\Bin$ and $\Bin^{\le n}$ from $\Lat\subset {\mscr D}(\bZ)^{\le 0}$. In particular, their restrictions to $\Ab^{\mr{tf}}\subset {\mscr D}(\bZ)^{\le 0}$ coincide with the functors $\Bin$ and $\Bin^{\le n}$ from \Cref{ssec:classical binomial algebras}. For an arbitrary $M\in {\mscr D}(\bZ)^{\le 0}$, realizing it as the geometric realization $M\simeq |M_\bullet|$ of a simplicial object $M_\bullet\in \Fun(\Delta^\op,\Ab^{\mr{tf}})$, $\Bin(M)$ is computed as the geometric realization $|\Bin(M_\bullet)|$. 
\end{rem}	

\begin{rem}[Associated graded of $\mbf L\Bin$]\label{rem:ass graded for LBin}
Recall that by \Cref{rem:associated graded for Bin} the finite filtration on $\Bin^{\le n}\in \End(\Lat)$ given by $\Bin^{\le *}$ has the associated graded $\oplus_{k=0}^n\Gamma_{\mbb Z}^k$. This way, passing to the colimit, we get a description of the associated graded for the filtration on $\mbf L\Bin $ as $\oplus_{n=0}^\infty \mbf L\Gamma_{\mbb Z}^n$ (with $\gr_n(\mbf L\Bin)\simeq \mbf L\Gamma_{\mbb Z}^n$). Here $\mbf L\Gamma_{\mbb Z}^n$ denotes the non-abelian derived functor of $\Gamma_{\mbb Z}^n$.
\end{rem}

\begin{ex}\label{ex:Bin^n in low degrees} We explicitly describe $\mbf L\Bin^{\le n}$ in low degrees.
	
	\begin{enumerate}
		\item $\Bin^0\coloneqq \Bin^{\le 0}\colon \Lat\ra \Lat$ was given by the constant functor $\ul{\mbb Z}$, thus we get that $\mbf L\Bin^{\le 0}\simeq \ul{\mbb Z}$ is the constant functor.
		
		\item \label{exsub:Bin splits as zero and rest}By \Cref{rem:properties of Bin}(\ref{part:splitting off Bin0}) $\Bin^{\le n}$ splits as $\Bin^{\le 0}\oplus \ol{\Bin^{\le n}}$, which gives a splitting $\mbf L\Bin^{\le n}\simeq \ul{\bZ}\oplus \mbf L\ol{\Bin^{\le n}}$. The second summand $\mbf L\ol{\Bin^{\le n}}$ has a finite filtration $\mbf L\ol{\Bin}^{\le *}$ with the associated graded $\oplus_{i>0}^n \mbf L\Gamma_{\mbb Z}^i$.
		\item \label{ex:Bin^n in low degrees2} $\Bin^{\le 1}\colon \Lat\ra \Lat$ splits as $\ul{\mbb Z}\oplus \id_{\Lat}$ (\Cref{rem:properties of Bin}(\ref{part:splitting off Bin0})), where $\id_{\Lat}\in \End(\Lat)$ is the identity functor. Thus so is its non-abelian derived extension: $\mbf L\Bin^{\le 1}\simeq \ul{\mbb Z}\oplus \id_{{\mscr D}(\mbb Z)}$.
	\end{enumerate}	
\end{ex}

Below, we will mostly be interested in values of $\mbf L\Bin(M)$ for $M\in {\mscr D}(\mbb Z)^{\ge 0}$. For this it will be useful to know that $\mbf L\Bin$ commutes not only with sifted colimits but also some limits.

 Let $X^\bullet\colon \Delta \ra \mathscr C$ be a cosimplicial object in a complete $\infty$-category $\mathscr C$. Recall that $X^\bullet$ is called \textit{$m$-skeletal} if it is the right Kan extension of its restriction to $\Delta_{\le m}\subset \Delta$. A \textit{finite totalization} is a homotopy limit of a cosimplicial object that is $m$-skeletal for some $m$. A functor $F$ is said to commute with finite totalizations if for any $X^\bullet$, that is $m$-skeletal for some $m$, the natural map 
$$
F(\Tot X^\bullet)\ra \Tot(F(X^\bullet))
$$
is an equivalence.

\begin{lem}\label{lem:LBin_n commutes with finite totalizations} For any $n$ the functor $\mbf L\Bin^{\le n}\colon {\mscr D}(\bZ) \ra {\mscr D}(\bZ)$ commutes with finite totalizations.	
\end{lem}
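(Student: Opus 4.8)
The plan is to reduce the statement to the single structural input that $\mbf L\Bin^{\le n}$ is an $n$-excisive functor, and then to invoke the general principle that $n$-excisive endofunctors of a stable category commute with finite totalizations. For the first point: by \Cref{lem:Bin^n is polynomial} the functor $\Bin^{\le n}\colon \Lat\ra\Lat$ is additively polynomial of degree $n$, so by \Cref{prop:polynomial vs excisive} its non-abelian derived functor on ${\mscr D}(\bZ)^{\le 0}$ is $n$-excisive, and by \Cref{constr:derived monads associated to polynomial monads} (together with \Cref{prop:from connective to all}) the full derived functor $\mbf L\Bin^{\le n}$ on ${\mscr D}(\bZ)$ is again $n$-excisive. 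Thus it suffices to prove that an arbitrary $n$-excisive functor $F\colon {\mscr D}(\bZ)\ra {\mscr D}(\bZ)$ commutes with finite totalizations.

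For the general statement I would argue through the coskeletal/cube structure. Since an $n$-excisive functor is a fortiori $m$-excisive for every $m\ge n$, and since an $m$-skeletal object is also $m'$-skeletal for every $m'\ge m$, I may assume the cosimplicial object $X^\bullet$ is $m$-skeletal with $m\ge n$ and work with $\Tot X^\bullet=\Tot_m X^\bullet=\lim_{\Delta_{\le m}}X^\bullet$. The strategy is to express this truncated totalization through the finite tower assembling it from $X^0,\ldots,X^m$, whose successive layers are governed by strongly coCartesian cubes of dimension $\le m+1$ built out of the coface maps; one then runs an induction on $m$, using at each stage that $n$-excisiveness guarantees preservation of Cartesian-ness of the relevant strongly coCartesian cubes, which is precisely what controls the image under $F$ of the comparison map $F(\Tot X^\bullet)\ra \Tot(FX^\bullet)$. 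The point is that excisiveness compensates for the failure of $F$ to preserve the individual finite limits out of which $\Tot_m$ is built.

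The main obstacle is that $FX^\bullet$ is in general \emph{not} $m$-skeletal: because $F$ does not commute with the matching-object limits defining the coskeletal extension of $X^\bullet$, the conormalized terms $N^k(FX^\bullet)$ for $k>m$ need not vanish, so a priori $\Tot(FX^\bullet)$ receives contributions in arbitrarily high cosimplicial degree even though $F(\Tot X^\bullet)$ is a single object. The content of the argument is exactly that $n$-excisiveness forces these higher contributions to assemble into $F(\Tot X^\bullet)$; this is the standard mechanism by which polynomial functors interact with finite limits in Goodwillie calculus, and in our setup it is the same compatibility that underlies the connective-to-coconnective extension of \Cref{prop:from connective to all}. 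As a convenient alternative packaging, one may instead use the finite filtration $\mbf L\Bin^{\le *}$, whose associated graded pieces are the functors $\mbf L\Gamma^i_{\mbb Z}$ for $i\le n$ (\Cref{rem:ass graded for LBin}), together with the observation that commuting with finite totalizations is closed under fiber sequences of functors (since $\Tot$ is a limit and therefore preserves fibers), thereby reducing the claim to each graded piece $\mbf L\Gamma^i_{\mbb Z}$ separately.
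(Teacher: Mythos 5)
Your reduction to $n$-excisiveness is exactly the route the paper takes: the paper's entire proof consists of the observation that $\mbf L\Bin^{\le n}$ is $n$-excisive (via \Cref{lem:Bin^n is polynomial} and \Cref{prop:polynomial vs excisive}) together with a citation of \cite[Proposition 3.37]{BM}, which is precisely the general statement you set out to prove, namely that an $n$-excisive functor commutes with finite totalizations. So the skeleton of your argument is sound, and had you invoked that proposition your proof would be complete and identical to the paper's.

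The gap is in your attempt to establish this general principle yourself. Your paragraph beginning "The main obstacle" correctly identifies the real difficulty --- $FX^\bullet$ need not be $m$-skeletal, so $\Tot(FX^\bullet)$ receives contributions from arbitrarily high cosimplicial degrees --- but your resolution of it is only asserted: saying that "$n$-excisiveness forces these higher contributions to assemble" is a restatement of the claim, not a proof, and the inductive cube argument sketched before it is never carried out (which strongly coCartesian cubes, why the relevant comparison maps are equivalences, and how the induction on $m$ closes are all left unspecified). Your alternative packaging does not repair this: the reduction via the finite filtration $\mbf L\Bin^{\le *}$ to the graded pieces $\mbf L\Gamma^i_{\mbb Z}$ is legitimate, since $\Tot$ preserves fiber sequences and hence the class of functors commuting with finite totalizations is closed under finite filtrations, but each $\mbf L\Gamma^i_{\mbb Z}$ is itself an $i$-excisive functor, so you are left proving the very same general statement for it, and no argument is given there either. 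Either cite \cite[Proposition 3.37]{BM}, as the paper does, or supply an actual proof of the cube induction; as written, the proposal begs the question at its central step.
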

\begin{proof}
	This follows from  \cite[Proposition 3.37]{BM}, since $\mbf L\Bin^{\le n}$ is $n$-excisive.
\end{proof}

\begin{rem}[Computing the value of $\mbf L\Bin(M)$]\label{rem:recipe of computing Bin} Let $M\in {\mscr D}(\mbb Z)$;  we have $M=\colim_{n} \tau^{\le n}M$. Since $\mbf L\Bin$ commutes with filtered colimits, we have $\mbf L\Bin(M)\simeq \colim_{n} \mbf L\Bin(\tau^{\le n}M)$, so we can assume $M\in {\mscr D}(\mbb Z)^{\le m}$ for some $m$. By definition, $\mbf L\Bin=\colim_{\le n}\mbf L\Bin^{\le n}$, and so it is enough to compute $\mbf L\Bin^{\le n}(M)$ for all $n$.  Any $M\in {\mscr D}(\mbb Z)^{\le m}$ can be realized as the totalization $\Tot(M^\bullet)$ of an $m$-skeletal cosimplicial object $M^\bullet\in \Fun(\Delta, {\mscr D}(\mbb Z)^{\le 0})$. \Cref{lem:LBin_n commutes with finite totalizations} then says that $\mbf L\Bin^{\le n}(M)\simeq \Tot(\mbf L \Bin^{\le n}(M^\bullet))$. Finally, each $\mbf L\Bin^{\le n}(M^i)$ for $M^i\in {\mscr D}(\mbb Z)^{\le 0}$ can be computed via \Cref{rem:LBin on connective part}.
\end{rem}	

\subsection{Derived binomial rings} \label{ssec:derived binomial rings}
The following will be the main object of study of this paper:
\begin{defn} A \textit{derived binomial ring} $B$ is a module over the monad $\mbf L\Bin$ on ${\mscr D}(\bZ)$. The $\infty$-category of derived binomial rings will be denoted $\DBinAlg$.
	\end{defn}

\begin{rem}
		By general properties of the category of modules over a monad, $\DBinAlg$ is presentable and, consequently, has all small limits and colimits (\cite[Proposition 4.1.10]{Arpon}). 
	One has the forgetful functor $$U\colon \DBinAlg \ra {\mscr D}(\bZ)$$ which is conservative and has a left adjoint  $$\mbf L\Bin\colon {\mscr D}(\bZ) \ra \DBinAlg$$ 
	given by the free derived binomial ring. Functor ${U}$ commutes with all limits and sifted colimits, see \cite[Corollary 4.2.3.5]{Lur_HA}.
\end{rem}
\begin{rem}[Initial derived binomial ring] Note that we have an isomorphism $\mbf L\Bin(0)\simeq \Bin(0)\simeq \mbb Z$.
Also, $\mbf L\Bin(0)\in \DBinAlg$ is the initial object: indeed, by adjunction
	$$
	\Map_{\DBinAlg}(\mbf L\Bin(0),B)\simeq \Map_{{\mscr D}(\bZ)}(0,{U}(B))\simeq \{*\}.
	$$
	The map $\mbf L\Bin^{\le 0} \ra \mbf L\Bin^{\le n}$ in this case is an equivalence for any $n\ge 0$: indeed, the cofiber of this map has a finite filtration with the associated graded pieces given by $\mbf L\Gamma_{\mbb Z}^k(0)\simeq \Gamma_{\mbb Z}^k(0)=0$. This way the canonical map 
	$$
	\mbf L\Bin(0)\ra B
	$$ 
	for any derived binomial ring $B$ on the level of underlying complexes can be identified with the composition 
	$$
	\mbb Z\simeq \mbf L\Bin^{\le 0}(B)\ra \mbf L\Bin(B) \ra B.
	$$
	\end{rem}

\begin{rem}\label{rem:classical binomial rings as derived}
	Note that the filtered monad $\mbf L\Bin^{\le *}$ (resp. monad $\mbf L\Bin$) preserves the subcategory $\Ab^{\mr{tf}}\subset {\mscr D}(\bZ)$ and its restriction to $\Ab^{\mr{tf}}$ by construction coincides with the filtered monad $\Bin^{\le *}$ (resp. monad $\Bin$) that we considered in \Cref{ssec:classical binomial algebras}. This induces a fully faithful embedding $\BinAlg \subset \DBinAlg$ with the essential image spanned by derived binomial algebras whose underlying complex lies in $\Ab^{\mr{tf}}$.
\end{rem}

The following will be the main example of a derived binomial ring considered in this paper:
\begin{ex}[Derived binomial ring structure on singular cohomology $C^*_\sing (X,\mbb Z)$]\label{ex:cohomology of topological spaces as a derived binomial algebra}
	Let $X\in \mathscr S\mr{pc}$ be a space and consider the binomial algebra
	$$
	\mbb Z^{X}\coloneqq \holim_X \mbb Z,
	$$
	where the limit is taken in $\DBinAlg$ and $\mbb Z$ is the initial derived binomial ring.
	Since the forgetful functor to ${\mscr D}(\bZ)$ commutes with limits we have that the underlying complex of $\mbb Z^{X}$ is given by $\holim_X \mbb Z\in {\mscr D}(\bZ)$, which computes the singular cohomology $C_\sing^*(X,\mbb Z)$. In other words, $\mbb Z^{X}$ endows $C_\sing^*(X,\mbb Z)$ with the natural structure of a derived binomial ring, which we will continue to denote in the same way. Note that $C_\sing^*(\mr{pt},\mbb Z)\simeq \mbb Z^{\mr{pt}}\simeq \mbb Z$ is the initial binomial ring.
\end{ex}

\begin{construction}[Underlying derived commutative ring] In \cite{Arpon} Raksit constructs a monad\footnote{Raksit's construction is much more general, and here we specialized just to the case $\mscr C={\mscr D}(\mbb Z)$.} $\mbf L\Sym_{\mbb Z}$ on ${\mscr D}(\mbb Z)$ and defines the $\infty$-category $\DAlg(\mbb Z)$ of \textit{derived commutative algebras} in ${\mscr D}(\mbb Z)$ as the category of modules over $\mbf L\Sym_{\mbb Z}$. This monad is obtained by the same procedure as in \Cref{constr:derived monads associated to polynomial monads} but applied to the filtered monad $\Sym^{\le *}_{\mbb Z}$ on $\Lat$. The map of filtered monads $\Sym^{\le *}_{\mbb Z}\ra \Bin^{\le *}$ on $\Lat$ (\Cref{ex:filtered monads}(3)) then induces a map
	$$
	\mbf L\Sym^{\le *}_{\mbb Z}\ra \mbf L\Bin^{\le *}
	$$
	 of filtered monads on ${\mscr D}(\mbb Z)$, as well as a map $\mbf L\Sym_{\mbb Z}\ra \mbf L\Bin$ between their colimits. This induces a natural functor 
	 $$
	 \DBinAlg\ra \DAlg(\mbb Z),
	 $$
	 that associates to a derived binomial ring its underlying derived commutative $\mbb Z$-algebra. This functor is conservative and commutes with sifted colimits since its composition with the forgetful functor $ \DAlg(\mbb Z)\ra {\mscr D}(\mbb Z)$ is the forgetful functor ${U}\colon \DBinAlg\ra {\mscr D}(\mbb Z)$.
\end{construction}	

	\begin{rem}[Underlying $\mbb E_\infty$-algebra] \label{R3.1} Let $\AAlg_{\mbb E_\infty}\!(\mbb Z)$ denote the category of $\mbb E_\infty$-algebras in ${\mscr D}(\mbb Z)$ and let $\mr{T}\coloneqq \Sym_{\mbb Z,\mbb E_\infty}\!\!\in \End({\mscr D}(\mbb Z))$ be the monad given by taking (the underlying complex of) the free $\mathbb E_\infty$--algebra; we have an equivalence $\AAlg_{\mbb E_\infty}\!(\mbb Z)\simeq \Mod_{\mr T}({\mscr D}(\mbb Z))$. In \cite{Arpon} Raksit shows that $ \Sym_{\mbb Z,\mbb E_\infty}$ comes from a certain sifted colimit preserving filtered monad $\Sym_{\mbb Z,\mbb E_\infty}^{\le *}$ on ${\mscr D}(\mbb Z)$ via the colimit procedure and constructs a natural map of filtered monads $\Sym_{\mbb Z,\mbb E_\infty}^{\le *}\!\!\ra \mbf L\Sym_{\mbb Z}^{\le *}$ (see \cite[Construction 4.2.19]{Arpon}). This produces a map of monads $\Sym_{\mbb Z,\mbb E_\infty}\!\!\ra \mbf L\Bin$, and, considering corresponding categories of modules, a conservative functor $\DBinAlg \ra \AAlg_{\mbb E_\infty}\!(\mbb Z)$. We will view the value of this functor as the underlying $\mbb E_\infty$-ring of a derived binomial ring. 
	\end{rem}

\begin{rem}	\label{rem:sym vs bin tensor Q}	 
Let us note that the maps of endofunctors $\Sym_{\mbb Z,\mbb E_\infty}\to \mbf L\Sym_{\mbb Z} \to \mbf L\Bin$ all become isomorphisms after applying $-\otimes \bQ$. Indeed, for $A\in \Lat$ we have 
$\Sym(A)\otimes \bQ \xra{\sim} \Bin(A)\otimes \bQ$ by \Cref{rem:map Sym to Bin after tensoring with Q}.
For general $M\in {\mscr D}(\mbb Z)$, the functor 
$$\End({\mscr D}(\mbb Z))\xra{\circ (-\otimes \bQ)} \Fun({\mscr D}(\mbb Z),{\mscr D}(\mbb Q))$$
commutes with colimits, so it suffices to prove that the maps
$$\Sym_{\mbb Z,\mbb E_\infty}^{\leq n}\!(M)\otimes \bQ\to \mbf L\Sym_\bZ^{\leq n}(M)\otimes \bQ \to \mbf L\Bin^{\leq n}(M)\otimes \bQ$$
are equivalence for each $n$. All three functors are $n$-excisive and so, using Proposition \ref{prop:from connective to all}, we may assume that $M\in {\mscr D}(\mbb Z)^{\leq 0}$. Then, via Proposition \ref{prop:maps from sifted completion}, we can reduce to the case $M\in \Lat$. In this case the second map is an isomorphism by \Cref{rem:map Sym to Bin after tensoring with Q}. The first is identified with the direct sum of natural maps $\Sym_{\mbb Z,\mbb E_\infty}^{i}\!(M)\simeq M^{\otimes i}_{h\Sigma_i} \ra H^0(M^{\otimes i}_{h\Sigma_i})\simeq\Sym_{\mbb Z}^i(M)$ which become isomorphisms after tensoring with $\mbb Q$ since $\Sigma_n$ is a finite group.	
\end{rem}
	
	\begin{notation}\label{not:forgetful functors from DAlg}
		We will denote by ${(-)}^\circ\colon \DBinAlg\ra \AAlg_{\mbb E_\infty}\!(\mbb Z)$ the forgetful functor induced by the map of monads $\Sym_{\mbb Z,\mbb E_\infty}\ra \mbf L\Bin$.
	\end{notation}

	The following lemma allows to understand colimits in $\DBinAlg$. 
\begin{lem}\label{lem:forgetful functor commutes with everything}
	The forgetful functor ${(-)}^\circ\colon \DBinAlg\ra \AAlg_{\mbb E_\infty}\!(\mbb Z)$ commutes with all small limits and colimits.
\end{lem}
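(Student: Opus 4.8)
The plan is to exploit the commuting triangle of forgetful functors
$$\DBinAlg \xra{(-)^\circ} \AAlg_{\mbb E_\infty}(\mbb Z) \xra{\,G\,} \mscr D(\mbb Z),$$
whose composite is the functor $U$, which is conservative and preserves all limits and all sifted colimits. Since $G$ is itself conservative and preserves all limits and all sifted colimits (being the forgetful functor of a monadic adjunction), a two-out-of-three argument handles both limits and sifted colimits at once: for a diagram $p$ in $\DBinAlg$ of shape $K$ (a limit diagram, or a sifted colimit diagram), the comparison map witnessing preservation by $(-)^\circ$ becomes an equivalence after applying $G$, because $G\circ(-)^\circ=U$ and $G$ both commute with the $K$-(co)limit in question; conservativity of $G$ then promotes this to an equivalence. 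This settles that $(-)^\circ$ preserves all small limits and all sifted colimits.

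It remains to treat general colimits. For this I would use that the class of all small colimits is generated by sifted colimits together with finite coproducts, so that a functor between $\infty$-categories admitting all small colimits preserves them all as soon as it preserves sifted colimits and finite coproducts (see \cite[\S 4.4.2, \S 5.5.8]{Lur_HTT}). By the previous paragraph the whole statement reduces to showing that $(-)^\circ$ preserves finite coproducts, i.e. the initial object and binary coproducts. The initial object is immediate, since $(-)^\circ$ carries the initial binomial ring $\mbb Z$ to the initial $\mbb E_\infty$-algebra $\mbb Z$. Note that the content here is genuinely that a restriction-of-scalars functor (a right adjoint) nonetheless preserves coproducts; this will ultimately rest on the fact that coproducts in both categories are computed by tensor products of underlying objects.

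For binary coproducts, consider the natural comparison map $c_{B_1,B_2}\colon B_1^\circ \sqcup B_2^\circ \to (B_1\sqcup B_2)^\circ$ in $\AAlg_{\mbb E_\infty}(\mbb Z)$. Both the source and the target, viewed as functors of $(B_1,B_2)$, commute with sifted colimits separately in each variable: coproducts commute with sifted colimits in each variable, and $(-)^\circ$ preserves sifted colimits by the first step. Since every derived binomial ring is the geometric realization of its monadic bar resolution by free objects $\mbf L\Bin(M)$ (\cite{Lur_HA}), it suffices to check that $c$ is an equivalence when each $B_i=\mbf L\Bin(M_i)$ is free. There the left adjoint $\mbf L\Bin$ turns the coproduct $M_1\oplus M_2$ into $\mbf L\Bin(M_1)\sqcup \mbf L\Bin(M_2)$, while a coproduct of $\mbb E_\infty$-algebras is computed by the tensor product of underlying complexes; hence, after applying the conservative $G$, the map $c$ is identified with the natural map
$$\mbf L\Bin(M_1)\otimes_{\mbb Z}\mbf L\Bin(M_2) \to \mbf L\Bin(M_1\oplus M_2).$$
Everything thus comes down to the (derived) symmetric monoidality of $\mbf L\Bin$.

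The main work, and the main obstacle, is to establish this last equivalence. On $\Lat$ it is exactly the multiplicativity of $\Bin$ recorded in \Cref{rem:properties of Bin}(\ref{part:monodicity}) (equivalently \Cref{rem:polynomial maps are symmetric monoidal}), and it refines to the level of the whole filtered monad $\Bin^{\le *}$, the filtration on the left being the convolution of the two filtrations. To propagate this to all of $\mscr D(\mbb Z)$ I would argue filtered piece by filtered piece and one variable at a time: for fixed $M_2$, both $M_1\mapsto \bigl(\mbf L\Bin(M_1)\otimes_{\mbb Z}\mbf L\Bin(M_2)\bigr)^{\le n}$ and $M_1\mapsto \mbf L\Bin^{\le n}(M_1\oplus M_2)$ are $n$-excisive functors commuting with sifted colimits (the former is a finite colimit of functors $\mbf L\Bin^{\le i}(-)\otimes_{\mbb Z}C$, each $i$-excisive), so by \Cref{prop:from connective to all} and \Cref{prop:maps from sifted completion} the comparison of $n$-th filtered pieces is determined by its restriction to $\Lat\times\Lat$, where it is the known isomorphism; taking the colimit over $n$ yields the desired equivalence, and conservativity of $G$ upgrades it to an equivalence in $\AAlg_{\mbb E_\infty}(\mbb Z)$. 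The delicate points are the bookkeeping of the convolution filtration on the tensor product and checking that the filtered comparison map is precisely the one induced by the monoidal structure on $\Bin^{\le *}$; once these are verified the proof is complete.
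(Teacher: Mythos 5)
Your proof is correct and follows essentially the same route as the paper's: limits and sifted colimits via the underlying-complex/conservativity argument, reduction of general colimits to a single binary coproduct, monadic bar resolution to free objects, and reduction of the monoidality statement $\mbf L\Bin(M)\otimes\mbf L\Bin(N)\simeq \mbf L\Bin(M\oplus N)$ to lattices via excisive, sifted-colimit-preserving functors, ending with \Cref{rem:properties of Bin}(\ref{part:monodicity}). The only cosmetic difference is that the paper handles your flagged "delicate point" (the filtration bookkeeping) by phrasing the comparison as a filtered map for the Day convolution product on $\Fun(\bZ_{\geq 0},{\mscr D}(\mbb Z))$, which is precisely the verification you defer.
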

\begin{proof}
	The argument follows closely the proof of \cite[Proposition 4.2.27]{Arpon}. All small limits and sifted colimits are preserved by ${(-)}^\circ$ since they can be computed on the level of underlying complexes in ${\mscr D}(\mbb Z)$. Since any small colimit can be realized as a sifted colimit of finite coproducts it remains to check the commutation with a single coproduct.
	
	For $A,B\in \DBinAlg$, let us consider the canonical map\footnote{Here $\boxtimes$ denotes the coproduct of $\DBinAlg$ and $\otimes$ denotes the coproduct in $\AAlg_{\mbb E_\infty}\!(\mbb Z)$.}
	\begin{center}
		$A^\circ\otimes B^\circ \sa{\varphi} (A\boxtimes B)^\circ$.
	\end{center} 
	To prove that $\varphi$ is an equivalence, we will reduce to the case when the binomial rings are free and discrete.	First, using that $\DBinAlg$ is by definition the category of modules over $\mbf L\Bin$, we may use Bar-resolutions $A\simeq \colim_{[n]\in \Delta^{\op}} \mbf L\Bin^{(n)} (A)$ and $B\simeq \colim_{[n]\in \Delta^{\op}} \mbf L\Bin^{(n)} (B)$ to reduce to the case of free derived binomial algebras (here $\mbf L\Bin^{(n)}$ denotes the $n$-th iterate of $\mbf L\Bin$). Second, note the $\mbf L\Bin$ is a left adjoint and so commutes with coproducts. Thus, for any $M,N\in {\mscr D}(\mbb Z)$, we can replace $\varphi$ with
	$$\mbf L\Bin(M)^\circ\otimes \mbf L\Bin(N)^\circ\to \mbf L\Bin(M\oplus N)^\circ.$$
	Using that the forgetful functor $\AAlg_{\mbb E_\infty}\!(\mbb Z)\ra {\mscr D}(\mbb Z)$ is symmetric monoidal (where the monoidal structure on the source is given by the tensor product) and tensor product in ${\mscr D}(\mbb Z)$ preserves filtered colimits we can identify the above map with the colimit of a filtered map $\mbf L\Bin^{\leq *}(M)\circledast \mbf L\Bin^{\leq *}(N) \to \mbf L\Bin^{\leq *}(M\oplus N)$
	where $\circledast$ is the Day convolution tensor product on $\Fun(\bZ_{\geq 0},{\mscr D}(\mbb Z))$. Thus it remains to check that the map
	$$
	\colim_{i+j\le n}\left(\mbf L\Bin^{\le i}(M)\otimes \mbf L \Bin^{\le j}(N)\right)\ra  \mbf L\Bin^{\le n}(M\oplus N)
	$$
	induced by $\phi$ is an equivalence for any $n$. Considering both sides as sifted colimit preserving polynomial functors ${\mscr D}(\mbb Z)\times {\mscr D}(\mbb Z)\ra {\mscr D}(\mbb Z)$, using Proposition \ref{prop:from connective to all}, we can reduce to the case when $M, N\in {\mscr D}(\mbb Z)^{\leq 0}$, where, using Proposition \ref{prop:maps from sifted completion}, we can reduce further to the case $M, N\in\Lat$. There it follows from  \Cref{rem:properties of Bin}(\ref{part:monodicity}).
\end{proof}

\begin{cor}\label{cor:forgetful functor to derived algebras commutes with all colimits}
	The forgetful functor ${(-)}^{\circ'}\colon \DBinAlg\ra \DAlg(\mbb Z)$ commutes with all small limits and colimits.
	
\end{cor}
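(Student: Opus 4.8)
The plan is to deduce this formally from \Cref{lem:forgetful functor commutes with everything}, by factoring $(-)^{\circ'}$ through the forgetful functor already controlled there. Recall that the maps of monads $\Sym_{\mbb Z,\mbb E_\infty}\ra \mbf L\Sym_{\mbb Z}\ra \mbf L\Bin$ induce, via restriction of scalars, the three forgetful functors $(-)^{\circ'}\colon \DBinAlg\ra \DAlg(\mbb Z)$, the functor $V\colon \DAlg(\mbb Z)\ra \AAlg_{\mbb E_\infty}\!(\mbb Z)$ of Raksit, and $(-)^\circ\colon \DBinAlg\ra \AAlg_{\mbb E_\infty}\!(\mbb Z)$. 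Since restriction of scalars is contravariantly functorial in the monad and the composite of the first two monad maps is the third, one obtains a canonical identification $(-)^\circ\simeq V\circ (-)^{\circ'}$.

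First I would record the two properties of $V$ that the argument needs. It is conservative: the composite $\DAlg(\mbb Z)\xra{V}\AAlg_{\mbb E_\infty}\!(\mbb Z)\ra {\mscr D}(\mbb Z)$ is the forgetful functor of the monad $\mbf L\Sym_{\mbb Z}$, hence conservative, and a functor whose postcomposition with a conservative functor is conservative is itself conservative. Moreover $V$ commutes with all small limits and colimits: this is precisely \cite[Proposition 4.2.27]{Arpon}, whose proof \Cref{lem:forgetful functor commutes with everything} reproduces. Together with \Cref{lem:forgetful functor commutes with everything}, which asserts that $(-)^\circ\simeq V\circ (-)^{\circ'}$ commutes with all small limits and colimits, this sets up the final formal step.

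The key observation is the elementary fact that if $G$ is conservative and preserves a class of (co)limits while $G\circ F$ preserves that class, then so does $F$. Indeed, applying $G$ to the comparison map $F(\lim_i X_i)\ra \lim_i F(X_i)$ (resp.\ $\colim_i F(X_i)\ra F(\colim_i X_i)$) and using that $G$ preserves the relevant (co)limit identifies it with the comparison map for $G\circ F$, which is an equivalence; conservativity of $G$ then forces the original map to be an equivalence. Taking $G=V$ and $F=(-)^{\circ'}$, applied once to the class of all small limits and once to all small colimits, yields the corollary.

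Since everything is formal once \Cref{lem:forgetful functor commutes with everything} is available, there is no serious obstacle. The only point that is not automatic is that $V$ preserves \emph{all} small colimits, and not merely limits and sifted colimits (which would follow directly from monadicity); for this one genuinely invokes \cite[Proposition 4.2.27]{Arpon}. Indeed, the commutation with limits and sifted colimits already follows from the conservative, limit- and sifted-colimit-preserving forgetful functors $\DBinAlg\ra {\mscr D}(\mbb Z)$ and $\DAlg(\mbb Z)\ra {\mscr D}(\mbb Z)$ by the same formal observation, so the genuine content of the corollary is the commutation with arbitrary coproducts.
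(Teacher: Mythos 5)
Your proof is correct and is essentially the paper's own argument: the paper likewise writes $(-)^\circ\simeq V\circ(-)^{\circ'}$, cites \cite[Proposition 4.2.27]{Arpon} for $V$ being conservative and (co)limit-preserving, invokes \Cref{lem:forgetful functor commutes with everything} for the composite, and leaves the formal cancellation step you spell out implicit. Your explicit statement of that cancellation lemma, and the closing remark isolating coproducts as the only non-formal content, are just slightly more detailed versions of the same route.
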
	

\begin{proof}
The forgetful functor $\DAlg(\mbb Z)\ra \AAlg_{\mbb E_\infty}\!(\mbb Z)$ is conservative and commutes with all small limits and colimits by \cite[Proposition 4.2.27]{Arpon}, and the composition $\DBinAlg\ra \AAlg_{\mbb E_\infty}\!(\mbb Z)$ commutes with all small limits and colimits by \Cref{lem:forgetful functor commutes with everything}.
\end{proof}	

\begin{notation}[Coproduct in $\DBinAlg$]
Note that the forgetful functor $\AAlg_{\mbb E_\infty}\!(\mbb Z)\ra \mscr D(\mbb Z)$ is symmetric monoidal, where monoidal structure on the left is given by coproduct. From now on we will denote the coproduct in $\DBinAlg$ by $\otimes$ for simplicity: by \Cref{lem:forgetful functor commutes with everything} and the above this is compatible with the forgetful functor $U\colon \DBinAlg\ra \mscr D(\mbb Z)$.
\end{notation}
\begin{rem}\label{rem:shift as Bar-construction}
	Let $f\colon M\ra N$ be a map in ${\mscr D}(\mbb Z)$; it induces a map of derived binomial algebras $\mbf L\Bin(M)\ra \mbf L\Bin(N)$. Since $\mbf L\Bin$ commutes with colimits one gets an equivalence
	$$
	\mbb Z\otimes_{\mbf L\Bin(M)}\!\mbf L\Bin(N)\simeq \mbf L\Bin(\cofib(f))
	$$
	where we identify $\mbb Z\simeq \mbf L\Bin(0)$ and the map $\mbf L\Bin(M)\ra \mbb Z$ is induced by the map $M\ra 0$. In particular, taking $N=0$ we get 
	$$
	\mbb Z\otimes_{\mbf L\Bin(M)}\!\mbb Z\simeq \mbf L\Bin(M[1]);
	$$
	in other words, $\mbf L\Bin(M[1])$ is computed as the Bar-construction of $\mbf L\Bin(M)$.
\end{rem}

As we saw in \Cref{rem:recipe of computing Bin} the computation of the underlying complex of $\mathbf L\Bin(M)$ for general $M\in \mscr D(\bZ)$ is quite involved: in particular, it involves a composition of limit and colimit which a priori don't commute. It would be more convenient in practice if we knew that not only $\mathbf L\Bin^{\le n}$, but the colimit $\mathbf L\Bin$ also commutes with finite totalizations. However, this is not true in general, see \Cref{rem:conditions on commutation with totalizations are sharp}. Nevertheless, $\mathbf L\Bin$ does commute with finite totalizations of cosimplicial objects if they are suitably bounded from below. 

\begin{construction}[Perverse t-structure on $\mscr D(\mbb Z)$]\label{constr:perverse t-structure on abelian groups}
	Recall that there is a \textit{perverse} $t$-structrure on $\mscr D(\mbb Z)$ which we will denote by with $\mscr D(\mbb Z)^{\le 0}_{\mr{perv}}\subset \mscr D(\mbb Z)$ spanned by $M\in $ such that $H^i(M)$ is 0 for $i>1$ and $H^1(M)$ is torsion. Note that we have $\mscr D(\mbb Z)^{\le 0}\subset \mscr D(\mbb Z)^{\le 0}_{\mr{perv}} \subset \mscr D(\mbb Z)^{\le 1}$. The right orthogonal $\mscr D(\mbb Z)^{\ge 0}_{\mr{perv}}$ is spanned by $M$ such that $H^i(M)=0$ for $i<0$ and $H^0(M)$ is torsion free. Note that the intersection $\Perf_{\mbb Z,\mr{perv}}^{\ge 0}\coloneqq \Perf_{\mbb Z}\cap \ \! \mscr D(\mbb Z)^{\ge 0}_{\mr{perv}}$ is identified with $(\Perf_{\mbb Z}^{\le 0})^\vee$ (full subcategory $\mscr D(\mbb Z)$ spanned by duals of objects in $\Perf_{\mbb Z}^{\le 0}$), and its easy to see that in fact $\mscr D(\mbb Z)^{\ge 0}_{\mr{perv}}\simeq \Ind((\Perf_{\mbb Z}^{\le 0})^\vee)$. We have $\mscr D(\mbb Z)^{\ge 1}\subset \mscr D(\mbb Z)^{\ge 0}_{\mr{perv}} \subset \mscr D(\mbb Z)^{\ge 0}$.
	
	The intersection $\Perf(\mbb Z)\cap \mscr D(\mbb Z)^{\heartsuit}_{\mr{perv}}$ is spanned by objects of the form $M^\vee$ where $M$ is a classical finitely generated abelian group. We will denote by $\tau^{\le i}_{\mr{perv}}$ (resp. $H^i_{\mr{perv}}$) the truncation (resp. cohomology) functors with respect to perverse $t$-structure. Note that the standard monoidal structure on $\mscr D(\mbb Z)$ is right t-exact for the standard t-structure it is \textit{left} $t$-exact for perverse one: $\mscr D(\mbb Z)^{\ge 0}_{\mr{perv}}\otimes^{\mbb L}_{\mbb Z} \mscr D(\mbb Z)^{\ge 0}_{\mr{perv}}\subset \mscr D(\mbb Z)^{\ge 0}_{\mr{perv}}$. 
	
\end{construction}	
\begin{rem}\label{rem:Gamma^n and Bin preserves ind-completion of dual to connective perf}
	We claim that for any $n$ one has $\mbf L\Gamma^n(\Perf_{\mbb Z,\mr{perv}}^{\ge 0})\subset \Perf_{\mbb Z,\mr{perv}}^{\ge 0}$. To see this, note that we have $$\mbf L\Gamma^n(M^\vee)\simeq \mbf L\Sym^n(M)^\vee,$$
	and $\mbf L\Sym^n$ preserves the subcategory $\Perf_{\mbb Z}^{\le 0}\subset {\mscr D}(\mbb Z)^{\le 0}$. Indeed, any object in $\Perf_{\mbb Z}^{\le 0}$ is the geometric realization $|A_\bullet|$ of an $m$-skeletal simplicial object $A_\bullet\in \Fun(\Delta^\op,\Lat)$; by \cite[Proposition 2.10.]{BGMNN} the simplicial object $\Sym^n(A_\bullet)\in \Fun(\Delta^\op,\Lat)$ is then $nm$-skeletal, and so its geometric realization is again perfect. Since $\mbf L\Gamma^n$ preserves filtered colimits this also gives $\mbf L\Gamma^n(\mscr D(\mbb Z)^{\ge 0}_{\mr{perv}})\subset \mscr D(\mbb Z)^{\ge 0}_{\mr{perv}}$. The same then holds for $\mbf L\Bin$, since it is a filtered colimit of $\mbf L\Bin^{\le n}$, that in turn have finite filtrations with the associated graded pieces given by $\oplus_{i=0}^n \mbf L\Gamma^k$ and so preserve $\mscr D(\mbb Z)^{\ge 0}_{\mr{perv}}$. 
\end{rem}	

\begin{rem}\label{rem:cohomological bounds on Bin}
	In particular, we have that for $M\in \mscr D(\mbb Z)^{\ge 0}_{\mr{perv}}$, the complexes $\mbf L\Bin^{\le n}(M)$ and $\mbf L\Bin(M)$ land in the coconnective part ${\mscr D}(\mbb Z)^{\ge 0}\subset {\mscr D}(\mbb Z)$.
\end{rem}	

\begin{prop}\label{prop:LBin commutes with suitable totalizations}
	$\mathbf L\Bin$ commutes with finite totalizations of cosimplicial diagrams with values in the subcategory $D(\mbb Z)^{\ge 0}_{\mr{perv}}$. In other words,  for an $m$-skeletal cosimplicial diagram  $M^\bullet$ in $D(\mbb Z)^{\ge 0}_{\mr{perv}}$ the natural map 
	$$
	\mbf L\Bin(\Tot(M^\bullet)) \ra \Tot(\mbf L\Bin(M^\bullet))
	$$
	in $\DBinAlg$ is an equivalence.
\end{prop}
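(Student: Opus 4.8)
The plan is to exploit the filtered structure $\mbf L\Bin = \colim_n \mbf L\Bin^{\le n}$ together with the fact (\Cref{lem:LBin_n commutes with finite totalizations}) that each $\mbf L\Bin^{\le n}$, being $n$-excisive, already commutes with finite totalizations. Since $M^\bullet$ is $m$-skeletal, $\Tot(M^\bullet)$ is a finite totalization, so for every $n$ we get $\mbf L\Bin^{\le n}(\Tot(M^\bullet)) \simeq \Tot(\mbf L\Bin^{\le n}(M^\bullet))$. As $\mbf L\Bin$ commutes with the filtered colimit defining it, this yields
$$\mbf L\Bin(\Tot(M^\bullet)) \simeq \colim_n \Tot(\mbf L\Bin^{\le n}(M^\bullet)),$$
while the target is $\Tot(\mbf L\Bin(M^\bullet)) = \Tot(\colim_n \mbf L\Bin^{\le n}(M^\bullet))$. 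Everything thus reduces to showing that the canonical interchange map $\colim_n \Tot(Y_n^\bullet) \to \Tot(\colim_n Y_n^\bullet)$ is an equivalence, where $Y_n^\bullet \coloneqq \mbf L\Bin^{\le n}(M^\bullet)$. This is exactly the point at which a filtered colimit generally fails to commute with an infinite totalization, so genuine input beyond formal manipulation is needed.

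The crucial input is the uniform coconnectivity of \Cref{rem:cohomological bounds on Bin}: since $M^\bullet$ takes values in $\Ind((\Perf_{\mbb Z}^{\le 0})^\vee) \subset {\mscr D}(\mbb Z)^{\ge 0}$, every $Y_n^\bullet$ and also $Y_\infty^\bullet \coloneqq \mbf L\Bin(M^\bullet) = \colim_n Y_n^\bullet$ is a cosimplicial object valued in ${\mscr D}(\mbb Z)^{\ge 0}$, with a bound \emph{independent of $n$}. I would then invoke the standard analysis of the totalization tower: writing $\Tot_s$ for the partial totalization, i.e.\ the limit over the finite category $\Delta_{\le s}$ (hence a finite limit), the fiber of $\Tot_s \to \Tot_{s-1}$ applied to any termwise-coconnective $Y^\bullet$ is the shift $N^s(Y^\bullet)[-s]$ of a finite limit of coconnective objects, hence lies in ${\mscr D}(\mbb Z)^{\ge s}$ (coconnectivity is closed under limits). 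Consequently $\Tot(Y^\bullet) \to \Tot_s(Y^\bullet)$ induces an isomorphism on $H^j$ for all $j < s$, and with the same bound for every $Y_n^\bullet$ and for $Y_\infty^\bullet$.

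With this in hand I would verify the interchange is an equivalence by computing $H^i$ for each fixed $i$. Pick $s > i$. Using that filtered colimits are exact in ${\mscr D}(\mbb Z)$, so that they commute with $H^i$ and with the finite limit $\Tot_s$, one obtains
$$H^i\big(\colim_n \Tot(Y_n^\bullet)\big) \cong \colim_n H^i(\Tot_s Y_n^\bullet) \cong H^i\big(\Tot_s \colim_n Y_n^\bullet\big) = H^i(\Tot_s Y_\infty^\bullet) \cong H^i(\Tot Y_\infty^\bullet),$$
where the outer two isomorphisms use the uniform degreewise stabilization of the totalization towers from the previous step, and the middle one uses exactness of filtered colimits against the finite limit $\Tot_s$. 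Tracing the construction, this chain is induced by the natural interchange map, so that map is an isomorphism on every $H^i$ and hence an equivalence.

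The main obstacle is entirely concentrated in the second step: interchanging an infinite totalization with a filtered colimit is false without a boundedness hypothesis, and the whole role of the subcategory $\Ind((\Perf_{\mbb Z}^{\le 0})^\vee)$ is to force the uniform coconnective bound of \Cref{rem:cohomological bounds on Bin}, which makes each $H^i$ of the totalization computable by a \emph{finite} totalization with a bound independent of $n$; finite totalizations are finite limits and do commute with filtered colimits. This is also precisely why the statement is sharp (cf.\ \Cref{rem:conditions on commutation with totalizations are sharp}): dropping the coconnectivity destroys the uniformity and the degreewise reduction to finite totalizations.
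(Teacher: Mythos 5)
Your proof is correct and takes essentially the same route as the paper's: both reduce via \Cref{lem:LBin_n commutes with finite totalizations} to interchanging the filtered colimit $\colim_m \mbf L\Bin^{\le m}$ with the totalization, and both conclude from the uniform coconnectivity supplied by \Cref{rem:cohomological bounds on Bin} together with the commutation of filtered colimits with the finite partial totalizations, so that the comparison map can only fail in arbitrarily high cohomological degree and hence is an equivalence. The only (cosmetic) difference is that the paper packages the key estimate as the tail $\mathrm{Fib}_n(-)$ of the totalization tower being $n$-coconnective, citing an external reference, whereas you reprove that estimate inline via the conormalized tower ($\fib(\Tot_s \to \Tot_{s-1}) \simeq N^s[-s]$) and then verify the map degreewise on $H^i$.
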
	

\begin{proof} 
	For a cosimplicial object $X^\bullet$ let us denote by $\mr{Fib}_n(X^\bullet)$ the cofiber of the natural map 
	$$\Tot(X^\bullet)\simeq \holim_{[\bullet]\in \Delta} X^\bullet \ra \holim_{[\bullet]\in \Delta_{\le n}} X^\bullet.$$ 
	
	Fix some $n\ge 0$ and let $M^\bullet$ be an $n$-skeletal cosimplicial diagram with values in $D(\mbb Z)^{\ge 0}_{\mr{perv}}$.
	We have a map of fiber sequences
	\begin{equation}\label{eq:commutative diagram of totalizations}
		\xymatrix{\colim_m \mr{Fib}_n(\mbf L\Bin^{\le m}(M^\bullet))\ar[r]\ar[d]& \colim_m \Tot(\mbf L\Bin^{\le m} (M^\bullet))\ar[r] \ar[d]& \colim_m \lim_{\Delta_{\le n}} \mbf L\Bin^{\le m}(M^\bullet)\ar[d]\\
			\mr{Fib}_n(\mbf L\Bin(M^\bullet))\ar[r]&\Tot(\mbf L\Bin(M^\bullet)) \ar[r]& \lim_{\Delta_{\le n}} \mbf L \Bin(M^\bullet).}
	\end{equation}
	Note that by \Cref{lem:LBin_n commutes with finite totalizations} the natural map
	$$
	\mbf L\Bin^{\le m}(\Tot(M^\bullet)) \ra \Tot(\mbf L\Bin^{\le m}(M^\bullet))
	$$
	is an equivalence. So the map in question can be identified with middle vertical arrow in \Cref{eq:commutative diagram of totalizations}. 
	
	Note that the right vertical arrow in \Cref{eq:commutative diagram of totalizations} is an equivalence since $\Delta_{\le n}$ is final to a finite diagram and filtered colimits commute with finite limits. Note also that all terms in $\mbf L\Bin^{\le m}(M^\bullet)$ and $\mbf L\Bin(M^\bullet)$ lie in ${\mscr D}(\mbb Z)^{\ge 0}$ and this forces $\mr{Fib}_n(\mbf L\Bin^{\le m}(M^\bullet))$ and $\mr{Fib}_n(\mbf L\Bin(M^\bullet))$ to land in ${\mscr D}(\mbb Z)^{\ge n}$ (see e.g. \cite[Corollary 3.13]{KP_p_Hodge}). It follows that the fiber of the corresponding map $\colim_m \mr{Fib}_n(\mbf L\Bin^{\le m}(M^\bullet))\ra \mr{Fib}_n(\mbf L\Bin(M^\bullet))$ also lies in ${\mscr D}(\mbb Z)^{\ge n}$ and, since the right vertical arrow is an equivalence, the same is true for the fiber of the map 
	$$
	\mbf L\Bin(\Tot(M^\bullet)) \ra \Tot(\mbf L\Bin(M^\bullet)).
	$$
	Since $n$ was arbitrary we get that the fiber of the latter is infinitely coconnective, and thus in fact is 0.
\end{proof}

\begin{cor}\label{cor:on coherent objects it is the right Kan extension}
	The restriction of $\mbf L\Bin\colon {\mscr D}(\mbb Z)\ra {\mscr D}(\mbb Z)$ to $\Perf_{\mbb Z,\mr{perv}}^{\ge 0}\subset {\mscr D}(\mbb Z)$ coincides with the right Kan extension of $\Bin\colon \Lat \to {\mscr D}(\mbb Z)$ along $\Lat\ra \Perf_{\mbb Z,\mr{perv}}^{\ge 0}$.
\end{cor}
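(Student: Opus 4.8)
The plan is to construct a canonical comparison map to the right Kan extension and prove it is an equivalence by reducing, through finite totalizations, to the already-understood case of lattices. Write $\iota\colon \Lat \hookrightarrow (\Perf_{\mbb Z}^{\le 0})^\vee$ for the inclusion. By \Cref{rem:LBin on connective part} the restriction of $\mbf L\Bin$ to $\Lat\subset \Ab^{\mr{tf}}$ is the classical functor $\Bin$, so $\iota^*\bigl(\mbf L\Bin|_{(\Perf_{\mbb Z}^{\le 0})^\vee}\bigr)\simeq \Bin$. Since $\iota$ is fully faithful, the counit $\iota^*\Ran_\iota\Bin \to \Bin$ is an equivalence, so $\Ran_\iota\Bin$ also restricts to $\Bin$ on $\Lat$. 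The adjunction $\iota^*\dashv \Ran_\iota$ applied to the equivalence $\iota^*\bigl(\mbf L\Bin|_{(\Perf_{\mbb Z}^{\le 0})^\vee}\bigr)\simeq \Bin$ then produces a canonical natural transformation
$$
\eta\colon \mbf L\Bin|_{(\Perf_{\mbb Z}^{\le 0})^\vee} \longrightarrow \Ran_\iota\Bin
$$
whose restriction to $\Lat$ is the identity of $\Bin$; in particular $\eta_L$ is an equivalence for every $L\in\Lat$. The goal is to show $\eta_M$ is an equivalence for all $M\in(\Perf_{\mbb Z}^{\le 0})^\vee$.

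Next I would use that every such $M$ is a finite totalization of lattices. Indeed $M\simeq N^\vee$ for some $N\in\Perf_{\mbb Z}^{\le 0}$; writing $N\simeq |N_\bullet|$ for an $m$-skeletal simplicial object $N_\bullet$ in $\Lat$ and dualizing levelwise yields an $m$-skeletal cosimplicial object $M^\bullet\coloneqq N_\bullet^\vee$ in $\Lat\subset(\Perf_{\mbb Z}^{\le 0})^\vee$ with $M\simeq \Tot(M^\bullet)$ (compare \Cref{rem:recipe of computing Bin}). It therefore suffices to show that both functors send this totalization to the totalization of the levelwise values. For the source this is precisely \Cref{prop:LBin commutes with suitable totalizations}: since $M^\bullet$ lands in $\Ind((\Perf_{\mbb Z}^{\le 0})^\vee)$, one gets $\mbf L\Bin(M)\simeq \Tot(\mbf L\Bin(M^\bullet))\simeq \Tot(\Bin(M^\bullet))$, the last step again by \Cref{rem:LBin on connective part}. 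Granting the analogous statement for $\Ran_\iota\Bin$, the map $\eta_M$ is identified with $\Tot(\eta_{M^\bullet})$, a totalization of equivalences, hence an equivalence; as $M$ was arbitrary this proves the corollary.

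The main obstacle is thus showing that $\Ran_\iota\Bin$ commutes with these finite totalizations, and for this I would pass to the dual picture. Under the duality $(-)^\vee$ the inclusion $\iota\colon \Lat\hookrightarrow(\Perf_{\mbb Z}^{\le 0})^\vee$ is identified with the opposite of $j\colon \Lat\hookrightarrow\Perf_{\mbb Z}^{\le 0}$, so that $\Ran_\iota$ becomes the opposite of the left Kan extension along $j$; concretely $(\Ran_\iota\Bin)(N^\vee)$ is the limit over the comma category of lattices over $N$, and the resulting functor is, up to the duality and the passage to opposites, the restriction to $\Perf_{\mbb Z}^{\le 0}$ of the sifted-colimit-preserving extension from $\Lat$ of \Cref{prop:maps from sifted completion}. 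Because $\Perf_{\mbb Z}^{\le 0}\subset {\mscr D}(\mbb Z)^{\le 0}\simeq \mathscr P_\Sigma(\Lat)$ is closed under finite geometric realizations and that extension preserves all sifted colimits, it preserves finite geometric realizations; dualizing back shows $\Ran_\iota\Bin$ preserves finite totalizations of $m$-skeletal cosimplicial objects in $(\Perf_{\mbb Z}^{\le 0})^\vee$, as needed.

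The two points requiring genuine care are the bookkeeping identifying $\Ran_\iota$ with the dual of $\Lan_j$ (including that a skeletal simplicial resolution of $N$ dualizes to an appropriately bounded cosimplicial resolution of $M$, so that \Cref{prop:LBin commutes with suitable totalizations} genuinely applies) and the compatibility of $\eta$ with the totalization identifications on both sides. Once these are in place the comparison $\eta_M\simeq \Tot(\eta_{M^\bullet})$ closes the argument, and the same reasoning also re-proves the coconnectivity bound of \Cref{rem:Gamma^n and Bin preserves ind-completion of dual to connective perf} as a sanity check on the construction.
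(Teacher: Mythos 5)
Your proof is correct and follows essentially the same route as the paper: dualize so as to work with $\Perf_{\mbb Z}^{\le 0}$ and $\Bin((-)^\vee)$, resolve an arbitrary object by an $m$-skeletal simplicial diagram in $\Lat$, and reduce everything to \Cref{prop:LBin commutes with suitable totalizations}. The only cosmetic difference is that you build an explicit comparison map $\eta$ and separately verify that the right Kan extension commutes with finite totalizations (via its identification with the restriction of the sifted-colimit-preserving extension), whereas the paper compresses exactly this point into its criterion that a functor on $\Perf_{\mbb Z}^{\le 0}$ is a left Kan extension from $\Lat$ as soon as it preserves one such colimit presentation of each object.
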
	
\begin{proof}Equivalently, we need to show that the functor $\Perf_{\mbb Z}^{\le 0} \ra \DBinAlg^\op$ sending $M$ to $\mathbf L\Bin(M^\vee)$ is a left Kan extension of $\Bin((-)^\vee)\colon \Lat \ra \DBinAlg^\op$. Embedding $\Perf_{\mbb Z}^{\le 0}$ into ${\mscr D}(\mbb Z)^{\le 0}\simeq \mscr P_\Sigma(\Lat)\subset \Fun(\Lat^\op, \Spc)$ one sees that to check that a functor $F\colon \Perf_{\mbb Z}^{\le 0}\ra \mscr D$ (where $\mscr D$ has small colimits) is a left Kan extension from $\Lat$, it is enough to check that for any $M\in \Perf_{\mbb Z}^{\le 0}$ and some presentation $M=\colim_i M_i$ as a colimit with $M_i\in \Lat$, the natural map $\colim_i F(M_i)\xra{\sim}F(\colim_i M_i)$ is an equivalence. Note that any $M\in \Perf_{\mbb Z}^{\le 0}$ can be represented as the geometric realization $|M_i|$ of an $m$-skeletal simplicial diagram $M_\bullet\in \Fun(\Delta^\op,\Lat)$. By passing to opposite categories we then reduce to checking that the natural map
	$$
	\mathbf L\Bin(\Tot(M_\bullet^\vee))\ra \Tot(\mathbf L\Bin(M_\bullet^\vee))
	$$
	for the dual cosimplicial diagram $M_\bullet^\vee\in \Fun(\Delta,\Lat)$ is an equivalence. This is a particular case of \Cref{prop:LBin commutes with suitable totalizations}.
\end{proof}

\section{Free derived binomial rings}\label{sec:free derived binomial rings}
In this section we compute free derived binomial rings on abelian groups concentrated in a single degree, which form the building blocks of the category $\DBinAlg$ of derived binomial rings. In the coconnective enough range they turn out to have a natural interpretation as cohomology of Eilenberg-MacLane spaces (see \Cref{ssec:cohomology of Eilenberg-MacLane spaces}); in the connective they rather demonstrate a somewhat singular behavior of $\mbf L\Bin$ (see \Cref{ssec:free derived binomial rings in other degrees}). In \Cref{ssec:LBin(A[-1])} we also compute explicitly what happens in the remaining cases: here $\mbf L\Bin$ no more has a clear topological interpretation, but its cohomology still defines some interesting functors which we fully describe. Besides that, in \Cref{ssec:integral homotopy type theory} we apply our computations to establish an integral version of rational homotopy theory\footnote{Let us note that this was also done earlier in a slightly different context by Horel in \cite{Horel}, and later yet in another slightly different context by Antieau \cite{Antieau_Spherical}.}; namely, we show that nilpotent spaces of finite type fully faithfully embed in $\DBinAlg$ via the singular $\mbb Z$-cohomology functor (\Cref{cor:cohomology defines fully faithful embedding}). In \Cref{ssec:interpretation as cohomology of a stack} we also give a geometric interpretation of $\mbf L\Bin(M^\vee)$ for $M\in \Perf_{\mbb Z}^{\le 0}$ as cohomology of the structure sheaf of a higher stack $K(\mbb H\otimes_{\mbb Z}M)$.
\subsection{Cohomology of Eilenberg-MacLane spaces}\label{ssec:cohomology of Eilenberg-MacLane spaces}
Most of the coconnective part of the category $\DBinAlg$ is in fact glued from the derived binomial rings of the form $C_\sing^*(X,\mbb Z)$, as we will see below. Before formulating the precise result, let us first recall some necessary notions.

\begin{construction}[Generalized Eilenberg-MacLane space]\label{constr:generalized EM-space}
	Recall that the category $\mathscr S\mr{pc}$ of spaces is the sifted completion (or \textit{animation}) of the category $\Set^{\mr{fin}}$ of finite sets. The functor
	$$
	\mbb Z[-]\colon \Set^{\mr{fin}} \ra {\mscr D}(\bZ)^{\le 0}
	$$
	sending a finite set $S$ to the free abelian group $\mbb Z[S]$ naturally extends to a small colimit preserving functor $\mathscr S\mr{pc} \ra {\mscr D}(\bZ)^{\le 0}$, which is known as the singular chains functor $X\mapsto C_*^\sing(X,\mbb Z)$ (and which one could also denote as $X\mapsto \mbb Z[X]$). It has a right adjoint 
	$$
	K\colon {\mscr D}(\bZ)^{\le 0} \ra \mathscr S\mr{pc} 
	$$
known as the \textit{generalized Eilenberg-MacLane space} functor.
	
	Recall that ${\mscr D}(\bZ)$ can be identified with the category $\Mod_{H\mbb Z}(\Sp)$ of $H\mbb Z$-modules in spectra. The forgetful functor ${\mscr D}(\bZ)\ra \Sp$ restricts to a functor ${\mscr D}(\bZ)^{\le 0} \ra \Sp_{\ge 0}$ to connective spectra whose left adjoint is given by the smash product $H\mbb Z\wedge -$. Since $H\mbb Z\wedge \Sigma^\infty_+ X\simeq C_*^\sing(X,\mbb Z)$
we get that the composition 
$${\mscr D}(\bZ)^{\le 0} \ra \Sp_{\ge 0} \xra{\Omega^{\infty}} \mathscr S\mr{pc}
	$$
	with the infinite loop functor gives another formula for $K$. Since $K$ is the right adjoint it commutes will all limits; from the latter formula it also follows that it commutes with sifted colimits by \cite[Corollary 4.2.3.5 and Proposition 1.4.3.9]{Lur_HA}. 
	
	Since both $C_*^\sing(-,\mbb Z)$ and $K$ commute with sifted colimits, given a space $X$ which is the geometric realization of a simplicial set $X_\bullet\in \Fun(\Delta^\op,\Set)$, its singular homology $C_*^\sing(X,\mbb Z)$ can be computed as the geometric realization of the simplicial abelian group $(\mbb Z[X])_\bullet\in \Fun(\Delta^\op,\Ab)$. For $K(-)$, if an object $M$ of ${\mscr D}(\bZ)^{\le 0}$ is the geometric realization $M\simeq |A_\bullet|$ of $A_\bullet\in\Fun(\Delta^\op,\Ab^{\mr{tf}})$, then $K(M)$ is computed by the geometric realization $|A_\bullet|\in \Spc$ where each $A_i$ considered just as a set.
	
From the latter description and classical Dold-Kan correspondence (see e.g. \cite[Section 1.2.3]{Lur_HA}) we see that $\pi_*(K(M))\simeq H_*(M)$. In particular, for $M\coloneqq A[n]$ with $A$ a discrete abelian group we get that $\pi_i(K(A[n]))=0$ if $i\neq n$ and $\pi_n(K(A[n])=A$, in other words $K(A[n])$ is equivalent to the classical $n$-th Eilenberg-MacLane space $K(A,n)$.
	
\end{construction}

\begin{rem}\label{rem: maps to EM-space}
	By adjunction, we get an equivalence
	$$
	\Map_{\mathscr S\mr{pc}}(X,K(M))\simeq \Map_{{\mscr D}(\bZ)^{\le 0}}(C_*^\sing(X,\mbb Z),M)\simeq K(\tau^{\le 0}\Hom_{{\mscr D}(\bZ)}(C_*^\sing(X,\mbb Z),M)).
	$$
	 For $M=\mbb Z[n]$ this gives $\Map_{\mathscr S\mr{pc}}(X,K(\mbb Z,n))\simeq K(\tau^{\le 0}(C^*_\sing(X,\mbb Z)[n]))$, since for any $X$ one has a natural equivalence $(C_*^\sing(X,\mbb Z))^\vee\simeq C^*_\sing(X,\mbb Z)$, where $(-)^\vee\coloneqq R\Hom_{{\mscr D}(\bZ)}(-,\mbb Z)$. 
	
	In particular, looking at $\pi_0$ we get the classical isomorphism $[X,K(\mbb Z,n)]\simeq H^n_\sing(X,\mbb Z)$. This gives a canonical class $[c_n]\in H^n_\sing(K(\mbb Z,n),\mbb Z)$ as an image of $\id_{K(\mbb Z,n)}$, and any class $x\in H^n_\sing(X,\mbb Z)$ is the pull-back of $[c_n]$ under the corresponding map $[x]\colon X\ra K(\mbb Z,n)$.
\end{rem}

\begin{construction}\label{constr:functor K and adjunctions} By plugging $X=K(M)$ in the equivalence 	$$\Map_{\mathscr S\mr{pc}}(X,K(M))\simeq \Map_{{\mscr D}(\bZ)^{\le 0}}(C_*^\sing(X,\mbb Z),M)$$ and looking at the image of $\id_{K(M)}$ we get a natural transformation 
	\begin{equation}\label{eq:transformation from homology of EM-space}
	C_*^\sing(K(-),\mbb Z) \ra \id_{{\mscr D}(\bZ)^{\le 0}} 
	\end{equation}
	of endofunctors of ${\mscr D}(\bZ)^{\le 0}$. Both functors commute with sifted colimits and so this transformation is determined by its restriction to $\Lat$ (by \Cref{prop:maps from sifted completion}). Note that the restrictions of corresponding functors are given by $M\mapsto M$ and $M\mapsto \mbb Z[M]$, while the transformation between them is given by the natural homomorphism $\mbb Z[M]\ra M$ that sends a generator $[m]$ to $m\in M$.

	 Post-composing with $(-)^\vee$ we get a transformation 
	\begin{equation*}
	(-)^\vee \ra C_\sing^*(K(-),\mathbb Z)
	\end{equation*}
	of functors from ${\mscr D}(\mbb Z)^{\le 0}$ to $({\mscr D}(\mbb Z)^{\ge 0})^\op$. Since $(-)^\vee$ sends colimits to limits, the above transformation is also determined by its restriction to $\Lat$, where it is given by the natural embedding 
	$$
	M^\vee \ra \Map_\Set(M,\mbb Z)
	$$
	of $\mbb Z$-linear functions $M\ra \mbb Z$ into all $\mbb Z$-valued functions on $M$ as a set.
	
	Finally, using the derived binomial ring structure on $C_\sing^*(K(-),\mathbb Z)$ (\Cref{ex:cohomology of topological spaces as a derived binomial algebra}) we can extend this to a transformation 
	\begin{equation}\label{eq:transformation from Bin}
	\mbf L\Bin((-)^\vee) \ra C_\sing^*(K(-),\mathbb Z)
	\end{equation}
	of functors from ${\mscr D}(\bZ)^{\le 0}$ to $\DBinAlg$. 
\end{construction}

\begin{ex}\label{ex:map from Bin to all functions}
	Let's evaluate the map (\ref{eq:transformation from Bin}) on $\mbb Z\in {\mscr D}(\bZ)^{\le 0}$. In this case the map is induced by $\Hom_{\mbb Z}(\mbb Z,\mbb Z) \ra \Map_\Set(\mbb Z,\mbb Z)$, where the source is identified with the constant $\mbb Z$-valued functions. The corresponding map $\Bin(\mbb Z) \ra \Map_\Set(\mbb Z,\mbb Z)$ is then identified with the embedding $\Map_{\mr{poly}}(\mbb Z,\mbb Z)\ra \Map_\Set(\mbb Z,\mbb Z)$. Note that this map is definitely \textit{not} an isomorphism: for example the source is countable, while the target has cardinality of continuum. The transformation (\ref{eq:transformation from Bin}) can be thought of as (the derived version of) the embedding of polynomial $\mbb Z$-valued functions into all. 
\end{ex}

Even though (\ref{eq:transformation from Bin}) is quite far from being an equivalence in the simplest case $M\coloneqq \mbb Z$, it is in fact one for any perfect $M$ that is 1-connective. But, before proving this, let us note the following
\begin{lem}
	The functor $C_\sing^*(K(-),\mathbb Z)\colon  \Perf_{\mbb Z}^{\le -1,\op}\ra \DBinAlg$ preserves finite coproducts: namely, for any $M_1,M_2\in \Perf_{\mbb Z}^{\le -1}$ the natural map
	$$
	C_\sing^*(K(M_1),\mathbb Z)\otimes C_\sing^*(K(M_2),\mathbb Z) \ra C_\sing^*(K(M_1\oplus M_2),\mathbb Z)
	$$
	is an equivalence.
\end{lem}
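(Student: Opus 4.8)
The plan is to reduce the statement to a Künneth equivalence for the underlying cochain complexes, and then to deduce that equivalence from the finiteness of the homology of the spaces $K(M_i)$.

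Since the forgetful functor $U\colon \DBinAlg\to \mscr D(\mbb Z)$ is conservative, it suffices to show the map becomes an equivalence after applying $U$. By \Cref{lem:forgetful functor commutes with everything} the functor ${(-)}^\circ\colon \DBinAlg\to \AAlg_{\mbb E_\infty}\!(\mbb Z)$ preserves coproducts, and the forgetful functor $\AAlg_{\mbb E_\infty}\!(\mbb Z)\to \mscr D(\mbb Z)$ is symmetric monoidal with coproduct computed by $\otimes_{\mbb Z}$; hence on underlying complexes the coproduct $\otimes$ of \DBinAlg{} becomes the tensor product $\otimes_{\mbb Z}$. Moreover $K$ is a right adjoint (\Cref{constr:generalized EM-space}) and $M_1\oplus M_2$ is the product of $M_1$ and $M_2$ in ${\mscr D}(\mbb Z)^{\le 0}$, so $K(M_1\oplus M_2)\simeq K(M_1)\times K(M_2)$. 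Under these identifications, and using that the chains functor $\mbb Z[-]\simeq H\mbb Z\wedge \Sigma^\infty_+(-)$ is symmetric monoidal (\Cref{constr:generalized EM-space}), the map in question is identified with the cross-product map
$$
C^*_\sing(K(M_1),\mbb Z)\otimes_{\mbb Z} C^*_\sing(K(M_2),\mbb Z)\to C^*_\sing(K(M_1)\times K(M_2),\mbb Z),
$$
so it remains to prove that this is an equivalence in $\mscr D(\mbb Z)$.

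Writing $C^*_\sing(X,\mbb Z)\simeq C_*^\sing(X,\mbb Z)^\vee$ and $C_*^\sing(K(M_1)\times K(M_2),\mbb Z)\simeq A\otimes_{\mbb Z} B$ with $A\coloneqq C_*^\sing(K(M_1),\mbb Z)$ and $B\coloneqq C_*^\sing(K(M_2),\mbb Z)$, the cross-product map becomes the lax-monoidal comparison map $A^\vee\otimes_{\mbb Z} B^\vee\to (A\otimes_{\mbb Z}B)^\vee$ for the internal-hom-into-the-unit functor $(-)^\vee=R\Hom(-,\mbb Z)$. Here the finiteness enters: for $M_i\in \Perf_{\mbb Z}^{\le -1}$ the space $K(M_i)$ is the infinite loop space of a connective spectrum of finite type, hence a connected nilpotent space of finite type, so $A$ and $B$ are connective with finitely generated cohomology in each degree. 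Each truncation $\tau^{\ge -n}A$ is then a bounded complex with finitely generated cohomology and so is perfect over $\mbb Z$ (as $\mbb Z$ has global dimension one), and a connectivity estimate gives $A^\vee\simeq \colim_n (\tau^{\ge -n}A)^\vee$: the cofiber of $(\tau^{\ge -n}A)^\vee\to A^\vee$ is the dual of the $(n+1)$-connective complex $\tau^{\le -n-1}A$ and hence lies in cohomological degrees growing with $n$. Since the comparison map is an equivalence for perfect complexes, passing to the filtered colimit over $n$ and $m$ (together with the analogous presentation of $(A\otimes_{\mbb Z}B)^\vee$) shows it is an equivalence in general.

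The main obstacle is this last step: the Künneth equivalence for the complexes $A,B$, which are only bounded below and not perfect. All the content reduces to the perfect case, but because $(-)^\vee$ converts colimits to limits one must carefully compare $A^\vee=R\Hom(A,\mbb Z)$ with the colimit of the duals of the perfect truncations $\tau^{\ge -n}A$; it is exactly the finite-type hypothesis on $H_*(K(M_i),\mbb Z)$, via the global-dimension-one bound over $\mbb Z$, that makes this comparison an equivalence and thereby closes the argument.
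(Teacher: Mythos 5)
Your proof is correct, but it follows a genuinely different route than the paper. The paper argues by resolution: it first checks the statement for $M_1,M_2\in\Lat[1]$, where $K(M)\simeq S^1\otimes_{\mbb Z}M$ is a compact torus and the classical K\"unneth formula applies to cohomology that is finitely generated and free in each degree; it then observes that the coproduct in $\DBinAlg$ commutes with finite totalizations in each variable (checked on underlying complexes, since finite totalizations are final to finite limits), and that any $M\in\Perf_{\mbb Z}^{\le -1}$ is a finite geometric realization of a simplicial diagram in $\Lat[1]$, so that $C^*_\sing(K(-),\mbb Z)$ turns this into a finite totalization and the general case reduces to the torus case. You instead reduce everything to a single cohomological K\"unneth statement for the product $K(M_1)\times K(M_2)$: you use \Cref{lem:forgetful functor commutes with everything} (legitimately, as it precedes this lemma) to identify the coproduct with $\otimes_{\mbb Z}$ on underlying complexes, identify the map with the cross-product map, and then prove the K\"unneth equivalence $A^\vee\otimes B^\vee\xra{\sim}(A\otimes B)^\vee$ by exhausting the duals by duals of perfect truncations, using connectivity estimates and the global dimension one of $\mbb Z$. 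The trade-off: the paper's argument stays inside its formal framework (simplicial resolutions by lattices plus elementary K\"unneth for tori) and needs no input about the homology of general Eilenberg--MacLane spaces, whereas your argument requires Serre's finiteness theorem to know that $H_*(K(M_i),\mbb Z)$ is degreewise finitely generated, but in exchange proves a more general and reusable statement -- the cohomological K\"unneth equivalence for arbitrary spaces (or bounded-below complexes) of finite type -- rather than something tailored to $\Perf_{\mbb Z}^{\le -1}$.
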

\begin{proof}
	Let $\Lat[1]$ be the image of $\Lat$ under the shift functor $[1]\colon {\mscr D}(\mbb Z)^{\le 0}\ra {\mscr D}(\mbb Z)^{\le -1}$. Note first that the statement is true for $M_1,M_2\in \Lat[1]$. Indeed, if $M\in \Lat[1]$ we have $K(M)\simeq S^1\otimes_{\mbb Z}M$, and so it follows from the K\"unneth formula for singular cohomology. 
	
	Note also that the coproduct in $\DBinAlg$ commutes with finite totalizations in each variable: indeed, it is enough to check this on the level of underlying objects in ${\mscr D}(\mbb Z)$, where it follows from the fact that finite totalizations are final to a finite limit. Any $M\in \Perf_{\mbb Z}^{\le -1}$ can be realized as the finite geometric realization of a simplicial diagram in $\Lat[1]$;  since $C_\sing^*(K(-),\mathbb Z)$ sends geometric realizations to totalizations this way one can reduce from the case of any $M_1,M_2\in \Perf_{\mbb Z}^{\le -1}$ to $M_1,M_2\in \Lat[1]$.
\end{proof}		
\begin{prop}\label{prop: description of free coconnective binomial algebras} For any $M\in \Perf_{\mbb Z}^{\le -1}$ the map 	(\ref{eq:transformation from Bin}) 
	$$
	\mbf L\Bin(M^\vee)\ra C_\sing^*(K(M),\mathbb Z)
	$$
 is an equivalence of derived binomial rings.
\end{prop}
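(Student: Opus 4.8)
The plan is to prove the statement by dévissage, reducing an arbitrary $M\in\Perf_{\mbb Z}^{\le -1}$ to the single case $M=\mbb Z[1]$ (where $K(M)\simeq S^1$) and then computing both sides explicitly. Throughout, since the forgetful functor $\DBinAlg\ra\mscr D(\mbb Z)$ is conservative, it suffices to check that \Cref{eq:transformation from Bin} is an equivalence on underlying complexes.

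First I would record that both functors $\mbf L\Bin((-)^\vee)$ and $C^*_\sing(K(-),\mbb Z)$, viewed on $(\Perf_{\mbb Z}^{\le -1})^\op$, turn finite geometric realizations in the variable $M$ into finite totalizations. For the target this holds because $K$ preserves sifted colimits (\Cref{constr:generalized EM-space}) while $C^*_\sing(-,\mbb Z)\simeq C_*^\sing(-,\mbb Z)^\vee$ sends colimits to limits. For the source, $(-)^\vee$ likewise sends colimits to limits, carrying a finite simplicial object in $\Lat[1]$ to a finite cosimplicial object in $\mathrm{Ind}((\Perf_{\mbb Z}^{\le 0})^\vee)$, where $\mbf L\Bin$ commutes with finite totalizations by \Cref{prop:LBin commutes with suitable totalizations}. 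As every $M\in\Perf_{\mbb Z}^{\le -1}$ is a finite geometric realization of a simplicial diagram in $\Lat[1]$, naturality of \Cref{eq:transformation from Bin} reduces us to $M\in\Lat[1]$. Next, both functors send finite direct sums in $M$ to coproducts $\otimes$ in $\DBinAlg$: for the target by the preceding Lemma, and for the source because $\mbf L\Bin$ is a left adjoint and $(-)^\vee$ is additive. Since every $M\in\Lat[1]$ is a finite sum of copies of $\mbb Z[1]$, we are reduced to proving that
$$
\mbf L\Bin(\mbb Z[-1])\ra C^*_\sing(S^1,\mbb Z)
$$
is an equivalence.

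For this base case I would exploit the filtration $\mbf L\Bin^{\le *}$, whose associated graded is $\bigoplus_n\mbf L\Gamma^n$ (\Cref{rem:ass graded for LBin}); this is exactly the deformation of $\mbf L\Bin$ to the free divided power functor. Dualizing via \Cref{rem:Gamma^n and Bin preserves ind-completion of dual to connective perf} gives $\mbf L\Gamma^n(\mbb Z[-1])\simeq \mbf L\Sym^n(\mbb Z[1])^\vee$, and the Illusie décalage isomorphism $\mbf L\Sym^n(\mbb Z[1])\simeq (\wedge^n_{\mbb Z}\mbb Z)[n]$ shows this is $\mbb Z$ for $n=0$, equal to $\mbb Z[-1]$ for $n=1$, and $0$ for $n\ge 2$. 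Hence the binomial filtration collapses after its first step, so $\mbf L\Bin(\mbb Z[-1])\simeq\mbf L\Bin^{\le 1}(\mbb Z[-1])$, which by \Cref{ex:Bin^n in low degrees}(3) equals $\mbb Z\oplus\mbb Z[-1]$, matching $C^*_\sing(S^1,\mbb Z)$ abstractly. It then remains to check the comparison map is an isomorphism on cohomology: on $H^0$ both sides are the unit $\mbb Z$, and on $H^1$ the map is the linear (degree-one) summand of \Cref{eq:transformation from Bin}, which by \Cref{constr:functor K and adjunctions} and \Cref{rem: maps to EM-space} sends the generator of $H^1(\mbb Z[-1])$ to the tautological class $[c_1]\in H^1(S^1,\mbb Z)$, itself a generator. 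A map of complexes concentrated in degrees $0,1$ inducing isomorphisms on $H^0$ and $H^1$ is an equivalence.

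The main obstacle is the base case, and specifically the vanishing $\mbf L\Gamma^{\ge 2}(\mbb Z[-1])\simeq 0$, which forces the binomial filtration to degenerate to its first two pieces; this is the precise incarnation of "deforming $\mbf L\Bin(\mbb Z[-1])$ to $\mbf L\Gamma(\mbb Z[-1])$." Two points deserve care: that the décalage and duality identifications are genuinely compatible with the filtration $\mbf L\Bin^{\le *}$ (so the collapse is a statement about the filtered object, not merely an abstract isomorphism of graded pieces), and that the surviving linear term of $\mbf L\Bin^{\le 1}$ maps to a \emph{generator} of $H^1(S^1,\mbb Z)$ rather than to a nonzero multiple. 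Everything preceding the base case is formal dévissage built from the commutation properties already established in the excerpt.
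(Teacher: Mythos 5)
Your proposal is correct and follows essentially the same route as the paper's proof: dévissage to $M=\mbb Z[1]$ using commutation with finite totalizations and coproducts, then collapse of the binomial filtration via the décalage vanishing $\mbf L\Gamma^{\ge 2}(\mbb Z[-1])\simeq 0$, and identification of the degree-one component of the comparison map with the tautological class. The only cosmetic difference is that you obtain the vanishing by dualizing $\mbf L\Sym^n(\mbb Z[1])$ rather than applying décalage to $\mbf L\Gamma^n$ directly, which changes nothing of substance.
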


\begin{proof}Note that $\Perf_{\mbb Z}^{\le -1}$ is generated by $\mbb Z[1]$ under finite direct sums and finite geometric realizations. The functor $C_\sing^*(K(-),\mathbb Z)$ sends finite geometric realizations to finite totalizations and finite coproducts to coproducts. By \Cref{prop:LBin commutes with suitable totalizations} this is also true for $\mbf L\Bin((-)^\vee)$ restricted to $\Perf_{\mbb Z}^{\le 0}$. Thus it is enough to show that (\ref{eq:transformation from Bin}) is an equivalence in the case $M=\mbb Z[1]$.
	 
	Note that in this case we have $K(\mbb Z[1])\simeq K(\mbb Z,1)\simeq S^1$, while $K(0)\simeq \mr{pt}$. Evaluating the adjunction $C_*^\sing(K(-),\mbb Z)\ra \id$ on $0\ra \mbb Z[1]\ra 0$ leads to a commutative diagram
	$$
	\xymatrix{\mbb Z\ar[r]\ar[d]& C_*^\sing(S^1,\mbb Z)\ar[d]\ar[r]& \mbb Z \ar[d]\\
	0\ar[r]& \mbb Z[1] \ar[r] & 0.}
	$$
	Note that the top horizontal maps split $C_*^\sing(S^1,\mbb Z)$ as $\mbb Z\oplus \mbb Z[1]$ while the vertical middle map is an isomorphism on $H_1$ and so is isomorphic to the projection on $\mbb Z[1]$.
		
	Dually, evaluating $(-)^\vee\ra C_\sing^*(K(-),\mbb Z)$ on $0\ra \mbb Z[1]\ra 0$ we get a commutative diagram
	$$
	\xymatrix{0 \ar[r]\ar[d]& \mbb Z[-1] \ar[d]\ar[r]& 0 \ar[d]\\
	\mbb Z \ar[r]& C^*_\sing(S^1,\mbb Z) \ar[r] & \mbb Z}
	$$
	in ${\mscr D}(\bZ)$ with $C^*_\sing(S^1,\mbb Z)$ being split as $\mbb Z\oplus \mbb Z[-1]$, and the middle vertical map being the embedding of the second summand. By adjunction we get a commutative diagram
	$$
	\xymatrix{\mbf L\Bin(0) \ar[r]\ar[d]_{\wr}& \mbf L\Bin(\mbb Z[-1]) \ar[d]\ar[r]& \mbf L\Bin(0) \ar[d]^{\wr}\\
		\mbb Z \ar[r]& C^*_\sing(S^1,\mbb Z) \ar[r] & \mbb Z}.
	$$
	with left and right vertical arrows being isomorphisms. Recall that $\mbf L\Bin(\mbb Z[-1])$ has an exhaustive filtration by $\mbf L\Bin^{\le n}(\mbb Z[-1])$ with the associated graded $\mbf L\Gamma^*(\mbb Z[-1])$. By decalage isomorphism \cite[Proposition I.4.3.2.1]{Illusie_Cotangent} (or \cite[Proposition A.2.49]{KP_p_Hodge}) we have $\mbf  L\Gamma^k(\mbb Z[-1])\simeq (\Lambda^k\mbb Z)[-k]$, which is isomorphic to 0 unless $k=0$ or $k=1$. This way we get that the underlying complex $\mbf L\Bin(\mbb Z[-1])$ is quasi-isomorphic to $\mbf L\Bin^{\le 1}(\mbb Z[-1])$, which canonically splits as $\mbb Z\oplus \mbb Z[-1]$ by \Cref{ex:Bin^n in low degrees}(\ref{ex:Bin^n in low degrees2}). Moreover, the composite map $\mbb Z[-1] \ra \mbf L\Bin^{\le 1}(\mbb Z[-1]) \ra C^*(S^1,\mbb Z)$ is identified with the initial map $\mbb Z[-1]\ra C^*(S^1,\mbb Z)$. Thus the middle vertical arrow induces the identity map $\mbb Z\oplus \mbb Z[-1]\ra \mbb Z\oplus\mbb Z[-1]$ and this ends the proof.

\end{proof}

\begin{ex} In particular, applying \Cref{prop: description of free coconnective binomial algebras} to $M$ given by a cyclic abelian group in a single cohomological degree, we get the following:
	\begin{itemize}
		\item For $M=\mbb Z[n]$ we get 
		$$\mathbf L\Bin(\mbb Z[-n])\simeq C^*_\sing(K(\mbb Z,n),\mbb Z).$$
		\item For $M=(\mbb Z/p^k)[n]$ we have $M^\vee\simeq \mbb Z/p^k[-n-1]$, and so
		$$\mathbf L\Bin((\mbb Z/p^k)[-n-1])\simeq C^*_\sing(K(\mbb Z/p^k,n),\mbb Z).$$
	\end{itemize}
Both equivalences only hold for $n\ge 1$.
\end{ex}


\begin{rem}
	Let $X\in \mathscr{S}\mr{pc}$ be a space. 	 Let $x\in H^n_\sing(X,\mbb Z)$ be a cohomology class; it defines a map $[x]\colon X\ra K(\mbb Z,n)$ and the corresponding map $\mbb Z[-n]\ra C^*_\sing(X,\mbb Z)$ in ${\mscr D}(\bZ)$ factors through the universal map $\mbb Z[-n]\ra C^*_\sing(K(\mbb Z,n),\mbb Z)$ (represented by the canonical class $c_n\in H^{n}_\sing(K(\mbb Z,n),\mbb Z)$, see \Cref{rem: maps to EM-space}): we thus have a commutative triangle
	$$
	\xymatrix{\mbb Z[-n] \ar[rr]^{1\mapsto c_n}\ar[dr]_{1\mapsto x} && C^*_\sing(K(\mbb Z,n),\mbb Z)\ar[dl]^{[x]^*}\\
	& C^*_\sing(X,\mbb Z),}
	$$
where the horizontal map is the one from \Cref{eq:transformation from homology of EM-space} in the case $M=\mbb Z[n]$. By using the derived binomial ring structure on $C^*_\sing(X,\mbb Z)$ we get a commutative diagram
	$$
\xymatrix{\mbf L\Bin(\mbb Z[-n]) \ar[rr]^{1\mapsto c_n}_\sim\ar[dr]_{1\mapsto x} && C^*_\sing(K(\mbb Z,n),\mbb Z)\ar[dl]^{[x]^*}\\
	& C^*_\sing(X,\mbb Z)}
$$
of derived binomial rings. In other words, the map 
$$
\mbf L\Bin(\mbb Z[-n])\ra C^*_\sing(X,\mbb Z)
$$
from the free derived binomial ring induced by $x$ is described topologically as the pull-back $$[x]^*\colon C^*_\sing(K(\mbb Z,n),\mbb Z) \ra C^*_\sing(X,\mbb Z)$$ with respect to the map $[x]\colon X\ra K(\mbb Z,n)$ classifying $x\in H_n(X,\mbb Z)$.
\end{rem}

\subsection{Integral homotopy theory}\label{ssec:integral homotopy type theory}

In this section we establish a variant of integral homotopy theory using derived binomial algebras. Our proof is essentially equivalent to the one in \cite{Horel}.

The key step is the following lemma

\begin{lem}\label{lem:singular cohomology is fully faithful for Eilenberg-MacLane spaces}
	Let $N, M\in \Perf_{\mbb Z}^{\le - 1}$. Then the natural map of mapping spaces
	$$
	\Map_{\Spc}(K(N), K(M)) \ra \Map_{\DBinAlg}(C^*_\sing(K(M),\mbb Z),C^*_\sing(K(N),\mbb Z))
	$$
	induced by the functor $C^*_\sing(-,\mbb Z)\colon \Spc^\op \ra \DBinAlg$ is an equivalence.
\end{lem}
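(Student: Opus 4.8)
The plan is to reduce the whole statement to the free--forgetful adjunction for $\DBinAlg$ combined with the identification of $C^*_\sing(K(M),\mbb Z)$ as a \emph{free} derived binomial ring. By \Cref{prop: description of free coconnective binomial algebras}, the transformation (\ref{eq:transformation from Bin}) provides an equivalence $\mbf L\Bin(M^\vee)\xra{\sim}C^*_\sing(K(M),\mbb Z)$, natural in $M\in\Perf_{\mbb Z}^{\le-1}$. Feeding this into the source variable and invoking the adjunction $\mbf L\Bin\dashv U$ gives
$$\Map_{\DBinAlg}\big(C^*_\sing(K(M),\mbb Z),C^*_\sing(K(N),\mbb Z)\big)\simeq \Map_{{\mscr D}(\mbb Z)}\big(M^\vee,\,U\,C^*_\sing(K(N),\mbb Z)\big).$$
Since $M$ is perfect we have $M^{\vee\vee}\simeq M$, and writing $C^*_\sing(K(N),\mbb Z)\simeq C_*^\sing(K(N),\mbb Z)^\vee$, the standard duality $R\Hom(M^\vee,Y^\vee)\simeq R\Hom(Y,M)$ (valid for perfect $M$) identifies the right-hand side with $\Map_{{\mscr D}(\mbb Z)}\big(C_*^\sing(K(N),\mbb Z),M\big)$. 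As both $C_*^\sing(K(N),\mbb Z)$ and $M$ lie in the fully faithful subcategory ${\mscr D}(\mbb Z)^{\le0}$, this equals $\Map_{{\mscr D}(\mbb Z)^{\le0}}\big(C_*^\sing(K(N),\mbb Z),M\big)$, which by the adjunction $C_*^\sing(-,\mbb Z)\dashv K$ of \Cref{constr:generalized EM-space} (see also \Cref{rem: maps to EM-space}) is exactly $\Map_{\Spc}(K(N),K(M))$.

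So far this is a chain of three formal equivalences; call the resulting composite $\beta$ and call the $C_*$--$K$ adjunction equivalence $\alpha\colon \Map_{\Spc}(K(N),K(M))\xra{\sim}\Map_{{\mscr D}(\mbb Z)^{\le0}}(C_*^\sing(K(N),\mbb Z),M)$. The actual content, and the step I expect to be the main obstacle, is to verify that the map $\Phi$ induced by $C^*_\sing(-,\mbb Z)$ satisfies $\beta\circ\Phi\simeq\alpha$; since $\alpha$ and $\beta$ are equivalences, this forces $\Phi$ to be one.

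To check $\beta\circ\Phi\simeq\alpha$, I would trace a map $f\colon K(N)\to K(M)$ through both composites. On the one hand, $\alpha(f)=\epsilon_M\circ C_*^\sing(f)$, where $\epsilon$ is the counit (\ref{eq:transformation from homology of EM-space}). On the other hand, $\Phi(f)=f^*$; under $\mbf L\Bin\dashv U$ this corresponds to the composite $M^\vee\to U\mbf L\Bin(M^\vee)\simeq C^*_\sing(K(M),\mbb Z)\xra{f^*}C^*_\sing(K(N),\mbb Z)$, whose first arrow is, by the very construction of (\ref{eq:transformation from Bin}) from the linear transformation $(-)^\vee\to C^*_\sing(K(-),\mbb Z)$ in \Cref{constr:functor K and adjunctions}, precisely the dual counit $\epsilon_M^\vee$. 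Using $f^*=C_*^\sing(f)^\vee$, this composite is $(\epsilon_M\circ C_*^\sing(f))^\vee$, which the double-dual/tensor--hom identification sends back to $\epsilon_M\circ C_*^\sing(f)$, matching $\alpha(f)$.

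The one delicate point is promoting these comparisons from $\pi_0$ to a coherent equivalence of mapping spaces. I would phrase each intermediate identification as an equivalence of natural transformations of functors in the pair $(M,N)$, so that tracking the universal element (the counit, respectively the unit) through each adjunction automatically upgrades the pointwise check to a homotopy-coherent one; the naturality of (\ref{eq:transformation from Bin}) supplied by \Cref{prop: description of free coconnective binomial algebras}, together with the standard compatibility of unit and counit with their adjunctions, is exactly what makes this work. Granting this, $\beta\circ\Phi\simeq\alpha$ with $\alpha,\beta$ equivalences, so $\Phi$ is an equivalence.
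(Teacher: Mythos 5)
Your proposal is correct and is essentially the paper's own proof: both identify the two mapping spaces with a common one via \Cref{prop: description of free coconnective binomial algebras} plus the $\mbf L\Bin\dashv U$ adjunction on one side and the $C_*^\sing(-,\mbb Z)\dashv K$ adjunction plus perfect-complex duality on the other, and then conclude by two-out-of-three after observing that both composite equivalences are given by composition with the universal map $M^\vee \ra C^*_\sing(K(M),\mbb Z)$ from \Cref{constr:functor K and adjunctions}. Your explicit tracing of $f$ and the dual-counit computation is exactly the compatibility check the paper leaves implicit, so the only differences are expository.
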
	

\begin{proof}
By \Cref{prop: description of free coconnective binomial algebras} we can identify 
\begin{equation}\label{eq:maps between Eilenberg-MacLane spaces}
\Map_{\DBinAlg}(C^*_\sing(K(M),\mbb Z),C^*_\sing(K(N),\mbb Z))\xra{\sim} \Map_{{\mscr D}(\mbb Z)}(M^\vee, C^*_\sing(K(N),\mbb Z))
\end{equation}
with the map given by composing the underlying map of complexes with the map $M^\vee \ra C^*_\sing(K(M),\mbb Z)$ from \Cref{constr:functor K and adjunctions}. On the other hand, by $C_*(-,\mbb Z) \dashv K$ adjunction we get an equivalence
$$
\Map_{\Spc}(K(N), K(M))\simeq \Map_{{\mscr D}(\mbb Z)}(C_*^\sing(K(N),\bZ),M)\simeq \Map_{{\mscr D}(\mbb Z)}(M^\vee, C^*_\sing(K(N),\bZ))
$$ 
with the map from the left to the right again given by composing the induced map $C^*_\sing(K(M),\mbb Z)\ra C^*_\sing(K(N),\mbb Z)$ with $M^\vee \ra C^*_\sing(K(M),\mbb Z)$. This way we get that the map in question composed with equivalence \Cref{eq:maps between Eilenberg-MacLane spaces} is an equivalence. Thus so is the original map.
\end{proof}	
\begin{notation}
For a derived binomial ring $B\in \DBinAlg$ let us denote $$[B]\coloneqq \Map_{\DBinAlg}(B,\mbb Z)\in \Spc.$$
We will call $[B]$ the \textit{homotopy type} defined by $B$. Functor $[-]$ sends colimits of derived binomial algebras to limits of spaces.
\end{notation}
\begin{rem}
Note that for any space $X\in \Spc$, the natural identification  $X\simeq \Map_{\Spc}(\{*\}, X)$ induces a comparison map 
$$c_X\colon X\ra \Map_{\DBinAlg}(C^*_\sing(X,\mbb Z),\bZ) =:[C^*_\sing(X,\mbb Z)].$$
\end{rem}
\begin{prop}[{\cite[Theorem 5.4]{Horel}}]\label{prop:X can be reconstructed from cochains} Let $X\in \Spc$ be a connected nilpotent space of finite type. Then the natural map 
	$$
	c_X\colon X\ra [C^*_\sing(X,\mbb Z)]
	$$
	is an equivalence.
\end{prop}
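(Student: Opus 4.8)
The plan is to prove the statement by induction along a principal refinement of the Postnikov tower of $X$, with the base case supplied by \Cref{lem:singular cohomology is fully faithful for Eilenberg-MacLane spaces} and the inductive step by an integral Eilenberg--Moore statement. Since $X$ is connected nilpotent of finite type, we may write $X\simeq \lim_k X_k$ where each $X_k\ra X_{k-1}$ is a principal fibration with fiber $K(A_k,m_k)$ for a finitely generated abelian group $A_k$ and some $m_k\ge 1$, classified by a $k$-invariant $\kappa_k\colon X_{k-1}\ra K(A_k,m_k+1)$; in particular each $X_k$ sits in a homotopy pullback square with corners $X_k$, $X_{k-1}$, $\mr{pt}$, $K(A_k,m_k+1)$. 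Throughout I would use that $[-]\coloneqq \Map_{\DBinAlg}(-,\mbb Z)$ sends colimits to limits, that $C^*_\sing(\mr{pt},\mbb Z)\simeq \mbb Z$ (\Cref{ex:cohomology of topological spaces as a derived binomial algebra}), and that $C^*_\sing(-,\mbb Z)\colon \Spc^\op\ra \DBinAlg$ is symmetric monoidal (derived Künneth together with \Cref{lem:forgetful functor commutes with everything}, which identifies the coproduct in $\DBinAlg$ with $\otimes^{\mathbb L}_{\mbb Z}$ on underlying complexes).

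For the base case I would take $N=0$ in \Cref{lem:singular cohomology is fully faithful for Eilenberg-MacLane spaces}: since $K(0)\simeq \mr{pt}$ and $C^*_\sing(\mr{pt},\mbb Z)\simeq \mbb Z$, the lemma yields that for every $M\in \Perf_{\mbb Z}^{\le -1}$ the map $K(M)\simeq \Map_{\Spc}(\mr{pt},K(M))\ra \Map_{\DBinAlg}(C^*_\sing(K(M),\mbb Z),\mbb Z)=[C^*_\sing(K(M),\mbb Z)]$ is an equivalence, and one checks this map is exactly $c_{K(M)}$. Applying it to $M=A[m]$ with $m\ge 1$ and $A$ finitely generated settles the result for every Eilenberg--MacLane space occurring as a fiber or a base in the tower.

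The key geometric input is that $C^*_\sing(-,\mbb Z)$ carries each principal-fibration pullback square to a pushout in $\DBinAlg$, i.e. $C^*_\sing(X_k,\mbb Z)\simeq C^*_\sing(X_{k-1},\mbb Z)\otimes_{C^*_\sing(K(A_k,m_k+1),\mbb Z)}\mbb Z$. The universal instance $X_{k-1}=\mr{pt}$ is immediate: there $X_k$ is $\Omega K(A,m+1)\simeq K(A,m)$, and combining \Cref{prop: description of free coconnective binomial algebras} with the Bar-construction identity \Cref{rem:shift as Bar-construction} gives $\mbb Z\otimes_{C^*_\sing(K(A,m+1),\mbb Z)}\mbb Z\simeq \mbf L\Bin\big((A[m+1])^\vee[1]\big)\simeq \mbf L\Bin\big((A[m])^\vee\big)\simeq C^*_\sing(K(A,m),\mbb Z)$. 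The general instance is the main obstacle: it is an integral Eilenberg--Moore statement, which I would prove by resolving $X_{k-1}\times_{K(A_k,m_k+1)}\mr{pt}$ by the cobar cosimplicial space $[\bullet]\mapsto X_{k-1}\times K(A_k,m_k+1)^{\times\bullet}$, identifying the candidate pushout with the realization of the associated bar object via monoidality, and then checking convergence through a connectivity estimate. Here finiteness and nilpotency guarantee the relevant totalizations are bounded below, so that \Cref{prop:LBin commutes with suitable totalizations} (commutation of $\mbf L\Bin$ with bounded-below totalizations), together with its commutation with filtered colimits, lets realization and totalization be compared degreewise, reducing everything to the universal instance just established.

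With this in hand the induction is formal. Applying $[-]$ to the pushout square and using that it sends pushouts to pullbacks, the naturality of $c$ produces a map from the pullback square $X_k\simeq X_{k-1}\times_{K(A_k,m_k+1)}\mr{pt}$ to the pullback square computing $[C^*_\sing(X_{k-1},\mbb Z)]\times_{[C^*_\sing(K(A_k,m_k+1),\mbb Z)]}\mr{pt}$; its three outer corners are equivalences by the inductive hypothesis and the base case, hence $c_{X_k}$ is an equivalence. Finally, since $X$ is of finite type its Postnikov tower stabilizes in each cohomological degree, so $C^*_\sing(X,\mbb Z)\simeq \colim_k C^*_\sing(X_k,\mbb Z)$ is a filtered colimit in $\DBinAlg$; applying $[-]$ turns this into $\lim_k [C^*_\sing(X_k,\mbb Z)]$, and $c_X\simeq \lim_k c_{X_k}$ is then an equivalence. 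The difficulty is thus entirely concentrated in the general Eilenberg--Moore base-change step, whose proof rests on the convergence afforded by the finiteness and nilpotency hypotheses.
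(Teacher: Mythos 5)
Your proposal is correct and takes essentially the same route as the paper: decompose $X$ by a principal refinement of its Postnikov tower, obtain the base case from \Cref{lem:singular cohomology is fully faithful for Eilenberg-MacLane spaces} applied with $N=0$, identify $C^*_\sing(X_k,\mbb Z)\simeq C^*_\sing(X_{k-1},\mbb Z)\otimes_{C^*_\sing(K(A_k,m_k+1),\mbb Z)}\mbb Z$ via Eilenberg--Moore for the inductive step, and pass to the limit using that the connectivities of the stages tend to infinity. The only divergence is in the Eilenberg--Moore step, which the paper does not prove but simply cites (To\"en's theorem, combined with \Cref{lem:forgetful functor commutes with everything}), whereas you sketch a cobar-resolution proof; that is a legitimate alternative, but note that the convergence question (cochains of the totalization versus realization of the bar object) is precisely the content of the cited theorem, and \Cref{prop:LBin commutes with suitable totalizations} --- a statement about $\mbf L\Bin$ applied to cosimplicial complexes, not about $C^*_\sing$ commuting with totalizations of cosimplicial \emph{spaces} --- does not by itself supply that convergence.
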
	
\begin{proof}
	By \cite[Proposition V.6.1]{GoerssJardinee} any $X$ connected nilpotent of finite type can be expressed as the homotopy limit of a tower 
	$$
	\ldots \ra X_i \ra X_{i-1} \ra \ldots \ra X_1 \ra X_0=*
	$$
	with the property that each map $X_i \ra X_{i-1}$ is a fiber of a map $X_{i-1} \ra K(A_i,k_i)$ with $k_i\ge 2$ and $k_i\ra \infty$ as $i\ra \infty$, and $A_i$ being a finitely generated abelian group for any $i$. By \Cref{lem:singular cohomology is fully faithful for Eilenberg-MacLane spaces} applied to $N=0$ one sees that $c_X$ is an equivalence for $X\simeq K(M)$ and $M\in \Perf_{\mbb Z}^{\le -1}$, in particular it is true for $X_0\simeq K(0)$ and $K(A_i,k_i)\simeq K(A_i[k_i])$ above. Let us show by induction on $i$ that it is also true for $X_i$. Indeed, by Eilenberg-Moore theorem (see e.g. \cite[Theorem 5.4]{Toen_geo}) and \Cref{lem:forgetful functor commutes with everything} the natural map 
	$$
	C^*_\sing(X_{i-1},\mbb Z)\otimes_{C^*_\sing(K(A_i,k_i), \mbb Z)} \mbb Z \ra C^*_\sing(X_{i},\mbb Z)
	$$
	induced by the fiber square 
	$$
	\xymatrix{X_i \ar[r]\ar[d]& X_{i-1}\ar[d]\\
	\{*\} \ar[r]& K(A_i,k_i)}
	$$
	is an equivalence, which allows to identify
	$$
	[C^*_\sing(X_i,\bZ)] \simeq [C^*_\sing(X_{i-1},\bZ)]\times_{[C^*_\sing(K(A_i,k_i),\bZ)]} [\mbb Z],
	$$
	and then $X_i$ with $[C^*_\sing(X_i,\bZ)]$.
	
	Finally, since $k_i\ra \infty$ as $i$ grows we know that 
	$$
	C^*_\sing(X,\mbb Z)\simeq \colim_i C^*_\sing(X_{i},\mbb Z) \in \DBinAlg,
	$$
	and so 
	$$[C^*_\sing(X,\mbb Z)]\simeq \lim_i [C^*_\sing(X_{i},\mbb Z)].$$
	By functoriality we get that $c_X=\lim_i c_{X_i}$ is an equivalence.
\end{proof}	
\begin{rem} Following the same line of argumentation as in \cite[Section 6]{Horel}, more generally for $X$ connected of finite type one can identify $c_X\colon X\ra [C^*_\sing(X,\bZ)]$ with the $H\mbb Z$-localization map  $X \ra L_{\bZ}X$.
\end{rem}	
\begin{cor}\label{cor:cohomology defines fully faithful embedding} The restriction of the functor $C^*_\sing(-,\mbb Z)\colon \Spc^\op \ra \DBinAlg$ to the full subcategory spanned by connected nilpotent spaces of finite type is fully faithful. 
	
\end{cor}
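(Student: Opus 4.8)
The plan is to deduce the corollary formally from \Cref{prop:X can be reconstructed from cochains}, by exhibiting the map on mapping spaces induced by $C^*_\sing(-,\mbb Z)$ as composition with the comparison map $c_X$.

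First I would record that $C^*_\sing(-,\mbb Z)\colon \Spc^\op \ra \DBinAlg$ is given by cotensoring with the initial object. By \Cref{ex:cohomology of topological spaces as a derived binomial algebra} we have $C^*_\sing(X,\mbb Z)\simeq \holim_X \mbb Z$, and since $\Map_{\DBinAlg}(B,-)$ commutes with limits this yields, naturally in $B\in \DBinAlg$ and $X\in \Spc$, an equivalence
$$
\Map_{\DBinAlg}(B, C^*_\sing(X,\mbb Z)) \simeq \holim_X \Map_{\DBinAlg}(B,\mbb Z) \simeq \Map_{\Spc}(X, [B]).
$$
In other words $C^*_\sing(-,\mbb Z)$ is right adjoint to $[-]\colon \DBinAlg \ra \Spc^\op$, and unwinding the construction of $c_X$ one checks that the counit of this adjunction, evaluated at $X$, is exactly the comparison map $c_X\colon X \ra [C^*_\sing(X,\mbb Z)]$.

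Next, for $X,Y$ in the subcategory I would trace through this adjunction to identify the functor-induced map
$$
\Map_{\Spc^\op}(X,Y)=\Map_{\Spc}(Y,X) \ra \Map_{\DBinAlg}(C^*_\sing(X,\mbb Z), C^*_\sing(Y,\mbb Z)).
$$
Applying the equivalence above with $B = C^*_\sing(X,\mbb Z)$ rewrites the target as $\Map_{\Spc}(Y, [C^*_\sing(X,\mbb Z)])$, and the functor map then becomes postcomposition with $c_X$: concretely, a morphism $f\colon Y\ra X$ goes to $C^*_\sing(f)$, whose $y$-component under $C^*_\sing(Y,\mbb Z)\simeq \holim_Y \mbb Z$ is the pullback of $C^*_\sing(X,\mbb Z)\ra \mbb Z$ along $f(y)$, which is precisely $c_X\circ f$.

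Finally, \Cref{prop:X can be reconstructed from cochains} gives that $c_X$ is an equivalence whenever $X$ is connected nilpotent of finite type, so postcomposition with $c_X$ is an equivalence of mapping spaces for every $Y$; hence the functor map is an equivalence and full faithfulness follows. The genuine content is entirely in \Cref{prop:X can be reconstructed from cochains}, which we may invoke; the remaining ingredients are the formal cotensor adjunction and the bookkeeping identifying the induced map with composition by the counit, and I expect the latter identification to be the only step requiring genuine care.
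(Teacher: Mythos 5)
Your proof is correct and follows essentially the same route as the paper: both arguments use the cotensor formula $C^*_\sing(X,\mbb Z)\simeq \holim_X\mbb Z$ to rewrite $\Map_{\DBinAlg}(C^*_\sing(X,\mbb Z),C^*_\sing(Y,\mbb Z))$ as $\Map_{\Spc}(Y,[C^*_\sing(X,\mbb Z)])$ and then reduce full faithfulness to \Cref{prop:X can be reconstructed from cochains}. The only difference is presentational: you package the reduction as the counit of an adjunction $[-]\dashv C^*_\sing(-,\mbb Z)$, whereas the paper writes the induced map directly as $\lim_X$ of copies of the comparison map, which is the same computation.
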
	
\begin{proof}
	It is enough to show that the natural map 
	$$
	\Map_{\Spc}(X,Y)\ra \Map_{\DBinAlg}(C^*_{\sing}(Y,\mbb Z), C^*_{\sing}(X,\mbb Z))
	$$
	is an equivalence if $Y$ is connected nilpotent of finite type. Note that since $X=\colim_X \{*\} \in \Spc$ we can rewrite the above map as 
	$$
	\lim_X \left(\Map_{\Spc}(\{*\},Y) \ra \Map_{\DBinAlg}(C^*_{\sing}(Y,\mbb Z), \mbb Z) \right) \simeq \lim_X \left(Y\ra [C^*_{\sing}(Y,\mbb Z)] \right). 
	$$
	It remains to refer to \Cref{prop:X can be reconstructed from cochains} to see that the latter map is an equivalence.
\end{proof}	

\begin{rem}
	In \cite[Theorem 2.9]{Ekedahl} Ekedahl shows that any nilpotent space $X$ is equivalent to a geometric realization $|X_\bullet|$ of a simplicial set $X_\bullet\in \Fun(\Delta^\op,\Set)$ where $X_0\coloneqq *$ is a point, and all terms $X_i$ can be endowed with the structures of finite free abelian groups so that all the maps $X_i\ra X_j$ induced by the simplicial structure on $X_\bullet$ are polynomial in the sense of \Cref{def:polynomial maps} (Ekedahl uses the term "numerical" instead of polynomial). Note that a polynomial map $X_i\ra X_j$ induces a $\mbb Z$-linear map $\mbb Z[X_i] \ra X_j$ via \Cref{rem:description of polynomial maps}, and then, dually, a linear map $X_j^\vee \ra \Bin(X_i^\vee)$, which in turn defines a map $\Bin(X_j^\vee)\ra \Bin(X_i^\vee)$ of classical binomial algebras. As a part of \cite[Theorem 2.9]{Ekedahl} he also shows that $X_\bullet$ can be chosen so that $C^*_\sing(X,\mbb Z)$ is computed by the corresponding totalization $\Tot_{[\bullet]\in \Delta} \Bin(X_\bullet^\vee)$. In our terminology, one gets an equivalence of derived binomial rings
	$$
	C^*_\sing(X,\mbb Z) \xra{\sim} \Tot_{[\bullet]\in \Delta} \Bin(X_\bullet^\vee).
	$$
	In particular, one can choose $X_\bullet$ to be "special and minimal" in the terminology of \textit{loc.cit.}, which, as Ekedahl shows, enjoys some uniqueness and functoriality properties on the (homotopy-)simplicial level.
\end{rem}	

The essential image of the category $(\Spc^{\mr{cn},\mr{nilp},\mr{ft}})^{\mr{op}}$ in $\DBinAlg$ seems to be quite difficult to describe explicitly. It is possible however to describe the essential image of the subcategory $(\Spc^{\mr{1-cn},\mr{ft}})^{\op}\subset (\Spc^{\mr{cn},\mr{nilp},\mr{ft}})^{\mr{op}}$ of \textit{simply-connected} spaces of finite type, as we briefly discuss below. The key to this description is the construction of \textit{Postnikov towers} for the following type of derived binomial rings:
\begin{defn} 
	Let us call a derived binomial ring $A\in \DBinAlg$ 
		\begin{itemize}\item \textit{finite type} if $H^i(A)$ is a finitely generated abelian group for all $i\in \mbb Z$;
			\item 
	\textit{simply connected} if $H^{i}(A)=0$ for $i<0$, $H^0(A)\simeq \mbb Z$, $H^1(A)=0$ and $H^2(A)$ is torsion free.
	\end{itemize}	

\end{defn}	
Denote by $\DBinAlg^{\mr{1-cn, ft}}\subset \DBinAlg$ the subcategory of simply-connected binomial algebras of finite type. Recall the perverse t-structure $(\mscr D(\mbb Z)^{\le 0}_{\mr{perv}}, \mscr D(\mbb Z)^{\ge 0}_{\mr{perv}})$ on $\mscr D(\mbb Z)$ (see \Cref{constr:perverse t-structure on abelian groups}). The simply-connectedness condition in the definition above is equivalent to $H^0_{\mr{perv}}(A)\simeq \mbb Z$ and $H^{i}_{\mr{perv}}(A)=0$ for $i<0$ and $i=1$.

\begin{prop}\label{prop:image of simply-connected spaces}
	Functor $C^*_\sing(-,\mbb Z)\colon (\Spc^{\mr{cn},\mr{nilp},\mr{ft}})^{\mr{op}} \ra \DBinAlg$ restricts to an equivalence 
	$$
	(\Spc^{\mr{1-cn},\mr{ft}})^{\op} \xra{\sim } \DBinAlg^{\mr{1-cn, ft}}.
	$$
	The inverse functor is given by $B \mapsto [B]:=\Map_{\BinAlg}(B,\mbb Z)$.
\end{prop}	
\begin{rem}
	In the cosimplicial setup a similar result is essentially contained in  \cite[Theorem 4.5]{Ekedahl}.
\end{rem}	
\begin{proof}   The functor $C^*_\sing(-,\mbb Z)$ restricted to $(\Spc^{\mr{1-cn},\mr{ft}})^{\op}\subset \Spc^\op$ naturally lands in $\DBinAlg^{\mr{1-cn, ft}}$; indeed $H_1(X,\mbb Z)=0$ for any simply-connected space $X$, which forces $H^1(X,\mbb Z)$ to be 0 and $H^2(X,\mbb Z)$ to be torsion free by the universal coefficients formula. By \Cref{cor:cohomology defines fully faithful embedding} this restriction is fully faithful and we only need to show that $\DBinAlg^{\mr{1-cn, ft}}$ is in its essential image. For this one can imitate the construction of Postnikov tower in the setting of derived binomial algebras.
	
	Given $B\in \DBinAlg^{\mr{1-cn, ft}}$ we will construct an $\mbb N$-indexed system 
	\begin{equation}\label{eq:inverse system}
	B_0\ra B_1 \ra B_2 \ra \ldots \ra B 
	\end{equation}
	 of derived binomial rings such that  for any $i$ we have
	 \begin{itemize}
	 	\item $B_i\simeq C^*_\sing(X_i,\mbb Z)$ for some $X_i\in \Spc^{\mr{1-cn},\mr{ft}}$;
	 	\item the induced map $\tau^{\le i}_{\mr{perv}}B_i\ra \tau^{\le i}_{\mr{perv}}B$ in $D(\mbb Z)$ is an equivalence and $H^{i+1}_{\mr{perv}}(B_i) \ra H^{i+1}_{\mr{perv}}(B)$ is an embedding (in other words fiber of $B_i \ra B$ is $(n+1)$-coconnected in perverse $t$-structure).
	\end{itemize}	
This is enough since by the fully faithfulness of $C^*_\sing(-,\mbb Z)$ the system (\ref{eq:inverse system}) comes from an inverse system $\ldots \ra X_i \ra \ldots \ra X_2 \ra X_1$ of simply-connected spaces and then $B\simeq C^*_\sing(X,\mbb Z)$ where $X:=\lim_i X_i$.

We put $B_0=B_1:=\mbb Z$; the required conditions are satisfied since $B$ is 1-connected. Assume that we have constructed $s_n\colon B_n\ra B$ for some $n\ge 1$. Let $F_n\in \mscr D(\mbb Z)$ be the fiber of $s_n$; we have a fiber sequence 
$$
F_n \ra B_n \ra B
$$
and by our assumptions $F_n\in  \mscr D(\mbb Z)^{\ge n+2}_{\mr{perv}}$. Taking the first non-trivial perverse truncation of $F_n$ we get a natural map $H^{n+2}_{\mr{perv}}(F_n)[-n-2] \ra B_n$ in $\mscr D(\mbb Z)$ which extends to a map $\mbf{L}\Bin(H^{n+2}_{\mr{perv}}(F_n)[-n-2]) \ra B_n$ of derived binomial rings. We let $B_{n+1}$ to be the pushout
$$
\xymatrix{\mbb Z \ar[r] &B_{n+1}\\
\mbf{L}\Bin(H^{n+2}_{\mr{perv}}(F_n)[-n-2]) \ar[u]\ar[r]& B_n\ar[u]}
$$
in $\DBinAlg$. Since the composite map $H^{n+2}_{\mr{perv}}(F_n)[-n-2] \ra B_n\ra B$ is 0, the map $s_n\colon B_n\ra B$ naturally extends to a map $s_{n+1}\colon B_{n+1}\ra B$. Moreover, note that by \Cref{prop: description of free coconnective binomial algebras} we have $\mbf{L}\Bin(H^{n+2}_{\mr{perv}}(F_n)[-n-2])\simeq C^*_\sing(K(H^{n+2}_{\mr{perv}}(F_n)^\vee,n+2),\mbb Z)$. Since $B_n \simeq C^*_\sing(X_n,\mbb Z)$ for some $X_n\in \DBinAlg^{\mr{1-cn, ft}}$ the map $H^{n+2}_{\mr{perv}}(F_n)[-n-2] \ra B_n$ comes from some map of spaces $X_n \ra K(H^{n+2}_{\mr{perv}}(F_n)^\vee,n+2)$. Then the pushout square above induces a map 
$$
B_{n+1} \ra C^*_\sing(X_{n+1},\mbb Z)
$$
	where $X_{n+1}:= \{*\} \times_{K(H^{n+2}_{\mr{perv}}(F_n)^\vee,n+2)} X_n $, which is an equivalence by Eilenberg-Moore theorem. Moreover, since $n\ge 1$ the Eilenberg-Maclane space  $K(H^{n+2}_{\mr{perv}}(F_n)^\vee,n+2)$ is 2-connected and so $X_{n+1}$ is still simply-connected; consequently, $B_{n+1}$ is simply-connected.
	
	It remains to show that the fiber of $B_{n+1}\ra B$ is $(n+2)$-coconnective in the perverse t-structure on $\mscr D(\mbb Z)$. For this we will construct an exhaustive filtration of $\mbb Z\simeq \mbf L\Bin(0)$ as an $\mbf{L}\Bin(H^{n+2}_{\mr{perv}}(F_n)[-n-2])$--module with the associated graded $\mbf{L}\Gamma(H^{n+2}_{\mr{perv}}(F_n)[-n-1])\otimes\mbf{L}\Bin(H^{n+2}_{\mr{perv}}(F_n)[-n-2])$. For simplicity put $A\coloneqq H^{n+2}_{\mr{perv}}(F_n)$. We represent the map $A[-n-2]\ra 0$ by the map of complexes
	$$
	\xymatrix{ \ldots \ar[r] \ar[d]& 0  \ar[r] \ar[d]& 0 \ar[d] \ar[r] & A \ar[r] \ar[d]& 0 \ar[r]\ar[d]& \ldots \\
		\ldots \ar[r] & 0 \ar[r]& A\ar[r]^{\id} & A \ar[r] & 0 \ar[r] & \ldots 
}
	$$ 
	where on top $A$ is put in degree $n+2$. Consider the induced map of Dold-Kan corresponding cosimplicial objects $\mr{DK}^\bullet(A[-n-2]) \ra \mr{DK}^\bullet([A\ra A])$; note that both source and target are obtained as finite totalizations of a cosimplicial object in $\Perf_{\mbb Z,\mr{perv}}^{\ge 0}$. By \Cref{cor:on coherent objects it is the right Kan extension} the induced map on totalizations
	$$
	\Tot(\mbf L\Bin(\mr{DK}^\bullet(A[-n-2]))) \ra \Tot(\mbf L\Bin(\mr{DK}^\bullet([A\ra A])))
	$$ 
	is equivalent to the map $\mbf{L}\Bin(A[-n-2])\ra \mbb Z$. Each term $\mbf L\Bin(\mr{DK}^k([A\ra A]))$ is given by $\mbf L\Bin(\mr{DK}^k(A[-n-2]))\otimes \mbf L\Bin(\mr{DK}^k(A[-n-1]))$ and the binomial filtration on the second term $\mbf L\Bin(\mr{DK}^\bullet(A[-n-2]))\otimes \mbf L\Bin^{\le *}(\mr{DK}^\bullet(A[-n-1]))$ gives a well-defined filtration on the cosimplicial object $\mbf L\Bin(\mr{DK}^\bullet([A\ra A]))$ as an $\mbf L\Bin(\mr{DK}^\bullet(A[-n-2]))$--module. One shows similarly to \Cref{prop:LBin commutes with suitable totalizations} that this gives an exhaustive filtration on $\Tot(\mbf L\Bin(\mr{DK}^\bullet([A\ra A])))\simeq \mbf L\Bin(0)\simeq \mbb Z$ as an $\mbf L\Bin(\mr{DK}^\bullet(A[-n-2]))$--module; corresponding $i$-th graded piece is the totalization of $\mbf L\Bin(\mr{DK}^\bullet(A[-n-2]))\otimes \mbf L\Gamma_{\mbb Z}^i(\mr{DK}^\bullet(A[-n-1]))$, which is identified with $\mbf{L}\Bin(A[-n-2])\otimes \mbf L\Gamma_{\mbb Z}^i(A[-n-1])$.
	
	We also obtain an exhaustive filtration $B_{n+1}^{\le *}$ on $B_{n+1}\coloneqq B_n\otimes_{\mbf{L}\Bin(H^{n+2}_{\mr{perv}}(F_n)[-n-2])}\mbb Z$ by considering the tensor product filtration (with constant filtrations on $B_n$ and $\mbf{L}\Bin(H^{n+2}_{\mr{perv}}(F_n)[-n-2])$). Its associated graded is $B_n\otimes_{\mbb Z} \mbf{L}\Gamma_{\mbb Z}^*(H^{n+2}_{\mr{perv}}(F_n)[-n-1])$ with the grading coming from $\mbf L\Gamma_{\mbb Z}^*$.
	Recall that $\mbf{L}\Gamma_{\mbb Z}^i(H^{n+2}_{\mr{perv}}(F_n)[-n-1])\simeq (\mbf{L}\Sym^i(H^{n+2}_{\mr{perv}}(F_n)^\vee[n+1]))^\vee$ and that $H^{n+2}_{\mr{perv}}(F_n)^\vee$ is a finitely generated classical abelian group. By \cite[Proposition 25.2.4.1]{Lur_SAG} $\mbf{L}\Sym^i(H^{n+2}_{\mr{perv}}(F_n)^\vee[n+1])$ is $(2i+n -2)$--connective in the classical t-structure on $\mscr D(\mbb Z)$; thus $\mbf{L}\Gamma_{\mbb Z}^i(H^{n+2}_{\mr{perv}}(F_n)[-n-1])$ is $(2i+n -2)$--coconnective in the perverse t-structure and, consequently, so is each graded piece $B_n\otimes_{\mbb Z} \mbf{L}\Gamma_{\mbb Z}^i(H^{n+2}_{\mr{perv}}(F_n)[-n-1])$.

	Consider the spectral sequence for $H^{*}_\mr{perv}(-)$ corresponding to the above filtration:
	$$
	E_2^{p,q}:= H^{p+q}_{\mr{perv}}(B_n \otimes \mbf{L}\Gamma_{\mbb Z}^q(H^{n+2}_{\mr{perv}}(F_n)[-n-1])) \Rightarrow H^{p+q}_\mr{perv}(B_{n+1}).
	$$
	Note that the first column is identically 0 since $B_n$ is simply connected. Also, $E_2^{p,0}\simeq H^p_{\mr{perv}}(B_n)$ since $H^0_{\mr{perv}}(\mbf{L}\Gamma_{\mbb Z}^*(H^{n+2}_{\mr{perv}}(F_n)[-n-1]))=\mbb Z$, $E_2^{p,q}=0$ for $0<q<n+1$ and $q=n+2$ because of the description of perverse cohomology of  $\mbf{L}\Gamma_{\mbb Z}^q(H^{n+2}_{\mr{perv}}(F_n)[-n-1])$ above.
	To summarize, second sheet looks like that:
\begin{flushleft}
\begin{tikzpicture}
	\matrix (m) [matrix of math nodes,
	nodes in empty cells,
	nodes={minimum width=12ex,
		minimum height=12ex,
		outer sep=-1pt},
	column sep=2.5ex, row sep=0.1ex]{
		n+2 \strut &  0  \strut & 0       &0      &\cdots & 0     & 0 & 0\\                   
		n+1    &   H^{n+2}_{\mr{perv}}(F_n)  &  0   & \cdots     & \cdots & \cdots & \cdots & \cdots \\
		n   &  0   & 0  & \cdots    & \cdots & 0 &0 & 0\\
		\vdots \strut   &  \vdots   &  \vdots & \vdots    & \vdots & \vdots & \vdots  &\vdots\\
		1      &  0   & 0  & 0   & \cdots & 0&0 & 0\\
		0      &  \mbb Z   &  0  &    H^{2}_{\mr{perv}}(F_n)&  \cdots   &  H^{n+1}_{\mr{perv}}(B_n) & H^{n+2}_{\mr{perv}}(B_n) & H^{n+3}_{\mr{perv}}(B_n) \\
		\quad\strut &   0 &  1  &  2  &  \cdots &  n+1  & n+2 & n+3  \strut \\};
	\draw[-stealth] (m-2-2) -- (m-6-7);
	\draw[thick] (m-1-1.north east) -- (m-7-1.east) ;
	\draw[thick] (m-7-1.north) -- (m-7-8.north east) ;
\end{tikzpicture}
\end{flushleft}
with the arrow depicting the first potentially non-zero differential. From the form of the spectral sequence we thus get that the natural map $H^i_{\mr{perv}}(B_{n})\ra H^{i}_{\mr{perv}}(B_{n+1})$ is an isomorphism for $i\le n$, that there is an exact sequence
$$
0\ra H^{n+1}_{\mr{perv}}(B_n) \ra H^{n+1}_{\mr{perv}}(B_{n+1}) \ra H^{n+2}_{\mr{perv}}(F_n) \ra H^{n+2}_{\mr{perv}}(B_n) \ra H^{n+2}_{\mr{perv}}(B_{n+1}) \ra 0.
$$
and for $i=n+3$ we have an embedding $H^{n+3}_{\mr{perv}}(B_{n})\ra H^{n+3}_{\mr{perv}}(B_{n+1})$. Let $F_{n+1}$ be the fiber of $B_{n+1}\ra B$ and $C_n$ be the cofiber of $B_n\ra B_{n+1}$. By the above we have $H^i_{\mr{perv}}(C_n)=0$ for $i\le n$,  $H^{n+1}_{\mr{perv}}(C_n)\simeq H^{n+2}_{\mr{perv}}(F_n)$ and $H^{n+2}_{\mr{perv}}(C_n)=0$. We have a commutative diagram
$$
\xymatrix{C_n\ar@{=}[r]& C_n \ar[r]& 0\\
	F_{n+1} \ar[r]\ar[u]& B_{n+1}\ar[r]\ar[u]& B\ar[u]\\
	F_n \ar[r]\ar[u]& B_n\ar[r]\ar[u]& B\ar@{=}[u]}
$$
where all rows and columns are fiber sequences. Looking at the left column we get a long exact sequence of perverse cohomology
\begin{equation}\label{eq:exact sequence of cohomology}
0=H^{n+1}_{\mr{perv}}(F_n)\ra H^{n+1}_{\mr{perv}}(F_{n+1}) \ra H^{n+1}_{\mr{perv}}(C_n) \ra H^{n+2}_{\mr{perv}}(F_n) \ra H^{n+2}_{\mr{perv}}(F_{n+1}) \ra H^{n+2}_{\mr{perv}}(C_n)=0.
\end{equation}

To analyse the maps in this sequence we now replace $B_{n+1}$ with analogous pushout, but in $\mscr D(\mbb Z)$ rather then $\DBinAlg$. Namely, let $P_{n+1}$ be the pushout
$$
\xymatrix{B_n \ar[r] & P_{n+1}\\
H^{n+2}_{\mr{perv}}(F_n)[-n-2]\ar[u] \ar[r]& 0	\ar[u] 
}
$$
 in $\mscr D(\mbb Z)$, where the map $H^{n+2}_{\mr{perv}}(F_n)[-n-2]\ra B_n$ is induced by $F_n \ra B_n$. We have a natural map $P_{n+1} \ra B_{n+1}$, and in fact one can describe $P_{n+1}$ in terms of the filtration on $B_{n+1}$ that we considered. Namely, note that there is a natural map $\Sigma (H^{n+2}_{\mr{perv}}(F_n)[-n-2]) \simeq H^{n+2}_{\mr{perv}}(F_n)[-n-1] \ra P_{n+1}$ (which is induced by $0\ra B_n$) whose fiber is $B_n$. Similarly, the filtered piece $B_{n+1}^{\le 1}$ maps to $B_n\otimes H^{n+2}_{\mr{perv}}(F_n)[-n-1]$ with fiber $B_n$ and $P_{n+1}$ is exactly the pull-back of this extension via the natural map $H^{n+2}_{\mr{perv}}(F_n)[-n-1] \ra B_n\otimes H^{n+2}_{\mr{perv}}(F_n)[-n-1]$ (induced by $\mbb Z \ra B_n$). Writing the spectral sequence for perverse cohomology of this extension we see that it agrees with the spectral sequence for $B_{n+1}^{\le *}$ in the range drawn above. In particular, the cofiber $\widetilde{C}_n\simeq H^{n+2}_{\mr{perv}}(F_n)[-n-1]$ of $B_n \ra P_{n+1}$ agrees with $C_n$ up $(n+2)$--nd perverse cohomology. Consequently, using \Cref{eq:exact sequence of cohomology}, if we put $\widetilde F_{n+1}$ to be the fiber of natural map $P_{n+1}\ra B$, we also have $H^{i}_{\mr{perv}}(\widetilde F_{n+1})\simeq H^{i}_{\mr{perv}}(F_{n+1})$ for $i\le n+2$. But now it is clear that in the map of fiber sequences
 $$
 \xymatrix{\widetilde F_{n+1}\ar[r] &P_{n+1}\\
F_n \ar[r] \ar[u]& B_n\ar[u]\\
 H^{n+2}_{\mr{perv}}(F_n)[-n-2] \ar[u]\ar@{=}[r] & H^{n+2}_{\mr{perv}}(F_n)[-n-2] \ar[u]
} 
 $$
the low left vertical arrow is identified with the map from $(n+2)$--nd truncation (because $P_{n+1}$ is the cofiber of the composite $H^{n+2}_{\mr{perv}}(F_n)[-n-2] \ra F_n \ra B_n$). Writing long exact sequence of perverse cohomology for the left fiber sequence we get that $H^{i}_{\mr{perv}}(\widetilde F_{n+1})\simeq H^{i}_{\mr{perv}}(F_{n+1})$ for all $i\le n+2$ are 0.
\end{proof}	

\begin{rem}
	As one can see from the construction of \Cref{prop:image of simply-connected spaces} the tower $\mbb Z =B_0 \ra B_1 \ra B_2 \ra \ldots \ra B$ really corresponds to the Postnikov tower $X \ra \ldots \ra X_2 \ra X_1 \ra X_0 =*$ (for the space $X:= \Map_{\DBinAlg}(B,\mbb Z)$) after applying $\Map_{\DBinAlg}(-,\mbb Z)$.  Indeed, for every $n$ the pushout
	$$
	\xymatrix{\mbb Z \ar[r] &B_{n+1}\\
		\mbf{L}\Bin(H^{n+2}_{\mr{perv}}(F_n)[-n-2]) \ar[u]\ar[r]& B_n\ar[u]}
	$$ 
	is turned into a pull-back square 
	$$
		\xymatrix{X_{n+1} \ar[r] \ar[d]& X_n \ar[d]\\
{*} \ar[r] & K(H^{n+2}_{\mr{perv}}(F_n)^\vee, n+2)}
	$$
Here note again that by the definition of perverse $t$-structure, $H^{n+2}_{\mr{perv}}(F_n)^\vee$ is a classical abelian group, and so by induction and long exact sequence of homotopy groups we get that $X_{n+1}$ is $n+1$-truncated. It also follows that $\pi_i(X_n)$ stablizes with $n\gg 0$ and $\pi_i(X)\simeq \pi_i(X_n)$ for $n\ge i$.
\end{rem}	

\subsection{Interpretation as cohomology of generalized classifying stacks}\label{ssec:interpretation as cohomology of a stack}

Let $R$ be a classical commutative ring. 
\begin{construction}[Higher stacks]\label{constr:higher stacks} Let $\Aff_{\!/\!R}$ be the category of (classical) affine schemes over $R$. We endow $\Aff_{\!/\!R}$ with the Grothendieck topology generated by faithfully flat covers. We let $\Stk_{\!/\!R}$ denote the $\infty$-category  $\hShv^{\fpqc}(\Aff_{\!/\!R},\mathscr S\mr{pc})\subset \Fun(\Aff_{\!/\!R}^{\op}, \Spc)$ of \textit{higher stacks}: namely, the category of $\Spc$-valued hypersheaves in fpqc topology on $\Aff_{\!/\!R}$.
	
	Having a higher stack $\mstack Y$ with a fixed point $y\in \mstack Y(\Spec R)$ (which by Yoneda lemma can also be viewed as the map $y\colon \{*\}\ra \mstack Y$, where $\{*\}=\Spec R$ is the final object of $\Stk_{\!/\!R}$) one can consider the corresponding \textit{homotopy group sheaves} 
	$$\{\pi_i(\mstack Y,y)\}_{i\in \mbb N}\in \Shv^{\fpqc}(\Aff_{\!/\!R},\Set)$$ defined as sheafification of the presheaves $X\mapsto \pi_i(\mstack Y(X),y)\in \Set$. As usual, for $i\ge 1$ the sheaves  $\pi_i(\mstack Y,y)$ have a natural structure of sheaves of groups, that are in fact commutative if $i\ge 2$.
\end{construction}	

One can extend a lot of cohomology theories to higher stacks; here we will mostly be interested only in the following:
\begin{construction}[$\mscr O$-cohomology]\label{constr:O-cohomology}
	
	Consider the functor $\mscr O_{\DAlg}^{\mr{cl}}\colon \Aff^{\op}_{\!/\!R}\ra \DAlg(R)$ sending an affine scheme $\Spec A$ to $A$ considered as a derived commutative $R$-algebra; note that by the classical faithfully flat descent this is an fpqc-sheaf. Taking right Kan extension along the embedding $\Aff_{\!/\!R}^\op\ra \Stk_{\!/\!R}^\op$ we obtain a functor
	$$
	\mscr O_{\DAlg}\colon \Stk_{\!/\!R}^\op \ra \DAlg(R).
	$$
	Since $\mscr O_{\DAlg}^{\mr{cl}}$ is a sheaf, one can see that $\mscr O_{\DAlg}\colon \Stk_{\!/\!R}^\op \ra {\mscr D}(R)$ sends colimits in $\Stk_{\!/\!R}$ to limits in $\DAlg(R)$.
\end{construction}	
It will also be useful to have a suitable notion of abelian group (higher) stack. The idea of the following definition goes back to Lawyere.
\begin{construction}[Abelian group objects]\label{constr:abelian group objects} Let $\mscr C$ be an $\infty$-category admitting finite products. An \textit{abelian group object} of $\mscr C$ by definition is a functor $A\colon \Lat^\op \ra \mscr C$ that commutes with finite products. We will denote by $\Ab(\mscr C)\subset \Fun(\Lat^\op,\mscr C)$ the full subcategory spanned by abelian group objects of $\mscr C$. One has the forgetful functor $K\colon \Ab(\mscr C)\ra \mscr C$ given by evaluation at the object $\mbb Z\in \Lat^\op$. Functor $K$ commutes with limits and, since sifted colimits commute with products, also with sifted colimits. 
	
	In the case $\mscr C=\Spc$, the category $\Ab(\Spc)$ is naturally identified with ${\mscr D}(\mbb Z)^{\le 0}$ (\cite[Example 1.2.9]{Lur_Ell_I}). The functor $K\colon \Ab(\Spc)\ra \Spc$ is then naturally identified with the generalized Eilenberg-MacLane space functor that we considered in \Cref{constr:generalized EM-space}; recall that it has a left adjoint $\mbb Z[-]\colon \Spc\ra \Ab(\Spc)$ given by the "free abelian group".  More generally, for $\mscr C= \Stk_{\!/\!R}$ the functor $K\colon \Ab(\Stk_{\!/\!R})\ra \Stk_{\!/\!R}$ also has a left adjoint given by applying $\mbb Z[-]$ on the level of presheaves, and then sheafification.
	
	We will consider the $\infty$-category $\Ab(\Stk_{\!/\!R})$ of \textit{abelian group $R$-stacks}. By \cite[Example 1.2.13]{Lur_Ell_I} $\Ab(\Stk_{\!/\!R})$ is naturally identified with the tensor product $\Stk_{\!/\!R}\otimes {\mscr D}(\mbb Z)^{\le 0}$, which, in turn, can be further identified with the $\infty$-category $\hShv^{\fpqc}(\Aff_{\!/\!R},  {\mscr D}(\mbb Z)^{\le 0})$ of ${\mscr D}(\mbb Z)^{\le 0}$-valued hypersheaves on $\Aff_{\!/\!R}$. One has a natural embedding $\Shv^{\fpqc}(\Aff_{\!/\!R},  \Ab)\subset \Ab(\Stk_{\!/\!R})$; in particular, any classical abelian group $R$-scheme naturally defines an object of $\Ab(\Stk_{\!/\!R})$.
	
\end{construction}	

\begin{rem}Note that given an abelian group $R$-stack $H\colon \Lat^\op \ra \Stk_{\!/\!R}$, evaluating $H$ on the unique map $0\ra \mbb Z$ in $\Lat^\op$ endows $K(H)\coloneqq H(\mbb Z)$ with canonical point $0\colon \Spec R=H(0)\ra H(\mbb Z)\simeq K(H)$. We then can consider the corresponding homotopy group sheaves $\pi_i(K(H),0)$ as in \Cref{constr:higher stacks}, which we will simply denote $\pi_i(H)$. Under the identification $\Ab(\Stk_{\!/\!R})\simeq \hShv^{\fpqc}(\Aff_{\!/\!R},  {\mscr D}(\mbb Z)^{\le 0})$, the homotopy groups $\pi_i(H)$ correspond to the classical sheaves $\mc H^{-i}(H)\in \hShv^{\fpqc}(\Aff_{\!/\!R},  \Ab)$ given by $(-i)$-th cohomology. E.g. by \cite[Proposition 3.3.3]{BS_proetale}, Postnikov towers converge in $\hShv^{\fpqc}(\Aff_{\!/\!R},  {\mscr D}(\mbb Z)^{\le 0})$; consequently we have Whitehead theorem in this setting: a map $H_1\ra H_2$ in $\Ab(\Stk_{\!/\!R})$ is an equivalence if and only if it induces an isomorphism on all $\pi_i$.
\end{rem}	

\begin{rem}\label{rem:tensoring with a group scheme}
	Given an abelian group stack $H\in \Ab(\Stk_{\!/\!R})$, one has a natural colimit preserving functor $H\otimes_{\mbb Z}-\colon {\mscr D}(\mbb Z)^{\le 0}\ra \Ab(\Stk_{\!/\!R})$ given by tensor product with $H$: $M\mapsto H\otimes_{\mbb Z} M\in \Ab(\Stk_{\!/\!R})$. One can view it via \Cref{prop:maps from sifted completion} as the unique sifted colimit preserving functor that sends a finitely generated free abelian group $A\in \Lat$ to $H\otimes_{\mbb Z}A$. Since $K\colon \Ab(\Stk_{\!/\!R}) \ra \Stk_{\!/\!R}$ commutes with sifted colimits, and $\mscr O_{\DAlg}$ sends colimits in stacks to limits in $\DAlg(R)$ the functor 
	$$
	\mscr O_{\DAlg}(K(H\otimes_{{\mbb Z}} - ))\colon {\mscr D}(\mbb Z)^{\le 0,\op} \ra  \DAlg(R)
	$$
	sends sifted colimits in ${\mscr D}(\mbb Z)^{\le 0}$ to limits in $\DAlg(R)$.
	
	 In the case $M=\mbb Z[n]$ the tensor product $H\otimes_{\mbb Z}\mbb Z[n]\simeq H[n]$ is often denoted as $B^n H$. Also, in the case $M=\mbb Z/n$ we will denote $H\otimes_{{\mbb Z}}\mbb Z/n$ by $[H/n]$ (meaning "the quotient stack via the action through multiplication by $n$"). Indeed, since $\mbb Z/n\simeq \cofib(\mbb Z\xra{\cdot n} \mbb Z)$ we have $[H/n]\simeq \cofib(H\xra{\cdot n}H)\in  \Ab(\Stk_{\!/\!R})$.
\end{rem}

\begin{construction}[The map $i\colon \ul\bZ \ra \mbb H$]\label{constr:map from Z to H} Let $R=\mbb Z$ and let $\mbb H=\Spec(\Bin(\mbb Z))$. Let $\ul{\bZ}$ be the hypersheafification of the constant presheaf given by $\mbb Z$.
	One has $\ul{\bZ}=\mbb Z[*]$, and so by adjunction, abelian group sheaves homomorphisms $\ul \bZ \ra \mbb H$ in $\Shv^{\fpqc}(\Aff_{\!/\!R},  \Ab)$ are in bijection with $\mbb H(\Spec \mbb Z)$ or, in other words, algebra homomorphisms $\Bin(\mbb Z)\ra \mbb Z$. There is a preferred such homomorphism $\ev_1\colon \Bin(\mbb Z)\ra \mbb Z$ given by the binomial ring structure on $\mbb Z$ (\Cref{ex:classical binomial rings}(\ref{part:Z as binomial ring})), and we will denote the corresponding map of group sheaves $i\colon \ul \bZ \ra \mbb H$. On the level of functions (after applying $\mscr O_{\DAlg}$) the pull-back $i^*$ is given by the natural embedding $\Map_{\mr{poly}}(\mbb Z,\mbb Z) \ra \Map_{\Set}(\mbb Z,\mbb Z)$ of polynomial functions into all.
\end{construction}	

\begin{rem}[$\mathbf L\Bin(M^\vee)\in \DAlg(\mbb Z)$ as $\mscr O$-cohomology]\label{rem:LBin as cohomology of a stack} Note that via the discussion in \Cref{rem:description of polynomial maps} and definition of $\Bin$ we have a natural in $A\in \Lat$ identification 
	$$
	\Spec(\Bin(A^\vee))\simeq \Spec(\Map_{\mr{poly}}(A,\mbb Z))\simeq \mbb H\otimes_{\mbb Z} A
	$$
	as group $\mbb Z$-schemes. By taking back the global sections of $\mscr O$, we get the identification of the restriction of the functor
	$$
	\mathbf L \Bin((-)^\vee)\colon \Perf_{\mbb Z}^{\le 0,\op}\ra \DAlg(\mbb Z)
	$$
	to $\Lat^{\op}\subset  \Perf_{\mbb Z}^{\le 0,\op}$ with $\mscr O_{\DAlg}(K(\mbb H\otimes_{\mbb Z} -))$. Since both functors send finite geometric realizations to totalizations one gets from this an identification of functors
	$$
	\mathbf L \Bin((-)^\vee)\xra{\sim} \mscr O_{\DAlg}(K(\mbb H\otimes_{\mbb Z} -));
	$$
	in particular, for any $M\in \Perf_{\mbb Z}^{\le 0}$ we get a natural equivalence
	$$
	\mathbf L \Bin(M^\vee) \xra{\sim} \mscr O_{\DAlg}(K(\mbb H\otimes_{\mbb Z} M))
	$$
	of derived commutative $\mbb Z$-algebras. 
\end{rem}	

\begin{rem}
	Similarly, having $M\in {\mscr D}(\mbb Z)^{\le 0}$ we can consider the associated constant abelian group stack $\ul M\in \Ab(\Stk_{/\mbb Z})$ and its underlying stack $K(\ul M)$. Then we claim that there is a natural equivalence 
	$$
	C^*_{\sing}(K(-),\mbb Z)\simeq \mscr O_{\DAlg}(K(-))
	$$
	of functors ${\mscr D}(\mbb Z)^{\le 0,\op}\ra \DAlg(\mbb Z)$. There are various ways to see this; for example, both functors send sifted colimits in ${\mscr D}(\mbb Z)$ to limits in $\DAlg(\mbb Z)$, and their restrictions to $\Lat^\op \subset {\mscr D}(\mbb Z)^{\le 0,\op}$ can both be identified with the functor
	$$
	\Map_{\Set}(-,\mbb Z)\colon \Lat^\op\ra \DAlg(\mbb Z)
	$$
	sending a lattice $A\in \Lat$ to the algebra of all $\mbb Z$-valued functions on $A$.
\end{rem}	

\begin{rem}The map $i\colon \ul \bZ\ra \mbb H$ that we constructed in \Cref{constr:map from Z to H} induces a natural in $M$ map $M\ra \mbb H\otimes_{\mbb Z} M$ of abelian group stacks. By applying $\mscr O_{\DAlg}$ we then get a transformation 
	$$
	\mbf L\Bin((-)^\vee)\simeq \mscr O_{\DAlg}(K(\mbb H\otimes_{\mbb Z} -)) \tto  \mscr O_{\DAlg}(K(-))\simeq C^*_{\sing}(K(-),\mbb Z)
	$$
	of functors $\Perf_{\mbb Z}^{\le 0,\op}\ra \DAlg(\mbb Z)$. 
	Moreover, its restriction to $\Lat^{\op}$ is identified with the transformation $\Map_{\mr{poly}}(-,\mbb Z)\ra \Map_\Set(-,\mbb Z)$, and (since both functors sends finite geometric realizations to totalizations) this way it coincides with the transformation \Cref{eq:transformation from Bin} (or rather its composition with the forgetful functor $\DBinAlg\ra \DAlg(\mbb Z)$). In particular, from \Cref{prop: description of free coconnective binomial algebras} we then get the following:
	\begin{cor}[{\cite[Corollary 3.4]{Toen_geo}}]
		Let $M\in \Perf_{\mbb Z}^{\le -1,\op}$. Then the map $\ul M\ra \mbb H\otimes_{\mbb Z} M$ provided by tensoring $i\colon \ul{\mbb Z}\ra \mbb H$ with $M$ induces an equivalence of derived commutative algebras
		$$
		 \mscr O_{\DAlg}(K(\mbb H\otimes_{\mbb Z} M)) \xra{\sim}  \mscr O_{\DAlg}(K(\ul M)).
		$$
	\end{cor}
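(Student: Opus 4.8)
The plan is to observe that this corollary is a formal consequence of the identifications set up in \Cref{rem:LBin as cohomology of a stack} together with \Cref{prop: description of free coconnective binomial algebras}; essentially all of the mathematical content has already been front-loaded into those statements. First I would recall from \Cref{rem:LBin as cohomology of a stack} the natural equivalence of derived commutative $\mbb Z$-algebras
$$
\mbf L\Bin(M^\vee) \xra{\sim} \mscr O_{\DAlg}(K(\mbb H\otimes_{\mbb Z} M)),
$$
and from the subsequent remark the natural equivalence $C^*_\sing(K(M),\mbb Z) \simeq \mscr O_{\DAlg}(K(\ul M))$, both valid for $M\in \Perf_{\mbb Z}^{\le 0}$ (hence in particular for $M \in \Perf_{\mbb Z}^{\le -1}$).

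Next I would use the compatibility already established in the remark immediately preceding the statement: under these two identifications, the map $\mscr O_{\DAlg}(K(\mbb H\otimes_{\mbb Z} M)) \to \mscr O_{\DAlg}(K(\ul M))$ induced by tensoring $i\colon \ul{\bZ}\ra \mbb H$ with $M$ is precisely the transformation \Cref{eq:transformation from Bin} (after composing with the forgetful functor $\DBinAlg\ra \DAlg(\mbb Z)$). Thus the map in the corollary is identified with the underlying $\DAlg(\mbb Z)$-morphism of
$$
\mbf L\Bin(M^\vee) \ra C^*_\sing(K(M),\mbb Z).
$$

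Finally, \Cref{prop: description of free coconnective binomial algebras} asserts that this latter map is an equivalence of derived binomial rings for every $M\in \Perf_{\mbb Z}^{\le -1}$. Since the forgetful functor $\DBinAlg\ra \DAlg(\mbb Z)$ is conservative (indeed an equivalence can be detected on the underlying complexes in $\mscr D(\mbb Z)$), an equivalence in $\DBinAlg$ remains an equivalence in $\DAlg(\mbb Z)$, and the corollary follows.

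The only point requiring care --- and it is the point the preceding remark already handles --- is verifying that the geometric map induced by $i\colon \ul{\bZ}\ra \mbb H$ really corresponds to \Cref{eq:transformation from Bin} under the two separate identifications. This amounts to checking agreement on $\Lat^\op$, where it is the inclusion $\Map_{\mr{poly}}(-,\mbb Z)\hookrightarrow \Map_\Set(-,\mbb Z)$ of polynomial functions into all functions, and then invoking that all functors involved send finite geometric realizations to totalizations to extend the comparison to $\Perf_{\mbb Z}^{\le -1}$. I therefore expect no genuine obstacle beyond this bookkeeping; the substance of the result lives entirely in \Cref{prop: description of free coconnective binomial algebras}, whose proof reduces via \Cref{prop:LBin commutes with suitable totalizations} to the computation of $\mbf L\Bin(\mbb Z[-1])\simeq C^*_\sing(S^1,\mbb Z)$.
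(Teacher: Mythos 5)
Your proposal is correct and follows essentially the same route as the paper: the corollary appears there as a direct consequence of the enclosing remark, which identifies the map induced by $i\colon \ul{\mbb Z}\ra \mbb H$ (under the identifications $\mbf L\Bin((-)^\vee)\simeq \mscr O_{\DAlg}(K(\mbb H\otimes_{\mbb Z}-))$ and $\mscr O_{\DAlg}(K(-))\simeq C^*_\sing(K(-),\mbb Z)$, checked on $\Lat^\op$ and extended via commutation with finite totalizations) with the transformation \Cref{eq:transformation from Bin}, and then invokes \Cref{prop: description of free coconnective binomial algebras}. One cosmetic remark: you do not need conservativity of $\DBinAlg\ra\DAlg(\mbb Z)$ to pass the equivalence along the forgetful functor --- any functor preserves equivalences; conservativity would only be needed to reflect them.
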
	
	
\end{rem}	

\subsection{Free derived binomial rings in other degrees}\label{ssec:free derived binomial rings in other degrees}

In this section we finish the computation of free binomial algebras in the remaining cohomological degrees. We will first deal with the connective part, where the description is surprisingly easy, and then describe what happens in the remaining cases given by torsion abelian groups put in cohomological degree 1. 

Let us start with the following lemma:

\begin{lem}\label{lem:LSym and LBin are equivalent for Q-vector space}
	For any $M\in {\mscr D}(\mbb Z)$ the map of derived commutative rings
	$$
	\mathbf L\Sym_{\mbb Z}(M\otimes \mbb Q) \ra \mathbf L\Bin(M\otimes \mbb Q)
	$$
	is an equivalence.
\end{lem}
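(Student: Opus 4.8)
The plan is to reduce the statement to the classical \Cref{lem:map from Sym to Bin in the case of a vector space} by exploiting that the map $\mbf L\Sym_{\mbb Z}\to \mbf L\Bin$ is the colimit of a map of filtered monads $\mbf L\Sym^{\le *}_{\mbb Z}\to \mbf L\Bin^{\le *}$, each term of which is excisively polynomial. Write $N\coloneqq M\otimes\mbb Q$; since $N$ lies in the essential image of the restriction-of-scalars functor $\iota\colon {\mscr D}(\mbb Q)\hookrightarrow {\mscr D}(\mbb Z)$, it suffices to prove that $\mbf L\Sym_{\mbb Z}(N)\to \mbf L\Bin(N)$ is an equivalence for every $\mbb Q$-module $N$. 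As $\mbf L\Sym_{\mbb Z}=\colim_n \mbf L\Sym^{\le n}_{\mbb Z}$ and $\mbf L\Bin=\colim_n \mbf L\Bin^{\le n}$, and the comparison respects these filtrations, it is enough to show that
$$
\mbf L\Sym^{\le n}_{\mbb Z}(N)\to \mbf L\Bin^{\le n}(N)
$$
is an equivalence for each fixed $n$, and then to pass to the filtered colimit over $n$.

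For fixed $n$ I would consider the two composites $G_1\coloneqq \mbf L\Sym^{\le n}_{\mbb Z}\circ\iota$ and $G_2\coloneqq \mbf L\Bin^{\le n}\circ\iota$, regarded as functors ${\mscr D}(\mbb Q)\to {\mscr D}(\mbb Z)$, together with the natural transformation $\alpha\colon G_1\to G_2$. Both $\mbf L\Sym^{\le n}_{\mbb Z}$ and $\mbf L\Bin^{\le n}$ are $n$-excisive (for $\mbf L\Bin^{\le n}$ this is \Cref{lem:Bin^n is polynomial} together with \Cref{prop:polynomial vs excisive}) and preserve sifted colimits. Since $\iota$ is exact and preserves all colimits, it carries strongly coCartesian $n$-cubes to strongly coCartesian $n$-cubes and preserves sifted colimits; hence $G_1,G_2$ are again $n$-excisive and sifted-colimit-preserving. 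One may therefore apply \cite[Proposition 4.2.15]{Arpon} (the version of \Cref{prop:from connective to all} for the stable category $\mathscr C\coloneqq {\mscr D}(\mbb Q)$ with its standard $t$-structure) to reduce checking that $\alpha$ is an equivalence to the connective part ${\mscr D}(\mbb Q)^{\le 0}$, and then \Cref{prop:maps from sifted completion}, using ${\mscr D}(\mbb Q)^{\le 0}\simeq \mscr P_\Sigma(\Vect^{\mr{fd}}_{\mbb Q})$, to reduce further to the compact projective generators, i.e.\ to finite-dimensional $\mbb Q$-vector spaces $V$.

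On such a $V$, which lies in $\Ab^{\mr{tf}}\subset {\mscr D}(\mbb Z)^{\le 0}$, the functors $\mbf L\Sym^{\le n}_{\mbb Z}$ and $\mbf L\Bin^{\le n}$ agree with the classical functors $\Sym^{\le n}_{\mbb Z}$ and $\Bin^{\le n}$ (\Cref{rem:LBin on connective part}, using that both are left Kan extended from $\Lat$ and that $V=\colim_i A_i$ is a filtered colimit of lattices), and $\alpha_V$ becomes the classical comparison $\Sym^{\le n}_{\mbb Z}(V)\to \Bin^{\le n}(V)$. This is an isomorphism: it is filtered with associated graded the norm maps $\Sym^k_{\mbb Z}(V)\to \Gamma^k_{\mbb Z}(V)$, which are isomorphisms over $\mbb Q$ (the inverse being the projection divided by $k!$), exactly as in the proof of \Cref{lem:map from Sym to Bin in the case of a vector space}. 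Hence $\alpha$ is an equivalence on all of ${\mscr D}(\mbb Q)$, and taking the colimit over $n$ finishes the argument.

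The main obstacle — and the reason one cannot simply invoke \Cref{prop:from connective to all} directly for $\mbf L\Sym_{\mbb Z}\to\mbf L\Bin$ — is that the colimit functors $\mbf L\Sym_{\mbb Z}$ and $\mbf L\Bin$ are not themselves excisively polynomial (they are infinite colimits of $n$-excisive pieces), so the reduction to generators is available only at each finite filtration stage $\mbf L\Sym^{\le n}_{\mbb Z}\to\mbf L\Bin^{\le n}$. The other delicate point is that the comparison is an equivalence only after restricting to $\mbb Q$-modules, which forces transporting the entire argument to ${\mscr D}(\mbb Q)$ and verifying that restriction of scalars along $\mbb Z\to\mbb Q$ preserves the $n$-excisiveness and sifted-colimit properties required to apply Raksit's determination results over $\mbb Q$.
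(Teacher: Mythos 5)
Your proof is correct and is essentially the paper's own argument: reduce to the filtered pieces $\mbf L\Sym^{\le n}_{\mbb Z}\to \mbf L\Bin^{\le n}$, use $n$-excisiveness together with preservation of sifted colimits to reduce to compact projective generators, and conclude by the classical fact that $\Sym_{\mbb Z}(V)\to \Bin(V)$ is an isomorphism for a $\mbb Q$-vector space $V$ (\Cref{lem:map from Sym to Bin in the case of a vector space}). The only difference is bookkeeping: the paper keeps the functors $M\mapsto \mbf L\Sym^{\le n}_{\mbb Z}(M\otimes\mbb Q)$ and $M\mapsto \mbf L\Bin^{\le n}(M\otimes\mbb Q)$ defined on ${\mscr D}(\mbb Z)$, so that \Cref{prop:from connective to all} and \Cref{prop:maps from sifted completion} apply verbatim and the check lands on $M\in\Lat$, whereas you transport everything to ${\mscr D}(\mbb Q)$ along restriction of scalars, which forces you to invoke \cite[Proposition 4.2.15]{Arpon} in its general form for ${\mscr D}(\mbb Q)$ rather than the paper's ${\mscr D}(\mbb Z)$-specialization --- harmless, but an extra step the paper's formulation avoids.
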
	

\begin{proof}
The above map is filtered with respect to exhaustive filtrations, so it is enough to show an analogous statement for $\mathbf L\Sym_{\mbb Z}^{\le n}(M\otimes \mbb Q) \ra \mathbf L\Bin^{\le n}(M\otimes \mbb Q)$. Both sides are $n$-excisive functors in $M$ that commute with sifted colimits, so it is enough to check that the map is an equivalence for $M\in\Lat$. This follows from \Cref{lem:map from Sym to Bin in the case of a vector space}.
\end{proof}	

Using the identification in \Cref{lem:LSym and LBin are equivalent for Q-vector space}, for any $M\in {\mscr D}(\mbb Z)$, $M\ra M\otimes \mbb Q$ induces a natural map 
$$
\alpha\colon \mathbf L\Bin(M) \ra \mathbf L\Sym_{\mbb Z}(M\otimes \mbb Q).
$$Quite surprisingly, it turns out to be an equivalence for any $M\in {\mscr D}(\mbb Z)^{\le -1}$:

\begin{prop}\label{prop:free binomial algebra in strictly cocnnective setting}
	Let $M\in {\mscr D}(\mbb Z)^{\le -1}$. Then the map $\alpha$ induces an equivalence of derived commutative algebras
	$$
	\alpha\colon \mathbf L\Bin(M) \xra{\sim} \mathbf L\Sym_{\mbb Z}(M\otimes \mbb Q).
	$$
\end{prop}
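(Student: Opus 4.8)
The plan is to reduce the statement to the single generator $\bZ[1]$ and then to compute both sides of $\alpha$ by hand.

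\emph{Reduction.} Both $\mbf L\Bin$ and the functor $M\mapsto \mbf L\Sym_{\bZ}(M\otimes\bQ)$ commute with sifted colimits and send finite direct sums to coproducts (by the monoidality of $\mbf L\Bin$ in \Cref{rem:properties of Bin}(\ref{part:monodicity}), resp.\ of $\mbf L\Sym$), and $\alpha$ is a natural, monoidal transformation. Since $\mbf L\Bin$ and $-\otimes\bQ$ preserve filtered colimits, I would first reduce to $M\in\Perf_{\bZ}^{\le-1}$. Every such $M$ is a finite geometric realization $|M_\bullet|$ of a diagram $M_\bullet$ valued in $\Lat[1]$ (the shift of $\Lat$, as recalled just before \Cref{prop: description of free coconnective binomial algebras}); as both functors carry this realization to the realization of the levelwise values and $\alpha$ is natural, it suffices to treat $M\in\Lat[1]$, and then by monoidality the single object $M=\bZ[1]$.

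\emph{The two sides for $\bZ[1]$.} On the target, the décalage isomorphism $\mbf L\Sym^n_{\bQ}(V[1])\simeq(\mbf L\Lambda^n_{\bQ}V)[n]$ gives $\mbf L\Sym^n_{\bZ}(\bQ[1])\simeq (\mbf L\Lambda^n_{\bQ}\bQ)[n]$, which vanishes for $n\ge 2$, so $\mbf L\Sym_{\bZ}(\bQ[1])\simeq \bZ\oplus\bQ[1]$ with reduced part $\bQ[1]$. On the source, I would use the Bar-construction identification $\mbf L\Bin(\bZ[1])\simeq \bZ\otimes_{\mbf L\Bin(\bZ)}\bZ$ of \Cref{rem:shift as Bar-construction}; since the forgetful functor to $\mscr D(\bZ)$ preserves colimits and $\mbf L\Bin(\bZ)=\Bin(\bZ)$ is the discrete ring of integer-valued polynomials, the underlying complex is the derived tensor product $\bZ\otimes^{\mathbb L}_{\Bin(\bZ)}\bZ$ taken along the augmentation $\ev_0$.

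\emph{Rationality of the reduced part.} The key point is that $\ol{\mbf L\Bin}(\bZ[1]):=\cofib\big(\bZ\to\mbf L\Bin(\bZ[1])\big)$ is rational, which I would check by showing $\ol{\mbf L\Bin}(\bZ[1])\otimes^{\mathbb L}\mbb F_p=0$ for every prime $p$. Base change turns $\mbf L\Bin(\bZ[1])\otimes^{\mathbb L}\mbb F_p$ into $\mbb F_p\otimes^{\mathbb L}_{R_p}\mbb F_p$ with $R_p=\Bin(\bZ)\otimes\mbb F_p=\mc O(\mbb H_{\mbb F_p})$. By \Cref{rem:the map G_a --> H modulo p} one has $\mbb H_{\mbb F_p}\simeq \ul{\bZ_p}=\lim_n\ul{\bZ/p^n}$, so $R_p\simeq \colim_n \mbb F_p^{\bZ/p^n}$ and $\ev_0$ is the projection onto the single ($0$-indexed) direct factor. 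Hence $\mbb F_p$ is a retract of each $\mbb F_p^{\bZ/p^n}$, so $\mbb F_p\otimes^{\mathbb L}_{\mbb F_p^{\bZ/p^n}}\mbb F_p\simeq\mbb F_p$, and passing to the filtered colimit gives $\mbb F_p\otimes^{\mathbb L}_{R_p}\mbb F_p\simeq\mbb F_p$. Thus the reduced part is killed mod $p$ for all $p$ and is therefore a $\bQ$-module.

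\emph{Conclusion and main difficulty.} Both $\mbf L\Bin(\bZ[1])$ and $\mbf L\Sym_{\bZ}(\bQ[1])$ have $H^0=\bZ$ and reduced parts that are $\bQ$-modules (the former by the previous step, the latter equal to $\bQ[1]$); on these reduced parts $\alpha$ coincides with $\alpha\otimes\bQ$, which is an equivalence by \Cref{rem:sym vs bin tensor Q}. Hence $\alpha_{\bZ[1]}$ is an equivalence, which by the reduction finishes the proof. I expect the genuine obstacle to be exactly this base case: one must pin down $\mbf L\Bin(\bZ[1])$, and the cleanest route I see is the Bar construction over $\Bin(\bZ)$ together with the mod-$p$ degeneration $\mbb H_{\mbb F_p}\simeq\ul{\bZ_p}$, which is what forces the integral arithmetic of $\mbf L\Bin$ to collapse to something rational in the strictly connective range.
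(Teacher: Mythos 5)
Your proposal is correct and takes essentially the same route as the paper: reduction to the single generator $\bZ[1]$, the Bar-construction identification $\mbf L\Bin(\bZ[1])\simeq \bZ\otimes^{\mbb L}_{\Bin(\bZ)}\bZ$, the mod-$p$ collapse via $\mbb H_{\mbb F_p}\simeq \ul{\bZ_p}$ and flatness of $\mbb F_p$ over $\Bin(\bZ)/p$ (your retract argument is the paper's open-embedding argument), and the rational comparison via \Cref{rem:sym vs bin tensor Q}. The remaining differences are purely cosmetic: the paper reduces to $\bZ[1]$ in one line because both functors, viewed in $\DAlg(\mbb Z)$, preserve all small colimits (\Cref{cor:forgetful functor to derived algebras commutes with all colimits}), and it packages your "reduced part is rational" endgame as conservativity of $N\mapsto N_{\mbb Q}\times\prod_p N\otimes \mbb F_p$, noting along the way the small extra fact $\mbf L\Sym_{\mbb Z}(M)\otimes\mbb Q\simeq \mbf L\Sym_{\mbb Z}(M\otimes\mbb Q)\otimes\mbb Q$ that your final step also implicitly uses.
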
	

\begin{proof}
Both $\mathbf L\Bin$ (or rather $\mathbf L\Bin^{\circ'}\!\colon {\mscr D}(\mbb Z)\ra \DAlg(\mbb Z)$ in the notations of \Cref{cor:forgetful functor to derived algebras commutes with all colimits}) and $ \mathbf L\Sym_{\mbb Z}\colon {\mscr D}(\mbb Z)\ra \DAlg(\mbb Z)$ commute with all colimits. Since ${\mscr D}(\mbb Z)^{\le -1}$ is generated under colimits by $M=\mbb Z[1]$ it would be enough to check the statement in this case. 

Note that the functor ${\mscr D}(\mbb Z)\ra {\mscr D}(\mbb Q)\times \prod_{p}{\mscr D}(\mbb F_p)$ given by
$$
N\mapsto N_{\mbb Q} \times \prod_{p} (N\otimes \mbb F_p)
$$
is conservative. Thus to check that $\alpha$ induces an equivalence it is enough to check this after tensoring with $\mbb Q$ and modulo each prime $p$. For tensor with $\mbb Q$ we have $\mathbf L\Bin(M)\otimes \mbb Q\simeq \mathbf L\Sym_{\mbb Z}(M)\otimes \mbb Q$ for any $M\in {\mscr D}(\mbb Z)$ by the end of \Cref{rem:sym vs bin tensor Q}, and $\mathbf L\Sym_{\mbb Z}(M)\otimes \mbb Q\simeq \mathbf L\Sym_{\mbb Z}(M\otimes \mbb Q)\otimes \mbb Q$ (both functors are direct sums of $n$-excisive sifted colimit commuting and agree on $M\in \Lat$). Modulo $p$ we have the following: 
\begin{lem} The natural map
	$\mbb F_p\ra \mathbf L\Bin(\mbb Z[1])\otimes\mbb F_p$ (from the initial derived commutative ring over $\mbb F_p$) is an equivalence.
\end{lem}	
\begin{proof}
	Note that by \Cref{rem:shift as Bar-construction} one can compute $\mathbf L\Bin(\mbb Z[1])$ as $\mbb Z\otimes_{\Bin(\mbb Z)}^{\mbb L}\mbb Z$. For the reduction mod $p$ we have 
	$$
	\mathbf L\Bin(\mbb Z[1])\otimes\mbb F_p \simeq \mbb F_p\otimes^{\mbb L}_{\Bin(\mbb Z)/p} \mbb F_p.
	$$
Note that by \Cref{rem:the map G_a --> H modulo p} the algebra $\Bin(\mbb Z)/p$ is identified\footnote{As in \Cref{rem:the map G_a --> H modulo p} the affine scheme $\Spec(\Bin(\mbb Z)/p)$ is identified with the profinite constant scheme  $\underline{\mathbb Z}_p:= \lim_n \underline{\mbb Z/p^n}$. It is in fact an isomorphism of group schemes.} with $\colim_{n}\Map_\Set(\mbb Z/p^n,\mbb F_p)$ with maps induced by reductions $\ldots\ra\mbb Z/p^n\ra \mbb Z/p^{n-1}\ra \ldots\ra \mbb Z/p$. The restriction of homomorphism $\Bin(\mbb Z)/p\ra \mbb F_p$ to $\Map(\mbb Z/p^n,\mbb F_p)\subset \Bin(\mbb Z)/p$ is given by the pull-back under the embedding $0\hookrightarrow \mbb Z/p^n$. Note that the latter map is an open embedding (if we consider the above groups as constant group schemes over $\mbb F_p$) and thus $\mbb F_p\simeq \Map_\Set(0,\mbb F_p)$ is a flat $\Map_\Set(\mbb Z/p^n,\mbb F_p)$-module. By passing to colimit over $n$ it follows that $\mbb F_p$ is a flat $\Bin(\mbb Z)/p$-module. Thus we get
$$
\mbb F_p\otimes^{\mbb L}_{\Bin(\mbb Z)/p} \mbb F_p\simeq \mbb F_p\otimes_{\Bin(\mbb Z)/p} \mbb F_p\simeq \mbb F_p,
$$
as desired. Geometrically, this corresponds to the fiber square 
$$
	\xymatrix{\{0\}\ar[d]\ar[r]&\{0\}\ar[d]\\
	\{0\}\ar[r] &\underline{\mathbb Z}_p,
}
$$
of affine schemes. 
\end{proof}	
It is also true that the map $\mbb F_p\ra \mathbf L\Sym_{\mbb Z}(\mbb Z[1]\otimes \mbb Q)\otimes \mbb F_p$ is an equivalence: indeed, $\mbb F_p$ maps isomorphically to $\mathbf L\Sym_{\mbb Z}^0\otimes \mbb F_p$, while $\mathbf L\Sym_{\mbb Z}^i(\bZ[1]\otimes \mbb Q)$ is a $\mbb Q$-module for $i>0$, and so
$\mathbf L\Sym_{\mbb Z}^i(\bZ[1]\otimes \mbb Q)\otimes \mbb F_p\simeq 0$. Thus, from this and the lemma above we get that the map 
$$
\mathbf L\Bin(\mbb Z[1])\otimes\mbb F_p \ra \mathbf L\Sym_{\mbb Z}(\mbb Z[1])\otimes\mbb F_p
$$
is an equivalence for every $p$, and so $\alpha\colon \mathbf L\Bin(\mbb Z[1])\ra  \mathbf L\Sym_{\mbb Z}(\mbb Q[1])$ is also an equivalence.

\end{proof}	 

\begin{rem}\label{rem:free binomial algebra on torsion module in positive degrees is 0}
	In particular, we get that if $A$ is a torsion abelian group (meaning $A\otimes \mbb Q\simeq 0$) one has $\mathbf L\Bin(A[n])=\mbb Z$ for $n\ge 1$.
\end{rem}	

\begin{rem}\label{rem:connective isomorphism is not filtered}
We note that the isomorphism in \Cref{prop:free binomial algebra in strictly cocnnective setting} is \textit{not} a filtered isomorphism. Indeed, let's say $M\in \Perf_{\mbb Z}^{\le -1}$; the associated graded of $\mathbf L\Bin^{\le *}(M)$ is given by $\mathbf L\Gamma^*(M)$ and each graded component is again a perfect module. However, on the right hand side the associated graded pieces are complexes of $\mathbb Q$-vector spaces. This shows that in the case $M$ is 1-connective the spectral sequence converging from cohomology of  $\mathbf L\Gamma^*(M)$ to cohomology of $\mathbf L\Bin(M)$ is very non-generate: in fact it can not stabilize on any finite page. This is in contrast with the \textit{strictly coconnective} case where, as we will show in the sequel, the analogous spectral sequence always degenerates. 
\end{rem}	

\noindent In fact, the statement in \Cref{rem:free binomial algebra on torsion module in positive degrees is 0} can be extended slightly to the right in the cohomological direction:

\begin{prop}\label{prop:Bin of torsion abelian group}
	Let $A\in \Ab$ be a torsion abelian group (meaning $A\otimes \mbb Q\simeq 0$). Then the natural map 
	$$
	\mbb Z\ra \mathbf L\Bin(A)
	$$
	is an equivalence.
\end{prop}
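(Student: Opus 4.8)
The plan is to reduce to cyclic groups and then reinterpret $\mbf L\Bin(\mbb Z/p^k)$ as the functions on the derived $p^k$-torsion of the group scheme $\mbb H$, which I then check is trivial one prime at a time. First, since $\mbf L\Bin$ commutes with filtered colimits and every torsion abelian group is the filtered colimit of its finite subgroups, I may assume $A$ is finite. Writing $A\simeq \bigoplus_i \mbb Z/p_i^{k_i}$ and using that $\mbf L\Bin$ is a left adjoint, hence (by \Cref{lem:forgetful functor commutes with everything}) carries the direct sum to the coproduct $\otimes$ in $\DBinAlg$, whose underlying complex is $\otimes_{\mbb Z}$, I get $\mbf L\Bin(A)\simeq \bigotimes_i \mbf L\Bin(\mbb Z/p_i^{k_i})$. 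As the coproduct of copies of the initial ring $\mbb Z$ is again $\mbb Z$, it then suffices to treat $A=\mbb Z/p^k$.

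Next I would realize $\mbb Z/p^k\simeq \cofib(\mbb Z\xra{p^k}\mbb Z)$ and apply \Cref{rem:shift as Bar-construction} to the map $p^k\colon \mbb Z\ra\mbb Z$, obtaining
$$\mbf L\Bin(\mbb Z/p^k)\simeq \mbb Z\otimes_{\mbf L\Bin(\mbb Z)}\mbf L\Bin(\mbb Z),$$
where the left map $\mbf L\Bin(\mbb Z)\ra\mbb Z$ is the augmentation $\ev_0$ induced by $\mbb Z\ra 0$ and the right map is $\mbf L\Bin(p^k)=[p^k]^*$. Since $\mbf L\Bin(\mbb Z)=\Bin(\mbb Z)=\mc O(\mbb H)$ is $\mbb Z$-flat, the underlying complex is the derived tensor product $\mbb Z\otimes_{\mc O(\mbb H)}\mc O(\mbb H)$, i.e. the functions on the derived kernel $\{0\}\times_{\mbb H,[p^k]}\mbb H$. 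To prove that $\mbb Z\ra\mbf L\Bin(\mbb Z/p^k)$ is an equivalence I would use that the functor $N\mapsto N_{\mbb Q}\times\prod_\ell (N\otimes\mbb F_\ell)$ is conservative (as in the proof of \Cref{prop:free binomial algebra in strictly cocnnective setting}) and check it after $-\otimes\mbb Q$ and after each $-\otimes\mbb F_\ell$.

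The rational and the $\ell\neq p$ cases are formal. After $-\otimes\mbb Q$ one has $\mbf L\Bin(\mbb Z/p^k)\otimes\mbb Q\simeq\mbf L\Sym_{\mbb Z}((\mbb Z/p^k)\otimes\mbb Q)\otimes\mbb Q\simeq\mbf L\Sym_{\mbb Z}(0)\otimes\mbb Q\simeq\mbb Q$ by \Cref{rem:sym vs bin tensor Q}. For $\ell\neq p$, \Cref{rem:the map G_a --> H modulo p} identifies $\mc O(\mbb H)/\ell\simeq \mc O(\ul{\mbb Z_\ell})$, and multiplication by $p^k$ is invertible on $\mbb Z_\ell$, so $[p^k]^*$ is an automorphism of $\mc O(\mbb H)/\ell$; reindexing the relative tensor product along this automorphism gives $\mbf L\Bin(\mbb Z/p^k)\otimes\mbb F_\ell\simeq\mbb F_\ell\otimes_{\mc O(\mbb H)/\ell}\mc O(\mbb H)/\ell\simeq\mbb F_\ell$.

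The remaining case $\ell=p$ is the main obstacle, because there $[p^k]^*$ is no longer invertible. Here I would use the description $\mc O(\mbb H)/p\simeq \mc O(\ul{\mbb Z_p})\simeq\colim_n\Map_\Set(\mbb Z/p^n,\mbb F_p)$ together with the observation made in the proof of \Cref{prop:free binomial algebra in strictly cocnnective setting} that $\ev_0\colon\mc O(\mbb H)/p\ra\mbb F_p$ is flat (it is the pullback along the open embedding $0\hookrightarrow\mbb Z/p^n$). Flatness collapses the derived tensor product to the underived one, so $\mbf L\Bin(\mbb Z/p^k)\otimes\mbb F_p$ becomes the ring of functions on the \emph{classical} scheme-theoretic kernel of $[p^k]\colon\ul{\mbb Z_p}\ra\ul{\mbb Z_p}$ over $\mbb F_p$. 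Since $\mc O(\ul{\mbb Z_p})$ is absolutely flat, being a filtered colimit of the finite products $\mbb F_p^{\mbb Z/p^n}$ and hence reduced, while multiplication by $p^k$ is injective on the profinite set $\mbb Z_p$, this kernel is the reduced point $\{0\}$: concretely, for every clopen $U\not\ni 0$ one has $\mbf 1_U=[p^k]^*(\mbf 1_{p^k U})$ with $\mbf 1_{p^k U}$ lying in $\ker\ev_0$, so the ideal generated by $[p^k]^*(\ker\ev_0)$ is all of $\ker\ev_0$. Thus $\mbf L\Bin(\mbb Z/p^k)\otimes\mbb F_p\simeq\mbb F_p$, and conservativity finishes the proof.
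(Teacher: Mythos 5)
Your proposal is correct and follows essentially the same route as the paper's proof: reduce to $A=\mbb Z/p^k$, realize $\mbf L\Bin(\mbb Z/p^k)$ as a relative tensor product over $\Bin(\mbb Z)$ via the cofiber sequence $\mbb Z\xra{p^k}\mbb Z\ra \mbb Z/p^k$, and check the claim rationally and modulo every prime using the identification $\mbb H_{\mbb F_\ell}\simeq \ul{\mbb Z_\ell}$. The only local deviation is at $\ell=p$: the paper underives the tensor product by showing $[p^k]^*$ is flat and surjective (as a pushout of the projection $\Map_\Set(\mbb Z/p^k,\mbb F_p)\surj \mbb F_p$), whereas you invoke flatness of the augmentation $\ev_0$ and then compute the classical kernel explicitly with indicator functions — both are valid and yield the same conclusion.
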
	

\begin{proof}

The proof is somewhat similar to \Cref{prop:free binomial algebra in strictly cocnnective setting}. It is enough to check that the map is an equivalence after tensoring with $\mbb Q$ and modulo each prime $p$. 	It is enough to consider the case $A=\mbb Z/p^k$. We have $\mbb Z/p^k=\cofib(\mbb Z\xra{\cdot p^k}\mbb Z)$ and so 
	$$
	\mathbf L\Bin(\mbb Z/p^k)\simeq \Bin(\mbb Z)\otimes_{\Bin(\mbb Z)}^{\mbb L}\mbb Z
	$$ 
	where the map $\Bin(\mbb Z)\ra \Bin(\mbb Z)$ is induced by multiplication by $p^k$ on $\mbb Z$. In terms of the group scheme $\mbb H\coloneqq \Spec(\Bin(\mbb Z))$ the latter map is the pull-back with respect to the multiplication by $p^k$ map $[p^k]\colon \mbb H\xra{} \mbb H$. Over $\mbb Q$ we have $\mbb H_{\mbb Q}\simeq \mbb G_{a,\mbb Q}$ and multiplication by $p^k$ is an isomorphism, so
	$
	\mathbf L\Bin(\mbb Z/p^k)\otimes\mbb Q\simeq \mbb Q.
	$
	Modulo $\ell\neq p$ we have $\mbb H_{\mbb F_\ell}=\Spec( \Bin(\mbb Z)\otimes \mathbb F_\ell)\simeq \lim_n\ul{\mbb Z/\ell^n}$ and multiplication by $p^k$ is again an isomorphism. So,
	$
	\mathbf L\Bin(\mbb Z/p^k)\otimes\mbb F_\ell\simeq \mbb F_\ell.
	$

	It remains to understand the case $\ell=p$. We have $\mbb H_{\mbb F_p}=\Spec (\Bin(\mbb Z)\otimes \mbb F_p)\simeq \lim_n\ul{\mbb Z/p^n}$. Multiplication by $p^k$ is no longer an isomorphism, but it is an open embedding: indeed, via the above identification we have a fiber square 
	$$
	\xymatrix{\mbb H_{\mbb F_p}\ar[r]^{\cdot p^k}\ar[d]&\mbb H_{\mbb F_p}\ar@{->>}[d]\\
		\{0\}\ar[r] &\ul{\mbb Z/p^k},
	}
	$$
	so the map $\Bin(\mbb Z)\otimes \mbb F_p\rightarrow \Bin(\mbb Z)\otimes \mbb F_p$ is the pushout of the surjection $\Map_\Set(\mbb Z/p^k,\mbb F_p)\ra \mbb F_p$ (induced by the open embedding $\{0\}\hookrightarrow \ul{\mbb Z/p^k}$ of constant $\mbb F_p$-group schemes) with respect to the map $\Map_\Set(\mbb Z/p^k,\mbb F_p)\ra \Bin(\mbb Z)\otimes \mbb F_p$. In particular, it is flat and surjective at the same moment. From this and \Cref{rem:shift as Bar-construction} we get 
	$$
	\mathbf L\Bin(\mbb Z/p^k)\otimes\mbb F_p\simeq (\Bin(\mbb Z)\otimes \mbb F_p)\otimes_{(\Bin(\mbb Z)\otimes \mbb F_p)}\mbb F_p\simeq \mbb F_p,
	$$
	and then also the equivalence $\mathbb Z\xra{\sim}\mathbf L\Bin(\mbb Z/p^k)$.
\end{proof}

\begin{rem}
	As in \Cref{rem:connective isomorphism is not filtered} we note that the isomorphism in \Cref{prop:Bin of torsion abelian group} is very far from being a filtered isomorphism (is we endow left hand side with the constant $\mbb N$-indexed filtration). Indeed $\gr^1$ on the left is $0$, and is equal to $A$ on the right. In fact we don't really know what the filtered pieces $\mathbf L\Bin^{\le n}(A)$ are explicitly.
\end{rem}	
\begin{rem}\label{rem:bin for classical abelian group}
	For any abelian group $A$ we have a short exact sequence $0\ra \mr{Tors}(A)\ra A \ra A^{\mr{tf}}\ra 0$ where $\mr{Tors}(A)$ is the maximal torsion subgroup and $A^{\mr{tf}}$ is torsion free. On the level of derived categories we have $A^{\mr{tf}}\simeq \cofib(\mr{Tors}(A)\ra A)$, so by \Cref{rem:shift as Bar-construction} we get
	$$
	\Bin(A^{\mr{tf}})\simeq \mathbf L\Bin(A^{\mr{tf}})\simeq \mathbf L\Bin(A)\otimes_{\mathbf L\Bin(\mr{Tors}(A))}\mbb Z.
	$$
	However, by \Cref{prop:Bin of torsion abelian group} $\mathbf L\Bin(\mr{Tors}(A))\simeq \mbb Z$. This shows that there is a natural equivalence
	$$
	 \mathbf L\Bin(A)\simeq \Bin(A^{\mr{tf}})
	$$
	for any classical abelian group $A$; in particular $ \mathbf L\Bin(A)$ only depends on $A^{\mr{tf}}$.
\end{rem}

\begin{cor}\label{cor:homology of binomial ring} Let $B\in \DBinAlg$ be a derived binomial ring. Then $H^i(B)$ is a $\mbb Q$-vector space for $i <0$ and $H^0(B)$ is a torsion-free abelian group.
\end{cor}	
\begin{proof}
	It will be enough to show that $H^i(B)$ is torsion-free for $i\le 0$ and divisible for $i<0$. For the first assertion let $x\in H^i(B)$ be a class with $nx=0$. Then $x$ defines a map $ \mathbf L\Bin (\mbb Z/n [-i]) \ra B$. However, by \Cref{rem:free binomial algebra on torsion module in positive degrees is 0} and \Cref{prop:Bin of torsion abelian group} we get that $ \mathbf L\Bin (\mbb Z/n [-i]) \simeq \mbb Z$, and so the natural map $\mbb Z/n [-i] \ra  \mathbf L\Bin (\mbb Z/n [-i])$ is homotopic to the zero map. We get that the map $\mbb Z/n [-i] \ra B$ in $\mscr D(\mbb Z)$ that classifies $x$ is also 0, and so $x$ itself is 0. The divisibility assertion follows similarly: indeed, by \Cref{prop:free binomial algebra in strictly cocnnective setting} for $i< 0$ we have $ \mathbf L\Bin(\mbb Z[-i])\simeq  \mathbf L\Sym(\mbb Q[-i])$, and so the natural map $\mbb Z[i] \ra B$ in $\mscr D(\mbb Z)$ that classifies an element $x\in H^i(B)$ automatically extends to a map $\mbb Q[i] \ra B$. 
\end{proof}	

\begin{rem}
	The fact that $H^i(B)$ for $i<0$ are $\mbb Q$-vector spaces for a connective binomial ring can also be deduced formally from the fact that a classical split square zero extension $\mbb Z\oplus M$ in (classical) commutative rings is a binomial ring if and only if $M$ is a $\mbb Q$-vector space\footnote{For the only if part, note that $M$ is torsion-free by \Cref{rem: binomial rings are torsion free}, and then for any $m\in M$ such that $m^2=0$ we have ${m\choose n}= \frac{1}{n}\cdot m$, so if $\mbb Z \oplus M$ is binomially closed then $\frac{1}{n}\cdot m\in M$ for any $n$.}.
	
	Indeed, expanding on  \Cref{rem: binomial rings are torsion free}, one can identify the subcategory $\DBinAlg^\mr{cn}$ of connective derived binomial rings with the \textit{animation} (see \cite[Section 5.1.4]{Cesn-Sch}) of the category $\BinAlg$ of classical binomial rings; specifically, $\DBinAlg^\mr{cn}$ is the category of product-commuting functors from the opposite category $\BinAlg^{\mr{free},\op}$ of free finitely generated classical binomial rings to $\Spc$ (\textit{anima} in terminology of \cite{Cesn-Sch}). As such, any $B\in \DBinAlg^\mr{cn}$ comes with a natural Postnikov tower $B\ra \ldots \ra \tau_{\le n}B \ra \ldots \ra \tau_{\le 0}B$ induced by the Postnikov tower construction on $\Spc$ (also see \cite[Section 5.1.4]{Cesn-Sch}); it agrees with the Postnikov tower on the level of underlying animated abelian group. For a given $n$ one can consider the fiber product $B_n:=\tau_{\le n}B \times_{\tau_{\le n-1} B} \mbb Z$, which is naturally on object in  $\DBinAlg^\mr{cn}_{\mbb Z/ \! / \mbb Z}$ via the unit map $\mbb Z \ra B_n$ and the projection $B_n:=\tau_{\le n}B \times_{\tau_{\le n-1} B} \mbb Z\ra \mbb Z$.  Note that the underlying animated abelian group of $B_n$ is given by $\mbb Z \oplus \pi_n(B)[n]$ (here $\pi_n(B)=H^{-n}(B)$). Taking the $n$-fold loop space $\Omega^nB_n \in \DBinAlg^\mr{cn}_{\mbb Z/ \! / \mbb Z}$ we get an $\mbb E_n$-monoid in classical\footnote{Note that the underlying abelian group of $\Omega^nB_n $ is $\mbb Z\oplus \pi_n(B)[n][-n]\simeq \mbb Z\oplus \pi_n(B)$.} (coaugmented) binomial rings $\BinAlg_{\mbb Z/ \! / \mbb Z}$. Forgetting the $\mbb E_n$-monoid structure on $\Omega^nB_n$ to $\mbb E_1$ we get an associative monoid object of $\BinAlg_{\mbb Z/ \! / \mbb Z}$ and using Quillen's identification \cite[Proposition 1.4]{quillen1970} this gives a split square zero extension of $\mbb Z$ of the form $\mbb Z \oplus \pi_n(B)$ which also a binomial ring. By the above this forces $\pi_n(B)$ to be a $\mbb Q$-vector space.
	
	Let us also remark that even more abstractly one can develop the theory of derivations, cotangent complex, split and non-split square extensions for animated binomial rings, as e.g. in \cite[Sections 5.1.8-9]{Cesn-Sch}. The extension ideal of an animated split square zero  in this setting will automatically be a $\mbb Q$-vector space, since it is true for the classical split square zero extensions. Then one can show (similarly to \cite[Example 5.1.10(3)]{Cesn-Sch}) that $\tau_{\le n}B$ is a (non-split) square zero extension of $\tau_{\le n-1} B$, and so the corresponding extension ideal (given by $\pi_n(B)[n]$) is automatically a $\mbb Q$-vector space. More generally, one can show that $\DBinAlg^\mr{cn}$ is identified with the  full subcategory of $\mr{Ani}(\mr{Ring})$ spanned by those animated commutative rings $A$ such that $\pi_0(A)$ is a classical binomial ring and $\pi_i(A)$ is a $\mbb Q$-vector space for $i>0$. We leave it to the interested reader to fill in the omitted details.
\end{rem}	

\subsection{$\mbf L\Bin(A[-1])$ for torsion $A$}\label{ssec:LBin(A[-1])}
It remains to understand the structure of $\mathbf L\Bin(A[-1])$ where $A$ is a torsion abelian group.  Let $\Ab^{\fin}_p\subset \Ab$ denote the subcategory of finite abelian groups of $p$-power order. Using that any torsion abelian group is a filtered colimit of finite abelian groups, and that $\mathbf L\Bin$ sends direct sums to (derived) tensor products, we can reduce to the case $A\in \Ab^{\fin}_p$ for some prime $p$.

Recall the following description of $\mbb H$ modulo $p^d$, generalizing one in \Cref{rem:the map G_a --> H modulo p}:

\begin{lem}[{\cite[Corollaire 2.3]{Toen_geo}}]\label{lem:H modulo p^n}
	For any $n\ge 1$ there is a natural isomorphism of affine group schemes over $\mbb Z/p^n$
		$$
		\mbb H_{\mbb Z/p^n}\xra{\sim} \ul{\mbb Z_p}\coloneqq \lim_{i\ge 0}\ul{\mbb Z/p^i}
		$$
		such that the composition with the map $i\colon \ul{\mbb Z} \ra \mbb H$ is the natural map $\ul{\mbb Z}\ra \ul {\mbb Z_p}$.
\end{lem}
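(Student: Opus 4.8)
The plan is to deduce the statement from Cartier duality, reducing it to the geometric assertion that over $\mbb Z/p^n$ the formal multiplicative group coincides with its $p$-power torsion, i.e.\ $\widehat{\mbb G}_m\simeq \mu_{p^\infty}\coloneqq \colim_i\mu_{p^i}$. Recall from \Cref{rem:description of polynomial maps} that $\mbb H=\Spec(\Bin(\mbb Z))\simeq \mr{Dist}(\mbb G_m)$ is the Cartier dual of $\widehat{\mbb G}_m$, with $\Bin(\mbb Z)\simeq \colim_k(\mbb Z[t,t^{-1}]/(t-1)^{k+1})^\vee$. On the other side, each constant group scheme $\ul{\mbb Z/p^i}$ is Cartier dual to $\mu_{p^i}$, so $\ul{\mbb Z_p}=\lim_i\ul{\mbb Z/p^i}$ is the Cartier dual of $\mu_{p^\infty}$, with $\mc O(\ul{\mbb Z_p})\simeq \colim_i \mc O(\mu_{p^i})^\vee$. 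Since Cartier duality is an anti-equivalence exchanging the ind- and pro-structures, any isomorphism $\widehat{\mbb G}_{m,\mbb Z/p^n}\simeq \mu_{p^\infty,\mbb Z/p^n}$ of formal groups over $\mbb Z/p^n$ dualizes to the sought isomorphism $\mbb H_{\mbb Z/p^n}\simeq \ul{\mbb Z_p}$.

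Making the reduction explicit on functions, write $s=t-1$ and $[p^i](s)=(1+s)^{p^i}-1$, so that $\mc O(\mu_{p^i})=(\mbb Z/p^n)[s]/([p^i](s))$. Both distribution algebras above are then colimits of the $\mbb Z/p^n$-linear duals of the quotients of $(\mbb Z/p^n)[[s]]$ by the two descending families of ideals $\{(s^k)\}_k$ and $\{([p^i](s))\}_i$ (base change commutes with the relevant dualizations since the finite truncations $\mbb Z[[s]]/(s^{k+1})$ are finite free). Thus the required isomorphism is equivalent to the statement that these two filtrations of $(\mbb Z/p^n)[[s]]$ are \emph{mutually cofinal}; this is the heart of the argument and the step I expect to be the main obstacle, since over $\mbb Z/p^n$ (unlike over $\mbb F_p$, where $[p^i](s)=s^{p^i}$ and cofinality is immediate, cf.\ \Cref{rem:the map G_a --> H modulo p}) the polynomial $[p^i](s)$ is no longer a pure power of $s$.

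Cofinality I would establish by two elementary valuation estimates. For the inclusion $([p^i](s))\subseteq (s^k)$: Legendre's formula gives $v_p\tbinom{p^i}{j}=i-v_p(j)$ for $1\le j\le p^i$, so $\tbinom{p^i}{j}\equiv 0\pmod{p^n}$ whenever $v_p(j)\le i-n$, in particular for all $1\le j<p^{i-n+1}$; hence $[p^i](s)\in (s^{\,p^{i-n+1}})$, and $p^{i-n+1}\to\infty$ as $i\to\infty$. For the reverse inclusion $(s^{\,np^i})\subseteq([p^i](s))$: set $Q\coloneqq (\mbb Z/p^n)[[s]]/([p^i](s))$; since $[p^i](s)\equiv s^{p^i}\pmod p$, in $Q/p$ we have $s^{p^i}=0$, so $s^{p^i}\in pQ$, whence $s^{mp^i}\in p^mQ$ by induction and $s^{\,np^i}\in p^nQ=0$. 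Together these show the two systems are pro-isomorphic, so their limits agree as topological Hopf algebras, yielding $\widehat{\mbb G}_{m,\mbb Z/p^n}\simeq \mu_{p^\infty}$ and hence the isomorphism of affine group schemes.

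It remains to match the isomorphism with $i\colon \ul{\mbb Z}\to \mbb H$. The composite $\ul{\mbb Z}\xrightarrow{i}\mbb H_{\mbb Z/p^n}\xrightarrow{\sim}\ul{\mbb Z_p}$ is a homomorphism of group schemes over the connected base $\Spec(\mbb Z/p^n)$, hence is determined by the image of $1\in \ul{\mbb Z}$, an element of $\ul{\mbb Z_p}(\mbb Z/p^n)=\mbb Z_p$ which is unchanged under the nilpotent thickening $\mbb Z/p^n\to \mbb F_p$. It therefore suffices to check that this composite is the natural map $\ul{\mbb Z}\to\ul{\mbb Z_p}$ after reduction modulo $p$, which is precisely what is recorded in \Cref{rem:the map G_a --> H modulo p}. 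This completes the plan.
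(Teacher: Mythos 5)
Your construction of the isomorphism itself is correct and follows essentially the same route as the paper: both arguments reduce, via Cartier duality, to the assertion that $\colim_i \mu_{p^i}\xra{\sim}\widehat{\mbb G}_{m}$ over $\mbb Z/p^n$, and both prove this by showing that the two descending families of ideals $\{(s^k)\}_k$ and $\{([p^i](s))\}_i$ are mutually cofinal. Your Legendre-formula estimate giving $([p^i](s))\subseteq (s^{\,p^{i-n+1}})$ is exactly the computation the paper invokes (``decomposing $t$ as $(t-1)+1$ and keeping track of $p$-adic valuations of the binomial coefficients''). For the opposite containment the paper instead writes down the explicit congruence $(t-1)^{p^{n-1+i}}\equiv (t^{p^i}-1)^{p^{n-1}}\pmod{p^n}$, whereas your nilpotence bootstrap ($s^{p^i}\in pQ$, hence $s^{np^i}\in p^nQ=0$) is a clean alternative; it yields the exponent $np^i$ in place of $p^{n+i-1}$, and nothing is lost. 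Up to this point I have no objection.

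The genuine gap is in your last paragraph, the compatibility with $i\colon \ul{\mbb Z}\to\mbb H$. The rigidity reduction is fine: a homomorphism $\ul{\mbb Z}\to\ul{\mbb Z_p}$ over $\Spec(\mbb Z/p^n)$ is classified by an element $c\in \ul{\mbb Z_p}(\mbb Z/p^n)=\mbb Z_p$, and restriction along $\mbb Z/p^n\to\mbb F_p$ is the identity on these point sets, so it suffices to identify the composite after base change to $\mbb F_p$. But the statement you then need --- that $\ul{\mbb Z}\xra{i}\mbb H_{\mbb F_p}\simeq\ul{\mbb Z_p}$ is the natural map --- is \emph{not} recorded in \Cref{rem:the map G_a --> H modulo p}. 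That remark only describes the map $\mbb H_{\mbb F_p}\to\mbb G_{a,\mbb F_p}$ under the identification $\mbb H_{\mbb F_p}\simeq\ul{\mbb Z_p}$; it says nothing about $i$, which is in fact only defined later in the paper (\Cref{constr:map from Z to H}). Worse, the information the remark does contain cannot suffice even in principle: since $\mbb H_{\mbb F_p}\to\mbb G_{a,\mbb F_p}$ factors through the projection $\ul{\mbb Z_p}\surj\ul{\mbb Z/p}$, composing your map with it can only pin down the classifying element $c$ modulo $p$, whereas you need $c=1$ exactly. Note also that the mod-$p$ statement you are reducing to is precisely the $n=1$ case of the lemma (compatibility included), so as written your argument reduces the lemma to an unproved instance of itself.

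To close the gap, argue as the paper does, and directly over $\mbb Z/p^n$ (which makes the reduction to $\mbb F_p$ unnecessary): your containment $(s^{np^i})\subseteq([p^i](s))$ says exactly that each inclusion $\mu_{p^i,\mbb Z/p^n}\hookrightarrow\mbb G_{m,\mbb Z/p^n}$ factors through $\widehat{\mbb G}_{m,\mbb Z/p^n}$. Applying Cartier duality to the factorization $\mu_{p^i}\to\widehat{\mbb G}_m\to\mbb G_m$ produces maps $\ul{\mbb Z}\to\mbb H_{\mbb Z/p^n}\to\ul{\mbb Z/p^i}$ whose composite is dual to the inclusion $\mu_{p^i}\subseteq\mbb G_m$, i.e.\ is the natural projection $\ul{\mbb Z}\to\ul{\mbb Z/p^i}$. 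Here the one identification you must supply is that the Cartier dual of the completion map $\widehat{\mbb G}_m\to\mbb G_m$ is $i$ itself; this follows from the identification $\Bin(\mbb Z)\simeq\mr{Dist}(\mbb G_m)$ of \Cref{rem:description of polynomial maps} together with the description in \Cref{constr:map from Z to H} of $i^*$ as the inclusion $\Map_{\mr{poly}}(\mbb Z,\mbb Z)\subset\Map_{\Set}(\mbb Z,\mbb Z)$. Passing to the limit over $i$ then gives the asserted compatibility.
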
	
\begin{proof} Note that
	\begin{equation}\label{eq:congruence}
	(t-1)^{p^{n-1+s}}\equiv (t^{p^s}-1)^{p^{n-1}} \pmod {p^{n}}.
	\end{equation}
	We see that for any $s$, the map $\mu_{p^s,\mbb Z/p^n}\ \ra \mbb G_{m,\mbb Z/p^n}$ (on the level of functions given by the projection $\mbb Z/p^n[t,t^{-1}] \surj \mbb Z/p^n[t,t^{-1}]/(t^{p^s}-1)$) factors through the formal completion $\widehat{\mbb G}_{m,\mbb Z/p^n}\subset \mbb G_{m,\mbb Z/p^n}$. For any $s\ge 1$ we get maps $\mu_{p^s,\mbb Z/p^n}\ra \widehat{\mbb G}_{m,\mbb Z/p^n} \ra \mbb G_{m,\mbb Z/p^n}$ and, passing to Cartier duals, also maps $\ul{\mbb Z} \ra \mbb H_{\mbb Z/p^n} \ra \ul{\mbb Z/p^s}$ with the composition given by the natural projection.

	Moreover, we claim that the induced map $ \colim_{s\ge 1} (\mu_{p^s,\mbb Z/p^n})\ra \widehat{\mbb G}_{m,\mbb Z/p^n}$ is an isomorphism. Indeed, we need to check that the induced map
	 $$
	 \lim_s\mbb Z/p^n[t,t^{-1}]/(t-1)^{p^s} \ra \lim_s\mbb Z/p^n[t,t^{-1}]/(t^{p^s}-1)
	 $$
	 is an isomorphism.
	This follows from the \Cref{eq:congruence} and the embedding of ideals $
		(t^{p^{n+s-1}}-1 ) \subset (t-1)^{p^s} \pmod {p^n}$, which one can show by decomposing $t$ as $(t-1)+1$ and keeping track of $p$-adic valuations of the binomial coefficients appearing.
\end{proof}

 \begin{notation}\label{not:Pontryagin dual} For $A\in \Ab^{\fin}_p$, let us denote by $A^D\coloneqq \Hom_{\mbb Z}(A,\mbb Q/\mbb Z)$ the Pontryagin dual of $A$. Using the short exact sequence $ 0 \ra \mbb Z\ra \mbb Q \ra \mbb Q/\mbb Z\ra 0$ one also sees that $A^D\simeq \Ext^1_\Ab(A,\mbb Z)\simeq \Hom_{{\mscr D}(\mbb Z)}(A,\mbb Z)[1]\simeq A^\vee[1]$. 
 \end{notation}

 For $M\in {\mscr D}(\mbb Z)$ let us also denote by $\mbf L^i\Bin(M)\in \Ab$ the $i$-th cohomology of $\mbf L\Bin(M)$: $$\mbf L^i\Bin(M)\coloneqq H^i(\mbf L\Bin(M)),$$
and similarly for other non-abelian derived functors.
\begin{rem}\label{rem:LGamma of finite group is finite}
 	For any $k\in \mbb Z$ multiplication by $k$ on $A$ induces a map on $\mbf L \Gamma_{\mbb Z}^n(A[-1])$ which is multiplication by $k^n$. In particular, if $A$ is finite, multiplication by $|A|$ on $A$ induces a zero map on $\mbf L \Gamma_{\mbb Z}^n(A[-1])$; consequently, $\mbf L^i \Gamma_{\mbb Z}^n(A[-1])$ is a finite abelian group killed by $|A|^n$ for any $i$. 
 	
 	This way, if $A\in \Ab^{\fin}_p$, we get that $\mbf L^i \Gamma_{\mbb Z}^n(A[-1])\in \Ab^{\fin}_p$ for any $n>0$ and $i\in \mbb Z$. More generally, since $\mbf L^i\Bin^{\le n}(A[-1])$ has a finite filtration with associated graded pieces given by $\mbf L^i \Gamma_{\mbb Z}^n(A[-1])$ we get that $\mbf L^i\Bin^{\le n}(A[-1])$ is also in $\Ab^{\fin}_p$ if $i\neq 0$. Also, by \Cref{ex:Bin^n in low degrees}(\ref{exsub:Bin splits as zero and rest}), $\mbf L^0\Bin^{\le n}(A[-1])$ splits as 
 	$$\mbf L^0\Bin^{\le n}(A[-1])\simeq \mbb Z \oplus \mbf L^0\ol{\Bin^{\le n}}(A[-1]),$$ where the second summand is in $\Ab^{\fin}_p$.
 \end{rem}	
 
\begin{lem} \label{lem:L^1Bin for A being p-torsion} Let $A\in \Ab^{\fin}_p$ and fix $n\ge 1$. Then
\begin{enumerate}
			\item The natural map 
	$$
	i\colon \ul{A} \ra \mbb H_{\mbb Z/p^n}\otimes A
	$$
	in $\Ab(\Stk_{/\mbb Z/p^n})$ given by tensoring $i\colon \ul{\mbb Z}\ra \mbb H_{\mbb Z/p^n}$ with $A$ is an equivalence.
	
	\item There are natural equivalences
	$$
	\mbf L\Bin(A[-1])\otimes_{\mbb Z}^{\mbb L} \mbb Z/p^n \simeq \mscr O_{\DAlg}(K(\mbb H_{\mbb Z/p^n}\otimes A^D)) \simeq \Map_\Set(A^D,\mbb Z/p^n)
	$$
	of derived commutative rings.
	\item $\mbf L^i\Bin(A[-1])=0$ unless $i=0$ or 1; $\mbf L^0\Bin(A[-1])=\mbb Z$ and $\mbf L^1\Bin(A[-1])$ is torsion and $p$-divisible.
\end{enumerate}
\end{lem}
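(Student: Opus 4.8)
I will treat the three parts in order, bootstrapping the ring-theoretic computation (2) from the geometric statement (1), and then reading off (3) from the reduction modulo $p^n$.

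For (1), the plan is to invoke \Cref{lem:H modulo p^n}, which identifies $\mbb H_{\mbb Z/p^n}\simeq\ul{\mbb Z_p}$ compatibly with $i\colon\ul{\mbb Z}\ra\mbb H$. Tensoring this compatibility with $A$, the map in question becomes $\ul{A}=\ul{\mbb Z}\otimes A\ra\ul{\mbb Z_p}\otimes A$. Since both $\ul{(-)}$ and $\mbb H_{\mbb Z/p^n}\otimes(-)$ preserve finite direct sums, I would reduce to $A=\mbb Z/p^k$, where $\ul{\mbb Z_p}\otimes\mbb Z/p^k\simeq\cofib(\ul{\mbb Z_p}\xra{p^k}\ul{\mbb Z_p})$. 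As $p^k$ is injective on $\mbb Z_p$ with cokernel $\mbb Z/p^k$, this cofiber is the discrete sheaf $\ul{\mbb Z/p^k}$ in degree $0$ and the comparison map is the identity, settling (1).

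For (2), note first that $A[-1]\simeq(A^D)^\vee$ with $A^D\in\Perf_{\mbb Z}^{\le 0}$, so \Cref{rem:LBin as cohomology of a stack} gives $\mbf L\Bin(A[-1])\simeq\mscr O_{\DAlg}(K(\mbb H\otimes_{\mbb Z}A^D))$ over $\mbb Z$. The key point is a base-change compatibility with $-\otimes^{\mbb L}_{\mbb Z}\mbb Z/p^n$, and the mechanism I would use is \emph{finiteness} of the relevant totalization. Writing $A^D\simeq|M_\bullet|$ for a bounded (skeletal) simplicial diagram $M_\bullet$ in $\Lat$, \Cref{cor:on coherent objects it is the right Kan extension} (via \Cref{prop:LBin commutes with suitable totalizations}) identifies $\mbf L\Bin(A[-1])$ with the finite totalization $\Tot(\Bin(M_\bullet^\vee))$ of free classical binomial algebras. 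Each $\Bin(M_i^\vee)$ is torsion-free, hence flat over $\mbb Z$, so $-\otimes^{\mbb L}_{\mbb Z}\mbb Z/p^n$ introduces no higher $\Tor$ and, being exact, commutes with the finite totalization; thus $\mbf L\Bin(A[-1])\otimes^{\mbb L}_{\mbb Z}\mbb Z/p^n\simeq\Tot\bigl(\Bin(M_\bullet^\vee)/p^n\bigr)$. Using $\Spec\Bin(M_i^\vee)\simeq\mbb H\otimes_{\mbb Z}M_i$ and that reduction of functions commutes with base change along $\mbb Z\ra\mbb Z/p^n$, each term becomes $\mscr O(\mbb H_{\mbb Z/p^n}\otimes M_i)$; retotalizing, and using that $\mscr O_{\DAlg}(K(\mbb H_{\mbb Z/p^n}\otimes-))$ sends the finite realization $|M_\bullet|$ to a totalization (cf.\ \Cref{rem:tensoring with a group scheme}), yields the middle equivalence. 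The right-hand equivalence is then immediate from part (1) applied to $A^D$: it gives $\mbb H_{\mbb Z/p^n}\otimes A^D\simeq\ul{A^D}$, so $K(\mbb H_{\mbb Z/p^n}\otimes A^D)$ is a disjoint union of $|A^D|$ copies of $\Spec\mbb Z/p^n$ and its $\mscr O_{\DAlg}$ is $\Map_\Set(A^D,\mbb Z/p^n)$.

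For (3), I would combine (2) with the a priori finiteness bounds. By \Cref{rem:cohomological bounds on Bin} the complex $C\coloneqq\mbf L\Bin(A[-1])$ is coconnective, and by \Cref{rem:LGamma of finite group is finite} (passing to the filtered colimit over $\mbf L\Bin^{\le m}$) its cohomology is $p$-power torsion in every degree except $H^0(C)\simeq\mbb Z\oplus T_0$ with $T_0$ $p$-power torsion. Part (2) says $C\otimes^{\mbb L}_{\mbb Z}\mbb Z/p^n$ is concentrated in degree $0$ for every $n$; feeding this into the universal-coefficient sequences $0\to H^i(C)/p^n\to H^i(C\otimes^{\mbb L}_{\mbb Z}\mbb Z/p^n)\to H^{i+1}(C)[p^n]\to 0$ forces, for all $n$, that $H^j(C)[p^n]=0$ for $j\neq 1$ and $H^i(C)/p^n=0$ for $i\neq 0$. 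The first statement (for $j\ge 2$ directly, and for $j=0$ using $H^{-1}(C)=0$) kills $H^j(C)$ for $j\ge 2$ and forces $T_0=0$, so $H^0(C)=\mbb Z$; the second (with $i=1$) shows $H^1(C)$ is $p$-divisible, and it is torsion by the finiteness bound, giving exactly the asserted vanishing and divisibility. The step I expect to be the main obstacle is the base-change in (2): the only reason $-\otimes^{\mbb L}_{\mbb Z}\mbb Z/p^n$ passes through the totalization is that the latter is finite (because $A^D$ is perfect and connective, hence a bounded realization of lattices) together with the flatness of $\Bin$ on lattices, and getting the skeletal bookkeeping and the commutation of base change with $\mbb H\otimes(-)$ exactly right is the delicate part.
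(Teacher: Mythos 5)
Your proof is correct, and Parts (2) and (3) run essentially parallel to the paper's argument: the paper likewise gets the first equivalence in (2) from \Cref{rem:LBin as cohomology of a stack} plus $A^D\simeq A^\vee[1]$, gets the second from Part (1) applied to $A^D$, and deduces (3) from the mod-$p$ case of (2) via the same universal-coefficient bookkeeping combined with \Cref{rem:LGamma of finite group is finite}. (In fact your treatment of (2) is more careful than the paper's: the paper leaves the commutation of $-\otimes^{\mbb L}_{\mbb Z}\mbb Z/p^n$ with the construction implicit, whereas your reduction to a finite totalization of the flat algebras $\Bin(M_\bullet^\vee)$, using \Cref{prop:LBin commutes with suitable totalizations} and exactness of $-\otimes^{\mbb L}_{\mbb Z}\mbb Z/p^n$ for finite limits, supplies the missing justification.) Where you genuinely diverge is Part (1). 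The paper also starts from \Cref{lem:H modulo p^n}, but then computes $\mbb H_{\mbb Z/p^n}\otimes^{\mbb L}A$ as the homotopy limit $\holim_i(\ul{\mbb Z/p^i}\otimes^{\mbb L}A)$, invoking repleteness of the fpqc topos (\cite[Proposition B.1]{Toen_geo}) to commute homotopy group sheaves with the sequential limit, and then analyzes the resulting towers on $\pi_0$ and $\pi_1$. You instead reduce to $A=\mbb Z/p^k$ and compute $\cofib\bigl(p^k\colon \ul{\mbb Z_p}\to\ul{\mbb Z_p}\bigr)$ directly; this avoids the repleteness input entirely, which is a real simplification. The one step you should spell out is that the exactness of $0\to\mbb Z_p\xra{p^k}\mbb Z_p\to\mbb Z/p^k\to 0$ holds at the level of fpqc sheaves, not merely topological groups: injectivity is checked on $B$-points of $\ul{\mbb Z_p}=\lim_i\ul{\mbb Z/p^i}$ (compatible systems of locally constant functions), and surjectivity onto $\ul{\mbb Z/p^k}$ holds already as presheaves because the reduction $\mbb Z_p\to\mbb Z/p^k$ admits a (continuous, since the target is discrete) set-theoretic section, so every locally constant $\mbb Z/p^k$-valued function lifts to a compatible system. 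With that sentence added, your Part (1) is a complete and more elementary substitute for the paper's; the paper's repleteness argument buys the ability to handle the inverse limit for arbitrary finite $A$ in one stroke, while yours buys independence from the structure theory of the fpqc hypertopos at the cost of the (harmless) reduction to cyclic groups.
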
	
\begin{proof}
		Part 1 follows from the repleteness of fpqc-topos \cite{Toen_geo}. Namely, we know that $\mbb H_{\mbb Z/p^n}\simeq \lim_{i}\ul{\mbb Z/p^i}$ as affine group schemes. By \cite[Proposition B.1]{Toen_geo} homotopy group sheaves (see \Cref{constr:higher stacks}) commute with sequential colimits and so it is also true that $\mbb H_{\mbb Z/p^n}\in \Ab(\Stk_{/{\mbb Z/p^n}})$ computes $\holim_i \ul{\mbb Z/p^i}\in \Ab(\Stk_{/{\mbb Z/p^n}})$. Since $A$ is finite it lies in $\Perf_{\mbb Z}$ and so we can move $-\otimes_{\mbb Z}^{\mbb L} A$ inside the limit
	$$
	\mbb H_{\mbb Z/p^n}\otimes_{\mbb Z}^{\mbb L} A \simeq \holim_i (\ul{\mbb Z/p^i}\otimes_{\mbb Z}^{\mbb L}A)\in \Ab(\Stk_{/\mbb Z/p^n}).
	$$
	In the last limit we can restrict to $i\gg 0$, where both $\pi_0(\ul{\mbb Z/p^i}\otimes_{\mbb Z}^{\mbb L}A)\simeq A/p^i$ and $\pi_1(\ul{\mbb Z/p^i}\otimes_{\mbb Z}^{\mbb L}A) =A[p^i]$\footnote{Here, abusing notation, we denote by $[p^i]$ the elementary $p^i$-torsion, and not homological shift by $p^i$.} will be given by $A$. The maps in the diagram are identity on $\pi_0$ and multiplication by $p$ on $\pi_1$, so we get 
	$$
	\pi_0(\holim_i (\ul{\mbb Z/p^i}\otimes_{\mbb Z}^{\mbb L} A))\simeq \holim(\ldots \xra{\id} \ul A \xra{\id} \ul A \xra{\id} \ul A)\simeq \ul A
	$$ 
	and 
	$$
	\pi_1(\holim_i (\ul{\mbb Z/p^i}\otimes_{\mbb Z}^{\mbb L} A))\simeq \holim(\ldots \xra{\cdot p}  \ul A \xra{\cdot p} \ul A \xra{\cdot p} \ul A)\simeq 0
	$$
	(and all $\pi_j$ for $j>1$ are equal to 0). Thus we get that the natural map 
	$$
	\mbb H_{\mbb Z/p^n}\otimes_{\mbb Z}^{\mbb L} A\ra \pi_0(\mbb H_{\mbb Z/p^n}\otimes_{\mbb Z}^{\mbb L} A)\simeq \ul A
	$$
	is an equivalence. The map 
	$
	i\colon \ul  A\ra \mbb H_{\mbb Z/p^n}\otimes_{\mbb Z}^{\mbb L} A
	$ is given by $\ul  A\simeq \ul{\mbb Z}\otimes_{\mbb Z}A \ra \holim_{i\gg 0} (\ul{\mbb Z/p^i}\otimes_{\mbb Z}^{\mbb L} A)$ which maps identically to $\pi_0$ of any term in limit, and so we get that it is also an equivalence.
	
	The first isomorphisms in Part 2 follows from \Cref{rem:LBin as cohomology of a stack} and identification $A^D\simeq A^\vee[1]$ from \Cref{not:Pontryagin dual}. The second isomorphism is obtained from Part 1
	by applying $\mscr O_{\DAlg}(K(-))$.
	
	For Part 3, note that $\mbf L^i\Bin(A[-1]))\simeq \colim_{n} \mbf L^i\Bin^{\le n}(A[-1]))$, since cohomology commutes with filtered colimits. This way, from \Cref{rem:LGamma of finite group is finite} we get that $\mbf L^i\Bin(A[-1]))$ is $p$-torsion if $i\neq 0$ and is the direct sum of $\mbb Z$ and a $p$-torsion abelian group if $i=0$. By the universal coefficients formula, for any $i\in \mbb Z$ we have 
	\begin{equation}\label{eq:ses for reduction of Bin}
	0\ra (\mbf L^i\Bin(A[-1]))/p \ra H^i(\mbf L\Bin(A[-1])\otimes \mbb F_p) \ra (\mbf L^{i+1}\Bin(A[-1])[p] \ra 0.
	\end{equation}
	On the other hand, by Part 2 we know that $H^i(\mbf L\Bin(A[-1])\otimes \mbb F_p)\simeq 0$ for $i\neq 0$. Since a $p$-torsion abelian group $M$ is 0 if and only if $M[p]$ is we get that $\mbf L^{i}\Bin(A[-1])\simeq 0$ for $i\neq 0,1$. We also get that $(\mbf L^1\Bin(A[-1]))/p\simeq 0$, which means that $\mbf L^1\Bin(A[-1])$ is $p$-divisible, and that $(\mbf L^{0}\Bin(A[-1])[p]\simeq 0$, which means that $(\mbf L^{0}\ol{\Bin}(A[-1])\simeq 0$ and $\mbf L^0\Bin(A[-1])\simeq \mbb Z$.
\end{proof}	

\begin{rem}\label{rem:L^1Bin for A p-torsion}
	For $i=0$ by Part 2 of \Cref{lem:L^1Bin for A being p-torsion} we get that short exact sequence \Cref{eq:ses for reduction of Bin} takes the form 
	$$0\ra \mbb F_p\ra \Map_{\Set}(A,\mbb F_p)\ra (\mbf L^1\Bin(A[-1]))[p]\ra 0,$$
	which gives a non-canonical identification $(\mbf L^1\Bin(A[-1]))[p]\simeq \mbb F_p^{\oplus |A|-1}$. Since $\mbf L^1\Bin(A[-1])$ is $p$-divisible, this further gives a non-canonical identification $\mbf L^1\Bin(A[-1])\simeq  (\bQ_p/\mbb Z_p)^{\oplus |A|-1}$.
\end{rem}	

\begin{cor}
	For any finite abelian group $A\in \Ab^\fin$ we have $\mbf L^i\Bin(A[-1])=0$ unless $i=0$ or 1. $\mbf L^0\Bin(A[-1])\simeq \mbb Z$ and $\mbf L^1\Bin(A[-1])$ is torsion and divisible. 
\end{cor}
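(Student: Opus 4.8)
The plan is to reduce to the prime-power case already settled in \Cref{lem:L^1Bin for A being p-torsion} by decomposing $A$ into its primary components. Writing $A\simeq \bigoplus_p A_p$ with $A_p\in \Ab^{\fin}_p$ the $p$-primary part (only finitely many nonzero), we have $A[-1]\simeq \bigoplus_p A_p[-1]$. Since $\mbf L\Bin$ is a left adjoint it preserves coproducts, and by \Cref{lem:forgetful functor commutes with everything} the coproduct in $\DBinAlg$ is computed by the derived tensor product on underlying complexes; hence
$$
\mbf L\Bin(A[-1])\simeq \bigotimes_p \mbf L\Bin(A_p[-1]),
$$
a \emph{finite} derived tensor product over $\mbb Z$.

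Next I would record the shape of each factor. By \Cref{lem:L^1Bin for A being p-torsion}(3) each $\mbf L\Bin(A_p[-1])$ is concentrated in cohomological degrees $0$ and $1$, with $\mbf L^0\Bin(A_p[-1])\simeq \mbb Z$ and $T_p\coloneqq \mbf L^1\Bin(A_p[-1])$ torsion and $p$-divisible (in fact $T_p\simeq (\bQ_p/\mbb Z_p)^{\oplus |A_p|-1}$ by \Cref{rem:L^1Bin for A p-torsion}). As a two-term complex of abelian groups, and since $\Ext^2_{\mbb Z}$ vanishes, each factor splits in $\mscr D(\mbb Z)$ as $\mbf L\Bin(A_p[-1])\simeq \mbb Z\oplus T_p[-1]$.

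Then I would compute the tensor product by expanding the finite product $\bigotimes_p(\mbb Z\oplus T_p[-1])$ into a direct sum indexed by subsets $S$ of the (finite) set of relevant primes, the summand for $S$ being $\bigotimes_{p\in S}T_p[-1]$. The crucial observation is that for distinct primes $p\ne p'$ the group $T_p$ is $p$-torsion while $T_{p'}$ is $p'$-torsion, so $T_p\otimes^{\mbb L}_{\mbb Z}T_{p'}\simeq 0$ (all $\Tor$-groups of coprime-torsion abelian groups vanish). Consequently every summand with $|S|\ge 2$ is zero, and only $S=\emptyset$ (contributing $\mbb Z$ in degree $0$) and the singletons survive, yielding
$$
\mbf L\Bin(A[-1])\simeq \mbb Z\oplus\Big(\bigoplus_p T_p\Big)[-1].
$$
Thus $\mbf L^i\Bin(A[-1])=0$ for $i\ne 0,1$, $\mbf L^0\Bin(A[-1])\simeq \mbb Z$, and $\mbf L^1\Bin(A[-1])\simeq \bigoplus_p T_p$ is a direct sum of divisible torsion groups, hence torsion and divisible (each $T_p$ is $p$-divisible by the lemma, and $q$-divisible for $q\ne p$ since multiplication by an integer prime to $p$ is invertible on a $p$-torsion group).

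The one point requiring care — and the only real obstacle — is the vanishing of the cross-terms $T_p\otimes^{\mbb L}T_{p'}$, equivalently that no cohomology in degree $\ge 2$ is created when assembling the primary pieces. This is exactly where the coprimality of the torsion is used, and it is what keeps the answer concentrated in degrees $0$ and $1$.
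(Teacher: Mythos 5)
Your proof is correct and follows essentially the same route as the paper: primary decomposition $A\simeq\bigoplus_p A_p$, the identification $\mbf L\Bin(A[-1])\simeq\bigotimes_p^{\mbb L}\mbf L\Bin(A_p[-1])$ via coproducts in $\DBinAlg$, and then K\"unneth together with the vanishing of derived tensor products of coprime-torsion groups to kill the cross-terms, with \Cref{lem:L^1Bin for A being p-torsion} supplying the shape of each factor. Your explicit splitting $\mbf L\Bin(A_p[-1])\simeq\mbb Z\oplus T_p[-1]$ (valid since $\Ext^2_{\mbb Z}=0$) is just a slightly more hands-on way of running the same K\"unneth argument the paper invokes directly.
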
	
\begin{proof}
	Let $A\simeq \oplus_p A_p$ be the decomposition of $A$ as a direct sum of its $p$-primary components. We have an equivalence 
	$$
	\mbf L\Bin(A[-1])\simeq \otimes_{p}^{\mbb L} \ \! \mbf L \Bin(A_p[-1]).
	$$ 
	By the K\"unneth formula, since $\mbb Q_p/\mbb Z_p\otimes^{\mbb L} \mbb Q_\ell/\mbb Z_\ell\simeq 0$ for any primes $p\neq \ell$, and \Cref{lem:L^1Bin for A being p-torsion} we get that $\mbf L^i\Bin(A[-1])=0$ unless $i=0$ or 1 and  $\mbf L^1\Bin(A[-1])\simeq \oplus_p \ \!\mbf L^1\Bin(A_p[-1])$, while $\mbf L^0\Bin(A[-1])\simeq \mbb Z$. The statement follows.
\end{proof}	
\begin{rem}
	As in \Cref{rem:L^1Bin for A p-torsion}, in terms of the decomposition $A\simeq \oplus_p A_p$ one can non-canonically identify 
	$$
	\mbf L^1\Bin(A[-1])\simeq \oplus_p \ \! (\bQ_p/\mbb Z_p)^{\oplus |A_p|-1}. 
	$$
	In \Cref{cor:canonical formula for L^1Bin} we will give a more canonical (but less explicit) formula for $\mbf L^1\Bin(A[-1])$.
\end{rem}	

\begin{rem} Let $\Phi\coloneqq \mbf L^1\Bin(-[-1])\colon \Ab^\fin \ra \Ab^{\mr{tors}}$. Note that via the K\"unneth formula, for any $A_1,A_2\in \Ab^{\fin}$ one gets an identification
	$$
	\Phi(A_1\oplus A_2)\simeq \Phi(A_1)\oplus \Phi(A_2) \oplus \Tor^1(\Phi(A_1),\Phi(A_2)).
	$$
	We see that the functor $\Phi\coloneqq \mbf L^1\Bin(-[-1])\colon \Ab^\fin \ra \Ab^{\mr{tors}}$ 
	behaves as the "torsion multiplicative formal group law".
\end{rem}	

\paragraph{Description of $\mbf L^1\Bin^{\le t}(A[-1])$.}

\begin{construction}

Consider the functor $\Phi^t(A)\coloneqq \mbf L^1\Bin^{\le t}(A[-1])$ as a functor
$\Ab^\fin \ra \Ab$. By the \Cref{rem:LGamma of finite group is finite} it in fact takes values in $\Ab^\fin\subset \Ab$.
By universal coefficients formula we have a short exact sequence:
\begin{equation}\label{eq:ses for Phi dual}0\ra \Ext^1_{\Ab}(\Phi^t(A),\mbb{Z})
\ra \Hom_{{\mscr D}(\mbb Z)}^0(\mbf L\Bin^{\le t}(A[-1]),\mbb{Z})
\ra \mbb{Z}=\Hom_{\Ab}(\mbf L^0\Bin^{\le t}A[-1],\mbb Z)\ra 
0.
\end{equation}
We shall describe the functor
$\Psi^t(A)\coloneqq \Hom_{{\mscr D}(\mbb Z)}^0(\mbf L\Bin^{\le t}(A[-1]),\mbb{Z})=
H_0(\mbf L\Bin^{\le t}(A[-1])^\vee)$.
It is a contravariant functor from $\Ab^\fin$ to $\Ab$. By the discussion in \Cref{not:Pontryagin dual} the left term in \Cref{eq:ses for Phi dual} is naturally identified with $(\Phi^t(A))^D$, and so \Cref{eq:ses for Phi dual} gives a short exact sequence of functors
$$
0\ra (\Phi^t)^D \ra \Psi^t \ra \ul{\bZ}\ra 0
$$
from ${({\Ab^{\fin}})}^{\op}$ to $\Ab$. Thus knowing $\Psi^t(A)$ with the map $\Psi^t(A)\ra \mbb Z$ would be enough to reconstruct $\Phi^t(A)$. 
\end{construction}

\begin{thm}\label{thm:filtered pieces of L^1Bin}
	There is a functorial in $A\in \Ab^\fin$ isomorphism
	$$\Psi^t(A)\simeq \mbb{Z}[A^D]/J_A^{t+1},$$
	where $J_A\subset \mbb{Z}[A^D]$ is the augmentation ideal.
	The natural morphism $\Psi^t(A)\ra \mbb{Z}$ is identified with
	the augmentation.
	Consequently, one gets a functorial in $A$ isomorphism
	$$\Phi^t(A)\simeq (J_A/J_A^{t+1})^D.$$
\end{thm}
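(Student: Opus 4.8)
The plan is to identify the dualized filtered piece $\mbf L\Bin^{\le t}(A[-1])^\vee$ with the non-abelian derived functor of the group-algebra truncation $L\mapsto \mbb Z[L]/I_L^{t+1}$, and then read off $\Psi^t(A)$ as its bottom cohomology. First I set up the relevant functor on lattices. For $L\in\Lat$ write $G^{\le t}(L)\coloneqq \mbb Z[L]/I_L^{t+1}$, where $I_L$ is the augmentation ideal. By \Cref{constr:Hopf algebra structure on Z[A]} this is again a lattice carrying a finite filtration with graded pieces $\Sym^i_{\mbb Z}(L)$ for $0\le i\le t$, so $G^{\le t}\colon\Lat\ra\Lat$ is additively polynomial of degree $t$ and admits a derived extension $\mbf L G^{\le t}$ commuting with sifted colimits and finite totalizations. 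Combining \Cref{def:free binomial algebra} with the identification $\Map_{\le t}(L^\vee,\mbb Z)\simeq(\mbb Z[L^\vee]/I_{L^\vee}^{t+1})^\vee$ of \Cref{rem:description of polynomial maps} gives, naturally in $L\in\Lat$, an isomorphism of lattices $\Bin^{\le t}(L)^\vee\simeq \mbb Z[L^\vee]/I_{L^\vee}^{t+1}=G^{\le t}(L^\vee)$.

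Next comes the main identification. Since $A$ is finite, $A^D$ is a finite abelian group, hence lies in $\Perf_{\mbb Z}^{\le 0}$, and $A[-1]\simeq(A^D)^\vee$ lies in $(\Perf_{\mbb Z}^{\le 0})^\vee$ (recall $A^D\simeq A^\vee[1]$ from \Cref{not:Pontryagin dual}). Mirroring the argument of \Cref{cor:on coherent objects it is the right Kan extension}, realize $A^D\simeq|P_\bullet|$ as a finite geometric realization of an $m$-skeletal simplicial diagram $P_\bullet$ in $\Lat$, so that $A[-1]\simeq\Tot(P_\bullet^\vee)$ is a finite totalization of the cosimplicial lattice $P_\bullet^\vee$. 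Because $\mbf L\Bin^{\le t}$ commutes with finite totalizations (\Cref{lem:LBin_n commutes with finite totalizations}) and restricts to $\Bin^{\le t}$ on $\Lat$, we get $\mbf L\Bin^{\le t}(A[-1])\simeq\Tot(\Bin^{\le t}(P_\bullet^\vee))$. Dualizing now turns the finite totalization of perfect complexes into the geometric realization of the dual simplicial diagram, and the pointwise duality of the previous paragraph yields
\[
\mbf L\Bin^{\le t}(A[-1])^\vee\simeq \bigl|\Bin^{\le t}(P_\bullet^\vee)^\vee\bigr|\simeq \bigl|G^{\le t}(P_\bullet)\bigr|\simeq \mbf L G^{\le t}(A^D),
\]
the last step using that $\mbf L G^{\le t}$ commutes with sifted colimits and $|P_\bullet|\simeq A^D$. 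I expect this structural identity to be the \emph{main obstacle}: one must check carefully that $(-)^\vee=R\Hom_{\mscr D(\mbb Z)}(-,\mbb Z)$ interacts correctly with the finite totalization and converts the pointwise duality simplicially, keeping everything perfect throughout.

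With the identification in hand, the computation of $\Psi^t$ is quick. As $G^{\le t}$ takes values in $\Lat\subset\mscr D(\mbb Z)^{\le 0}$, the complex $\mbf L G^{\le t}(A^D)$ is connective, and $\Psi^t(A)=H_0(\mbf L\Bin^{\le t}(A[-1])^\vee)=\pi_0\,\mbf L G^{\le t}(A^D)$. The underived extension $B\mapsto\mbb Z[B]/I_B^{t+1}$ to all of $\Ab$ preserves sifted colimits — this follows from $\mbb Z[-]$ being cocontinuous, the functorial splitting $\mbb Z[B]\simeq\mbb Z\oplus I_B$ of \Cref{rem:induced splitting in low degree}, and stability of tensor powers and images under sifted colimits in $\Ab$ — so its $\pi_0$-comparison map is an equivalence and $\pi_0\,\mbf L G^{\le t}(A^D)\simeq \mbb Z[A^D]/I_{A^D}^{t+1}=\mbb Z[A^D]/J_A^{t+1}$, which is the first assertion. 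The natural map $\Psi^t(A)\ra\mbb Z$ is induced by the inclusion $\mbf L\Bin^{\le 0}\hookrightarrow\mbf L\Bin^{\le t}$ of the bottom filtration step (\Cref{ex:Bin^n in low degrees}); dually it becomes the map $\mbf L G^{\le t}(A^D)\ra\mbf L G^{\le 0}(A^D)\simeq\mbb Z$ induced by $\mbb Z[-]/I^{t+1}\ra\mbb Z[-]/I$, which on $\pi_0$ is exactly the augmentation $\mbb Z[A^D]/J_A^{t+1}\ra\mbb Z$.

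Finally I deduce the formula for $\Phi^t$. Feeding $\Psi^t(A)\simeq\mbb Z[A^D]/J_A^{t+1}$ together with the identification of $\Psi^t(A)\ra\mbb Z$ with the augmentation into the short exact sequence \Cref{eq:ses for Phi dual} exhibits $(\Phi^t(A))^D$ as the kernel $J_A/J_A^{t+1}$ of the augmentation. Since $A^D$ is finite, $J_A/J_A^{t+1}$ is a finite abelian group (its graded pieces are finite quotients of the $\Sym^i(A^D)$), and $\Phi^t(A)=\mbf L^1\Bin^{\le t}(A[-1])$ is finite by \Cref{rem:LGamma of finite group is finite}; applying Pontryagin duality, an involution on finite abelian groups, gives $\Phi^t(A)\simeq(J_A/J_A^{t+1})^D$. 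Functoriality in $A$ is built into each step, since all identifications above are natural in $A\in\Ab^\fin$.
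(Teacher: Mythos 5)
Your proposal is correct, and it shares its skeleton with the paper's proof: both rest on the duality $\Bin^{\le t}(L)\simeq (\mbb Z[L^\vee]/I^{t+1})^\vee$ (with $I$ the augmentation ideal) from \Cref{rem:description of polynomial maps}, on \Cref{lem:LBin_n commutes with finite totalizations}, on a resolution of $A[-1]$ by lattices, and on the same universal-coefficients/Pontryagin-duality endgame via \Cref{eq:ses for Phi dual}. Where you genuinely diverge is the middle. The paper fixes a two-term resolution $[L_0\xra{d}L_1]\sim A[-1]$ and never dualizes more than the bottom of the complex: $\Psi^t(A)$ is computed directly as $\coker(f_0-f_1)$ of the duals of the two coface maps, and the image of $f_0-f_1$ is identified by hand as the ideal generated by the elements $1-[d^\vee(v)]$, giving $\mbb Z[L_0^\vee]/(J_{L_0}^{t+1},1-[d^\vee(v)])_{v\in L_1^\vee}\simeq \mbb Z[A^D]/J_A^{t+1}$. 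You instead dualize the whole complex, identifying $\mbf L\Bin^{\le t}(A[-1])^\vee\simeq \mbf LG^{\le t}(A^D)$ for $G^{\le t}(-)=\mbb Z[-]/I^{t+1}$, and read off $\Psi^t$ as $\pi_0$ of a connective derived functor, using that the underived extension of $G^{\le t}$ to $\Ab$ preserves sifted colimits (which is correct: $G^{\le t}(B)=\coker(I_B^{\otimes(t+1)}\to\mbb Z[B])$ and each ingredient preserves sifted colimits). Your route is more functorial and yields more---the whole dual complex, not only its $H_0$---while the paper's is engineered precisely to avoid the one step you leave unverified.

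That step, $\Tot(\Bin^{\le t}(P_\bullet^\vee))^\vee\simeq |\Bin^{\le t}(P_\bullet^\vee)^\vee|$, is true but not automatic: $R\Hom_{\mscr D(\mbb Z)}(-,\mbb Z)$ takes colimits to limits, not limits to colimits, and \Cref{rem:conditions on commutation with totalizations are sharp} shows such exchanges genuinely fail outside the skeletal/bounded regime, so an argument must be supplied. Two are available with the paper's own tools. First, all terms $\Bin^{\le t}(P_n^\vee)$ are lattices, so as in the proof of \Cref{prop:LBin commutes with suitable totalizations} the fibers $\fib(\Tot\to\Tot_{\le n})$ lie in $\mscr D(\mbb Z)^{\ge n}$; since $\mbb Z$ has injective dimension $1$, their duals lie in increasingly connective range, hence $\colim_n(\Tot_{\le n})^\vee\xra{\sim}\Tot^\vee$, and each $(\Tot_{\le n})^\vee$ is the dual of a finite limit of perfect complexes, i.e.\ the corresponding finite colimit of duals; passing to the colimit over $n$ gives the realization. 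Second, alternatively, since $G^{\le t}$ carries a finite filtration with graded pieces $\Sym^i_{\mbb Z}$, $i\le t$, the argument of \Cref{rem:Gamma^n and Bin preserves ind-completion of dual to connective perf} (via \cite[Proposition 2.10]{BGMNN}) shows $G^{\le t}(P_\bullet)$ is again a skeletal simplicial object; dually $\Bin^{\le t}(P_\bullet^\vee)$ is a skeletal cosimplicial object, its totalization is an honest finite limit, and duality converts it into the finite colimit $|G^{\le t}(P_\bullet)|$. With either argument inserted your proof is complete; your identification of $\Psi^t(A)\to\mbb Z$ with the augmentation (via the functorial splitting of \Cref{ex:Bin^n in low degrees}) and the final passage to $\Phi^t$ agree with the paper's.
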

\begin{proof}
	Pick a free resolution  $[L_0\xra{d}L_1]\sim A[-1]$.
	Recall that by \Cref{rem:description of polynomial maps} for a free abelian group $L$ there is 
	a natural isomorphism 
	$$\Bin^{\le t}(L)\coloneqq \Map_{\le t}(L^\vee,\mbb Z)\xra{\sim} (\mbb{Z}[L^\vee]/J_L^{t+1})^\vee,$$
	where $J_L\subset \mbb{Z}[L^\vee]$ is the augmentation ideal.
	Let $\mr{DK}^\bullet([L^0\xra{d}L^1])\simeq L_0\oplus L_1^{\oplus \bullet}$ denote the cosimplicial abelian group Dold-Kan corresponding to the 2-term complex $[L^0\xra{d}L^1]$; we have $A[-1]\simeq \Tot_{[\bullet]\in \Delta}(\mr{DK}^\bullet([L^0\xra{d}L^1]))$. By \Cref{lem:LBin_n commutes with finite totalizations}, $\mbf L\Bin^{\le t}$ commutes with finite totalizations and so we get
	$$\mbf L\Bin^{\le t} A[-1]\simeq
	\Tot_{[\bullet]\in \Delta} \mbf L\Bin^{\le t}\!\left(\mr{DK}^\bullet([L^0\xra{d}L^1])\right)\simeq 	\Tot_{[\bullet]\in \Delta} \Bin(L_0\oplus L_1^{\oplus \bullet}),$$
and then 
	$$\Psi^t(A)=\Hom_{{\mscr D}(\mbb Z)}^0(\mbf L\Bin^{\le t}A[-1],\mbb{Z})\simeq 
	\coker\left(f_0-f_1\colon \mbb{Z}[L_0^\vee\oplus L_1^\vee]/J_{L_0\oplus L_1}^{t+1}\ra
	\mbb{Z}[L_0^\vee]/J_{L_0}^{t+1}\right),$$
	where the maps $f_i$ are given by $f_0=i^\vee_0$ and $f_1=i^\vee_0\oplus d^\vee$ correspondingly (here $i_0\colon L_0\hookrightarrow L_0\oplus L_1$ is the natural embedding and $d^\vee\colon L_1^\vee \ra L_0^\vee$ is dual to the differential $d\colon L_0\ra L_1$). Both $f_0$ and $f_1$
	are homomorphisms of algebras.
	Let $u\in L_0^\vee$ and $v\in L_1^\vee$, and consider 
	the basis element $[u]\cdot [v]\in\mbb{Z}[L_0^\vee \oplus L_1^\vee]$.
	By definition, $f_0([u]\cdot [v])=[u]$ and $f_1([u]\cdot [v])=[u]\cdot [d^\vee(v)]$.
	We see that $\Im (f_0-f_1)\subset \mbb{Z}[L_0^\vee]/J_{L_0}^{t+1}$ is an ideal generated by 
	$1-[d^\vee(v)]$ for all $v\in L_1^\vee\subset \mbb{Z}[L_1^\vee]$.
	This gives
	$$\Psi^t(A)\xra{\sim}\mbb{Z}[L_0^\vee]/(J_{L_0}^{t+1}, 1-[d^\vee(v)])_{v\in L_1^\vee}\simeq   \mbb{Z}[L_0^\vee/L_1^\vee]/J_{L_0}^{t+1}.$$
	Finally, noting that $L_0^\vee/L_1^\vee\simeq \Ext^1(A,\mbb Z)\simeq A^D$ we get the first statement. 
	From the proof above one also sees that the augmentation map 
	$\mbb{Z}[A^D]/J_{A}^{t+1}\ra\mbb{Z}$
	is exactly  induced by the restriction $$\Hom^0_{{\mscr D}(\mbb Z)}(\mbf L\Bin^{\le t}(A[-1]),\mbb{Z})\ra
	\Hom_\Ab(\mbf L^0\Bin^{\le t}(A[-1]),\mbb{Z})=\mbb{Z}.$$
	Thus by \Cref{eq:ses for Phi dual} we can identify 
	$$
	 \Ext^1_{\Ab}(\Phi^t(A),\mbb{Z})\simeq \ker(\mbb{Z}[A^D]/J_{A}^{t+1}\ra\mbb{Z})\simeq J_A/J_{A}^{t+1},
	$$
	and then, passing to Pontryagin duals again, also
	$$
	\Phi^t(A)\simeq \Ext^1_{\Ab}(\Phi^t(A),\mbb{Z})^D\simeq (J_A/J_{A}^{t+1})^D.
	$$

\end{proof}

\begin{cor}\label{cor:canonical formula for L^1Bin}
	Let $A\in \Ab^\fin$. Then there is a natural in $A$ isomorphism 
	$$\mbf L^1\Bin(A[-1])\simeq \colim_{t\ge 1} (J_A/J_A^{t+1})^D,$$
	where $J_A\subset \mbb Z[A^D]$ is the augmentation ideal.
\end{cor}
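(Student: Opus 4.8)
The plan is to deduce the formula by passing to the filtered colimit in \Cref{thm:filtered pieces of L^1Bin}, so that almost all of the work is already done. First I would recall that by definition $\mbf L\Bin\simeq \colim_{t}\mbf L\Bin^{\le t}$ (\Cref{rem:colimit of filtered monad}), and that filtered colimits in ${\mscr D}(\mbb Z)$ are exact, so each cohomology functor $H^i\colon {\mscr D}(\mbb Z)\ra \Ab$ commutes with them (as already used in the proof of \Cref{lem:L^1Bin for A being p-torsion}). Evaluating on $A[-1]$ and taking $H^1$ then gives
$$
\mbf L^1\Bin(A[-1])\simeq H^1\big(\colim_{t}\mbf L\Bin^{\le t}(A[-1])\big)\simeq \colim_{t} H^1\big(\mbf L\Bin^{\le t}(A[-1])\big)=\colim_{t}\Phi^t(A),
$$
where $\Phi^t(A)=\mbf L^1\Bin^{\le t}(A[-1])$ is the functor introduced just before \Cref{thm:filtered pieces of L^1Bin} (the colimit may be taken over $t\ge 1$, which is cofinal in $\mbb Z^\times_{\ge 0}$).

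The second step is simply to substitute the identification $\Phi^t(A)\simeq (J_A/J_A^{t+1})^D$ supplied by \Cref{thm:filtered pieces of L^1Bin}, yielding $\mbf L^1\Bin(A[-1])\simeq \colim_{t}(J_A/J_A^{t+1})^D$, which is the assertion, with naturality in $A$ inherited from the naturality in \Cref{thm:filtered pieces of L^1Bin}.

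The only point requiring genuine care — and the one I expect to be the main (if mild) obstacle — is checking that the isomorphisms of \Cref{thm:filtered pieces of L^1Bin} are compatible with the transition maps of the two colimit systems, so that the termwise identification induces an isomorphism of colimits. I would argue this as follows. The structural maps $\mbf L\Bin^{\le t}\ra \mbf L\Bin^{\le t+1}$ of the filtered monad induce the transition maps $\Phi^t(A)\ra \Phi^{t+1}(A)$ on the left; dually they induce $\Psi^{t+1}(A)\ra \Psi^t(A)$, which under the isomorphism $\Psi^t(A)\simeq \mbb Z[A^D]/J_A^{t+1}$ are exactly the canonical surjections $\mbb Z[A^D]/J_A^{t+2}\surj \mbb Z[A^D]/J_A^{t+1}$. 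This is visible from the proof of \Cref{thm:filtered pieces of L^1Bin}, where every identification is induced by the single filtered object $\mbf L\Bin^{\le \bullet}(A[-1])$ together with the manifestly filtered isomorphisms $\Bin^{\le t}(L)\simeq (\mbb Z[L^\vee]/J_L^{t+1})^\vee$. Tracking these through the short exact sequence \Cref{eq:ses for Phi dual} and dualizing, the transition maps on $(J_A/J_A^{t+1})^D$ become the Pontryagin duals of the quotients $J_A/J_A^{t+2}\surj J_A/J_A^{t+1}$, i.e. the injections $(J_A/J_A^{t+1})^D\hookrightarrow (J_A/J_A^{t+2})^D$ that organize the target colimit. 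Since the isomorphisms of \Cref{thm:filtered pieces of L^1Bin} are natural in both $A$ and the filtration degree, this compatibility holds on the nose, and taking the colimit gives the desired natural isomorphism.
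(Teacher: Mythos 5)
Your proof is correct and follows essentially the same route as the paper: the paper's own argument is exactly that cohomology commutes with filtered colimits, so $\mbf L^1\Bin(A[-1])\simeq \colim_{t\ge 1}\mbf L^1\Bin^{\le t}(A[-1])\simeq \colim_{t\ge 1}(J_A/J_A^{t+1})^D$ by \Cref{thm:filtered pieces of L^1Bin}. Your additional verification that the transition maps match the canonical surjections $\mbb Z[A^D]/J_A^{t+2}\surj \mbb Z[A^D]/J_A^{t+1}$ under dualization is a careful spelling-out of the naturality the paper leaves implicit, not a different argument.
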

\begin{proof}
	Since cohomology commutes with filtered colimits, by \Cref{thm:filtered pieces of L^1Bin} we get 
	$$
	\mbf L^1\Bin(A)[-1]\simeq \colim_{t\ge 1} \mbf L^1\Bin^{\le t}(A)[-1] \simeq  \colim_{t\ge 1} (J_A/J_A^{t+1})^D.
	$$
\end{proof}	

In the case $A=\mbb{Z}/p$ we get a particularly nice description.
Let $K\coloneqq \mbb Q_p(\zeta_p)$ where $\zeta_p^p=1$ is a primitive root of unity and let $\mc O_K\subset K$ denote the ring of integers. The element $\pi\coloneqq \zeta_p-1$ is a uniformizer of $\mc O_K$. Also note that $\Aut_\Ab(\bZ/p)\simeq (\bZ/p)^\times$ and that $(\bZ/p)^\times\simeq \Gal(K/\mbb Q_p)$.
\begin{cor}\label{cor:L^1Bin for Z/p}
	There is an isomorphism
	$$\mbf L^1\Bin^{\le t}(\mbb{Z}/p[-1])\simeq 
	((\pi)/(\pi^{t+1}))^D$$
	that is compatible with the filtrations and the action
	of $(\bZ/p)^\times$.
\end{cor}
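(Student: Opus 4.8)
The plan is to deduce the Corollary directly from \Cref{thm:filtered pieces of L^1Bin}, which already gives a functorial (in $A\in \Ab^\fin$) identification $\mbf L^1\Bin^{\le t}(A[-1])=\Phi^t(A)\simeq (J_A/J_A^{t+1})^D$. So it suffices to produce, for $A=\mbb Z/p$, a $(\mbb Z/p)^\times$-equivariant and filtration-compatible isomorphism of finite abelian groups $J_A/J_A^{t+1}\simeq (\pi)/(\pi^{t+1})$ and then apply Pontryagin duality $(-)^D$. Here $A^D=\Hom_{\mbb Z}(\mbb Z/p,\mbb Q/\mbb Z)\simeq \mbb Z/p$, so $\mbb Z[A^D]\simeq \mbb Z[\mbb Z/p]=\mbb Z[x]/(x^p-1)$ with $x$ a generator of $\mbb Z/p$, and the augmentation ideal is $J_A=(y)$ with $y\coloneqq x-1$.

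The key input I would use is the classical Rim (conductor) square: the ring $\mbb Z[x]/(x^p-1)$ is the fiber product $\mbb Z\times_{\mbb F_p}\mc O_K$, where $\mbb Z[x]/(x^p-1)\to \mbb Z$ is the augmentation $x\mapsto 1$, the map $\mbb Z[x]/(x^p-1)\to \mc O_K$ is $x\mapsto \zeta_p$, and the two maps to $\mbb F_p$ are reduction modulo $p$ and modulo $\pi=\zeta_p-1$ (these agree since $\zeta_p\equiv 1 \bmod \pi$). Under $x\mapsto \zeta_p$ one has $y\mapsto \pi$, hence $y=(0,\pi)$ in the fiber product and $y^k=(0,\pi^k)$. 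Then $J_A=\ker(\mbb Z[x]/(x^p-1)\to \mbb Z)=\{(0,s):s\in (\pi)\}$, and projection to the second factor gives an isomorphism of $\mbb Z[x]/(x^p-1)$-modules $J_A\xra{\sim}(\pi)$, where the source acts through $x\mapsto \zeta_p$. Computing $J_A^{t+1}=(y^{t+1})=\mbb Z[x]/(x^p-1)\cdot (0,\pi^{t+1})$ and using that the second coordinate of a general element of the fiber product ranges over all of $\mc O_K$, one gets $J_A^{t+1}\cong (\pi^{t+1})$ under the same projection. Therefore $J_A/J_A^{t+1}\xra{\sim}(\pi)/(\pi^{t+1})$; since this is induced by the ring map $\mbb Z[x]/(x^p-1)\to \mc O_K$, it is automatically compatible with the transition maps as $t$ grows.

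For equivariance, I would note that $\Aut_{\Ab}(\mbb Z/p)=(\mbb Z/p)^\times$ acts on $\Phi^t(\mbb Z/p)$ through functoriality, which by the naturality in \Cref{thm:filtered pieces of L^1Bin} corresponds to the action on $J_A/J_A^{t+1}$ coming from the action of $(\mbb Z/p)^\times$ on $A^D\simeq \mbb Z/p$, i.e.\ $x\mapsto x^c$ for $c\in (\mbb Z/p)^\times$. Under $x\mapsto \zeta_p$ this matches $\zeta_p\mapsto \zeta_p^c$, which is precisely the action of $\Gal(K/\mbb Q_p)\simeq (\mbb Z/p)^\times$ on $\mc O_K$ and hence on $(\pi)/(\pi^{t+1})$. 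Thus the isomorphism above is $(\mbb Z/p)^\times$-equivariant, and dualizing yields $\mbf L^1\Bin^{\le t}(\mbb Z/p[-1])\simeq ((\pi)/(\pi^{t+1}))^D$ compatibly with the action and the filtrations.

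The only real content beyond bookkeeping is the identification of the augmentation-ideal powers $J_A^{t+1}$ with the ideal powers $(\pi^{t+1})$, and I expect this to be the main point; the Rim square is what makes it transparent, turning the a priori subtle multiplicative structure of the group ring into a plain DVR computation in $\mc O_K$. Concretely, the step that must be checked carefully is that $y^{t+1}$ generates the full ideal $(0,\pi^{t+1})\mc O_K$ rather than a proper sub-ideal, which is exactly the surjectivity of the second projection on the fiber product.
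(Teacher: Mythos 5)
Your overall strategy is sound and it is genuinely different from the paper's. The paper's proof of this corollary also starts from \Cref{thm:filtered pieces of L^1Bin}, but then immediately $p$-completes: since the quotient $(x-1)L/(x-1)^{t+1}L$ is a finite $p$-group (\Cref{rem:LGamma of finite group is finite}), it may be replaced by $(x-1)L^\wedge_p/(x-1)^{t+1}L^\wedge_p$ with $L^\wedge_p=\mbb Z_p[x]/(x^p-1)$; the factorization $x^p-1=(x-1)\phi_p(x)$ then makes the ideal $(x-1)L^\wedge_p$ a module over $\mc O_K\simeq \mbb Z_p[x]/(\phi_p(x))$, identified with the ideal $(\pi)\subset\mc O_K$. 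Your conductor-square argument keeps the computation integral instead of completing first, and reduces the key point (that powers of the augmentation ideal match powers of $(\pi)$) to the surjectivity of the second projection of the fiber product. Both arguments are incarnations of the same decomposition $\mbb Q_p[x]/(x^p-1)\simeq \mbb Q_p\times K$; yours is arguably more self-contained, the paper's is shorter because completing first lets one quote the $\mc O_K$-module structure directly.

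There is, however, a concrete error as written. With the paper's notation $K=\mbb Q_p(\zeta_p)$, so $\mc O_K=\mbb Z_p[\zeta_p]$, the claimed identity $\mbb Z[x]/(x^p-1)\simeq \mbb Z\times_{\mbb F_p}\mc O_K$ is false: the right-hand side contains $(0,s)$ for \emph{every} $s\in(\pi)\subset\mbb Z_p[\zeta_p]$ and is therefore uncountable, while the left-hand side is a finitely generated $\mbb Z$-module. The Rim/conductor square holds for the \emph{global} cyclotomic ring, $\mbb Z[x]/(x^p-1)\simeq \mbb Z\times_{\mbb F_p}\mbb Z[\zeta_p]$. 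The repair is routine but must be made: run your argument with $\mbb Z[\zeta_p]$, obtaining a filtration-compatible, $(\mbb Z/p)^\times$-equivariant isomorphism $J_A/J_A^{t+1}\simeq (\zeta_p-1)/(\zeta_p-1)^{t+1}$ computed in $\mbb Z[\zeta_p]$, and then observe that this quotient is a finite $p$-group, hence unchanged by applying $-\otimes_{\mbb Z}\mbb Z_p$, while $\mbb Z[\zeta_p]\otimes_{\mbb Z}\mbb Z_p\simeq \mbb Z_p[\zeta_p]=\mc O_K$ because $p$ is totally ramified in $\mbb Q(\zeta_p)$. This recovers exactly the local quotient $(\pi)/(\pi^{t+1})$ appearing in the statement, and the rest of your proof (equivariance via $x\mapsto x^c$ versus the Galois action, and compatibility with the transition maps in $t$) goes through unchanged.
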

\begin{proof} Let $L\coloneqq \mbb Z[(\mbb Z/p)^D]$; we can $(\mbb Z/p)^\times$-equivariantly identify $L$ with $\mbb Z[x]/(x^p-1)$, where $i\in (\mbb Z/p)^\times$ sends $x$ to $x^i$.  By \Cref{thm:filtered pieces of L^1Bin} $\mbf L^1\Bin^{\le t}(\mbb{Z}/p[-1])$ is identified with the Pontryagin dual of the quotient $(x-1) L /(x-1)^{t+1} L$. By \Cref{rem:LGamma of finite group is finite} this is a finite $p$-torsion group, so $$(x-1) L /(x-1)^{t+1} L \simeq ((x-1) L /(x-1)^{t+1} L)\otimes_{\mbb Z} \mbb Z_p\simeq (x-1) L^\wedge_p /(x-1)^{t+1} L^\wedge_p,$$
	where $L^\wedge_p\coloneqq \mbb Z_p[(\mbb Z/p)^D]$. Let $\phi_p(x)\coloneqq x^{p-1}+x^{p-2}+\ldots +1$. We have $x^p-1=(x-1) \cdot \phi_p(x)$, and so ideal  $(x-1) L^\wedge_p$ has the $\mc O_K\simeq \mbb Z_p[x]/\phi_p(x)$-module structure; moreover the corresponding module is naturally identified with the ideal generated by $(x-1)\in\mbb Z_p[x]/\phi_p(x)$, which corresponds to $\pi\in \mc O_K$. This gives the required isomorphism.
\end{proof}	
\begin{rem}[Case $A=\mbb Z/p$]\label{rem:case A=Z/p}
	Let $K/\mbb Q_p$ be a finite extension and $\mathfrak{d}_{K/\mbb Q_p}\subset \mc O_K$ the different ideal of $K$; recall that by definition the inverse ideal $\mathfrak{d}_{K/\mbb Q_p}^{-1}\subset K$ is identified with the dual $\mbb Z_p$-lattice to $\mc O_K$ via the pairing $(\alpha,\beta)\mapsto \tr_{K/\mbb Q_p}(\alpha\beta)$. The Pontryagin dual $(\mc O_K)^D$ (defined, say, as the group of continuous homomorphisms $ \Hom_\Ab^{\mr{cts}}(\mc O_K,\mbb Q/\mbb Z)$) is $\Gal(K/\mbb Q_p)$-equivariantly identified with $K/\mathfrak{d}_{K/\mbb Q_p}^{-1}$ via sending $\alpha\in K/\mathfrak{d}_{K/\mbb Q_p}^{-1}$ to the homomorphism $\beta \in \mc O_K \mapsto \tr_{K/\mbb Q_p}(\alpha\beta)$, where the latter is a well-defined element of $\mbb Q_p/\mbb Z_p\subset \mbb Q/\mbb Z$. More generally, for any $x,y\in \mc O_K$, such that $x|y$, this restricts to a $\Gal(K/\mbb Q_p)$-equivariant isomorphism
$$
((y)/(x))^D\simeq (x\cdot\mathfrak{d}_{K/\mbb Q_p})^{-1}/ (y\cdot \mathfrak{d}_{K/\mbb Q_p})^{-1}
$$
	In the case $K=\mbb Q_p(\zeta_p)$ the different ideal is given by $(\zeta_p-1)^{p-2}=(\pi^{p-2})\subset \mc O_K$. By \Cref{cor:L^1Bin for Z/p} we get an isomorphism 
	$$
	\mbf L^1\Bin^{\le t}(\mbb{Z}/p[-1])\simeq ((\pi)/(\pi^{t+1}))^D \simeq (\pi^{-t-1-p})/(\pi^{1-p})\simeq (\pi^{-t})/\mc O_K
	$$
	 Here in the last isomorphism we used the equality of ideals $(\zeta_p-1)^{p-1}=(p)$, which follows from the fact that $\mr{val}_p(\zeta_p-1)=1/p-1$. By passing to the limit as $t\ra \infty$ we also get a $(\mbb Z/p)^{\times}$-equivariant isomorphism 
	 $$
	 \mbf L^1\Bin(\mbb{Z}/p[-1])\simeq K/\mc O_K,
	 $$
	 where $(\mbb Z/p)^{\times}$-acts on the right  through the Galois action.
\end{rem}	
\begin{rem}\label{rem:conditions on commutation with totalizations are sharp} Note that $\mbb Z/p[-1]$ is equivalent to a (finite) totalization of the Dold-Kan corresponding cosimplicial object $\mr{DK}^\bullet(\mbb Z/p[-1])\simeq \mbb Z/p^{\oplus \bullet}$:
	$$
	\mbb Z/p[-1] \xra{\sim} \Tot_{[\bullet]\in \Delta} \mbb Z/p^{\oplus \bullet}.
	$$
	By \Cref{prop:Bin of torsion abelian group}, $\mbf L\Bin(\mbb Z/p^{\oplus \bullet})\simeq \mbf L\Bin(0)\simeq \mbb Z$, and so 
	$$
	\Tot_{[\bullet]\in \Delta} \mbf L\Bin(\mbb Z/p^{\oplus \bullet})\simeq \Tot_{[\bullet]\in \Delta} \mbb Z \simeq \mbb Z,
	$$
	which does not agree with $\mbf L\Bin(\mbb Z/p[-1])$. This shows that $\mbf L\Bin$ does not commute with finite totalizations if we even slightly relax the boundedness condition in \Cref{prop:LBin commutes with suitable totalizations} (note that even though $\mbb Z/p^{\oplus \bullet}$ is coconnective in the standard t-structure it does not lie in $\mscr D(\mbb Z)^{\ge 0}_{\mr{perv}}$).
\end{rem}

\section{Derived binomial rings and cohomology of log analytic spaces}\label{sec:log stuff}
In this section we apply the derived binomial monad $\mbf L\Bin$ to give a formula for integral Betti(=singular) cohomology of fs log analytic spaces. The latter was defined in {\cite[Definition 1.1.1]{Kato-Nak-99}} in terms of an auxiliary topological space $\pi\colon X^{\log}\ra X$ which now carries the name of "Kato-Nakayama space". In \Cref{ssec:log betti} we recall its definition and also introduce the 2-term complex $\eexp(\alpha)\coloneqq \mc O_X \ra \mc M^{\gr}$, which gives an explicit model for $R^{\le 1}\pi_*\ul{\bZ}$. In \Cref{ssec:sheaves of binomial algebras}, for a sufficiently nice topological space $X$ we construct the derived binomial monad $\mbf L\mc{B}in_X$ on the category $\Shv(X,\mscr D(\mbb Z))$ of $\mscr D(\mbb Z)$-valued sheaves on $X$ and identify modules over $\mbf L\mc{B}in_X$ with the category of sheaves of derived binomial rings on $X$. In \Cref{ssec:LBin'} we define a slightly modified version $\mbf L\mc{B}in^{\mr{coaug}}_X$ of a free binomial ring in the undercategory $\Shv(X,\mscr D(\mbb Z))_{\setminus \ul \bZ}$. In \Cref{ssec:cohomology of KN-space} we then show that given a proper map $\pi\colon Y\ra X$ of sufficiently nice topological spaces such that each fiber is a compact real torus, the pushforward $R\pi_*\ul{\mbb Z}$ is the free binomial ring $\mbf L\mc{B}in^{\mr{coaug}}_X(R^{\le 1}\pi_*\ul{\bZ})$; applying this to $\pi\colon X^{\log}\ra X$ gives a formula for $\RG_\sing^{\log}(X,\mbb Z)$ as global sections of $\mbf L\mc{B}in^{\mr{coaug}}_X(\eexp(\alpha))$. In \Cref{ssec:cosheaf shit} we show that one can also reconstruct from $\eexp(\alpha)$  the cosheaf $[X^{\log},\pi]$ represented by $X^{\log}$ and also define a gerbe structure on the latter over the cosheaf of groups $\ol{\mc M^{\gr}}^\vee$. Finally, in \Cref{ssec:geometrization} we propose an algebraic version $X^{\log}_{\mbb H}$ of $[X^{\log},\pi]$ which we hope to be able to generalize to other cohomological contexts in the future.

\subsection{Log singular cohomology}\label{ssect:log_Betti_coh}\label{ssec:log betti}

\begin{defn}[{\cite[Definition 1.1.1]{Kato-Nak-99}}]
	 Recall  that a log complex analytic space $(X,\mathcal M)$ is given by:
	 \begin{itemize}
	 	\item[-] a complex analytic space $X$;
	 	\item[-] a sheaf of commutative monoids $\mathcal M$;
	 	\item[-] a homomorphism\footnote{Here we consider the sheaf  $\mathcal O_X$ of holomorphic functions as a sheaf of monoids with the operation given by multiplication.} $\alpha\colon \mathcal M\to \mathcal O_X$ such that $\alpha^{-1}(\cO^\times_X)\iso \cO^\times_X$.
	 \end{itemize}
\end{defn}
\begin{assumption}\label{ass:fs log anal}
In this paper we will only consider \textit{fine and saturated (fs)} log analytic spaces (\cite[Definition 1.1.2]{Kato-Nak-99}). The key property of fs log analytic spaces is that any fiber of the sheaf $\mathcal M$ is a fine and saturated monoid (see \cite[Section I.1.3]{Ogus_Book}). In particular, the \textit{ghost sheaf}  $\overline{\mathcal M^{\gr}}:=\coker(\cO^\times_X\to \mathcal M^{\gr})$ (where $\mathcal M^{\gr}$ denotes the group completion of a sheaf of monoids) is constructible     with stalks given by torsion-free finitely generated abelian groups (in fact this is the only property of fs log structure that we are going to use).
\end{assumption}

The classical approach to defining log Betti (=singular) cohomology of a log analytic space is via the construction of the corresponding Kato-Nakayama space $X^{\log}$ (\cite{Kato-Nak-99}), that we now recall.


\begin{construction}[Kato-Nakayama space]\label{constr:Kato-Nakayama space} To an fs log analytic space $(X,\mathcal M)$ one can associate a certain natural topological space $X^{\log}$, together with a continuous map $X^{\log} \ra X$. The space $X^{\log}$ is called the \textit{Kato-Nakayama space} of $(X,\mathcal M)$. The underlying set of $X^{\log}$ is given by the pairs $(x,\varphi)$ of a point $x\in X$, and a group homomorphism
	$\varphi\colon \mathcal{M}^{\gr}_x\ra S^{1}$
	that makes the diagram
	\[
	\xymatrix{{\mathcal{O}_{X,x}^\times}\ar[r]\ar[d]_{\mathrm{arg}_x} & \mathcal{M}^{\gr}_x\ar[ld]^{\varphi}\\
		S^{1}
	}
	\]
	commutative. Here $\arg_x$ denotes the \textit{argument} function that sends $$f\mapsto \arg_x(f)\coloneqq \frac{f(x)}{|f(x)|}\in S^1\subset \mbb C.$$
	
	The topology on $X^{\log}$ is defined as follows. By construction, we have a natural map of sets $\pi\colon X^{\log}\ra X$, sending $(x,\phi)$ to $x$. For any open $U\subset X$ and a section ${m}\in\mathcal{M}(U)$,
	we can define a function 
	\[
	\mathrm{arg}({m})\colon \pi^{-1}(U)\longrightarrow S^{1}; 
	\]
	$$(x,\varphi)\longmapsto\varphi({m}_{x}).$$
	The space $X^{log}$ is endowed with the weak topology defined
	by $\pi$ and maps $\mathrm{arg}({m})$ for all $m\in \mathcal M^{\gr}_{\an}(U)$ and open $U\subset X^{an}$. 
	
\end{construction}

\begin{rem}\label{rem:map pi}
	By \cite[Corollary V.1.2.8]{Ogus_Book} the map $\pi\colon X^{\log}\ra X$ is proper. The fiber $\pi^{-1}(x)\hookrightarrow X^{\log}$ over a point $x\in X$ is identified with the set of group homomorphisms
	 $$\pi^{-1}(x)\simeq \Hom(\ol{\mc M^{\gr}_x},S^1),$$
	 with the topology induced from $S^1$. In particular, $\pi^{-1}(x)$ is a compact real torus, whose dimension is equal to the rank of $\ol{\mc M^{\gr}_x}$.
\end{rem}	

\begin{defn}[Log singular cohomology]
	The	\textit{log Betti cohomology}\footnote{Or, in other words, singular log cohomology.} $\RG^{\log}_{\sing}(X,\bZ)$ of a log analytic space $X$ is defined as the the singular cohomology $\RG_\sing(X^{\log},\mbb Z)$ of the topological space $X^{\log}$.
\end{defn}	

\begin{rem}[Log-schemes over $\mbb C$ and analytification]
	To an  \textit{fs log scheme} $(X,\mc M)$ (\cite[Definition 1.7]{Nakayama}) over $\mbb C$ with $X$ being of finite type one can naturally associate its \textit{analytification} $(X_{\an},\mc M_{\an})$ (where $\mc M_{\an}$ is simply defined by taking pull-back of the log-structure from $X$ to $X_\an$) which is an fs log analytic space. Following \cite{Kato-Nak-99} one can define log Betti cohomology of  $(X,\mc M)$ as log Betti cohomology of $(X_{\an},\mc M_{\an})$. As shown in \cite{Kato-Nak-99}, it agrees with the appropriately defined log \'etale and log de Rham cohomology under naturally defined comparison isomorphisms.
\end{rem}

As discussed in the introduction, one can look for alternative, more direct descriptions of the log-Betti cohomology that would only directly involve the analytic space $X$ and the monoid $\mc M$. Let $(X,\mathcal M)$ be an fs log analytic space. We will denote by 
$\exp_{\mc M}\colon \mathcal O_{X} \xra{} \mathcal M^{\gr}$
the composition of the classical exponential map $\mc O_{X}\xra{\exp} \mc O^\times_{X}$ 
with the natural inclusion $\cO_{X}^\times \hookrightarrow \mathcal M^{\gr}$ given by the log structure.

\begin{notation}[2-term complex of sheaves $\eexp(\alpha)$] \label{not:exp(a)}Let  
$$\eexp(\alpha)\coloneqq \fib(\exp_{\mc M}\colon \mathcal O_{X} \ra \mathcal M^{\gr})$$
be the fiber of $\exp_{\mc M}$ in the category $\Shv(X,{\mscr D}(\mbb Z))$ of sheaves of abelian groups on $X$.  Equivalently, we consider an object of $\Shv(X,{\mscr D}(\mbb Z))$ represented by the 2-term complex $ [\mathcal O_{X} \ra \mathcal M^{\gr}]$. The cohomology sheaves of $\eexp(\alpha)$ are given by $H^0(\eexp(\alpha))\simeq \ker(\exp_{\mc M})\simeq \ker (\exp)\simeq \ul \bZ$ and  $H^1(\eexp(\alpha))\simeq \coker(\exp_{\mc M})\simeq \ol{ \mc M^{\gr}}\coloneqq \mc M^{\gr}/ \mc O_X^\times$.
\end{notation}

\begin{rem}\label{rem: exp computes first truncation}
 As we will see below, the complex $\eexp(\alpha)$ is an explicit model for $R^{\le 1}\pi_*\ul{\mbb Z}$, where $\pi\colon X^{\log} \ra X$ is the natural projection from the Kato-Nakayama space (\Cref{lem:exp is the first truncation}).
\end{rem}	

To compute $\RG^{\log}_{\mr{Betti}}(X,\bZ)$ one can proceed in two steps: namely, first take pushforward $R\pi_* \ul{\mbb Z}$ to $X$ and then compute the derived global sections of the latter. Our goal will be to give a closed formula for the full derived pushforward $R\pi_* \ul{\mbb Z}$ in terms of its first truncation $R^{\le 1}\pi_*\ul{\mbb Z}\simeq \eexp(\alpha)$.

\subsection{Sheaves of binomial rings}\label{ssec:sheaves of binomial algebras} 
\begin{notation}Let $X$ be a topological space and let $\Op(X)$ be the category given by open subsets of $X$. Recall that a \textit{sieve} on $X$ is a collection $\mf U$ of open subsets of $X$, such that if $V\in \mf U$ and $V'\subset V$, then $V'\in \mf U$. If $U\simeq \cup_{V\in \mf U} V$ we will say that $\mf U$ is a covering sieve for $U$. The data $U\mapsto \{\mf U, \text{ $\mf U$ covers $U$}\}$ gives rise to an $(\infty,1)$-site in the sense of \cite[Definition 6.2.2.1]{Lur_HTT}. For an $\infty$-category $\mscr C$ the category of $\mscr C$-valued presheaves is simply defined as the functor category $\PShv(X, \mscr C)\coloneqq \Fun(\Op(X)^\op,\mscr C)$. Assume  $\mscr C$ has all small limits. Then a presheaf $\mc F\in \PShv(X, \mscr C)$ is called a sheaf if for any covering sieve $\mf U$ of $U$ the natural map
	$$
	\mc F (U)\ra \lim_{V\in \mf U} \mc F(V) 
	$$
is an equivalence. We will denote by $\Shv(X, \mscr C)\subset \PShv(X, \mscr C)\coloneqq \Fun(\Op(X)^\op,\mscr C)$ the $\infty$-category of $\mscr C$-valued sheaves on $X$.

By \cite[Remark 1.3.1.6]{Lur_SAG}, if $\mscr C$ is presentable, $\Shv(X, \mscr C)$ can be identified with the Lurie's tensor product $\Shv(X, \Spc)\otimes \mscr C$, and is also presentable. Also, by \cite[Lemma 1.3.4.3]{Lur_SAG}, the inclusion $i_X\colon \Shv(X, \mscr C)\hra \PShv(X, \mscr C)$ has a left adjoint $\#_X\colon \PShv(X, \Spc)\to \Shv(X, \Spc)$, which we will call the sheafification functor. 
\end{notation}
\begin{rem}
	
	Recall that a paracompact topological space $X$ is said to have \textit{covering dimension $\le n$} if the following condition is satisfied: for any open covering
	$\{U_\alpha\}$ of $X$, there exists an open refinement $\{V_\beta\}$ such that each intersection $V_{\beta_0}\cap \ldots \cap V_{\beta_{n+1}}=\emptyset$ provided the $\beta_i$ are pairwise distinct. By \cite[Theorem 7.2.3.6]{Lur_HTT} if $X$ is paracompact and has locally finite covering dimension, then the $\infty$-topos $\Shv_\infty(X)\coloneqq \Shv(X, \Spc)$ has locally finite \textit{homotopy dimension}. This implies some nice properties of $\Shv_\infty(X)$: most importantly Postnikov towers are convergent\footnote{Meaning that any $F\in \Shv_\infty(X)$ the natural map $F\ra \lim_{n}\tau^{\le n}F$ is an equivalence.} \cite[Proposition 7.2.1.10]{Lur_HTT} and, consequently, $\infty$-topos $\Shv_\infty(X)$ is hypercomplete; in other words, $\Shv_\infty(X)\simeq \hShv(X, \Spc)$, where $\hShv$ denotes the category of hypersheaves. Note that this also implies that $\Shv(X, \mscr C)\simeq \hShv(X, \mscr C)$ for any presentable $\infty$-category $\mscr C$. By \cite[Corollary 7.2.1.17]{Lur_HTT} it also follows that a map $p\colon \mc F\ra \mc G$ in $\Shv(X, \mscr C)$ is an equivalence if and only if the map of stalks $p_x\colon \mc F_x \ra \mc G_x$ in $\Shv(\mr{pt}, \mscr C)\simeq \mscr C$ is an equivalence for all $x\in X$. 
\end{rem}

\begin{assumption}\label{ass:finite covering dimension}
	Below we are going to assume that all topological spaces are paracompact locally of finite covering dimension.
	We note that any locally closed subspace of $\mbb R^n$ has a covering dimension $\le n$; in particular any complex analytic space $X$, or Kato-Nakayama space $X^{\log}$ corresponding to some fs log structure $\mc M$ are paracompact of locally finite covering dimension.
\end{assumption}	

\begin{rem}[$t$-structure]
	We will be primarily working with the category $\Shv(X, {\mscr D}(\mbb Z))$ of ${\mscr D}(\mbb Z)$-valued sheaves. By \cite[Proposition 2.1.1.1]{Lur_SAG} it has a natural $t$-structure $(\Shv(X, {\mscr D}(\mbb Z))^{\le 0},\Shv(X, {\mscr D}(\mbb Z))^{\ge 0})$ with the subcategory $\Shv(X, {\mscr D}(\mbb Z))^{\ge 0}$ being spanned by those $\mc F\in \Shv(X, {\mscr D}(\mbb Z))$ such that $\mc F(U)\in {\mscr D}(\mbb Z)^{\ge 0}$ for any $U$. The heart $\Shv(X, {\mscr D}(\mbb Z))^\heartsuit\subset \Shv(X, {\mscr D}(\mbb Z))$ is naturally identified with the abelian category $\Shv(X,\Ab)$ of classical sheaves of abelian groups on $X$. 
	
	We note that the image of $\mc F\in \Shv(X, \Ab)$ under the resulting functor\footnote{Note that the functor $\PShv(X,\Ab)\ra \PShv(X,\mscr D(\mbb Z))$ does not preserve the subcategories of sheaves. The functor in question is obtained as the composition of the natural functor $\Shv(X,\Ab)\ra \PShv(X,\mscr D(\mbb Z))$ and sheafification $\#_X\colon \PShv(X, \mscr D(\bZ))\to \Shv(X, \mscr D(\bZ))$.}
	$$
	\Shv(X, \Ab)\simeq \Shv(X, {\mscr D}(\mbb Z))^\heartsuit\ra \Shv(X, {\mscr D}(\mbb Z))
	$$ 
	is sending an open $U$ to $\RG(U,\mc F)$ rather than $\mc F(U)\in \Ab\subset D(\mbb Z)$.
	
\end{rem}	

\begin{rem}[Comparison with the derived category of sheaves] Identification $\Shv(X,\Ab)\simeq \Shv(X, \mscr {\mscr D}(\mbb Z))^\heartsuit$ extends to a fully faithful functor 
	$$
	\mscr D^+\!(\Shv(X,\Ab)) \ra  \Shv(X, \mscr {\mscr D}(\mbb Z)).
	$$
	If the $\infty$-topos $\Shv(X, \Spc)$ is hypercomplete, the $t$-structure on $\Shv(X, \mscr {\mscr D}(\mbb Z))$ is left-separated, and by \cite[Section 2.1]{Lur_SAG} the latter functor extends to an equivalence
	$$
	\mscr D(\Shv(X,\Ab)) \ra \Shv(X, \mscr {\mscr D}(\mbb Z))
	$$
	with the unbounded derived category $\mscr D(\Shv(X,\Ab))$.
\end{rem}

The monad $\mbf L\Bin$ on ${\mscr D}(\mbb Z)$ naturally extends to a monad on the category of presheaves $\PShv(X, {\mscr D}(\mbb Z))\coloneqq \Fun(\Op(X)^\op,{\mscr D}(\mbb Z))$ via post-composition with $\mbf L\Bin$. We will denote this monad by $\mbf L\Bin_X$. Via Barr-Beck-Lurie theorem one can identify the category of modules over $\mbf L\Bin_X$ in $\PShv(X, {\mscr D}(\mbb Z))$ with the category of presheaves $\PShv(X, \DBinAlg)$ on $X$ with values in $\DBinAlg$.

Let us as before denote by $i_X$ the inclusion $\Shv(X, {\mscr D}(\mbb Z))\hra \PShv(X, {\mscr D}(\mbb Z))$, and by $\#_X$ its left adjoint "sheafification" functor $\PShv(X, {\mscr D}(\mbb Z))\to \Shv(X, {\mscr D}(\mbb Z))$. Note that since $i_X$ is fully faithful, by definition, $\Shv(X, {\mscr D}(\mbb Z))$ is a reflective localization of $\PShv(X, {\mscr D}(\mbb Z))$. We can define an endofunctor of $\Shv(X, {\mscr D}(\mbb Z))$ by the formula:
$$\mbf L\mathcal Bin_X\coloneqq \#_X \circ \mbf L\Bin_X \circ i_X.$$

\begin{prop}[{\cite[Proposition 4.1.9.]{Arpon}}]\label{Arpon th}
	Let $\tau\colon \mscr C\rightleftarrows \mscr C_0:\! i$ be a localization of $\infty$-categories. Let $\mr T$ be a monad on $\mscr C$ such that the unit transformation $\mathrm{id}_{\mscr C}\to i\circ\tau$ induces an equivalence $\tau\circ \mr T\sa{\sim}\tau\circ \mr T\circ i \circ \tau$. Then there is an induced monad structure on the composite $\mr T_0\coloneqq \tau\circ \mr T \circ i$ and an induced localization
	$$\tau\colon \mr{L}\mscr{M}\mathrm{od}_{\mr T}(\mscr C)\rightleftarrows \mr{L}\mscr{M}\mathrm{od}_{\mr T_0}(\mscr C_0) \ \!\!:\! i$$
	where the embedding $i$ identifies $\mr{L}\mscr{M}\mathrm{od}_{\mr T_0}(\mscr C_0)$ with the fiber product $\mr{L}\mscr{M}\mathrm{od}_{\mr T}(\mscr C)\times_{\mscr C}\mscr C_0$.
\end{prop}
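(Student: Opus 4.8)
The plan is to deduce all three assertions from the $\infty$-categorical Barr--Beck--Lurie theorem, using the observation that the hypothesis is exactly a Beck--Chevalley (projection-formula) condition. Write $L\coloneqq i\circ\tau$ for the localization endofunctor and $\eta\colon \mathrm{id}_{\mscr C}\to L$ for the unit; since $i$ is fully faithful, the counit $\tau i\to \mathrm{id}_{\mscr C_0}$ is an equivalence and the essential image of $i$ is the subcategory of $L$-local objects. The transformation $\tau\mr T\eta\colon \tau\mr T\to \tau\mr Ti\tau$ is the hypothesis; by the universal property of the localization $\tau$ (functors out of $\mscr C$ inverting the $L$-equivalences factor uniquely through $\tau$, \cite[\S5.2.7]{Lur_HTT}), its being an equivalence is equivalent to a canonical equivalence $\tau\mr T\simeq \mr T_0\circ\tau$ with $\mr T_0\coloneqq \tau\mr Ti$. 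Equivalently: the oplax structure maps of $\Theta\coloneqq \tau(-)i\colon \End(\mscr C)\to\End(\mscr C_0)$, which a priori point the wrong way, become equivalences on the relevant iterates of $\mr T$, so that $\Theta$ transports the algebra structure of $\mr T$ to one on $\mr T_0$. I would not check the higher coherences of this transport by hand, and instead manufacture the monad through a module category, which also yields the remaining claims.

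Set $\mscr M\coloneqq \mr{L}\mscr{M}\mathrm{od}_{\mr T}(\mscr C)\times_{\mscr C}\mscr C_0$, the full subcategory of $\mr T$-modules whose underlying object is $L$-local, and let $U\colon \mscr M\to\mscr C_0$ be the induced forgetful functor (underlying object followed by $\tau$). The heart of the argument, and the only place the hypothesis is used, is the following lemma: the inclusion $\mscr M\hookrightarrow \mr{L}\mscr{M}\mathrm{od}_{\mr T}(\mscr C)$ is a reflective localization whose reflection $\widehat L$ lies over $L$, i.e. $G\circ\widehat L\simeq L\circ G$ for the forgetful $G\colon \mr{L}\mscr{M}\mathrm{od}_{\mr T}(\mscr C)\to\mscr C$. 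To see this, for a module $M$ with underlying object $X=G(M)$ one equips $LX$ with a coherent $\mr T$-action; the key computation is that for any $L$-local $Y$,
\[
\Map_{\mscr C}(\mr T LX, Y)\simeq \Map_{\mscr C_0}(\tau\mr T LX,\tau Y)\simeq \Map_{\mscr C_0}(\tau\mr T X,\tau Y),
\]
using $\tau\mr TL\simeq \tau\mr T$, so that the free action $\mu_X\colon \mr T X\to X$ determines, after localization, a canonical action $\mr T LX\to LX$. Carrying this out in the module category — e.g. by localizing the bar resolution $|\mr T^{\bullet+1}X|$ termwise, where each term is free — produces $\widehat L M$ together with the universal map $M\to\widehat LM$.

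Granting the lemma, $U\colon \mscr M\to\mscr C_0$ is conservative (it is $\tau$ composed with the restriction of the conservative $G$, and $\tau$ is an equivalence on local objects) and admits the left adjoint $F_0\coloneqq \widehat L\circ F\circ i$, where $F\colon \mscr C\to\mr{L}\mscr{M}\mathrm{od}_{\mr T}(\mscr C)$ is the free functor; moreover $\mscr M$ is closed under $U$-split geometric realizations, these being absolute colimits preserved by $G$ and $\tau$. Hence Barr--Beck--Lurie \cite[Theorem 4.7.3.5]{Lur_HA} exhibits an equivalence $\mscr M\simeq \mr{L}\mscr{M}\mathrm{od}_{\mr T_0'}(\mscr C_0)$ for the monad $\mr T_0'\coloneqq U F_0$; the covering property computes $\mr T_0'(c)=\tau G\widehat L F i(c)\simeq \tau i\tau GFi(c)\simeq \tau\mr Ti(c)=\mr T_0(c)$, identifying $\mr T_0'$ with $\mr T_0$. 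This gives the monad structure on $\mr T_0$ (first assertion) and, since $\mscr M=\mr{L}\mscr{M}\mathrm{od}_{\mr T}(\mscr C)\times_{\mscr C}\mscr C_0$ by definition, the fiber-product identification (last assertion). Finally, $\widehat L$ is precisely the induced localization $\tau\colon \mr{L}\mscr{M}\mathrm{od}_{\mr T}(\mscr C)\rightleftarrows\mr{L}\mscr{M}\mathrm{od}_{\mr T_0}(\mscr C_0)\colon i$, lying over the original localization on underlying objects by the covering property (middle assertion).

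The main obstacle is the lemma: coherently lifting the reflective localization from $\mscr C$ to $\mr T$-modules. One cannot simply put a $\mr T$-module structure on $LX$, since that requires an infinite tower of compatibilities; the content of the hypothesis is exactly the Beck--Chevalley equivalence $\tau\mr TL\simeq \tau\mr T$ that makes the mapping-space computation above functorial in all variables, so that the localization unit promotes to a module-level unit. The cleanest rigorous route is to verify directly that $\mscr M\subset \mr{L}\mscr{M}\mathrm{od}_{\mr T}(\mscr C)$ is stable under the relevant limits and that the $G$-local equivalences have $\mscr M$ as their category of local objects with reflection covering $L$; checking the $U$-split hypothesis of Barr--Beck is then routine.
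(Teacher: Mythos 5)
First, a point of orientation: the paper itself contains no proof of this proposition --- it is imported verbatim from Raksit (\cite[Proposition 4.1.9]{Arpon}) --- so your attempt has to be judged against the argument in that cited source.

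Your Barr--Beck skeleton is fine as bookkeeping: \emph{granting} your key lemma (reflectivity of $\mscr M \coloneqq \LMod_{\mr T}(\mscr C)\times_{\mscr C}\mscr C_0$ inside $\LMod_{\mr T}(\mscr C)$, with reflection $\widehat L$ covering $L$), the conservativity of $U$, the closure of $\mscr M$ under $U$-split realizations, and the identification $UF_0\simeq \mr T_0$ are all correct. But that lemma is where the entire content of the proposition lives, and neither of your proposed justifications establishes it. The hypothesis says that $\mr T$ preserves $L$-equivalences; it does \emph{not} say that $\mr T$ preserves $L$-local objects --- in the paper's own application, $\mbf L\Bin_X$ applied to a sheaf is only a presheaf, which is the very reason the proposition is invoked. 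Consequently your ``termwise localized bar resolution'' does not exist as described: its terms $F(L\mr T^nX)$ do not lie in $\mscr M$, and the required simplicial structure maps cannot be produced from the universal property of $L$, because their targets (objects such as $\mr T L\mr T^{n-1}X$) are not local; if ``localize termwise'' instead means ``reflect each term into $\mscr M$'', the construction is circular, since reflections of free modules are exactly what you are trying to build. Your mapping-space computation produces the action map $\mr TLX\to LX$ only at the lowest coherence level, as you yourself concede, so the infinite tower of coherences --- the actual problem --- is never constructed.

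The fallback in your closing paragraph does not close the gap either. Reflectivity of $\mscr M$ is not automatic from closure under limits: it requires presentability/accessibility hypotheses absent from the statement; and even when they hold (as in the paper's application, where one can Bousfield-localize $\LMod_{\mr T}(\mscr C)$ at the set of free maps $F(s)$ on a generating set of $L$-equivalences, which does exhibit $\mscr M$ as the local objects), the covering property $G\widehat L\simeq LG$ --- which your computation of $UF_0$ uses essentially --- still does not follow: the localization equivalences form the strongly saturated closure of $\{F(s)\}$, whose generation involves pushouts in $\LMod_{\mr T}(\mscr C)$, and $G$ does not preserve pushouts, so there is no soft reason these maps have underlying $L$-equivalences. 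The way the cited proof resolves all of this is precisely the route you set aside at the start: work one categorical level up. The full subcategory $\mscr E\subset\End(\mscr C)$ of endofunctors preserving $L$-equivalences is a monoidal subcategory (under composition) containing $\mr T$, and $F\mapsto i\tau F$ is a localization of $\mscr E$ compatible with this monoidal structure in the sense of \cite[Definition 2.2.1.6]{Lur_HA}; then \cite[Proposition 2.2.1.9]{Lur_HA} transports the algebra structure coherently, producing the monad $\mr T_0\simeq\tau\mr Ti$ together with the structured compatibilities $\tau\mr T\simeq\mr T_0\tau$ and $\mr Ti\to i\mr T_0$, from which the functors $\tau$ and $i$ on module categories, their adjunction, and the fiber-product identification follow formally (your Barr--Beck bookkeeping can then be run, but is no longer needed to create any module structure). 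Without this, or an equivalent structured input, your proof has a genuine gap at its central lemma.
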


\begin{cor}\label{corr of Arpon th}
	The endofunctor $\mbf L\mathcal B\mr{in}_X\coloneqq \#_X \circ \mbf L\Bin_X \circ i_X$ has a natural structure of a monad. 
\end{cor}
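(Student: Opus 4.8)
The plan is to deduce this directly from \Cref{Arpon th}. I would instantiate that proposition with the reflective localization $\#_X\colon \PShv(X,{\mscr D}(\mbb Z)) \rightleftarrows \Shv(X,{\mscr D}(\mbb Z)) : i_X$, taking $\mscr C = \PShv(X,{\mscr D}(\mbb Z))$, $\mscr C_0 = \Shv(X,{\mscr D}(\mbb Z))$, $\tau = \#_X$, $i = i_X$, and $\mr T = \mbf L\Bin_X$ the monad on presheaves obtained by post-composition with $\mbf L\Bin$ (so that $\mr T_0 = \#_X\circ \mbf L\Bin_X\circ i_X = \mbf L\mathcal Bin_X$ is exactly the endofunctor in question). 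Since $i_X$ is fully faithful, $\Shv(X,{\mscr D}(\mbb Z))$ is a reflective localization of $\PShv(X,{\mscr D}(\mbb Z))$, so the only hypothesis left to check is that the unit $\mathrm{id}\to i_X\circ \#_X$ induces an equivalence
$$
\#_X\circ \mbf L\Bin_X \xra{\sim} \#_X\circ \mbf L\Bin_X\circ i_X\circ \#_X.
$$

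The key step, and the heart of the argument, is the observation that $\mbf L\Bin_X$ preserves local (i.e.\ stalk-wise) equivalences. This rests on two inputs. First, $\mbf L\Bin\colon {\mscr D}(\mbb Z)\ra {\mscr D}(\mbb Z)$ commutes with filtered colimits; since the stalk of a presheaf $\mc F$ at a point $x$ is the filtered colimit $\mc F_x \simeq \colim_{U\ni x}\mc F(U)$ and $\mbf L\Bin_X$ acts section-wise, one gets a natural identification $(\mbf L\Bin_X\mc F)_x \simeq \mbf L\Bin(\mc F_x)$, so that $\mbf L\Bin_X$ commutes with the formation of stalks. Second, under \Cref{ass:finite covering dimension} the space $X$ is paracompact of locally finite covering dimension, so by \cite[Corollary 7.2.1.17]{Lur_HTT} the topos $\Shv_\infty(X)$ is hypercomplete and equivalences in $\Shv(X,{\mscr D}(\mbb Z))$ are detected on stalks. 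Combining these, any functor preserves equivalences, so if $p\colon \mc F\to \mc G$ is a stalk-wise equivalence then $(\mbf L\Bin_X p)_x \simeq \mbf L\Bin(p_x)$ is an equivalence for every $x$; hence $\mbf L\Bin_X p$ is again a local equivalence.

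Applying this to the unit $\eta\colon \mathrm{id}\to i_X\circ \#_X$, which is by definition a local equivalence, I conclude that $\mbf L\Bin_X(\eta)\colon \mbf L\Bin_X\ra \mbf L\Bin_X\circ i_X\circ \#_X$ is a local equivalence, and therefore $\#_X$ sends it to an equivalence since sheafification inverts local equivalences. This verifies the hypothesis of \Cref{Arpon th}, which then endows $\mbf L\mathcal Bin_X = \#_X\circ \mbf L\Bin_X\circ i_X$ with a monad structure and identifies the category of modules over it with the reflective localization $\mr{L}\mscr{M}\mathrm{od}_{\mbf L\Bin_X}(\PShv(X,{\mscr D}(\mbb Z)))\times_{\PShv(X,{\mscr D}(\mbb Z))}\Shv(X,{\mscr D}(\mbb Z))$, i.e.\ with sheaves of derived binomial rings on $X$. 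The main (and essentially only) obstacle is the stalk-commutation step; everything else is a formal unwinding of \Cref{Arpon th}, so no further computation is needed.
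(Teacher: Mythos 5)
Your proposal is correct and follows essentially the same route as the paper: both apply \Cref{Arpon th} to the localization $\#_X \dashv i_X$ and verify the unit condition on stalks, using that $\mbf L\Bin$ commutes with filtered colimits (so $\mbf L\Bin_X$ commutes with taking stalks), that sheafification does not change stalks, and that under \Cref{ass:finite covering dimension} equivalences are detected stalk-wise. The paper phrases this as a direct identification of the induced map on stalks with an identity map, while you package it as "$\mbf L\Bin_X$ preserves local equivalences," but the content is identical.
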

\begin{proof} We apply \Cref{Arpon th}.
 Using adjunction $\#_X\dashv i_X$, we may consider $\Shv(X, {\mscr D}(\mbb Z))$ as the reflective localization of $\PShv(X, {\mscr D}(\mbb Z))$. Let $\mc F\in  \PShv(X, {\mscr D}(\mbb Z))$ be a presheaf. Unit of the adjunction $\#_X\dashv i_X$ induces the map 
$$(\mbf L\Bin_X(\mc F))^\#\sa{\phi} (\mbf L\Bin_X(\mc F^\#))^\#,$$
and we only need to check that $\phi$ is an equivalence. By our assumptions on $X$, it is enough to check that it induces an equivalence on all stalks $\phi_x$ at all points $x\in X$.  Note that the sheafification does not change the stalks, so we can as well replace $\phi_x$ with the map
	$$
	\mbf L\Bin_X(\eta)_x\colon \mbf L\Bin_X(\mc F)_x\ra \mbf L\Bin_X(\mc F^\#)_x,
	$$
where $\eta\colon F\to F^\#$ is the unit of $\#_X\dashv i_X$ adjunction. But, since the stalk is given by a filtered colimit, and $\mbf L\Bin$ commutes with filtered colimits, this map can be identified with the identity map  $\mbf L\Bin(\mc F_x)\ra \mbf L\Bin((\mc F^\#)_x)\simeq \mbf L\Bin(\mc F_x)$.
\end{proof}
\begin{notation}[Derived binomial rings over $X$]
We will denote by $$\DBinAlg(X)\coloneqq \mr{L}\mscr{M}\mathrm{od}_{\mbf L\mathcal Bin_X}(\Shv(X, {\mscr D}(\mbb Z)))$$ the $\infty$-category of modules over $\mbf L\mathcal Bin_X$ in $\Shv(X, {\mscr D}(\mbb Z))$. Slightly abusing notation, for $\mc F\in \Shv(X, {\mscr D}(\mbb Z))$ we will now denote by $\mbf L\mathcal Bin_X(\mc F)\in \DBinAlg(X)$ the free derived binomial ring on $\mc F$, as well as its underlying object in $\Shv(X, {\mscr D}(\mbb Z))$. 
\end{notation}

\begin{ex}
	Let $X\coloneqq \mr{pt}$ be a point. In this case $\Shv(\mr{pt}, {\mscr D}(\mbb Z))\simeq \PShv(\mr{pt}, {\mscr D}(\mbb Z))\simeq {\mscr D}(\mbb Z)$ and $\mbf L\mc Bin_{\mr{pt}}\simeq \mbf L\Bin_{\mr{pt}}\simeq \mbf L\Bin$; we get an equivalence of $\infty$-categories
	$$
	\DBinAlg(\mr{pt})\simeq \DBinAlg.
	$$
\end{ex}	
\begin{rem}\label{presentability of DBinAlg}
	Since $\mbf L\Bin$ commutes with sifted colimits, and colimits in $\PShv(X, {\mscr D}(\mbb Z))$ are computed pointwise, we see that $\mbf L\Bin_X$ also commutes with sifted colimits. We claim that $\mbf L\mathcal B\mr{in}_X$ commutes with sifted colimits as well. Indeed, given a sifted diagram $I$ and a functor $I\ra \Shv(X, {\mscr D}(\mbb Z))$, $i\mapsto \mc F_i$
	it suffices to show that the canonical map
	$$\colim_{i\in I}\mbf L\mathcal B\mr{in}_X(\mathcal F_i)\ra \mbf L\mathcal B\mr{in}_X(\colim_{i\in I} \mathcal F_i)$$
	is a quasi-isomorphism. Again, this can be checked on the level of stalks. Here we have
	$$\mbf L\mathcal B\mr{in}_X(-)_x\iso \mbf L\Bin_X(-)_x\iso \mbf L\Bin((-)_x)$$
	so we can rewrite the map above as
	$$\colim_{i\in I} \mbf L\Bin((\mathcal F_i)_x)\ra \mbf L\Bin(\colim_{i\in I}((\mathcal F_i)_x)).$$
	and use that $\mbf L\Bin$ commutes with sifted colimits.
	As a  corollary, by \cite[Proposition 4.1.10]{Arpon} we get that the categories $\LMod_{\mbf L\Bin_X}(\PShv(X,{\mscr D}(\mbb Z))$ and $\DBinAlg(X)$ are presentable. In particular, both categories admit all small limits and colimits. 
	
	By \cite[Corollary 4.2.3.5]{Lur_HA} it follows that the forgetful functor $$U_X\colon \DBinAlg(X) \ra \Shv(X,{\mscr D}(\mbb Z))$$
	commutes with sifted colimits. 
\end{rem}

\begin{rem}\label{R5.5}
	By
	\Cref{Arpon th} and the natural equivalence $$\mr{L}\mscr{M}\mathrm{od}_{\mbf L\Bin_X}(\PShv(X,{\mscr D}(\mbb Z)))\simeq \PShv(X,\DBinAlg)$$ we get another natural equivalence
	\begin{equation}\label{eq:}
	\DBinAlg(X)\iso \PShv(X,\DBinAlg)\times_{\PShv(X,{\mscr D}(\mbb Z))}\Shv(X,{\mscr D}(\mbb Z)).
	\end{equation}
	In other words, the category $\DBinAlg(X)$ is given by the full subcategory in $ \PShv(X,\DBinAlg)$ of presheaves of derived binomial rings on $X$ whose underlying complex is a sheaf. Since the forgetful functor $\PShv(X,\DBinAlg)\ra \PShv(X,{\mscr D}(\mbb Z))$ is conservative and commutes with limits, we get an identification of$	\DBinAlg(X)$ with the category of $\DBinAlg$-valued sheaves on $X$:
	$$
	\DBinAlg(X)\simeq \Shv(X,\DBinAlg).
	$$
	We will denote by 
	$$\#_X^\Bin \colon  \PShv(X,\DBinAlg)\rightleftarrows \DBinAlg(X)\ \!\!:\! i_X^\Bin$$ the associated sheafification functor and its right adjoint. We might occasionally omit the superscript "$\Bin$" to simplify notation.
	
	Let us also note that via the identification in \Cref{Arpon th} as the fiber product, identifying two projections, we get a natural equivalence
	$$
	U_X^{\mr{psh}}\circ i_X^\Bin \simeq i_X\circ U_X,
	$$
	where we denoted by $U_X^{\mr{psh}}\colon \PShv(X, \DBinAlg) \ra \PShv(X, {\mscr D}(\mbb Z))$ the post-composition with the forgetful functor $U\colon \DBinAlg\ra {\mscr D}(\mbb Z)$.
\end{rem}

\begin{lem}\label{lem:U and sheafification}
	Let $X$ be as in \Cref{ass:finite covering dimension}. There is a natural equivalence of functors
	$$
	\#_X\circ U_X^{\mr{psh}} \xra{\sim} U_X\circ \#_X^\Bin
	$$
	from $\PShv(X,\DBinAlg)$ to $\Shv(X,{\mscr D}(\mbb Z))$
\end{lem}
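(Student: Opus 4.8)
The plan is to exhibit the desired equivalence as the Beck--Chevalley mate of the equivalence $U_X^{\mr{psh}}\circ i_X^\Bin \simeq i_X\circ U_X$ recorded in \Cref{R5.5}, and then to verify it is an equivalence by checking on stalks. Write $\eta\colon \id \ra i_X^\Bin\circ \#_X^\Bin$ for the unit of $\#_X^\Bin \dashv i_X^\Bin$ and $\epsilon\colon \#_X\circ i_X \ra \id$ for the counit of $\#_X\dashv i_X$; since $i_X$ is fully faithful, $\epsilon$ is an equivalence. First I would define the comparison $\theta$ as the composite
$$
\#_X\circ U_X^{\mr{psh}} \xra{\ \#_X U_X^{\mr{psh}}\eta\ } \#_X\circ U_X^{\mr{psh}}\circ i_X^\Bin\circ \#_X^\Bin \simeq \#_X\circ i_X\circ U_X\circ \#_X^\Bin \xra{\ \epsilon\ } U_X\circ \#_X^\Bin,
$$
where the middle equivalence is \Cref{R5.5}. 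This is natural in the input, and only the first arrow is a priori non-invertible.

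Next, by \Cref{ass:finite covering dimension} the space $X$ is paracompact of locally finite covering dimension, so $\Shv(X,{\mscr D}(\mbb Z))$ is hypercomplete and a map in it is an equivalence if and only if it induces an equivalence on all stalks. Thus it suffices to show that $\theta$ becomes an equivalence after applying the stalk functor $x^*\coloneqq (-)_x$ for every $x\in X$.

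To compute the stalk I would use that, as in the proof of \Cref{corr of Arpon th}, one has $\mbf L\mathcal Bin_X(-)_x \simeq \mbf L\Bin((-)_x)$. This compatibility of the monads $\mbf L\mathcal Bin_X$ and $\mbf L\Bin$ along $x^*$ lets the stalk functor lift to functors $x^{*,\Bin}\colon \DBinAlg(X)\ra \DBinAlg$ and $x^{*,\Bin}_{\mr{psh}}\colon \PShv(X,\DBinAlg)\ra \DBinAlg$ intertwining the forgetful functors, i.e. $U\circ x^{*,\Bin}\simeq x^*\circ U_X$ and $U\circ x^{*,\Bin}_{\mr{psh}}\simeq x^*_{\mr{psh}}\circ U_X^{\mr{psh}}$, where $x^*_{\mr{psh}}$ denotes the presheaf stalk. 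Since stalks of a presheaf agree with stalks of its sheafification in both the plain and the binomial settings (sheafification does not change stalks, and $\mbf L\Bin$ commutes with the filtered colimit computing the stalk), we also have $x^*\circ \#_X\simeq x^*_{\mr{psh}}$ and $x^{*,\Bin}\circ \#_X^\Bin\simeq x^{*,\Bin}_{\mr{psh}}$. Combining these, the source of $\theta$ on a stalk becomes
$$
(\#_X U_X^{\mr{psh}}\mathcal F)_x \simeq x^*_{\mr{psh}}(U_X^{\mr{psh}}\mathcal F)\simeq U\bigl(x^{*,\Bin}_{\mr{psh}}\mathcal F\bigr),
$$
while the target becomes
$$
(U_X\#_X^\Bin \mathcal F)_x \simeq U\bigl(x^{*,\Bin}\#_X^\Bin \mathcal F\bigr)\simeq U\bigl(x^{*,\Bin}_{\mr{psh}}\mathcal F\bigr).
$$
Tracing $\theta$ through these identifications—where $\#_X$, $\#_X^\Bin$ and the unit $\eta$ all become identities after passing to the point, since $\Shv(\mr{pt},{\mscr D}(\mbb Z))\simeq {\mscr D}(\mbb Z)$ and $\DBinAlg(\mr{pt})\simeq \DBinAlg$—shows that $\theta_x$ is the identity of $U(x^{*,\Bin}_{\mr{psh}}\mathcal F)$, hence an equivalence.

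I expect the main obstacle to be the careful construction of the lifted stalk functors $x^{*,\Bin}$ and $x^{*,\Bin}_{\mr{psh}}$ together with the compatibility $x^{*,\Bin}\circ \#_X^\Bin \simeq x^{*,\Bin}_{\mr{psh}}$: concretely, upgrading the pointwise identity $\mbf L\mathcal Bin_X(-)_x\simeq \mbf L\Bin((-)_x)$ to a morphism of monads over $x^*$ and checking that the induced functor on module categories commutes with sheafification. This is precisely where the computation already performed in \Cref{corr of Arpon th} (and \Cref{presentability of DBinAlg}) does the real work; the rest is formal $\infty$-categorical bookkeeping of mates and stalks.
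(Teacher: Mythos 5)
Your proposal is correct and follows essentially the same route as the paper: the comparison map is constructed as exactly the same composite (unit of $\#_X^\Bin\dashv i_X^\Bin$, the equivalence $U_X^{\mr{psh}}\circ i_X^\Bin\simeq i_X\circ U_X$ from \Cref{R5.5}, then the counit of $\#_X\dashv i_X$), and invertibility is checked on stalks using hypercompleteness from \Cref{ass:finite covering dimension}. Your elaboration of the stalk check via the lifted functors $x^{*,\Bin}$ and $x^{*,\Bin}_{\mr{psh}}$ is just a more explicit spelling-out of the paper's remark that neither functor changes stalks and the induced map on stalks is the identity.
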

\begin{proof}
From $U_X^{\mr{psh}}\circ i_X^\Bin \simeq i_X\circ U_X$ and $\#_X\dashv i_X$ adjunction, one gets a natural transformation $\#_X\circ U_X^{\mr{psh}}\circ i_X^\Bin \ra U_X$. Pre-composing it with $\#_X^\Bin$, and using $\#_X^\Bin\dashv i_X^\Bin$ adjunction we get transformations 
$$
\#_X\circ U_X^{\mr{psh}} \ra \#_X\circ U_X^{\mr{psh}}\circ i_X^\Bin \circ \#_X^\Bin \ra U_X\circ \#_X^\Bin,
$$
giving the required map as the composition. Moreover, none of the two functors change the stalks (or rather their underlying objects in ${\mscr D}(\mbb Z)$), and the induced map on stalks is the identity map, so this transformation is an equivalence.
\end{proof}

The next step is to define direct and inverse image functors for $\DBinAlg(-)$. We will do so by sheafifying the corresponding functors on the level of presheaves.

\begin{rem}[Pull-back and pushforward for presheaves]\label{rem:pull/push_for_preasheaves}
		Recall that for any cocomplete category $\mscr C$, the pull-back $f^{\dagger}_{\mscr C}\colon \PShv(X,\mscr C) \ra \PShv(Y,\mscr C)$ on the categories of $\mscr C$-valued presheaves is defined as the left adjoint to the functor $Rf_{\dagger,\mscr C} \colon \PShv(Y,\mscr C) \ra \PShv(X,\mscr C)$ obtained by pre-composition with $f^{-1,\op}\colon \Op(X)^\op \ra \Op(Y)^\op$. Any functor $F\colon \mscr C\ra \mscr D$ induces functors $$F_X\colon  \PShv(X,\mscr C) \ra  \PShv(X,\mscr D), \qquad F_Y\colon  \PShv(Y,\mscr C) \ra  \PShv(Y,\mscr D)$$ via post-composition with $F$, and we have a natural equivalence $Rf_{\dagger,\mscr D}\circ F_Y\simeq F_X\circ Rf_{\dagger,\mscr C}$ of functors $\PShv(Y,\mscr C)\ra \PShv(X,\mscr D)$. If $F$ has a left adjoint $G$ we also get a natural equivalence 
	$$
	G_Y\circ f^\dagger_{\mscr D} \simeq f^\dagger_{\mscr C} \circ G_X.
	$$
	
\end{rem}	

Note that since $f^{-1}\colon \Op(X)\ra \Op(Y)$ sends open covers to open covers, $Rf_{\dagger,\Bin} \colon \PShv(Y,\DBinAlg) \ra \PShv(X,\DBinAlg)$ sends sheaves to sheaves.
\begin{construction}[{$Rf_{*,\Bin}$ and $f^*_\Bin$}]
	Let $f\colon Y\ra X$ be a continuous map of topological spaces satisfying \Cref{ass:finite covering dimension}. We define the \textit{(derived) pushforward functor} 
	$$
	Rf_{*,\Bin}\colon \DBinAlg(Y)\ra \DBinAlg(X)
	$$
	as the restriction of $Rf_{\dagger,\Bin}$ to $\DBinAlg(Y)\subset \PShv(Y,\DBinAlg)$. Note that $Rf_{\dagger,\Bin}$ commutes with all small limits, and since limits of sheaves can be computed on the level of presheaves, so does $Rf_{*,\Bin}$. We define the \textit{pull-back functor} $$f_{\Bin}^*\colon \DBinAlg(X)\ra \DBinAlg(Y)$$ as the left adjoint to $Rf_{*,\Bin}$, that exists by Lurie's adjoint functor theorem. Being a left adjoint, $f_{\Bin}^*$ commutes with all small colimits.
\end{construction}	

\begin{rem}\label{rem:sheafy pull-back is the sheafification of presheaf pull-back}
	The chain of natural equivalences
	$$
	\Map(f^*_\Bin\mc A,\mc B)\simeq \Map(\mc A,Rf_{*,\Bin}\mc B)\simeq \Map(i_X^\Bin\mc A,i_X^\Bin (Rf_{*,\Bin}\mc B))\simeq 
	$$
	$$
	\simeq \Map(i_X^\Bin\mc A,Rf_{\dagger,\Bin}(i_Y^\Bin \mc B))= \Map(f^\dagger_\Bin(i_X^\Bin\mc A), i_Y^\Bin \mc B)\simeq \Map((\#^\Bin_Y\circ f^\dagger_\Bin \circ i_X^\Bin)(\mc A),\mc B)
	$$
	shows that there is a natural equivalence of functors 
	$$
	f^*_\Bin\simeq \#^\Bin_Y\circ f^\dagger_\Bin \circ i_X^\Bin.
	$$
	In other words, $f^*_\Bin$ is obtained from the presheaf pull-back $f^\dagger_\Bin$ by sheafification.
\end{rem}

Let us now show some basic properties of how the functors $Rf_{*,\Bin}$ and $f^*_\Bin$ interact with $\mbf L\mc Bin_{-}$ and $U_{-}$.

\begin{prop}\label{prop:properties of pullback and pushforward}	Let $f\colon Y\ra X$ be a continuous map of topological spaces satisfying \Cref{ass:finite covering dimension}. 
	There are the following natural equivalences:
	$$f^*_{\Bin}\circ \mbf L\mathcal Bin_X\iso \mbf L \mathcal Bin_Y \circ f^*;\,\,\,\, U_X\circ Rf_{*,\Bin}\iso Rf_*\circ U_Y;\,\,\,\, f^*\circ U_X\iso U_Y\circ f_{\Bin}^*.$$ 
\end{prop}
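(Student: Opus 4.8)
The plan is to prove the three equivalences in the order \emph{(pushforward of $U$), (pullback of the free functor), (pullback of $U$)}, since the first two are formal consequences of the presheaf-level constructions together with uniqueness of adjoints, while the last genuinely requires passing to stalks. I would begin with the middle identity $U_X\circ Rf_{*,\Bin}\iso Rf_*\circ U_Y$, as both functors are of ``restriction type'' and can be computed at the level of presheaves. Since $i_X\colon \Shv(X,{\mscr D}(\mbb Z))\hra \PShv(X,{\mscr D}(\mbb Z))$ is fully faithful it suffices to produce an equivalence after applying $i_X$. Using \Cref{R5.5} in the form $i_X\circ U_X\simeq U^{\mr{psh}}_X\circ i_X^\Bin$, the fact that $Rf_{*,\Bin}$ (resp.\ $Rf_*$) is the restriction of the presheaf pushforward $Rf_{\dagger,\Bin}$ (resp.\ $Rf_\dagger$), and that these presheaf pushforwards preserve sheaves, I would rewrite
$$
i_X\,U_X\,Rf_{*,\Bin}\;\simeq\;U^{\mr{psh}}_X\,Rf_{\dagger,\Bin}\,i_Y^\Bin\;\simeq\;Rf_\dagger\,U^{\mr{psh}}_Y\,i_Y^\Bin\;\simeq\;Rf_\dagger\,i_Y\,U_Y\;\simeq\;i_X\,Rf_*\,U_Y,
$$
where the middle equivalence is the commutation $Rf_{\dagger,\mscr D}\circ F_Y\simeq F_X\circ Rf_{\dagger,\mscr C}$ of \Cref{rem:pull/push_for_preasheaves} applied to $F=U$. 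Fully faithfulness of $i_X$ then yields the claim.

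Next I would deduce $f^*_{\Bin}\circ \mbf L\mc Bin_X\iso \mbf L\mc Bin_Y\circ f^*$ purely by uniqueness of adjoints. Both sides are composites of left adjoints: $\mbf L\mc Bin_X\dashv U_X$, $\mbf L\mc Bin_Y\dashv U_Y$, $f^*\dashv Rf_*$ and $f^*_\Bin\dashv Rf_{*,\Bin}$. Hence $f^*_\Bin\circ\mbf L\mc Bin_X$ admits the right adjoint $U_X\circ Rf_{*,\Bin}$, while $\mbf L\mc Bin_Y\circ f^*$ admits the right adjoint $Rf_*\circ U_Y$. The equivalence of right adjoints established in the previous step then forces the two left adjoints to be equivalent.

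Finally, the third identity $f^*\circ U_X\iso U_Y\circ f^*_{\Bin}$ is the one I expect to be the main obstacle, because the composite $f^*\circ U_X$ mixes a left adjoint with a right adjoint and so is not itself adjoint to anything convenient; the bare ``uniqueness of adjoints'' trick is unavailable. Instead I would first produce a candidate natural transformation $\tau\colon f^*\circ U_X\Rightarrow U_Y\circ f^*_\Bin$ by taking the unit $\mathrm{id}\Rightarrow Rf_{*,\Bin}\circ f^*_\Bin$, applying $U_X$, rewriting $U_X\circ Rf_{*,\Bin}\simeq Rf_*\circ U_Y$ via the first step, and transposing along $f^*\dashv Rf_*$. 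I would then verify that $\tau$ is an equivalence on stalks, which by \Cref{ass:finite covering dimension} (hypercompleteness of $\Shv(X,\Spc)$) detects equivalences. For $y\in Y$ with $x=f(y)$, pullback preserves stalks, so $(f^*U_X\mc A)_y\simeq (U_X\mc A)_x$ and $(U_Y f^*_\Bin\mc A)_y\simeq U\big((f^*_\Bin\mc A)_y\big)$, while $(f^*_\Bin\mc A)_y\simeq \mc A_x$ in $\DBinAlg$; since stalks are filtered colimits and the forgetful functor $U$ commutes with filtered colimits, both sides are canonically identified with $U(\mc A_x)$, with $\tau$ inducing the identity there. This final stalkwise verification --- confirming that $U$ intertwines the filtered colimits computing the two stalks and that $\tau$ genuinely induces the identity --- is the step requiring the most care.
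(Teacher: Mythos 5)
Your proposal is correct and follows essentially the same route as the paper: prove $U_X\circ Rf_{*,\Bin}\simeq Rf_*\circ U_Y$ by reducing to the presheaf-level commutation of $Rf_\dagger$ with the forgetful functor, deduce $f^*_\Bin\circ \mbf L\mc Bin_X\simeq \mbf L\mc Bin_Y\circ f^*$ by uniqueness of (left) adjoints, and then construct the third transformation by adjunction formalities and verify it using that $U$ commutes with filtered colimits. The only cosmetic differences are that you check the pushforward identity via full faithfulness of $i_X$ (rather than via sheafification and the lemma comparing $\#_X\circ U_X^{\mr{psh}}$ with $U_X\circ \#_X^{\Bin}$), and that you verify the third equivalence on stalks rather than on sections of the explicit presheaves $W\mapsto \colim_{f(W)\subset V}U(\mc A(V))$ and $W\mapsto U(\colim_{f(W)\subset V}\mc A(V))$ before sheafification; both reductions bottleneck on exactly the same fact.
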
 
\begin{proof} By the discussion in \Cref{rem:pull/push_for_preasheaves} applied to $\mscr C\coloneqq \DBinAlg$, $\mscr D\coloneqq {\mscr D}(\mbb Z)$, $F\coloneqq U:\DBinAlg \ra {\mscr D}(\mbb Z)$ we have a natural equivalence of functors
	\begin{equation}\label{eq:pushforward+forgetful on the level of presheaves}
	U_X^{\mr{psh}} \circ Rf_{\dagger,\Bin}\simeq Rf_{\dagger}\circ U_Y^{\mr{psh}}.
	\end{equation}
We also have equivalences
$$
	\#_X\circ U_X^{\mr{psh}} \circ Rf_{\dagger,\Bin}\circ i_Y^\Bin\simeq U_X\circ 	\#_X^\Bin  \circ Rf_{\dagger,\Bin}\circ i_Y^\Bin \simeq U_Y  \circ Rf_{*,\Bin}
$$	
	by \Cref{lem:U and sheafification} and the fact that $Rf_{\dagger,\Bin}$ commutes with $i_X$. On the other hand 
	$$
	\#_X\circ Rf_{\dagger,{\mscr D}(\mbb Z)}\circ U_Y^{\mr{psh}}\circ  i_Y^\Bin \simeq \#_X \circ Rf_{\dagger,{\mscr D}(\mbb Z)} \circ  i_Y\circ U_Y \simeq Rf_* \circ U_Y,
	$$
	since $U_Y^{\mr{psh}}\circ  i_Y^\Bin \simeq i_Y\circ U_Y$. This way we get the second equivalence, by pre-composing \Cref{eq:pushforward+forgetful on the level of presheaves} with $i_Y^\Bin$ and post-composing with $\#_Y$. The first equivalence then is obtained by taking left adjoints.

	 Let us now construct the third equivalence.
	 	Using $(\mbf L\mathcal B\mr{in}_{Y})\dashv (U_{Y})$ adjunction, the category of transformations $ f^*\circ U_X\Rightarrow U_Y\circ f_{\Bin}^*$ is equivalent to the category of transformations $\mbf L\mathcal B\mr{in}_{Y}\circ f^*\circ U_X \Rightarrow f_{\Bin}^*$. Via the natural equivalence $\mbf L\mathcal B\mr{in}_{Y}\circ f^* \simeq f^*_{\Bin}\circ \mbf L\mathcal B\mr{in}_X$ and counit $\mbf L\mathcal B\mr{in}_X\circ U_X\ra \id$ of the adjunction we get a transformation  $f^*\circ U_X\ra U_Y\circ f_{\Bin}^*$. Since the target category is stable, it is enough to check that it is an equivalence when evaluated on any object (of the source category). For $\mc A\in\DBinAlg(X)$, $(f^*\circ U_X)(\mc A)\simeq (\#_Y\circ f^\dagger \circ U_X^{\mr{psf}}\circ i_X^\Bin)(\mc A)$, thus $(f^*\circ U_X)(\mc A)$ is the sheafification of the presheaf
	 	$$
	 	W\mapsto \colim_{f(W)\subset V} U(\mc A(V)).
	 	$$ 
	Similarly, using \Cref{lem:U and sheafification} one sees that $(U_Y\circ f_{\Bin}^*)(\mc A)$ is obtained as the sheafification of the functor 
	$$
	W\mapsto U\circ \colim_{f(W)\subset V} \mc A(V),
	$$
	and that the transformation above is induced by the natural maps 
	$$
	\colim_{f(W)\subset V} U(\mc A(V)) \ra U\circ \colim_{f(W)\subset V} \mc A(V),
	$$
	for $W\in \Op(Y)$ (here the colimit on the left is computed in ${\mscr D}(\mbb Z)$, while the colimit on the right is in $\DBinAlg$).
	Since $U$ commutes with filtered colimits these maps are equivalences, and thus $f^*\circ U_X\simeq U_Y\circ f_{\Bin}^*$.

\end{proof}

\begin{rem} \Cref{prop:properties of pullback and pushforward} shows in particular that functors $f^*_\Bin$ and $Rf_{*,\Bin}$ are compatible with functors $f^*$ and $Rf_*$ for ${\mscr D}(\mbb Z)$-valued sheaves, in the sense that their underlying objects in $\Shv(-,{\mscr D}(\mbb Z))$ are computed by the latter.
\end{rem}	

\begin{ex}(Constant derived binomial rings)\label{ex:constant binomial algebra}
	Let $X$ be a  topological space satisfying \Cref{ass:finite covering dimension} and let $p_X\colon X\ra \mr{pt}$ be the unique map. Given a derived binomial ring $A\in \DBinAlg\simeq \DBinAlg(\mr{pt})$ we define the \textit{constant binomial ring} $\ul A\in \DBinAlg(X)$ as the pull-back $\ul A\coloneqq p_{X,\Bin}^*A$. By \Cref{rem:sheafy pull-back is the sheafification of presheaf pull-back}, $\ul A$ can be also seen as sheafification of the constant presheaf $A\in \PShv(X,\DBinAlg)$, $U\mapsto A$.
	
	Recall that $\bZ\in \DBinAlg$ is the initial object; this way we see that $\ul \bZ\in \DBinAlg(X)$ (being the sheafification of the initial presheaf $\bZ\in \PShv(X,\DBinAlg)$) is the initial object of $\DBinAlg(X)$. By \Cref{prop:properties of pullback and pushforward} the underlying ${\mscr D}(\mbb Z)$-valued sheaf of $\ul \bZ$ is the constant sheaf.
\end{ex}	

If $X$ is also locally weakly contractible the pushforward of $\ul \bZ\in \DBinAlg(X)$ under $p_X$ can be described in terms of \Cref{ex:cohomology of topological spaces as a derived binomial algebra}. 
\begin{lem}\label{lem:sing vs Cech}
	Assume that $X$ satisfies (\ref{ass:finite covering dimension})  and is locally weakly contractible. Then there is a natural equivalence of derived binomial rings
	$$
	C^*_\sing(X,\mbb Z) \simeq Rp_{X*,\Bin}\ul\bZ \in \DBinAlg.
	$$
\end{lem}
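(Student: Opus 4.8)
For $X$ paracompact, locally of finite covering dimension, and locally weakly contractible, there is a natural equivalence of derived binomial rings $C^*_\sing(X,\mathbb Z)\simeq Rp_{X*,\Bin}\underline{\mathbb Z}$, where $p_X\colon X\to \mathrm{pt}$.

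Let me sketch my plan.

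=== PROOF PROPOSAL ===

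\textbf{Strategy.} The plan is to prove the equivalence by comparing both sides through the forgetful functor $U\colon \DBinAlg\to {\mscr D}(\mbb Z)$, reducing to a known statement about $\mscr D(\mbb Z)$-valued sheaves, and then upgrading the comparison to the binomial level. The key point is that $U$ is conservative, so once I produce a natural map between the two binomial rings and check it is an equivalence on underlying complexes, I am done. I would first recall from \Cref{ex:constant binomial algebra} that $\ul\bZ = p_{X,\Bin}^*\bZ\in \DBinAlg(X)$ and that its underlying $\mscr D(\mbb Z)$-valued sheaf is the ordinary constant sheaf $\ul{\bZ}$, by \Cref{prop:properties of pullback and pushforward}. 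Similarly, $U_{\mr{pt}}\circ Rp_{X*,\Bin}\simeq Rp_{X*}\circ U_X$ by the second equivalence of \Cref{prop:properties of pullback and pushforward}, so the underlying complex of $Rp_{X*,\Bin}\ul\bZ\in \DBinAlg\simeq \DBinAlg(\mr{pt})$ is exactly $Rp_{X*}\ul\bZ\in \mscr D(\mbb Z)$, the derived global sections of the constant sheaf.

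\textbf{Constructing the comparison map.} Recall that $C^*_\sing(X,\mbb Z)$ was defined in \Cref{ex:cohomology of topological spaces as a derived binomial algebra} as $\holim_X \bZ$, the limit over $X$ of the initial binomial ring $\bZ$, taken in $\DBinAlg$. I would produce the comparison map as follows. Regard $X$ as a space (its underlying homotopy type), and note $\holim_X\bZ\simeq \Map_{\Spc}(X,\mr{pt})\text{-indexed limit}$; more usefully, $X\simeq \colim_X \{*\}$ exhibits $X$ as a colimit in $\Spc$. On the other hand $Rp_{X*,\Bin}\ul\bZ$ is the global sections of the constant binomial sheaf. The natural map arises from the universal property: for the terminal map $p_X$, the unit $\ul\bZ\to Rp_{X*,\Bin}p_{X,\Bin}^*\bZ$ combined with the identification $\holim_X\bZ\simeq \lim_{\Op(X)}(\cdots)$ gives a canonical comparison. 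Concretely, I would build it as the map comparing the limit $\holim_X\bZ$ in $\DBinAlg$ with $\Gamma(X,\ul\bZ)$, both computing "functions constant on contractible pieces" in a binomial sense.

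\textbf{Reducing to underlying complexes.} Since $U\colon \DBinAlg\to \mscr D(\mbb Z)$ is conservative and commutes with limits, it suffices to check the comparison map is an equivalence after applying $U$. On underlying complexes, the left side $U(C^*_\sing(X,\bZ))$ is the singular cochain complex $C^*_\sing(X,\bZ)\in\mscr D(\mbb Z)$ (as noted in \Cref{ex:cohomology of topological spaces as a derived binomial algebra}), and the right side $U(Rp_{X*,\Bin}\ul\bZ)\simeq Rp_{X*}\ul\bZ = R\Gamma(X,\ul\bZ)$ is sheaf cohomology of the constant sheaf. Thus the statement reduces to the classical fact that, for $X$ paracompact, locally of finite covering dimension, and locally weakly contractible, singular cohomology agrees with sheaf cohomology of the constant sheaf, i.e. $C^*_\sing(X,\bZ)\simeq R\Gamma(X,\ul{\bZ})$, naturally. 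This is where local weak contractibility is essential: it guarantees that the constant sheaf is the sheafification whose hypercohomology is computed by a Čech/hypercover built from contractible opens, on which singular and sheaf theory agree, and the finite-covering-dimension hypothesis (via \Cref{ass:finite covering dimension}) ensures hypercompleteness so that these agree without convergence issues.

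\textbf{Main obstacle.} The hard part will be verifying that the comparison map I construct on the binomial level genuinely induces, after $U$, the standard comparison isomorphism between singular and sheaf cohomology — that is, naturality and compatibility of the two a priori different "global sections" constructions ($\holim_X$ in $\DBinAlg$ versus $Rp_{X*,\Bin}$). I expect to handle this by exhibiting both as right Kan extensions / limits indexed compatibly over a basis of weakly contractible opens: on each such open $U$, $\ul\bZ(U)\simeq \bZ$ (since $U$ is weakly contractible and $\ul\bZ$ is the constant sheaf), matching the constant diagram defining $\holim_X\bZ$. Passing to the limit over a hypercover by contractible opens, both sides are computed by the same totalization of $\bZ$'s, and local weak contractibility plus paracompactness guarantee such hypercovers are cofinal. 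The verification that these identifications are compatible with the binomial structure — rather than merely on underlying complexes — is automatic once I work with $\mbf L\mc Bin_X$-module maps throughout and invoke conservativity of $U$, so the genuine content is the classical comparison, which I would cite or prove via the locally-weakly-contractible hypercover argument.
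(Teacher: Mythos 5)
Your overall plan (produce a comparison map of derived binomial rings, then invoke conservativity of $U$ to check it on underlying complexes) is sound in principle, but the two steps you treat as routine are exactly where the proposal breaks. First, you never actually construct the comparison map. The unit of the adjunction $p_{X,\Bin}^*\dashv Rp_{X*,\Bin}$ applied to $\bZ$ is a map $\bZ\to Rp_{X*,\Bin}\ul\bZ$ out of the \emph{initial} derived binomial ring; it exists for trivial reasons and carries no information relating $Rp_{X*,\Bin}\ul\bZ$ to $C^*_\sing(X,\bZ)\simeq\holim_X\bZ$. Second, your compatibility argument rests on the claim that $\ul\bZ(U)\simeq\bZ$ for a weakly contractible open $U$ "since $\ul\bZ$ is the constant sheaf". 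This is circular: $\ul\bZ$ is the (hyper)sheafification of the constant presheaf, so its value on $U$ is the sheaf cohomology $\RG(U,\ul\bZ)$, and the assertion that this is $\bZ$ is precisely the lemma you are proving, applied to $U$. Weak contractibility alone does not compute sections of a hypersheafification.

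Both gaps are closed by the one input your write-up never isolates: the presheaf $U\mapsto C^*_\sing(U,\bZ)$ is already an $(\infty,1)$-sheaf, i.e. singular cochains satisfy hyperdescent; this is \cite[Theorem 1.3]{Isaksen}, and it is the real content hiding inside your phrases "hypercovers are cofinal" and "classical comparison". This is how the paper argues: the map of presheaves of binomial rings $\bZ\to C^*_\sing(-,\bZ)$ is an equivalence on stalks by local weak contractibility, hence an equivalence after sheafification (here \Cref{ass:finite covering dimension} enters, via hypercompleteness, so that equivalences are detected on stalks); since the target is already a sheaf by Isaksen's theorem, this exhibits $C^*_\sing(-,\bZ)$ as the constant sheaf $\ul\bZ$ in $\DBinAlg(X)$, and taking global sections gives $Rp_{X*,\Bin}\ul\bZ\simeq\RG(X,\ul\bZ)\simeq C^*_\sing(X,\bZ)$ at once, with no separate map construction, no conservativity argument, and no matching against a classically constructed comparison isomorphism. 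If you want to salvage your version, the correct construction of your map is to factor the presheaf map $\bZ\to C^*_\sing(-,\bZ)$ through the sheafification $\ul\bZ$, which requires knowing the target is a sheaf — Isaksen again — and at that point the stalkwise argument already proves the resulting sheaf map is an equivalence, so your reduction to the classical singular-versus-sheaf comparison becomes superfluous.
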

\begin{proof}
We follow the proof in \cite[Theorem 12]{Clausen} replacing ${\mscr D}(\mbb Z)$ with $\DBinAlg$. We have a map of presheaves of binomial rings $\mbb Z \ra C^*_\sing(-,\mbb Z)$; since $X$ is weakly contractible this map is an isomorphism on stalks and thus induces an equivalence after sheafification. By \cite[Theorem 1.3]{Isaksen} the presheaf $C^*_\sing(-,\mbb Z)\colon U\mapsto C^*_\sing(U,\mbb Z)$ is in fact an $(\infty,1)$-sheaf; consequently, we get
$$
Rp_{X*,\Bin}\ul\bZ \simeq \RG(X,\ul\bZ) \simeq C^*_\sing(X,\mbb Z) \in \DBinAlg.
$$
\end{proof}

We end the subsection with the sheafy version of \Cref{lem:forgetful functor commutes with everything}. Note that the forgetful functor $(-)^\circ\colon\DBinAlg\ra \AAlg_{\mbb E_\infty}\!(\mbb Z)$ (see \Cref{not:forgetful functors from DAlg}) commutes with limits and induces a functor 
$$(-)^\circ\colon \DBinAlg(X)\ra \Shv(X,\AAlg_{\mbb E_\infty}\!(\mbb Z))$$ between the corresponding categories of sheaves. 

\begin{lem}\label{lem:forgetful sheaves to Einfty algebras commutes with colimits}
	The forgetful functor $(-)^\circ\colon \DBinAlg(X)\ra \Shv(X,\AAlg_{\mbb E_\infty}\!(\mbb Z))$ commutes will all small colimits.
\end{lem}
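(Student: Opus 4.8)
The plan is to deduce this from the pointwise statement \Cref{lem:forgetful functor commutes with everything} by testing on stalks. Under the identification $\DBinAlg(X)\iso \Shv(X,\DBinAlg)$ the functor $(-)^\circ\colon \DBinAlg(X)\to \Shv(X,\AAlg_{\mbb E_\infty}\!(\mbb Z))$ is simply postcomposition with the pointwise forgetful functor $(-)^\circ\colon \DBinAlg\to \AAlg_{\mbb E_\infty}\!(\mbb Z)$, so for any small diagram $i\mapsto \mc A_i$ in $\DBinAlg(X)$ there is a canonical comparison map $c\colon \colim_i (\mc A_i)^\circ \to (\colim_i \mc A_i)^\circ$, and the claim is exactly that $c$ is an equivalence. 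Since $X$ satisfies \Cref{ass:finite covering dimension}, by \cite[Corollary 7.2.1.17]{Lur_HTT} a map in $\Shv(X,\AAlg_{\mbb E_\infty}\!(\mbb Z))$ is an equivalence if and only if it induces an equivalence on all stalks $(-)_x$, $x\in X$; thus it suffices to show that $c$ becomes an equivalence after applying $(-)_x$ for each $x$.

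The key auxiliary input I would establish first is that the stalk functors are intertwined by $(-)^\circ$, i.e. that there is a natural equivalence $((-)^\circ)_x \iso ((-)_x)^\circ$ of functors $\DBinAlg(X)\to \AAlg_{\mbb E_\infty}\!(\mbb Z)$, where $(-)_x\colon \DBinAlg(X)\to \DBinAlg$ denotes the stalk, i.e. the pullback $\iota_{x,\Bin}^*$ along $\iota_x\colon \mr{pt}\to X$, and $(-)^\circ$ on the right is the pointwise forgetful functor. This follows by computing both sides as a filtered colimit over the opens $U\ni x$: by \Cref{rem:sheafy pull-back is the sheafification of presheaf pull-back} and triviality of sheafification on a point one has $(\mc B)_x\iso \colim_{U\ni x}\mc B(U)$ in $\DBinAlg$, while the stalk in $\Shv(X,\AAlg_{\mbb E_\infty}\!(\mbb Z))$ is computed by the analogous filtered colimit; since the pointwise $(-)^\circ$ commutes with filtered colimits (a special case of \Cref{lem:forgetful functor commutes with everything}), the two agree. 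The same filtered-colimit computation, combined with the fact that $(-)_x$ is a left adjoint (to the pushforward $\iota_{x*,\Bin}$, respectively its $\mbb E_\infty$-analogue) and hence commutes with all colimits, also shows that stalks commute with the colimits appearing in $c$.

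Granting this, the stalk of $c$ at $x$ fits into the chain of natural equivalences $(\colim_i (\mc A_i)^\circ)_x \iso \colim_i ((\mc A_i)^\circ)_x \iso \colim_i ((\mc A_i)_x)^\circ \iso (\colim_i (\mc A_i)_x)^\circ \iso ((\colim_i \mc A_i)_x)^\circ \iso ((\colim_i \mc A_i)^\circ)_x$, in which the middle equivalence $\colim_i ((\mc A_i)_x)^\circ \iso (\colim_i (\mc A_i)_x)^\circ$ is precisely \Cref{lem:forgetful functor commutes with everything} applied to the diagram $i\mapsto (\mc A_i)_x$ in $\DBinAlg$, and the remaining equivalences are the compatibilities of the previous paragraph. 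Since these identifications are compatible with $c$, the map $c$ is an equivalence on every stalk, hence an equivalence. The main obstacle is the bookkeeping in the second paragraph: checking that the stalk functors on the two sheaf categories are genuinely intertwined by $(-)^\circ$, so that the displayed chain is natural in $c$; once that compatibility is in place, all the substantive content has already been supplied by \Cref{lem:forgetful functor commutes with everything}, and the finite covering dimension assumption does the rest.
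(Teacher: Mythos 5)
Your proof is correct, but it is not the route the paper takes. The paper's own argument stays at the level of presheaves: the induced functor $(-)^\circ\colon \PShv(X,\DBinAlg)\ra \PShv(X,\AAlg_{\mbb E_\infty}\!(\mbb Z))$ commutes with all small colimits because presheaf colimits are computed open-by-open (so \Cref{lem:forgetful functor commutes with everything} applies on each $U$), and the sheaf-level statement is then deduced from the fact that colimits in the sheaf categories are obtained by sheafifying presheaf colimits and sheafification commutes with colimits (implicitly, one also needs that $(-)^\circ$ intertwines the two sheafification functors, the analogue of \Cref{lem:U and sheafification} — which, notably, the paper itself verifies by a stalk computation). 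You instead detect the equivalence directly on stalks: you use the standing \Cref{ass:finite covering dimension} to reduce to points, show that stalks intertwine the sheaf-level and pointwise forgetful functors (via the filtered-colimit description of stalks and the fact that the pointwise $(-)^\circ$ commutes with filtered colimits), and then invoke \Cref{lem:forgetful functor commutes with everything} stalkwise. The trade-off: the paper's argument is shorter and formally does not mention points at all, piggybacking on the same pattern used for \Cref{cor:forgetful functor to derived algebras commutes with all colimits}; yours makes explicit the stalk bookkeeping that the paper's approach hides inside the sheafification-compatibility, and leans more heavily on hypercompleteness of $\Shv(X,\Spc)$, but it is self-contained given the pullback/stalk machinery of \Cref{prop:properties of pullback and pushforward} and \Cref{rem:sheafy pull-back is the sheafification of presheaf pull-back}. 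Both arguments are sound; the substantive input in each case is the pointwise \Cref{lem:forgetful functor commutes with everything}.
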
	
\begin{proof}
	Indeed, the induced functor $(-)^\circ\colon \PShv(X,\DBinAlg) \ra \PShv(X,\AAlg_{\mbb E_\infty}\!(\mbb Z))$ commutes will all small colimits, since the colimits in the category of presheaves are computed pointwise. The statement for sheaves then follows since the sheafification functors also commutes with colimits.
\end{proof}	

\begin{notation}
We will denote the pushout of $\mc A$ and $\mc C$ along $\mc B$ in the category $\DBinAlg(X)$ by $\mc A\otimes_{\mc B}\mc C$.
\end{notation}

\begin{rem}\label{rem:all forgetful functors commute with pull/push}
	Similarly to the argument in \Cref{prop:properties of pullback and pushforward} one shows that the forgetful functor $$(-)^\circ\colon \DBinAlg(X)\ra \Shv(X,\AAlg_{\mbb E_\infty}\!(\mbb Z))$$ (as well as the similar forgetful functor to $\Shv(X,\DAlg(\mbb Z))$)  commutes with push-forwards and pull-backs.
\end{rem}	

\subsection{Functor $\mbf L \mc Bin_X^{\mr{coaug}}$}\label{ssec:LBin'}

In this section we define and study the functor $\mbf L \mc Bin^{\mr{coaug}}_X$ that takes as an input not just a sheaf $\mc F\in \Shv(X,{\mscr D}(\mbb Z))$, but rather a pointed (=coaugmented) sheaf  $\ul \bZ\ra \mc F$.

Let $X$ be a topological space satisfying \Cref{ass:finite covering dimension}. Let us denote by $\Shv(X,{\mscr D}(\mbb Z))_{\setminus \ul \bZ}$ the under category with respect to the constant sheaf $\underline \bZ$. Further, we will call an object of this category \textit{a pointed sheaf}. By \Cref{ex:constant binomial algebra} the constant sheaf $\ul \bZ$ has a structure of derived binomial ring that makes it an initial object of $\DBinAlg(X)$; thus there is a natural equivalence 
$$\DBinAlg(X)\simeq \DBinAlg(X)_{\setminus \mbb Z}.$$  We can define the corresponding forgetful functor $$U_{X,\setminus \ul \bZ}\colon \DBinAlg(X)\simeq \DBinAlg(X)_{\setminus \ul \bZ} \ra \Shv(X,{\mscr D}(\mbb Z))_{\setminus \ul \bZ}.$$

\begin{construction}[Functor $\mbf L \mc Bin_X^{\mr{coaug}}$]
 Note that the homomorphism of derived binomial rings
\begin{center}
	$\Bin(\bZ)\simeq \Map_{\mr{poly}}(\mbb Z,\mbb Z)\sa{\mathrm{ev_1}}\bZ;\,\,\,\, f \mapsto f(1)$
\end{center}
by sheafification gives rise to a map
$$\ev_1\colon \mbf L\mathcal Bin_X(\underline \bZ)\ra\underline \bZ$$
in $\DBinAlg(X)$.
Let us define a functor 
$$\mbf L\mc Bin_X^{\mr{coaug}}\colon \Shv(X,{\mscr D}(\mbb Z))_{\setminus \ul \bZ}\ra \DBinAlg(X)$$ 
which maps a pointed sheaf $s\colon \ul \bZ\ra \mc F$ to the pushout 
$$
\mbf L \mc Bin_X^{\mr{coaug}}(s) \coloneqq\mbf L\mathcal Bin_X(\mc F)\otimes_{\mbf L\mathcal Bin_X(\ul \bZ)}\ul \bZ
$$
of the diagram
\begin{equation}\label{Bin* pushout}
	\xymatrix{\mbf L\mathcal Bin_X(\mc F)\\
	\mbf L	\mathcal Bin_X(\ul \bZ)\ar[u]^{\mathcal Bin_X(s)}\ar[r]^(.61){\ev_1} & \ul \bZ
	}
\end{equation} 
in $\DBinAlg(X)$.

\end{construction}

For $\mc A\in \DBinAlg(X)$ let $\mbb I_{\mc A}\colon \ul \bZ \ra \mc A$ denote the unique up to homotopy map (see \Cref{ex:constant binomial algebra}).

\begin{thm}\label{free}
	Functor $\mbf L\mc Bin_X^{\mr{coaug}}$ is left-adjoint to the forgetful functor
	$$U_{X, \setminus \ul \bZ}\colon \DBinAlg(X)\ra \Shv(X,{\mscr D}(\mbb Z))_{\setminus \ul \bZ}.$$ 
\end{thm}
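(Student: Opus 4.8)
The plan is to verify the adjunction directly on mapping spaces, exhibiting a natural equivalence
$$\Map_{\DBinAlg(X)}(\mbf L\mc Bin_X^{\mr{coaug}}(s),\mc A)\simeq \Map_{\Shv(X,{\mscr D}(\mbb Z))_{\setminus\ul\bZ}}(s,U_{X,\setminus\ul\bZ}(\mc A))$$
for a pointed sheaf $s\colon \ul\bZ\to\mc F$ and $\mc A\in\DBinAlg(X)$. First I would feed the defining pushout square \Cref{Bin* pushout} into the general fact that mapping out of a pushout is a pullback of mapping spaces, obtaining (all mapping spaces below taken in $\DBinAlg(X)$)
$$\Map(\mbf L\mc Bin_X^{\mr{coaug}}(s),\mc A)\simeq \Map(\mbf L\mc Bin_X(\mc F),\mc A)\times_{\Map(\mbf L\mc Bin_X(\ul\bZ),\mc A)}\Map(\ul\bZ,\mc A).$$

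Next I would simplify the three terms. Since $\mbf L\mc Bin_X$ is left adjoint to $U_X$ by the monadic free--forgetful adjunction (\Cref{corr of Arpon th}, \Cref{presentability of DBinAlg}), the first two terms become $\Map_{\Shv(X,{\mscr D}(\mbb Z))}(\mc F,U_X(\mc A))$ and $\Map_{\Shv(X,{\mscr D}(\mbb Z))}(\ul\bZ,U_X(\mc A))$. Since $\ul\bZ$ is the initial object of $\DBinAlg(X)$ (\Cref{ex:constant binomial algebra}), the third term $\Map(\ul\bZ,\mc A)$ is contractible. The pullback therefore collapses to the fiber of the restriction map
$$s^*\colon \Map_{\Shv(X,{\mscr D}(\mbb Z))}(\mc F,U_X(\mc A))\longrightarrow \Map_{\Shv(X,{\mscr D}(\mbb Z))}(\ul\bZ,U_X(\mc A))$$
over the point singled out by the bottom leg $\ev_1$ of the pushout square, composed with the unique map $\mbb I_{\mc A}\colon\ul\bZ\to\mc A$.

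The decisive step is to identify this point. Under the adjunction, the map $\mbf L\mc Bin_X(\ul\bZ)\xra{\ev_1}\ul\bZ\xra{\mbb I_{\mc A}}\mc A$ corresponds to $U_X(\mbb I_{\mc A})\circ U_X(\ev_1)\circ\eta_{\ul\bZ}\colon\ul\bZ\to U_X(\mc A)$, where $\eta$ is the unit. I would check that $U_X(\ev_1)\circ\eta_{\ul\bZ}=\id_{\ul\bZ}$: the unit includes $\ul\bZ$ as the linear generator $[1]=\id_{\mbb Z}\in\Bin^{\le 1}(\mbb Z)$ of $\mbf L\mc Bin_X(\ul\bZ)$ (\Cref{ex:Bin^n in low degrees}(\ref{ex:Bin^n in low degrees2})), and by definition $\ev_1$ evaluates the associated polynomial function at $1$, sending this generator to $1$ (\Cref{ex:classical binomial rings}(\ref{part:Z as binomial ring})). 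This is checked on stalks, where it reduces to $\ev_1\circ\eta_{\mbb Z}=\id_{\mbb Z}$ for the ordinary binomial monad. Hence, using $U_X(\ul\bZ)\simeq\ul\bZ$, the distinguished point equals $U_X(\mbb I_{\mc A})=\mbb I_{U_X(\mc A)}$, which is precisely the canonical pointing of $U_{X,\setminus\ul\bZ}(\mc A)$.

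Finally, by the standard description of mapping spaces in an under-category as a fiber (cf.\ \cite{Lur_HTT}), the fiber of $s^*$ over $\mbb I_{U_X(\mc A)}$ is exactly $\Map_{\Shv(X,{\mscr D}(\mbb Z))_{\setminus\ul\bZ}}(s,U_{X,\setminus\ul\bZ}(\mc A))$, using naturality of the adjunction unit to match the structure map of the fiber with precomposition by $s$. Since every identification is natural in $s$ and in $\mc A$, this produces the desired adjunction. I expect the only genuinely delicate point to be the basepoint identification in the third paragraph; once $\ev_1\circ\eta=\id_{\ul\bZ}$ is established, the remaining manipulations are formal consequences of the pushout's universal property and the free--forgetful adjunction for $\mbf L\mc Bin_X$.
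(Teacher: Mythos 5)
Your proof is correct and takes essentially the same route as the paper's: map out of the defining pushout, collapse the resulting fiber product via the $\mbf L\mc Bin_X\dashv U_X$ adjunction and the initiality of $\ul\bZ$, and then identify the distinguished basepoint with the canonical pointing $\mbb I_{U_X(\mc A)}$. The only difference is that you explicitly verify $U_X(\ev_1)\circ\eta_{\ul\bZ}=\id_{\ul\bZ}$ on stalks, a step the paper asserts without computation — a welcome addition, since it is exactly why $\ev_1$ (rather than an arbitrary map to $\ul\bZ$) must appear in the pushout defining $\mbf L\mc Bin_X^{\mr{coaug}}$.
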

\begin{proof}
Note that for $(s_1\colon \bZ \ra \mc F_1), (s_2\colon \bZ \ra \mc F_2)\in \Shv(X,{\mscr D}(\mbb Z))_{\setminus \ul \bZ}$ the space of maps $\Map(s_1,s_2)$ is canonically identified with the homotopy fiber product
$$
\Map(s_1,s_2)\simeq \Map(\mc F_1,\mc F_2)\times_{\Map(\ul\bZ,\mc F_2)}\{s_2\},
$$
where the map of spaces $\Map(\mc F_1,\mc F_2)\ra \Map(\ul\bZ,\mc F_2)$ is induced by pre-composition with $s_1$. On the other hand, for $\mc A\in \DBinAlg(X)$ and $(s\colon \bZ \ra \mc F)\in \Shv(X,{\mscr D}(\mbb Z))_{\setminus \ul \bZ}$, we have a canonical equivalence
$$
\Map(\mbf L\mc Bin_X^{\mr{coaug}}(s), \mc A)\simeq \Map(\mbf L\mc Bin_X(\mc F),\mc A)\times_{\Map(\mbf L\mc Bin_X(\ul \bZ),\mc A)} \Map(\ul \bZ,\mc A)
$$
with the maps in the fiber product given by pre-composition with $\mbf L\mc Bin_X(s)$ and $\ev_1$. Note that $\Map(\ul \bZ,\mc A)\simeq \{*\}$ by \Cref{ex:constant binomial algebra}, and that the map of spaces $\Map(\mbf L\mc Bin_X(\mc F),\mc A)  \ra \Map(\mbf L\mc Bin_X(\ul \bZ),\mc A)$ can be identified with the map
$$
\Map(\mc F,U_X(\mc A))\ra \Map(\ul \bZ, U_X(\mc A))
$$
via the $\mbf L\mathcal Bin_X\dashv U_X$ adjunction. Moreover, the map $\mr{pt}\simeq \Map(\ul \bZ,\mc A)\ra \Map(\ul \bZ, U_X(\mc A))$ is given exactly by the canonical map $\mbb I_A\colon \ul \bZ \ra U_X(\mc A)$. This way we get a functorial in $s$ and $\mc A$ equivalence
$$
\Map(\mbf L\mc Bin_X^{\mr{coaug}}(s), \mc A)\simeq \Map(F,U_X(\mc A))\times_{\Map(\ul \bZ, U_X(\mc A))}\{\mbb I_A\}\simeq \Map(s, U_{X,\setminus \ul\bZ}(\mc A)).
$$
\end{proof}

\begin{rem} \label{Commutation with inverse images}
	Let $X,Y$ be topological spaces satisfying \Cref{ass:finite covering dimension} and let $f\colon Y\to X$ be a continuous map.  Then $\mbf L\mc Bin^{\mr{coaug}}_{-}$ commutes with the inverse image functors $f^*$: namely, there is a natural equivalence of functors
	$$
	f^*_{\Bin}\mbf L\mc Bin^{\mr{coaug}}_{X}(-)\simeq \mbf L\mc Bin^{\mr{coaug}}_{Y}(f^*(-)).
	$$ 
	Indeed, by \Cref{prop:properties of pullback and pushforward} $f^*_\Bin$ and $f^*$ intertwine the diagram \Cref{Bin* pushout} for $X$ and $Y$, and also commute with colimits. 
\end{rem}

\begin{ex}\label{ex:Bin^* in split case}
	Let $(s\colon \ul{\bZ} \ra \mc F)\in \Shv(X,{\mscr D}(\mbb Z))_{\setminus \ul \bZ}$ be a pointed sheaf on $X$ and suppose we have fixed a splitting $t\colon \mc F\ra \ul{\bZ}$. Then $t$ induces an equivalence 
	$$
	\mbf L\mc Bin_X^{\mr{coaug}}(s)\simeq \mbf L \mc Bin_X(\ol{\mc F}),
	$$
	where $\ol{\mc F}\coloneqq \cofib(s)$. Indeed, $t$ induces an equivalence $\mc F\simeq \ul \bZ \oplus \ol{\mc F}$, with $s$ identified with the embedding of the first summand. Since $\mbf L \mc Bin_X$ commutes with colimits, we get an equivalence $\mbf L \mc Bin_X( \mc F)\simeq \mbf L \mc Bin_X( \ol{\mc F}) \otimes \mbf L \mc Bin_X(\ul \bZ)$ and then
	$$
	\mbf L\mc Bin_X^{\mr{coaug}}(s)\simeq \mbf L \mc Bin_X( \ol{\mc F}) \otimes \mbf L \mc Bin_X(\ul \bZ)\otimes_{ \mbf L \mc Bin_X(\ul \bZ)}\ul \bZ\simeq \mbf L \mc Bin_X( \ol{\mc F}).
	$$
\end{ex}

\subsection{Cohomology of the Kato-Nakayama space}\label{ssec:cohomology of KN-space}

We return to the setup of \Cref{ssect:log_Betti_coh}. Namely, let $(X, \mathcal M\sa{\alpha}\cO_X)$ be an fs log complex analytic space and $\pi\colon X^{\log}\ra X$ be the associated Kato-Nakayama space. We also consider a complex $\mathpzc{exp(\alpha)}\in \Shv(X,{\mscr D}(\mbb Z))$ defined as the fiber of the map
$$\cO_{X}\xra{\exp_{\mc M}}\mathcal M^{\gr}.$$

Now, we would like to relate $\eexp(\alpha)$ and $R\pi_*\ul \bZ$. Let us  denote by $C(\bR)\colon U \mapsto \Map_{\mr{cont}}(U,\mbb R)$ and $C(S^1)\colon U\mapsto \Map_{\mr{cont}}(U,S^1)$ the sheaves of $\bR$-valued and $S^1$-valued continuous functions correspondingly. Note that the universal cover $\bR\ra S^1$ (given by mapping $x\in \bR$ to $e^{2\pi i x}\in S^1\subset \bC$) induces a short exact sequence
\begin{equation}\label{eq:ses for Z}
	0\ra \ul \bZ\ra C(\bR)\sa{\exp}C(S^1)\ra 0
\end{equation}	
of sheaves of abelian groups on $X^{\log}$.

In the case of the Kato-Nakayama space $X^{\log}$ we can further apply $R\pi_*$ to \Cref{eq:ses for Z}, this way getting a fiber sequence 
$$
R\pi_* \ul\bZ \ra R\pi_*C(\bR)\xra{\exp} R\pi_*C(S^1)
$$
in $\Shv(X,{\mscr D}(\mbb Z))$. One then also has a natural map of complexes
\begin{equation}\label{from exp to exp}
	\xymatrix{\cO_{X}\ar[r]^{\exp_{\mc M}}\ar@{-->}[d] & \mathcal{M}^{\gr}\ar@{-->}[d]\\
		\pi_{*}C(\bR)\ar[r]^{\exp} & \pi_{*}C(S^{1})
	}
\end{equation}
 defined by the formulas 
\begin{center}
	$f\mapsto 2\pi i \cdot \mathrm{Re}(\pi^*(f)));\,\,\,\,\,\,\, \mathrm m\mapsto \mathrm{arg(m)}$\footnote{Here $\Re$ the real part of a complex-valued function; also see \Cref{constr:Kato-Nakayama space} for definition of $\arg(m)\in \Map_{\mr{cont}}(X^{\log},S^1)$.},
\end{center}
where $f\in \cO_{X}(U)$, $\mathrm m\in \mathcal M^{\gr}(U)$ and $U\subset X$. On the other hand, we have the natural transformation $\pi_*\to R\pi_*$. So we also get a commutative diagram
\begin{equation}
	\xymatrix{
		\mc O_X \ar[r]^{{\exp_{\mc M}}}\ar[d] &\mc M^{\gr}\ar[d] \\
		R\pi_{*}C(\bR)\ar[r]^{\exp} & R\pi_{*}C(S^{1})
	}	
\end{equation} 
which induces a map
\begin{equation}\label{eq:map_from_exp_ to_the_first_truncation}
c\colon \eexp(\alpha)\simeq \fib(\mc O_X \xra{\exp_{\mc M}} \mc M^{\gr}) \tto R\pi_*\ul \bZ.
\end{equation}
\begin{rem}\label{rem:exp factors through first truncation}
Note that since $\eexp(\alpha)$ only has non-trivial 0-th and 1-st cohomology sheaves, the map $\eexp(\alpha)\ra R\pi_*\ul\bZ$ factors through $R^{\le 1}\pi_*\ul\bZ \coloneqq \tau^{\le 1}(R\pi_*\ul \bZ)$. Also, the sheaf $C(\mbb R)$ is soft and so $R^{>0}\pi_* C(\mbb R)\simeq 0$, and this way $R^{\le 1}\pi_*\ul\bZ$ can be represented by the 2-term complex
$$
\pi_*C(\mbb R)\ra \pi_*C(S^1)
$$
placed in cohomological degrees 0 and 1.
\end{rem}
The following lemma has been proven among other things in \cite{shuklin} (see also \cite[proof of Theorem 4.2.2]{Achinger_Ogus}, or \cite[Lemma 2.7]{Steenbrink} in the case of normal crossings divisor). We include the proof for reader's convenience.
\begin{lem}[{\cite[Lemma 5.5]{shuklin}}]\label{lem:exp is the first truncation}
	Map \Cref{eq:map_from_exp_ to_the_first_truncation} induces a quasi-isomorphism 
	$$\eexp(\alpha)\iso R^{\le 1}\pi_*\ul \bZ.$$
\end{lem}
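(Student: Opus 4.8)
The plan is to verify that the map $c$ of \Cref{eq:map_from_exp_ to_the_first_truncation} is a quasi-isomorphism stalkwise, which suffices since under \Cref{ass:finite covering dimension} a morphism in $\Shv(X,{\mscr D}(\mbb Z))$ is an equivalence if and only if it induces an equivalence on all stalks. Fix a point $x\in X$. Since $\pi\colon X^{\log}\ra X$ is proper (\Cref{rem:map pi}), proper base change identifies the stalk $(R\pi_*\ul\bZ)_x\simeq \colim_{U\ni x}\RG(\pi^{-1}(U),\ul\bZ)$ with $\RG(\pi^{-1}(x),\ul\bZ)$; as truncation commutes with taking stalks, we get $(R^{\le 1}\pi_*\ul\bZ)_x\simeq \tau^{\le 1}\RG(\pi^{-1}(x),\ul\bZ)$. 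By \Cref{rem:map pi} the fiber is the compact real torus $T_x\coloneqq \Hom(\ol{\mc M^{\gr}_x},S^1)$, whose low-degree cohomology is $H^0(T_x,\bZ)\simeq \bZ$ and $H^1(T_x,\bZ)\simeq \ol{\mc M^{\gr}_x}$, the latter being the canonical identification of the first cohomology of a torus with its character lattice. On the other side, by \Cref{not:exp(a)} the stalk $\eexp(\alpha)_x$ has $H^0\simeq \bZ$ and $H^1\simeq \ol{\mc M^{\gr}_x}$. Thus both sides carry the same cohomology in degrees $0$ and $1$, and it remains to check that $c$ induces isomorphisms there.

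On $H^0$ the induced map is visibly the identity $\bZ\ra\bZ$ of constant functions. For $H^1$ I would unwind the definition of $c$ through the diagram \Cref{from exp to exp}. Using the short exact sequence \Cref{eq:ses for Z} together with softness of $C(\bR)$ (so that $R^{>0}\pi_*C(\bR)\simeq 0$), the group $H^1(T_x,\bZ)$ is computed as the cokernel of $\Gamma(T_x,C(\bR))\xra{\exp}\Gamma(T_x,C(S^1))$, i.e.\ as the group $[T_x,S^1]$ of homotopy classes of continuous maps $T_x\ra S^1$. Under this identification the map induced by $c$ sends the class of $m\in\mc M^{\gr}_x$ to the class of $\arg(m)|_{T_x}$, which by \Cref{constr:Kato-Nakayama space} is the function $\varphi\mapsto\varphi(m_x)$. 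Changing $m$ by a unit in $\mc O^\times_{X,x}$ alters $\arg(m)$ only by a constant in $S^1$ and hence does not change its homotopy class, so the map factors through $\ol{\mc M^{\gr}_x}$ and coincides with the tautological character of $T_x=\Hom(\ol{\mc M^{\gr}_x},S^1)$. This is precisely the canonical isomorphism $\ol{\mc M^{\gr}_x}\xra{\sim}\Hom(T_x,S^1)=[T_x,S^1]\simeq H^1(T_x,\bZ)$.

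The main obstacle is the bookkeeping in this last step: one must match the connecting homomorphism of \Cref{eq:ses for Z} with the standard isomorphism $[T,S^1]\simeq H^1(T,\bZ)$ and check that $c$, which is built from the two-step zig-zag $\mc M^{\gr}\ra\pi_*C(S^1)\ra R\pi_*C(S^1)$ followed by the coboundary map, genuinely computes the evaluation character $\varphi\mapsto\varphi(m_x)$ after restriction to the fiber. Once this compatibility at the level of the fiber torus is in place, the remaining ingredients—the elementary computation of $H^{\le 1}$ of a torus and the observation that $\arg(m)$ restricts to the tautological character—are straightforward, and the proof concludes.
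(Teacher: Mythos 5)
Your proposal is correct and takes essentially the same route as the paper's proof: stalkwise reduction under \Cref{ass:finite covering dimension}, proper base change to the fiber torus $\Hom(\ol{\mc M^{\gr}_x},S^1)$, softness of $C(\bR)$ to present $H^1$ of the fiber as $\coker(\Map_{\mr{cont}}(\pi^{-1}(x),\bR)\ra\Map_{\mr{cont}}(\pi^{-1}(x),S^1))$, and identification of the induced map with the tautological evaluation characters. The compatibility you flag as ``the main obstacle'' is exactly what the paper isolates in \Cref{rem:pairing for torus}: the identification coming from the sequence \Cref{eq:ses for Z} sends a map $\phi$ to the pull-back $\phi^*[S^1]$ of the fundamental class, and pairing against the $1$-cycles $f_\lambda(S^1)$ shows that the evaluation characters induce the identity on the lattice, which completes the argument.
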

\begin{proof} By \Cref{rem:map pi} the map $\pi$ is proper with connected fibers, which allows us to identify $\pi_*\ul{\bZ}$ with $\ul\bZ$, and it is easy to see that the map \Cref{eq:map_from_exp_ to_the_first_truncation} induces an equivalence on $H^0$. Note also that $H^1(\eexp(\alpha))=\coker(\cO^\times_{X}\to \mathcal M^{\gr})=\overline{\mathcal M^{\gr}}$; it thus suffices to check that the induced morphism $\overline{\mathcal M^{\gr}}\to R^1\pi_*\bZ$ is an isomorphism.
	
	The latter is enough to do on stalks at all points $x\in X$. By proper base change
	$$
	(R^{\le 1}\pi_*\ul \bZ)_x\simeq \tau^{\le 1}\RG(\pi^{-1}(x),\ul \bZ),
	$$ 
	where $\pi^{-1}(x)\simeq \Hom(\ol{\mathcal M^{\gr}},S^1)$. Note that similarly to the \Cref{rem:exp factors through first truncation} we can identify $H^1(\pi^{-1}(x),\ul \bZ)$ with the cokernel of the map 
	$$
	\Map_{\mr{cont}}(\pi^{-1}(x), \mbb R) \ra \Map_{\mr{cont}}(\pi^{-1}(x), S^1).
	$$Tracing through the definition of the map $m\mapsto \arg(m)$ one sees that the corresponding map $\overline{\mathcal M^{\gr}_x} \ra H^1(\pi^{-1}(x),\ul \bZ)$ can be represented by the diagram
	$$
	\xymatrix{& \ol{\mathcal M^{\gr}_{x}} \ar[d]\\
\Map_{\mr{cont}}(\pi^{-1}(x), \mbb R)\ar[r]&	\Map_{\mr{cont}}(\pi^{-1}(x), S^1)}
	$$
	where (via the identification $\pi^{-1}(x)\simeq \Hom(\ol{\mathcal M^{\gr}_{x}},S^1)$) the map 
	$$
	 \ol{\mathcal M^{\gr}_{x}} \ra \Map_{\mr{cont}}(\pi^{-1}(x), S^1)
	$$
	is given by sending $m$ to the "evaluation at $m$ map" $\ev_m\colon \Hom(\ol{\mathcal M^{\gr}_{x}},S^1) \ra S^1$, $\phi\mapsto \phi(m)$. By \Cref{rem:pairing for torus} below this map induces an isomorphism 
	$$
	\ol{\mathcal M^{\gr}_{x}} \xra{\sim} \coker(\Map_{\mr{cont}}(\pi^{-1}(x), \mbb R) \ra \Map_{\mr{cont}}(\pi^{-1}(x), S^1))\simeq H^1(\pi^{-1}(x),\ul{\mbb Z}).
	$$

\end{proof}
\begin{rem}\label{rem:pairing for torus}
		Let $L\in \Lat$ be a lattice, and let $\mbb T_{L^\vee}\coloneqq \Hom(L,S^1)$ be the compact real torus given by $S^1$-valued characters of $L$.  Then the composite identification 
		$$
		\coker(\Map_{\mr{cont}}(\mbb T_{L^\vee},\mbb R)\ra \Map_{\mr{cont}}(\mbb T_{L^\vee},S^1))\xra{\sim}H^1(\mbb T_{L^\vee},\ul\bZ) \simeq H^1_\sing(\mbb T_{L^\vee},\mbb Z)\simeq L
		$$
	(with the first isomorphism coming from the short exact sequence \Cref{eq:ses for Z}) is in fact given by the map 
	$$
	g\colon \Map_{\mr{cont}}(\mbb T_{L^\vee},S^1)\ra H^1_\sing(\mbb T_{L^\vee},\mbb Z)
	$$ that sends $\phi\in \Map_{\mr{cont}}(\mbb T_{L^\vee},S^1)$ to the pull-back $\phi^*[S^1]\in H^1_\sing(\mbb T_{L^\vee},\mbb Z)$ of the fundamental class $[S^1]\in H^1_\sing(S^1,\bZ)$. 
	
	Note that we have a map $L\ra \Map_{\mr{cont}}(\mbb T_{L^\vee},S^1)$ sending $\ell \in L$ to the evaluation map 
	$$
	\ev_{\ell}\colon \Hom(L,S^1) \ra S^1, \quad f\mapsto f(\ell).
	$$
	Then its composition with $g$ is the identity: indeed, we have a natural identification $$H_1^\sing(\mbb T_{L^\vee},\mbb Z)\simeq L^\vee$$ with $\lambda\in L^\vee$ producing a map $f_\lambda\colon S^1\ra \mbb T_{L^\vee}$, and the class $\ev_{\ell}^*[S^1]$ pairs with the 1-cycle given by $f_\lambda(S^1)$ as $\lambda(\ell)\in \mbb Z$. 
\end{rem}	

Now it remains to show that in the above situation $R\pi_*\ul \bZ$ can be reconstructed from $R^{\le 1}\pi_*\ul \bZ$ (coaugmented via the natural equivalence $\pi_*\ul\bZ\simeq \ul \bZ$) by applying $\mbf L\mc Bin_X^{\mr{coaug}}$. The following can be considered as a (partial) generalization of Proposition \ref{prop: description of free coconnective binomial algebras} in the relative setting.
\begin{thm}\label{Bin* main th}
	Let $\pi\colon Y\ra X$ be a map of Hausdorff topological spaces satisfying \Cref{ass:finite covering dimension}. Suppose that 
	\begin{itemize}
		\item[1)] $\pi$ is proper;
		\item[2)] for any $x\in X$ the fiber $\pi^{-1}(x)\hookrightarrow Y$ is a compact finite dimensional real torus\footnote{Here the dimension of torus can vary with the point.}.
	\end{itemize} 
Consider $R^{\le 1}\pi_*\ul \bZ\in \Shv(X,{\mscr D}(\mbb Z))_{\setminus \ul \bZ}$ with the pointing $\ul \bZ\ra R^{\le 1}\pi_*\ul \bZ$ given by $\ul \bZ \simeq \pi_*\ul \bZ \ra R^{\le 1}\pi_*\ul \bZ$.
	Then there is a natural quasi-isomorphism of sheaves of derived binomial rings 
	$$\mbf L\mc Bin_X^{\mr{coaug}}(R^{\le 1}\pi_*\ul \bZ)\xra{\sim} R\pi_{*,\Bin}\ul \bZ $$
	\begin{proof}
		We have a map 
		$$
		(\pi_*\ul \bZ \ra R^{\le 1}\pi_*\ul \bZ) \tto (\pi_*\ul \bZ \ra R\pi_*\ul \bZ) \in \Shv(X,{\mscr D}(\mbb Z))_{\setminus \ul \bZ}.
		$$ 
		By \Cref{free} this induces a map 
		$$
		\beta\colon \mbf L\mc Bin_X^{\mr{coaug}}(R^{\le 1}\pi_*\ul \bZ) \ra R\pi_{*,\Bin}\ul \bZ
		$$
		in $\DBinAlg(X)$, and it is enough to check that it is an equivalence on fibers at all points of the underlying complexes. 
		
		Let $x\in X$ be a point, and let $i_x\colon \mr{pt} \ra X $ be the corresponding embedding. Also let $\pi_x\colon \pi^{-1}(x)\ra x$ be the corresponding projection. Note that $i_x^*$ is $t$-exact. Applying proper base change and \Cref{Commutation with inverse images} we see that the restriction of $\beta$ to $x$ can be identified with the analogous map
		\begin{equation}\label{eq:map from Bin^*}
		\mbf L\mc Bin_X^{\mr{coaug}}(R^{\le 1}\pi_{x,*}\ul \bZ) \ra R\pi_{x*,\Bin}\ul \bZ
		\end{equation}
		for $\pi_x\colon \pi^{-1}(x)\ra \mr{pt}$. Thus we can reduce to the case of the projection $p_{\mbb T}\colon \mbb T\ra \mr{pt}$ of a compact real torus to the point. Picking a point $t\in \mbb T$ gives a splitting $\RG^{\le 1}(\mbb T,\ul \bZ)\simeq C^{\le 1}_{\sing}(\mbb T,\bZ)\simeq \bZ \oplus H^1_\sing (\mbb T,\mbb Z)[-1]$, and also identifies $\mbb T$ with $K(H_1^\sing (\mbb T,\mbb Z)[1])$. Then via \Cref{ex:Bin^* in split case}, the map \Cref{eq:map from Bin^*} is identified with the equivalence
		$$
		\mbf L \Bin(M^\vee) \xra{\sim} C^*_\sing(K(M),\mbb Z)
		$$
		from \Cref{prop: description of free coconnective binomial algebras} in the case $M=H_1^\sing (\mbb T,\mbb Z)[1]$.
	\end{proof}	
\end{thm}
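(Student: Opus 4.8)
The plan is to build the comparison map $\beta$ from the universal property of $\mbf L\mc Bin_X^{\mr{coaug}}$ established in \Cref{free}, and then to verify that it is an equivalence by reducing stalkwise to a single torus fiber, where \Cref{prop: description of free coconnective binomial algebras} does all the work. To produce $\beta$, I would observe that the natural maps $\ul\bZ\simeq\pi_*\ul\bZ\ra R^{\le 1}\pi_*\ul\bZ\ra R\pi_*\ul\bZ$ assemble into a morphism of pointed sheaves $(\ul\bZ\ra R^{\le 1}\pi_*\ul\bZ)\ra(\ul\bZ\ra R\pi_*\ul\bZ)$ in $\Shv(X,{\mscr D}(\mbb Z))_{\setminus\ul\bZ}$. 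By \Cref{prop:properties of pullback and pushforward} the pushforward $R\pi_{*,\Bin}\ul\bZ\in\DBinAlg(X)$ lifts $R\pi_*\ul\bZ$ to a sheaf of derived binomial rings, and its image under $U_{X,\setminus\ul\bZ}$ is precisely the pointed sheaf above; the adjunction of \Cref{free} then yields the canonical map $\beta\colon\mbf L\mc Bin_X^{\mr{coaug}}(R^{\le 1}\pi_*\ul\bZ)\ra R\pi_{*,\Bin}\ul\bZ$ in $\DBinAlg(X)$.

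Next, since the forgetful functor $U_X$ is conservative and equivalences of ${\mscr D}(\mbb Z)$-valued sheaves are detected on stalks, it suffices to check that $i_x^*\beta$ is an equivalence for every $x\in X$. At this point I would apply proper base change to the underlying ${\mscr D}(\mbb Z)$-sheaves (legitimate because $R\pi_{*,\Bin}$ computes the ordinary $R\pi_*$ on underlying complexes, again by \Cref{prop:properties of pullback and pushforward}), together with the compatibility of $\mbf L\mc Bin^{\mr{coaug}}_{-}$ with inverse images from \Cref{Commutation with inverse images} and the $t$-exactness of $i_x^*$ (which in particular commutes with $\tau^{\le 1}$). These identifications rewrite $i_x^*\beta$ as the analogous comparison map for the projection $\pi_x\colon\pi^{-1}(x)\ra\mr{pt}$, reducing the whole statement to the case of the projection $p_{\mbb T}\colon\mbb T\ra\mr{pt}$ of a single compact finite-dimensional real torus.

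On $\mbb T$ I would choose a base point to split the coaugmentation, obtaining $\RG^{\le 1}(\mbb T,\ul\bZ)\simeq\bZ\oplus H^1_\sing(\mbb T,\mbb Z)[-1]$ and an identification $\mbb T\simeq K(H_1^\sing(\mbb T,\mbb Z)[1])$, where $H_1^\sing(\mbb T,\mbb Z)$ is a lattice, so that $H_1^\sing(\mbb T,\mbb Z)[1]\in\Perf_{\mbb Z}^{\le-1}$. The split-case formula of \Cref{ex:Bin^* in split case} then identifies the source with $\mbf L\Bin(H^1_\sing(\mbb T,\mbb Z)[-1])$, and under this identification $\beta$ becomes exactly the map of \Cref{prop: description of free coconnective binomial algebras} for $M=H_1^\sing(\mbb T,\mbb Z)[1]$, which is an equivalence.

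I expect the main obstacle to lie in the bookkeeping of the reduction step: one must verify that proper base change, the $t$-exactness of $i_x^*$, and the $\mbf L\mc Bin^{\mr{coaug}}$--pullback compatibility all hold simultaneously as statements in $\DBinAlg$, respecting the coaugmentations and the full binomial structure rather than only the underlying complexes, so that $i_x^*\beta$ is genuinely identified with the torus comparison map and not merely with an equivalent underlying morphism. Once that identification is secured, the torus case is inherited verbatim from \Cref{prop: description of free coconnective binomial algebras}, and no further computation is required.
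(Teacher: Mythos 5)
Your proposal is correct and follows essentially the same route as the paper's own proof: construct $\beta$ via the adjunction of \Cref{free}, reduce stalkwise using proper base change, $t$-exactness of $i_x^*$, and \Cref{Commutation with inverse images}, then settle the torus case by splitting at a base point and invoking \Cref{ex:Bin^* in split case} together with \Cref{prop: description of free coconnective binomial algebras}. The bookkeeping concern you flag is handled in the paper exactly as you anticipate, so no further work is needed.
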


\begin{ex}\label{ex:when first truncations splits}
		Assume that the first truncation $R^{\le 1}\pi_* \ul\bZ$ splits as $\ul\bZ\oplus (R^1\pi_*\ul \bZ)[-1]$ and let $\mc L_{\pi}\coloneqq R^1\pi_*\ul \bZ$. Then in this case by \Cref{ex:Bin^* in split case} we also get a (functorial in the splitting) equivalence
			$$
	R\pi_{*,\Bin}\ul\bZ \xra{\sim} \mbf L\mc Bin_X (\mc L_{\pi}[-1]).
	$$
	Moreover, one sees that the natural filtration by  $\mbf L\mc Bin^{\le *}_X(\mc L_{\pi}[-1])$  (obtained by sheafification) coincides with the Postnikov filtration: indeed, the $n$-th associated graded piece is given by (the defined analogously) $\mbf L\Gamma_X^n(\mc L_{\pi}[-1])\simeq (\wedge^n\mc L_{\pi})[-n]$, which is concentrated in a single cohomological degree $n$. One can see from this that the way different $R^i\pi_*\ul{\bZ}$'s are glued together inside $R\pi_*\ul\bZ$ is controlled by how functors $\Gamma^i$ are glued together inside $\Bin$. We plan to study this relation in more detail in the sequel.
\end{ex}

Finally, we get the  formula \Cref{intro_eq:pushforward as Bin} for $\pi\colon X^{\log}\ra X$ as a consequence of \Cref{lem:exp is the first truncation} and \Cref{Bin* main th}:
\begin{thm}\label{thm:cohomology of Kato-Nakayama space} Let $(X,\mathcal M)$ be an fs log analytic space over $\bC$. Let $\pi\colon X^{\log}\ra X$ be the natural map from the Kato-Nakayama space back to $X$.  Let $\eexp(\alpha)=\fib(\exp_{\mc M}\colon \mc O_X \ra \mc M^{\gr})$, as in \Cref{not:exp(a)}. Then there is a natural equivalence of sheaves of derived binomial rings 
	$$R\pi_{*,\Bin}\ul\bZ \xra{\sim} \mbf L\mc Bin_X^{\mr{coaug}}(\eexp(\alpha)).$$
	Consequently, we also get a closed formula for the log Betti cohomology of $X$:
	$$\RG^{\log}_{\mr{Betti}}(X,\bZ)\xra{\sim} \RG(X,\mbf L\mc Bin_X^{\mr{coaug}}(\eexp(\alpha))).$$
\end{thm}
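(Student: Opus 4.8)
The plan is to combine the two technical results already at hand — \Cref{lem:exp is the first truncation}, which identifies $\eexp(\alpha)$ with the first truncation of the pushforward, and \Cref{Bin* main th}, which reconstructs the full pushforward from its first truncation via $\mbf L\mc Bin_X^{\mr{coaug}}$ — and then to read off the cohomology formula by taking global sections.

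First I would check that the Kato--Nakayama projection $\pi\colon X^{\log}\ra X$ satisfies the hypotheses of \Cref{Bin* main th}. Both $X$ and $X^{\log}$ are Hausdorff, paracompact, and of locally finite covering dimension, so they fall under \Cref{ass:finite covering dimension}; by \Cref{rem:map pi} the map $\pi$ is proper and each fiber $\pi^{-1}(x)\simeq \Hom(\ol{\mc M^{\gr}_x},S^1)$ is a compact real torus of dimension $\rk\ol{\mc M^{\gr}_x}$. Hence \Cref{Bin* main th} applies and gives a natural equivalence
$$
\mbf L\mc Bin_X^{\mr{coaug}}(R^{\le 1}\pi_*\ul\bZ)\xra{\sim} R\pi_{*,\Bin}\ul\bZ
$$
in $\DBinAlg(X)$, where $R^{\le 1}\pi_*\ul\bZ$ is pointed by $\ul\bZ\simeq\pi_*\ul\bZ\ra R^{\le 1}\pi_*\ul\bZ$.

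Next I would promote \Cref{lem:exp is the first truncation} to an equivalence of \emph{pointed} sheaves. That lemma already produces a quasi-isomorphism $c\colon \eexp(\alpha)\xra{\sim}R^{\le 1}\pi_*\ul\bZ$, so it remains to observe that $c$ carries the coaugmentation $\ul\bZ\simeq\ker(\exp_{\mc M})\simeq H^0(\eexp(\alpha))\ra \eexp(\alpha)$ to the coaugmentation $\ul\bZ\simeq\pi_*\ul\bZ\ra R^{\le 1}\pi_*\ul\bZ$. This is immediate from the construction of $c$ in (the proof of) \Cref{lem:exp is the first truncation}: both $\ker(\exp_{\mc M})$ and $\pi_*\ul\bZ$ are canonically $\ul\bZ$, as kernels of the relevant exponential maps, and the proof shows $c$ is an isomorphism on $H^0$ compatibly with these identifications. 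Applying the functor $\mbf L\mc Bin_X^{\mr{coaug}}\colon \Shv(X,{\mscr D}(\mbb Z))_{\setminus\ul\bZ}\ra\DBinAlg(X)$ to the equivalence $c$ and composing with the previous display yields the desired natural equivalence $R\pi_{*,\Bin}\ul\bZ\xra{\sim}\mbf L\mc Bin_X^{\mr{coaug}}(\eexp(\alpha))$.

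Finally, for the cohomology statement I would apply derived global sections, which on $\DBinAlg(X)$ is the pushforward $Rp_{X*,\Bin}$ along $p_X\colon X\ra\mr{pt}$. Functoriality of pushforward gives $Rp_{X*,\Bin}\circ R\pi_{*,\Bin}\simeq R(p_X\circ\pi)_{*,\Bin}=Rp_{X^{\log}*,\Bin}$, so $\RG(X,R\pi_{*,\Bin}\ul\bZ)\simeq Rp_{X^{\log}*,\Bin}\ul\bZ$. Since $X^{\log}$ is locally weakly contractible and satisfies \Cref{ass:finite covering dimension}, \Cref{lem:sing vs Cech} identifies the latter with $C^*_\sing(X^{\log},\mbb Z)=\RG^{\log}_{\mr{Betti}}(X,\bZ)$, yielding the closed formula. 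Since both key inputs are already established, there is no deep obstacle here; the two points requiring genuine care are the compatibility of the coaugmentations under $c$, needed to invoke $\mbf L\mc Bin_X^{\mr{coaug}}$ functorially, and the local weak contractibility of $X^{\log}$, needed to apply \Cref{lem:sing vs Cech}, both of which are standard for Kato--Nakayama spaces.
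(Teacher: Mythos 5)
Your proposal is correct and is essentially the paper's own argument: the theorem is stated there precisely as the combination of \Cref{lem:exp is the first truncation} (identifying $\eexp(\alpha)$ with the pointed sheaf $R^{\le 1}\pi_*\ul\bZ$) with \Cref{Bin* main th} applied to $\pi\colon X^{\log}\ra X$, whose hypotheses hold by \Cref{rem:map pi} and \Cref{ass:finite covering dimension}. The two details you flag — compatibility of the coaugmentations under $c$, and local weak contractibility of $X^{\log}$ when taking global sections — are exactly the points the paper leaves implicit, and you handle them correctly.
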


\begin{rem}
	Via \Cref{rem:all forgetful functors commute with pull/push} we also get formulas for the pushforward $R\pi_{*}\ul\bZ$ in the categories of sheaves of $\mbb E_\infty$-algebras or derived commutative algebras as the corresponding underlying object of $\mbf L\mc Bin^{\mr{coaug}}_X(\eexp(\alpha))$.
\end{rem}	

\subsection{Cosheaf of spaces $[X^{\log},\pi]$ and its properties}
\label{ssec:cosheaf shit}

The goal of this section is to try to reconstruct the Kato-Nakayama space $\pi\colon X^{\log}\ra X$ from just knowing the pushforward $R\pi_{*}\ul\bZ$ viewed as a sheaf of binomial algebras. While the data of an honest topological space is too much to handle by a homotopy-algebraic object like $R\pi_{*}\ul\bZ$, we will show that one can at least reconstruct a rather fine homotopy-theoretic approximation: namely, the compatible data of spaces $\pi^{-1}(U)$ for open subsets $U\subset X$ considered up to homotopy; this data is naturally packaged into a \textit{cosheaf of spaces} on $X$ (see \Cref{constr:cosheaf represented by a space}).
\begin{notation}\label{not:cosheaves}
	Let $\mscr C$ be an $\infty$-category with small colimits and let $X$ be a topological space. We denote by $\hcoShv(X,\mscr C)\coloneqq (\hShv(X,\mscr C^\op))^\op$ the category of $\mscr C$-valued hyper-cosheaves on $X$. By a similar argument to \cite[Lemma 2.21]{Volpee} one gets that for cocomplete $\mscr C$ the left Kan extension along $\Op(X)\ra \Fun(\Op(X)^\op,\Spc) \xra{} \hShv(X,\Spc)$ identifies  $\hcoShv(X,\mscr C)\subset \Fun(\Op(X)^{\op}, \mscr C^\op)^{\op}\simeq \Fun(\Op(X), \mscr C)$ with
	$$
	\hcoShv(X,\mscr C) \simeq \Fun^{\mr{L}}(\hShv(X,\Spc),\mscr C)
	$$
	(where $ \Fun^{\mr{L}}$ denotes the subcategory of small colimit preserving functors). In particular, by \cite[Proposition 5.5.3.8]{Lur_HTT} if $\mscr C$ is presentable, then so is $\hcoShv(X,\mscr C)$. Via the adjoint functor theorem we get a right adjoint $R\colon \CoPShv(X,\mscr C) \ra \hcoShv(X,\mscr C)$ to the embedding $\hcoShv(X,\mscr C)\subset \CoPShv(X,\mscr C)\coloneqq \Fun(\Op(X), \mscr C)$, which we will call \textit{hypercosheafification}. As usual one has an explicit formula for it: namely
	$$
	R(\mc F)(U)\simeq \lim_{V_\bullet \ra U} |\mc F(V_\bullet)|\in \mscr C
	$$
	where the limit is taken over all hypercovers $V_\bullet \ra U$, and $|\mc F(V_\bullet)|$ denotes geometric realization of the simplicial object $\mc F(V_\bullet)$.
	
	We will be primarily interested in the cases where $\mscr C$ is given by $\Spc$, ${\mscr D}(\mbb Z)^{\le 0}$, $\Stk_{/\mbb Z}$ or $\Ab(\Stk_{/\mbb Z})$. We have natural adjoint functors 
	$$
	C_*^\sing(-,\mbb Z)\colon \hcoShv(X,\Spc) \ra 	\hcoShv(X,{\mscr D}(\mbb Z)^{\le 0}), \qquad  K\colon \hcoShv(X,{\mscr D}(\mbb Z)^{\le 0}) \ra 	\hcoShv(X,\Spc)
	$$
	and also the functor
		$$
	  K\colon \hcoShv(X,\Ab(\Stk_{/\mbb Z})) \ra 	\hcoShv(X,\Stk_{/\mbb Z})
	$$
	induced by the functors in Constructions \ref{constr:generalized EM-space} and \ref{constr:abelian group objects} correspondingly. Note that $C_*^\sing(-,\mbb Z)$ can be computed on the level of copresheaves since $C_*^\sing(-,\mbb Z)\colon \Spc \ra {\mscr D}(\mbb Z)$ commutes with colimits, but $K$ needs to be cosheafified. We also have functors
	$$
	\iota\colon \hcoShv(X,\Spc) \ra \hcoShv(X,\Stk_{/\mbb Z}), \qquad \iota\colon \hcoShv(X,{\mscr D}(\mbb Z)^{\le 0}) \ra \hcoShv(X,\Ab(\Stk_{/\mbb Z}))
	$$
	which are induced by the colimit preserving functor $\Spc \ra \Stk_{/\mbb Z}$ given by constant sheaf, and the identification ${\mscr D}(\mbb Z)^{\le 0}\simeq \Ab(\Spc)$. 
\end{notation}	

\begin{construction}\label{constr:cosheaf represented by a space}
	Let $\pi\colon Y\ra X$ be a map of topological spaces. Then to $\pi$ we can associate a cosheaf $[Y,\pi]\in \hcoShv(X,\Spc)$ by putting $[Y,\pi](U)\in \Spc$ for $U\in \Op(X)$ to be\footnote{More precisely, this means that we take the geometric realization $|\mr{Sing}(\pi^{-1}(U))_{\bullet}|$ of the simplicial set of continuous simplices of $\pi^{-1}(U)$.} $\pi^{-1}(U)\in \Spc$; by \cite{Isaksen} this indeed is a hyper-cosheaf. We will occasionally denote the corresponding object of $\hcoShv(X,\Spc)$ simply by $Y$.
	
	For any $n\ge 0$ let us denote by $\tau_{/X,\le n}([Y,\pi])$ (or simply $\tau_{/X,\le n} Y$) the relative $n$-truncation of $Y$: namely, the hypercosheafification of the copresheaf $U\mapsto \tau_{\le n}(\pi^{-1}(U))$ where $\tau_{\le n}\colon \Spc\ra \Spc$ denotes the $n$-truncation functor (see e.g. \cite[Proposition 5.5.6.18]{Lur_HTT}). For any $n\ge 0$ we have a natural map 
	$$
	[Y,\pi]\ra \tau_{/X,\le n}([Y,\pi]).
	$$
\end{construction}	

\begin{ex}\label{ex:final object in hypercosheaves}
	Assume $X$ is locally weakly contractible and consider the identity map $\id\colon X\ra X$. Then we claim that the associated object $[X,\id]\in \hcoShv(X,\Spc)$ is equivalent to the cosheafification of the constant copresheaf given by the point. Indeed, one has\footnote{Since $\{*\}$ is a terminal object of $\Spc$.} a map of copresheaves $[X,\id]\ra \{*\}$  which is locally an equivalence since $X$ is locally weakly contractible; thus it induces an equivalence after cosheafifying the target. In particular, in this case $[X,\id]$ is the final object of $\hcoShv(X,\Spc)$.
	
	More generally, one can see that the cosheafification of a constant copresheaf given by (the underlying homotopy type) of some topological space $S$ is explicitly given by $[X\times S,p_X]$ for the projection $p_X\colon X\times S\ra X$.
\end{ex}	

\begin{construction}[Relative truncated singular homology] Let $\pi\colon Y\ra X$ be a map of topological spaces and consider the cosheaf 
	$$C_*^\sing(\pi,\mbb Z)\coloneqq C_*^\sing([Y,\pi],\mbb Z) \in \hcoShv(X,{\mscr D}(\mbb Z)^{\le 0})$$
	sending $U\mapsto C_*^\sing(\pi^{-1}(U), \mbb Z)$.
	For each $n\ge 0$ let us denote by $C_{\le n}^\sing(\pi,\mbb Z)\in \hcoShv(X,{\mscr D}(\mbb Z)^{\le 0})$ the corresponding relative $n$-truncation: namely, the cosheafification of the copresheaf $U\mapsto C^\sing_{\le n}(\pi^{-1}(U),\mbb Z)$. 
	
	For any $\infty\ge m\ge n$ there exists a natural map $C_{\le m}^\sing(\pi,\mbb Z)\ra C_{\le n}^\sing(\pi,\mbb Z)$ and by adjunction we also have a natural map 
	$$
	[Y,\pi]\ra K(C_*^\sing(\pi,\mbb Z)),
	$$
	which we can also compose with the above one to get compatible maps 
	$$
	[Y,\pi] \ra K(C_{\le n}^\sing(\pi,\mbb Z))
	$$
	for any $n\ge 0$.
\end{construction}

\begin{rem}
	Let $Y\in \Spc$ be a space. The natural map of spaces $Y\ra K(C_*^\sing(Y,\mbb Z))$ induces a map $\tau_{\le 0}Y\simeq \pi_0(Y) \ra K(C_{\le 0}^\sing(Y,\mbb Z))$ by sending a (path-)connected component $Y_s\in \pi_0(Y)$ to the class $[y_s]$ of any point $y_s\in Y_s$. Moreover, the induced map $\mbb Z\langle\pi_0(Y)\rangle \ra C_{\le 0}^\sing(Y,\mbb Z)$ is an isomorphism, and so $K(C_{\le 0}^\sing(Y,\mbb Z))\simeq \pi_0(Y)\times \mbb Z$, where the map $\tau_{\le 0}Y\ra \pi_0(Y)\times \mbb Z$ is identified with the embedding $\pi_0(Y)\times \{1\}\hookrightarrow \pi_0(Y)\times \mbb Z$. This way, for any space $Y$ we get a commutative square
	\begin{equation}\label{eq:commutative square for truncations of spaces}
	\xymatrix{Y\ar[r]\ar[d]& K(C_{\le 1}^\sing(Y,\mbb Z)) \ar[d]\\
	\tau_{\le 0}Y\ar[r] & K(C_{\le 0}^\sing(Y,\mbb Z)).}
	\end{equation}
	
	If now $\pi \colon Y\ra X$ is a continuous map, by cosheafifying the above square for $f^{-1}(U)$ we get a commutative square 
	\begin{equation}\label{eq:commutative square for any cosheaf of spaces}
	\xymatrix{[Y,\pi]\ar[r]\ar[d]& K(C_{\le 1}^\sing(\pi,\mbb Z)) \ar[d]\\
		\tau_{/ X,\le 0}([Y,\pi])\ar[r] & K(C_{\le 0}^\sing(\pi,\mbb Z)),}
	\end{equation}
in $\hcoShv(X,\Spc)$, where we can also identify the bottom horizontal arrow with the embedding $$\tau_{/ X,\le 0}([Y,\pi])\times\{1\}\hookrightarrow \tau_{/ X,\le 0}([Y,\pi])\times \mbb Z.$$
\end{rem}	

\begin{lem}\label{lem:union of tori is a fiber product} Let $Y\in \Spc$ be weakly homotopy equivalent to a finite dimensional compact real torus. Then the square \Cref{eq:commutative square for truncations of spaces} is a fiber square.
\end{lem}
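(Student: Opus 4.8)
The plan is to exploit the connectedness of $Y$ to collapse the bottom corners of the square, and then to identify the homotopy fiber of the right-hand vertical map explicitly as an Eilenberg--MacLane space. First I would record the relevant homotopy-theoretic input: a finite dimensional compact real torus is connected and aspherical, so $Y$ is weakly equivalent to $(S^1)^n\simeq K(\mbb Z^n,1)$ with $\pi_1(Y)\simeq H_1(Y)\simeq \mbb Z^n$ and $\pi_i(Y)=0$ for $i\ge 2$. In particular $\tau_{\le 0}Y\simeq *$, and by the computation preceding the statement $C_{\le 0}^\sing(Y,\mbb Z)\simeq H_0(Y)\simeq \mbb Z$, so $K(C_{\le 0}^\sing(Y,\mbb Z))\simeq \mbb Z$ (the discrete group), with the bottom horizontal map $*\to \mbb Z$ picking out the class $1$. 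Thus proving that the square \Cref{eq:commutative square for truncations of spaces} is a fiber square amounts to showing that the natural map from $Y$ to the homotopy fiber of the projection $K(C_{\le 1}^\sing(Y,\mbb Z))\to \mbb Z$ over the point $1$ is an equivalence.

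Next I would identify that fiber. The map $C_{\le 1}^\sing(Y,\mbb Z)\to C_{\le 0}^\sing(Y,\mbb Z)$ is the homological truncation $\tau_{\le 1}\to \tau_{\le 0}$, whose fiber in $\mscr D(\mbb Z)^{\le 0}$ is $H_1(Y)[1]\simeq \mbb Z^n[1]$. Since $K$ is a right adjoint it preserves limits, so applying it yields a fiber sequence
$$
K(\mbb Z^n,1)\longrightarrow K(C_{\le 1}^\sing(Y,\mbb Z))\longrightarrow K(C_{\le 0}^\sing(Y,\mbb Z))\simeq \mbb Z
$$
over the basepoint $0$, using $K(\mbb Z^n[1])\simeq K(\mbb Z^n,1)$ as in \Cref{constr:generalized EM-space}. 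Because $K(C_{\le 1}^\sing(Y,\mbb Z))$ is an abelian group object and the projection is a homomorphism onto the discrete group $\mbb Z$, translation by the group structure identifies the fiber over $1$ with the fiber over $0$; hence the fiber over $1$ is again $K(\mbb Z^n,1)$. Equivalently, one may note that the $k$-invariant of $C_{\le 1}^\sing(Y,\mbb Z)$ lies in $\Ext^2_{\mbb Z}(\mbb Z^n,\mbb Z)=0$, so the complex splits as $\mbb Z\oplus \mbb Z^n[1]$ and $K(C_{\le 1}^\sing(Y,\mbb Z))\simeq \mbb Z\times K(\mbb Z^n,1)$, whose every fiber over an integer is $K(\mbb Z^n,1)$.

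Finally I would check that the induced map $Y\to K(\mbb Z^n,1)$ is a weak equivalence. The composite $Y\to K(C_{\le 1}^\sing(Y,\mbb Z))\to \mbb Z$ sends each point to its homology class $1$, so $Y$ factors through the fiber over $1$; on $\pi_1$ the resulting map $\pi_1(Y)\to \pi_1(K(\mbb Z^n,1))\simeq H_1(Y)$ is exactly the Hurewicz homomorphism, which is an isomorphism since $Y$ is connected with abelian fundamental group. As both $Y$ and $K(\mbb Z^n,1)$ are connected and aspherical, the map is then an isomorphism on all homotopy groups, hence a weak equivalence, which is precisely the assertion of the lemma. I expect the only genuinely delicate point to be the basepoint bookkeeping in the middle step: matching the fiber taken over $1\in \mbb Z$, where $\tau_{\le 0}Y$ lands, with the fiber over the natural zero section of the abelian group object $K(C_{\le 1}^\sing(Y,\mbb Z))$. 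This is resolved by the group structure, but it is the one spot where one must argue carefully rather than by formal nonsense.
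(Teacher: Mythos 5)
Your proof is correct and follows essentially the same route as the paper's: both hinge on splitting $C_{\le 1}^\sing(Y,\mbb Z)\simeq \mbb Z\oplus H_1^\sing(Y,\mbb Z)[1]$ and identifying the torus with $K(H_1^\sing(Y,\mbb Z),1)$, after which the square is the evident pullback. The paper obtains the splitting by choosing a basepoint of the torus and reads off the fiber-square property by inspection, while you obtain it from the vanishing of the relevant $\Ext$ group (or by translation in the group structure) and verify the comparison map via Hurewicz plus asphericity — a difference of bookkeeping, not of substance.
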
	
\begin{proof} Let $Y$ be homotopy equivalent equivalent to a torus $\mbb T$.
	Picking a point $x\in \mbb T$ we can split $C^\sing_{\le 1}(\mbb T, \mbb Z)\simeq \mbb Z\oplus H_1^\sing(\mbb T,\mbb Z)[1]$ and identify $\mbb T\simeq K(L,1)$ where $L=H_1^\sing(\mbb T,\mbb Z)$. Then \Cref{eq:commutative square for truncations of spaces} is identified with  
	$$
	\xymatrix{\{1\}\times K(L,1)\ar[rr]^{y\mapsto ([x],y)}\ar[d]&& \mbb Z \times K(L,1) \ar[d]\\
		\{1\} \ar[rr]^{1\mapsto [x]} && \mbb Z,}
	$$
	which is a fiber square.
\end{proof}	

\begin{cor}\label{cor:toric map}
	Let $\pi\colon Y\ra X$ be a map of topological spaces, and assume that there is a base $\mc B$  of weakly contractible open subsets $\{U_i\}_{i\in I}$ in $X$ such that $\pi^{-1}(U_i)$ is homotopy equivalent to a compact real torus. Then \Cref{eq:commutative square for any cosheaf of spaces} is a fiber square and takes the following form
	$$
	\xymatrix{[Y,\pi]\ar[r]\ar[d]& K(C_{\le 1}^\sing(\pi,\mbb Z)) \ar[d]\\
		[X\times \{1\},\id]\ar[r] & [X\times \mbb Z,p_X].}
	$$
\end{cor}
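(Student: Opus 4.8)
The plan is to realize the square \Cref{eq:commutative square for any cosheaf of spaces} as the hypercosheafification of the pointwise square and then reduce the pullback property to the single-torus computation \Cref{lem:union of tori is a fiber product}. Write $\Xi\in\Fun(\Op(X),\Fun(\Delta^1\times\Delta^1,\Spc))$ for the copresheaf of commuting squares $U\mapsto$ \Cref{eq:commutative square for truncations of spaces} evaluated at $Y=\pi^{-1}(U)$; by its very construction, \Cref{eq:commutative square for any cosheaf of spaces} is the hypercosheafification $R(\Xi)$, applied cornerwise. Since the base $\mc B$ consists of weakly contractible opens, $X$ is locally weakly contractible and \Cref{ex:final object in hypercosheaves} is available. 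Throughout I will use the dual of the familiar comparison lemma: a map of $\Spc$-valued copresheaves that is an equivalence on every element of a base is inverted by hypercosheafification $R$ (equivalently, $R$ factors through restriction to $\mc B$).

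First I would identify the bottom row. The copresheaf underlying the bottom-left corner of $\Xi$ is $U\mapsto \tau_{\le 0}\pi^{-1}(U)=\pi_0(\pi^{-1}(U))$, and that of the bottom-right corner is $U\mapsto K(C_{\le 0}^\sing(\pi^{-1}(U),\bZ))\simeq \pi_0(\pi^{-1}(U))\times\bZ$ by the computation in the Remark preceding \Cref{lem:union of tori is a fiber product}. For each $U_i\in\mc B$ the fibre $\pi^{-1}(U_i)$ is connected, so on $\mc B$ these two copresheaves agree with the constant copresheaves at $\ast$ and at $\bZ$. Hence the comparison maps to those constant copresheaves are equivalences on $\mc B$ and are therefore inverted by $R$; since $R$ of the constant copresheaf at $\ast$ (resp. $\bZ$) is the final object $[X,\id]$ (resp. $[X\times\bZ,p_X]$) by \Cref{ex:final object in hypercosheaves}, I obtain $\tau_{/X,\le 0}([Y,\pi])\simeq[X,\id]$ and $K(C_{\le 0}^\sing(\pi,\bZ))\simeq[X\times\bZ,p_X]$, with the bottom arrow the inclusion $[X\times\{1\},\id]\hookrightarrow[X\times\bZ,p_X]$ recorded in the same Remark.

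For the pullback property, form the pointwise pullback $\Xi^{\mathrm{pb}}\in\CoPShv(X,\Spc)$ of the cospan of $\Xi$, together with the canonical comparison map $\eta\colon(U\mapsto\pi^{-1}(U))\to\Xi^{\mathrm{pb}}$ out of the top-left corner. By \Cref{lem:union of tori is a fiber product}, for each $U_i\in\mc B$ the square $\Xi(U_i)$ is Cartesian, so $\eta(U_i)$ is an equivalence; as $\mc B$ is a base, $\eta$ is thus inverted by $R$. Now $R$ is the right adjoint to the inclusion $\hcoShv(X,\Spc)\hookrightarrow\CoPShv(X,\Spc)$, so it preserves limits, and limits in $\hcoShv(X,\Spc)$ are computed by hypercosheafifying the pointwise limit. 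Consequently $R(\Xi^{\mathrm{pb}})$ is the pullback, taken in $\hcoShv(X,\Spc)$, of the cospan of $R(\Xi)=$ \Cref{eq:commutative square for any cosheaf of spaces}, while $R(U\mapsto\pi^{-1}(U))=[Y,\pi]$. Therefore $R(\eta)\colon[Y,\pi]\to R(\Xi^{\mathrm{pb}})$ is exactly the comparison map from the top-left corner to the pullback, and it is an equivalence; this shows \Cref{eq:commutative square for any cosheaf of spaces} is a fiber square.

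The main obstacle is not geometric but lies in the (co)sheaf-theoretic bookkeeping, which runs opposite to the more familiar sheaf case: one must keep straight that hypercosheafification is a \emph{limit}-preserving right adjoint, so that it carries the pointwise pullback $\Xi^{\mathrm{pb}}$ to the genuine pullback in $\hcoShv(X,\Spc)$, and that it inverts precisely the maps that are equivalences on the base $\mc B$. Once this framework is in place, all the actual content is concentrated in \Cref{lem:union of tori is a fiber product}, namely the single assertion that for a compact real torus the square \Cref{eq:commutative square for truncations of spaces} is Cartesian.
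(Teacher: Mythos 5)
Your proof is correct and follows essentially the same route as the paper's: both arguments reduce, after hypercosheafifying over the base $\mc B$, to \Cref{lem:union of tori is a fiber product} for a single torus and to \Cref{ex:final object in hypercosheaves} for identifying the bottom row. The only difference is presentational: where the paper unwinds the explicit hypercover formula for $R$ (restricting to the final family of hypercovers whose terms are disjoint unions of elements of $\mc B$), you invoke the abstract facts that $R$ is a limit-preserving coreflection and that it inverts maps which are equivalences on a base --- the latter being exactly what the paper's finality-of-good-hypercovers argument establishes.
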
	

\begin{proof}
	Denote by $Z$ the fiber product of terms in \Cref{eq:commutative square for any cosheaf of spaces}. By the explicit description of hypercosheafification for any $U\in \Op(X)$ we get an equivalence
	$$
	Z(U)\xra{\sim} \lim_{V_\bullet \ra U} |Z'(V_\bullet)|
	$$
	where the limit runs over all hypercovers $V_\bullet \ra U$ and for every term $V_i\coloneqq \sqcup_s \{U_s\subset U\}$ the space $Z'(V_i)$ is defined as the union over $s$ of (homotopy) fiber products
	$$
	\xymatrix{Z'(U_s) \ar[r]\ar[d]&K(C^\sing_{\le 1}(\pi^{-1}(U_s),\mbb Z))\ar[d]\\
	\pi_0(\pi^{-1}(U_s))\ar[r]& K(C^\sing_{\le 0}(\pi^{-1}(U_s),\mbb Z)).
}
	$$
	By our assumptions, there is a cofinal family of hypercoverings $V_\bullet \ra U$ where each connected component $U_s\subset U$ of each term $V_i$ appearing belongs to $\mc B$. The map $Y(U) \ra Z(U)$ then can be represented by 
	$$
	\lim_{\stackrel{V_\bullet \ra U}{V_i = \sqcup_{j} U_s, U_s\in \mc B}} |Y(V_\bullet)| \ra \lim_{\stackrel{V_\bullet \ra U}{V_n = \sqcup U_s, U_s\in \mc B}} |Z'(V_\bullet)|.
	$$ 
	This map is an equivalence since for each $i$ the map 
	$$
	Y(V_i) \simeq \sqcup_s Y(U_s) \ra \sqcup_s Z'(U_s) \simeq Z'(V_i) 
	$$
	is an equivalence by our assumptions on $\mc B$ and \Cref{lem:union of tori is a fiber product}. Since $\pi^{-1}(U_i)$ is connected for any $U_i\in \mc B$, by a similar argument we also see that $\tau_{/X,\le 0}Y$ is identified with the cosheafification of the constant copresheaf given by $\{*\}$, and so is equivalent to $\id\colon X\ra X$ by \Cref{ex:final object in hypercosheaves}. Meanwhile $K(C_{\le 0}^\sing(Y,\mbb Z))\simeq \mbb Z\times \tau_{/X,\le 0}Y \simeq \mbb Z\times X$ and the map $\tau_{/X,\le 0}Y \ra K(C_{\le 0}^\sing(Y,\mbb Z))$ is identified with embedding $\{1 \}\times X \hookrightarrow \mbb Z\times X$. 
\end{proof}

\begin{rem}[Gerbe structure on $Y$]\label{rem:gerbe structure on Y} Let $\pi\colon Y\ra X$ be as in \Cref{cor:toric map} and let $\mc L_\pi^\vee\simeq H_1(\pi,\mbb Z)\in \hcoShv(X,{\mscr D}(\mbb Z)^{\le 0})$ denote cosheafification of the copresheaf given by $U\mapsto H_1^\sing(\pi^{-1}(U),\mbb Z)$. One can think of $\mc L_\pi^\vee$ as a dual version of $R^1\pi_*\ul\bZ$. We have a natural map $\mc L_\pi^\vee[1] \ra C^\sing_{\le 1}(\pi,\mbb Z)$, inducing the map $B\mc L_\pi^\vee \ra K(C^\sing_{\le 1}(\pi,\mbb Z))$ (here we denoted by $B\mc L_\pi^\vee$ the cosheaf of spaces given by $K(\mc L_\pi^\vee[1])$).
	
	The abelian group object structure  on $C^\sing_{\le 1}(\pi,\mbb Z)$ provides in particular the addition map $$K(C^\sing_{\le 1}(\pi,\mbb Z))\times K(C^\sing_{\le 1}(\pi,\mbb Z)) \ra K(C^\sing_{\le 1}(\pi,\mbb Z)).$$
	The induced map 
	$$
	B\mc L_\pi^\vee\times K(C^\sing_{\le 1}(\pi,\mbb Z)) \ra K(C^\sing_{\le 1}(\pi,\mbb Z))
	$$ 
	then preserves the fibers of the projection $K(C^\sing_{\le 1}(\pi,\mbb Z))\ra K(C^\sing_{\le 0}(\pi,\mbb Z))\simeq [X\times \mbb Z,p_X]$. In particular, it induces an action 
	$$
	B\mc L_\pi^\vee \times [Y,\pi] \ra [Y,\pi]
	$$
via \Cref{cor:toric map}. Moreover, it endows $[Y,\pi]$ with the structure of a torsor over $B\mc L_\pi^\vee $, meaning that on any $U\in \mc B$ it can be identified with the addition map $B\mc L_\pi^\vee(U) \times B\mc L_\pi^\vee(U) \ra B\mc L_\pi^\vee(U)$. This way we can view the cosheaf of spaces $[Y,\pi]\in \hcoShv(X,\Spc)$ represented by $\pi\colon Y\ra X$ as a sort of gerbe over the cosheaf $\mc L_\pi^\vee\in \hcoShv(X,{\mscr D}(\mbb Z)^{\le 0})$. 

\end{rem}	

\begin{rem}
	Alternatively, one can also identify $\mc L_\pi^\vee[1]\in \hcoShv(X,{\mscr D}(\mbb Z)^{\le 0})$ with the tensor product $\mc L_\pi^\vee \otimes S^1$ where $S^1$ is the cosheafification of the constant presheaf given by $S^1\in \Ab(\Spc)\simeq {\mscr D}(\mbb Z)^{\le 0}$. This way one can view $Y$ as a torsor over $\mc L_\pi^\vee \otimes S^1$ (in the sense of \Cref{rem:gerbe structure on Y}).
\end{rem}	

Let us now supply some examples for \Cref{cor:toric map}. One of the suitable but also rather general setups is given by \textit{real analytic maps} between \textit{real semi-analytic spaces}. More precisely, consider the following definition:
\begin{defn}\label{def:locally semi-analytic}
	We will call a map $\pi\colon Y\ra X$ of topological spaces \textit{locally semi-analytic} if locally on $X$ it is homeomorphic to the restriction of a real semi-analytic map $f\colon \mbb R^m\ra \mbb R^n$ to some relatively compact semi-analytic subsets $V\subset \mbb R^m$ and $U\subset \bR^n$ (in particular, we want $f(V)\subset U$).
\end{defn}	 The key result in this context is the following stronger version of base change theorem, for which we were not able to find a reference, so include a proof.

\begin{prop}\label{prop:local homotopy triviality of subanalytic maps}
Let $\pi\colon Y\ra X$ be a proper map between topological spaces that is locally semi-analytic in the sense of \Cref{def:locally semi-analytic}. Then each point $x\in X$ has a basis of neighborhoods $\{U_i\}_{i\in I}$, $x\in U_i$ such that the embedding 
$$
\pi^{-1}(x) \ra \pi^{-1}(U_i)
$$
is a homotopy equivalence.	
\end{prop}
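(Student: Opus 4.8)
The plan is to reduce to the local Euclidean model and then extract the desired homotopy equivalence from the theory of subanalytic sets, specifically from the local conic structure of a proper subanalytic map near one of its fibres. Since the assertion is local on $X$ and concerns a fixed point $x$, by \Cref{def:locally semi-analytic} I may replace $\pi$ by the restriction of a semi-analytic map $f\colon \mathbb R^m\ra \mathbb R^n$ to relatively compact semi-analytic subsets $V\subset \mathbb R^m$ and $U\subset \mathbb R^n$ with $f(V)\subset U$, and regard $x$ as a point of $U$. Restrictions of semi-analytic maps to relatively compact semi-analytic sets are subanalytic, so the full subanalytic toolkit (the stratification theorem of \L{}ojasiewicz--Hironaka, curve selection, the \L{}ojasiewicz inequalities, and Thom's isotopy lemmas) is available. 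The properness of $\pi$ guarantees that the preimage under $f$ of any compact subset of the closure of $U$ is compact.

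The key object is the continuous subanalytic function $\rho\colon V\ra \mathbb R_{\ge 0}$, $\rho(v)\coloneqq |f(v)-x|$, whose zero locus $\rho^{-1}(0)$ is exactly the fibre $\pi^{-1}(x)$ and whose closed sublevel set $\rho^{-1}([0,\epsilon])$ is $\pi^{-1}(\overline B_\epsilon(x))$; by properness each such sublevel set is compact. The heart of the argument, and the main technical obstacle, is to show that for all sufficiently small $\epsilon>0$ the inclusion
$$
\pi^{-1}(x)=\rho^{-1}(0)\hookrightarrow \rho^{-1}([0,\epsilon])=\pi^{-1}(\overline B_\epsilon(x))
$$
is a deformation retract. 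I would deduce this from the local conic structure theorem for subanalytic functions: choose a Whitney subanalytic stratification of $V$ adapted to $\rho$ for which $\rho$ is submersive on each stratum meeting $\rho^{-1}((0,\epsilon])$, construct a controlled rug vector field lifting $-\partial_t$ in a neighborhood of $\rho^{-1}(0)$, and integrate it. The \L{}ojasiewicz inequality near the compact set $\rho^{-1}(0)$ is what ensures that the integral curves have finite length, converge, and sweep out a deformation retraction of the whole sublevel set onto the central fibre; this is precisely the content of Thom's first isotopy lemma in its fibered form (compare also the stratified Morse theory of Goresky--MacPherson). Because the flow is directed toward decreasing values of $\rho$, the same retraction restricts to the open sublevel set $\rho^{-1}([0,\epsilon))=\pi^{-1}(B_\epsilon(x))$.

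With this in hand I would take the neighborhood basis to be $\{U_i\}=\{B_{\epsilon_i}(x)\cap U\}_{\epsilon_i\le \epsilon_0}$ for $\epsilon_0$ as produced above; these form a basis of open neighborhoods of $x$, and for each of them $\pi^{-1}(x)\hookrightarrow \pi^{-1}(U_i)$ is a deformation retract, hence a homotopy equivalence, as required. The genuinely delicate point is not the formal deformation-retract bookkeeping but the construction of the controlled vector field over a potentially very singular fibre $\pi^{-1}(x)$ sitting over a potentially very singular point $x$ of the target: one cannot appeal to any fibre-bundle triviality of $f$, since $x$ need not lie in a top-dimensional stratum, and must instead rely on the Whitney/controlled-stratification machinery and the \L{}ojasiewicz inequalities to guarantee convergence of the flow. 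I expect verifying the applicability of these tools to the \emph{locally semi-analytic} hypothesis, together with the repeated properness and compactness checks, to constitute the bulk of the work.
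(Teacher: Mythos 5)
Your proposal is correct in outline, but it follows a genuinely different route from the paper's proof. You aim to construct an actual strong deformation retraction of the tubes $\pi^{-1}(\overline{B}_\epsilon(x))=\rho^{-1}([0,\epsilon])$ onto the central fiber, via Whitney stratifications, controlled vector fields and \L{}ojasiewicz-type estimates — that is, the subanalytic conic-structure (mapping-cylinder-neighborhood) theorem in the style of Durfee. The paper never builds any retraction: it composes $\pi$ with the squared distance function to get a proper one-parameter family $\pi'\colon Y\ra [0,\epsilon)$, invokes Hardt's theorem (via \cite{Teissier}) only over the punctured interval $(0,\epsilon)$, where the family becomes a product $(0,\epsilon)\times S$, and then concludes by Whitehead's theorem after checking — using proper base change and the constancy of the sheaves $R^i\pi'_*$ on $(0,\epsilon)$ — that the inclusion of the central fiber induces isomorphisms on nonabelian $H^1(-,\underline{G})$ for every group $G$ (hence on $\pi_1$) and on $H^i(-,\mathcal{L})$ for every local system $\mathcal{L}$; triangulability \cite{Hardt_triangulations} enters only to ensure the spaces are locally weakly contractible, so that \v{C}ech cohomology computes singular cohomology and Whitehead applies. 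Your approach buys a stronger conclusion (a deformation retract rather than a bare homotopy equivalence), but its pivotal step is heavier than your sketch suggests: Thom's first isotopy lemma by itself only trivializes $\rho$ over $(0,\epsilon]$, and the continuity at time $0$ of the limiting map $\rho^{-1}(\epsilon)\ra\rho^{-1}(0)$ — equivalently, the mapping-cylinder structure near the possibly very singular fiber — is precisely the hard content of the conic-structure theorem, which you must either prove (this is where your \L{}ojasiewicz/control-data discussion would have to be carried out in full) or cite as a black box. The paper deliberately trades all of that stratification-theoretic work for soft sheaf theory (proper base change plus a Leray-type argument over an interval), which is why it needs Hardt only in its simplest one-parameter form; as the paper also remarks, its argument — and likewise yours — transports verbatim to the o-minimal/definable setting, where both triangulation and Hardt-type triviality are available.
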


\begin{proof}First of all, question is local, so we can assume that $\pi$ is the restriction of some analytic map from $\pi\colon \mbb R^m\ra \mbb R^n$ to some relatively compact (and automatically locally closed) semi-analytic subsets. We then can assume that both $X$ and $Y$ are Hausdorff, paracompact and locally weakly contractible, since bounded closed subanalytic subsets of $\mbb R^n$ admit a triangulation (\cite{Hardt_triangulations}).
	 Moreover, the square of the distance function $d(x,-)^2\colon \mbb R^n \ra \mbb R_{\ge 0}\subset \bR$ defines an analytic map $\delta\colon X\ra  \mbb R_{\ge 0}$  with the property that the preimage $\delta^{-1}(0)$ is given exactly by the point $\{x\}\subset X$, and subsets $U_\epsilon\coloneqq \delta^{-1}([0,\epsilon))\subset X$ form a basis of neighborhoods of $x$ in $X$. By replacing $X$ with a suitable compact neighborhood\footnote{For example, $\delta^{-1}([0,t])$ for sufficiently small $t >0$.} of $x$, we can assume that the composition of $\pi'\coloneqq \delta\circ \pi$ is proper, and it would be  enough to show that the map 
	$$
	j\colon \pi'^{-1}(0) \ra \pi'^{-1}([0,\epsilon))
	$$
	is a homotopy equivalence for small enough $\epsilon >0$. For this, by Whitehead's theorem it is enough to show that $j^*$ induces an isomorphism on {\v C}ech cohomology $H^1(-,\ul G)$ for any (not necessarily commutative) group $G$ functorially in $G$ (this implies that $j$ induces on isomorphism on $\pi_1(-)$) and, then, that $j^*$ is an isomorphism on $H^i(-,\mc L)$ for any local system $\mc L$. Here we implicitly used that $\pi'^{-1}([0,\epsilon))$ is locally weakly contractible to relate {\v C}ech to singular cohomology.
	
	The main tool that we will use is Hardt's theorem (see \cite[Th\'eor\`eme de Hardt, p.171]{Teissier}, which we apply to the graph of $\pi'$), which (under our assumptions on $\pi$) guarantees that for small enough $\epsilon$ one can identify the map $\pi'$ over the open interval $(0,\epsilon)$ with the projection $(0,\epsilon)\times S\ra (0,\epsilon)$ for some subanalytic set $S$.  Now, for the first step, consider sheaves $R^0\pi'_*\ul G$ and $R^1\pi'_*\ul G$ (the latter is the sheaf of pointed sets on $[0,\epsilon)$). Restrictions of both sheaves to $(0,\epsilon)$ are constant, and so their global sections on $[0,\epsilon)$ are given by the fibers at $\{0\}\inj [0,\epsilon)$. By proper base change the latter can be identified with $H^0( \pi'^{-1}(0), \ul G)$ and $H^1(\pi'^{-1}(0), \ul G)$ correspondingly. One also sees that $H^1(R^0\pi'_*\ul G)$ is a single point, and so global sections $\Gamma([0,\epsilon), R^1\pi'_*\ul G)$ compute $H^1(\pi'^{-1}([0,\epsilon), \ul G)$. We get that 
	$$j^*\colon H^1(\pi'^{-1}([0,\epsilon)), \ul G)\ra H^1(\pi'^{-1}(0), \ul G)$$
	is an isomorphism for any $G$.
	
Then, having a local system $\mc L$ on $ \pi'^{-1}([0,\epsilon))$ it remains to show that the restriction map 
$$
j^*\colon H^i(\pi'^{-1}([0,\epsilon)), \mc L) \ra  H^i(\pi'^{-1}(0), j^*\mc L)
$$
is an isomorphism.
By induction on $i$ and similar argument as above one can see that sheaves $R^i\pi'_*\mc L$ do not have higher cohomology (since their global sections are given by the restriction to $\{0\}$), and that $\Gamma([0,\epsilon), R^i\pi'_* \mc L)$ is naturally equivalent to $H^i(\pi'^{-1}([0,\epsilon)), \mc L)$; consequently the pull-back $j^*$ above is an isomorphism for any $i$.
\end{proof}		

\begin{rem}
Let us note that \Cref{prop:local homotopy triviality of subanalytic maps} in fact holds in a bigger generality of \textit{locally definable maps}: namely $\pi$ should satisfy the property in \Cref{def:locally semi-analytic} with \textit{semi-analytic} replaced by \textit{definable} for any given $o$-minimal model structure. Indeed, the only facts that we used are the existence of triangulation and Hardt's theorem, that also have analogues in this generality (see e.g. \cite[Theorems 4.4 and 5.22]{Coste_o_minimal}).
\end{rem}	
Let us also record the following straightforward observation. 
\begin{lem}\label{lem: Xlog is real analytic}
	Let $(X,\mc M)$ be an fs log complex analytic space. Then the map $\pi\colon X^{\log} \ra X$ is locally semi-analytic in the sense of \Cref{def:locally semi-analytic}. Consequently, \Cref{cor:toric map} applies to $\pi$.
\end{lem}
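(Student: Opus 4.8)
The plan is to verify \Cref{def:locally semi-analytic} directly by producing, around each point of $X$, an explicit semi-analytic model for the Kato-Nakayama map. Since the statement is local on $X$, I would first pass to a neighborhood $U$ of a chosen point on which the fs log structure admits a chart $\beta\colon P\to \mc M|_U$ with $P$ a fine saturated monoid, and on which $X$ is realized as a closed analytic subset of a (relatively compact) polydisc $D\subset \mbb C^N$; such charts exist locally for fs log analytic spaces. Fixing monoid generators $p_1,\dots,p_r$ of $P$ and setting $f_j\coloneqq \alpha(\beta(p_j))\in \cO_X(U)$, I would consider the map
$$
X^{\log}|_U\lra U\times (S^1)^r,\qquad (x,\varphi)\longmapsto \big(x,\varphi(\beta(p_1)_x),\dots,\varphi(\beta(p_r)_x)\big).
$$
Because $\arg(\beta(p_j))(x,\varphi)=\varphi(\beta(p_j)_x)$ is exactly one of the generating functions of the weak topology on $X^{\log}$ (\Cref{constr:Kato-Nakayama space}), and because the $\beta(p_j)$ together with $\cO_X^\times$ generate $\mc M^{\gr}$, this map is a homeomorphism onto its image and intertwines $\pi$ with the projection to $U$.

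The second step is to check that the image $V\subset U\times (S^1)^r$ is semi-analytic. Writing the $j$-th circle coordinate as $(a_j,b_j)\in\mbb R^2$ with $a_j^2+b_j^2=1$, the compatibility of $\varphi$ with $\arg$ translates, for each $j$, into the two conditions
$$
a_j\,\Im f_j(x)=b_j\,\Re f_j(x),\qquad a_j\,\Re f_j(x)+b_j\,\Im f_j(x)\ge 0,
$$
which say precisely that $(a_j,b_j)$ and $(\Re f_j(x),\Im f_j(x))$ are non-negatively proportional; these are vacuous exactly when $f_j(x)=0$ and otherwise pin down $\arg f_j(x)$, and crucially they are polynomial in $(a_j,b_j)$ with real-analytic coefficients, so no square roots are needed. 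The remaining constraints are the group relations: each relation among the $\beta(p_j)$ in $\mc M^{\gr}$ (up to units, whose argument is the analytic function $f/|f|$ determined by $x$) gives a monomial identity $\prod_j(a_j+ib_j)^{n_j}=\zeta(x)$ on the torus, again semi-analytic. Together with the analytic equations cutting out $X$ inside $D$, these exhibit $V$ as a semi-analytic subset of $\mbb R^{2N}\times(\mbb R^2)^r$; it is relatively compact since $(S^1)^r$ is compact and (after shrinking) $U$ is relatively compact, and $\pi$ is the restriction of the linear projection. This establishes that $\pi$ is locally semi-analytic.

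Finally, to deduce that \Cref{cor:toric map} applies, I would combine this with the facts already available: $\pi$ is proper (\Cref{rem:map pi}) and each fiber $\pi^{-1}(x)\simeq\Hom(\ol{\mc M^{\gr}_x},S^1)$ is a compact real torus. Applying \Cref{prop:local homotopy triviality of subanalytic maps} to the (now verified) locally semi-analytic proper map $\pi$ produces, at every $x$, a basis of neighborhoods $U_i$ for which $\pi^{-1}(x)\hra \pi^{-1}(U_i)$ is a homotopy equivalence; intersecting these with a basis of weakly contractible opens (available since $X$ is a locally contractible analytic space) yields a base $\mc B$ of weakly contractible opens whose $\pi$-preimages are homotopy equivalent to compact real tori, which is exactly the hypothesis of \Cref{cor:toric map}. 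I expect the only genuinely delicate point to be the first step --- verifying that the Kato-Nakayama weak topology coincides with the subspace topology of the semi-analytic model, and that the finitely many generators $\beta(p_j)$ really determine $\varphi$ once the unit constraints are imposed --- while the semi-analyticity of the defining equations and the final deduction are routine.
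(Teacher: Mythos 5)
Your core argument for semi-analyticity is correct, but it takes a more hands-on route than the paper. Both proofs start identically — a local fs chart $\beta\colon P\to \mc M|_U$ and a choice of generators $p_1,\dots,p_r$ of $P$ — but the paper then reduces to the universal toric case: citing \cite[Corollary II.2.3.6]{Ogus_Book} for the chart, it uses the homeomorphism $U^{\log}\simeq \mbb A(P)^{\log}\times_{\mbb A(P)}U$ to reduce to the single map $\mbb A(P)^{\log}\to \mbb A(P)$, which by \cite[Proposition V.1.2.7]{Ogus_Book} is $\Map_{\mr{Mon}}(P,\mbb R_{\ge 0}\times S^1)\to \Map_{\mr{Mon}}(P,\mbb C)$; the generators embed this into $(\mbb R_{\ge 0}\times S^1)^r\to \mbb C^r$, and each factor $\mbb R_{\ge 0}\times S^1\to \mbb C$ sits inside the real-analytic map $\mbb R\times \mbb C\to \mbb C$, $(t,z)\mapsto t\cdot z$. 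In other words, the paper keeps both modulus and argument coordinates and outsources exactly the point you flag as delicate — that the weak topology on $X^{\log}$ agrees with the topology of the model — to Ogus's fiber-product description. You instead keep only the argument coordinates, work directly over $U\subset X$, and re-prove the topology identification by hand (which does work: $\mc M^{\gr}$ is stalkwise generated by $\mc O_X^\times$ and the $\beta(p_j)$, and $\arg$ of a unit factors through $\pi$). In exchange, your description of the image by the proportionality conditions $a_j\Im f_j=b_j\Re f_j$, $a_j\Re f_j+b_j\Im f_j\ge 0$ is arguably cleaner — no modulus coordinates, no divisions or square roots — and relative compactness comes for free from compactness of $(S^1)^r$. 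One small remark: your monomial relation constraints are redundant. Since the face $\{p\in P:\, (\alpha\beta(p))(x')\ne 0\}$ of $P$ is generated by the generators $p_j$ it contains, and $\arg$ is multiplicative on nonvanishing functions, the generator conditions already imply every relation condition; including them is harmless, as they are semi-analytic anyway.

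The one step that fails as literally written is the last one: intersecting a basis of opens with torus preimages with a basis of weakly contractible opens does not produce a basis with either property — an intersection $U_i\cap V_j$ need not be weakly contractible, nor need $\pi^{-1}(U_i\cap V_j)$ be homotopy equivalent to a torus. The fix is cheap once you open up the proof of \Cref{prop:local homotopy triviality of subanalytic maps}: there the neighborhoods are of the form $U_\epsilon=\delta^{-1}([0,\epsilon))$ for a semi-analytic distance function $\delta$, and running the same Hardt-theorem argument for $\id_X$ with the same $\delta$ (or invoking the local conic structure of semi-analytic sets) shows that for small $\epsilon$ the opens $U_\epsilon$ are themselves weakly contractible; hence a single basis enjoys both properties simultaneously. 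To be fair, the paper elides this point entirely (its proof of the lemma only establishes semi-analyticity and the word "Consequently" carries the rest), so this is a slip in a step the paper does not spell out either — but as stated your reduction to \Cref{cor:toric map} needs this repair.
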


\begin{proof}
For a fine and saturated monoid $P$ let $\mbb A(P)$ denote the analytification of the affine toric variety $\Spec \mbb C[P]$; it has a natural log-structure given by $P\cdot \mc O^\times \subset \mc O$. Since the log structure on $X$ is fs, by \cite[Corollary II.2.3.6]{Ogus_Book} for any point $x\in X$ there exists an open neighborhood $x\in U\subset X$ and an fs monoid $P$ with the complex analytic map $f\colon U\ra \mbb A(P)$ such that $\mc M$ is obtained as a pull-back of the log-structure on $\mbb A(P)$. Moreover, one also has a homeomorphism
$$
U^{\log}\simeq \mbb A(P)^{\log}\times_{\mbb A(P)} U,
$$
and this way it will be enough to endow $\mbb A(P)^{\log}\ra {\mbb A(P)}$ with the real analytic structure. By \cite[Proposition V.1.2.7]{Ogus_Book} this map is identified with $\Map_{\mr{Mon}}(P,\mbb R_{\ge 0}\times S^1) \ra \Map_{\mr{Mon}}(P,\mbb C)$, where $\mbb C$ is considered as a monoid by multiplication, the sets of maps are endowed with the topology induced by the targets, and the map itself is induced by the homomorphism $\mbb R_{\ge 0}\times S^1\ra \mbb C$ sending $(r,z)$ to $r\cdot z$.  Let $p_1,\ldots,p_n$ be some generators of $P$; then we have a commutative diagram 
$$
\xymatrix{\Map_{\mr{Mon}}(P,\mbb R_{\ge 0}\times S^1)\ar[r]\ar[d]& \Map_{\mr{Mon}}(P,\mbb C)\ar[d] \\
	(\mbb R_{\ge 0}\times S^1)^n \ar[r]& \mbb C^n
}
$$
with vertical arrows induced by evaluations at $p_i$'s. Note that each map $\mbb R_{\ge 0}\times S^1 \ra \mbb C$ naturally embeds into $\mbb R \times \mbb C \ra \mbb C$, $(t, z)\mapsto t\cdot z$, where the map is clearly real-analytic. It is also easy to see that the preimage of any relatively compact $U\subset \mbb C$ in $\mbb R_{\ge 0}\times S^1$ is also relatively compact in $\mbb R \times \mbb C$. Moreover the spaces of maps above define semi-analytic subspaces of $(\mbb R \times \mbb C)^n$ and $\mbb C^n$ correspondingly (they are cut out by finitely many equations induced by relations between $p_i$'s). Thus the restriction of the map $\Map_{\mr{Mon}}(P,\mbb R_{\ge 0}\times S^1)\ra \Map_{\mr{Mon}}(P,\mbb C)$ to the intersection with any relatively compact $U\subset \mbb C^n$ is obtained by restriction to some relatively compact semi-analytic subspaces in $(\mbb R \times \mbb C)^n \ra \mbb C^n$, which is exactly what we wanted to show.
\end{proof}		

\begin{rem}\label{rem:reconstructing KN from exp}
	Let $X$ satisfy \Cref{ass:finite covering dimension}. The functor 
	$$
	\Map_{\DBinAlg}(-,\mbb Z)\colon \DBinAlg^\op \ra \Spc
	$$ 
	extends to a functor
	$$[-]_X\coloneqq \Map_{\DBinAlg}(-,\mbb Z)\colon \DBinAlg(X)^\op \ra \hcoShv(X,\Spc).$$
	There is also a natural functor 
	$$
	C^*_\sing(-/X, \mbb Z)\colon \hcoShv(X,\Spc)^\op \ra  \DBinAlg(X)
	$$
	with $C^*_\sing(\mc F/X, \mbb Z)$ sending $U\in \Op(X)$ to $C^*_\sing(\mc F(U),\mbb Z)$. From \Cref{cor:cohomology defines fully faithful embedding} and descent one can see that $C^*_\sing(-/X, \mbb Z)$ induces a fully faithful embedding when restricted to the subcategory of $\hcoShv(X,\Spc)^\op$ spanned by cosheaves whose values locally are connected nilpotent spaces of finite type. In particular, for such cosheaves $\mc F$ the natural map 
	$$
	\mc F\ra [C^*_\sing(\mc F/X, \mbb Z)]_X
	$$
	is an equivalence. Applying this to $\mc F=[X^{\log},\pi]$ represented by $\pi\colon X^{\log}\ra X$ we see from \Cref{thm:cohomology of Kato-Nakayama space} that one can reconstruct $[X^{\log},\pi]$ from $\eexp(\alpha)$ as
	$$
	[X^{\log},\pi] \xra{\sim} [\mbf L\mc Bin_X^{\mr{coaug}}(\eexp(\alpha))]_X.
	$$
\end{rem}	
\subsection{Algebraic approximation $X^{\log}_{\mbb H}$ to $[X^{\log},\pi]$}\label{ssec:geometrization}

Let us finish with our construction of an "algebraic approximation" $X^{\log}_{\mbb H}$ of $X^{\log}$. The construction in fact applies to any $\pi\colon Y\ra X$ as in \Cref{cor:toric map}, but for brevity let us just restrict to the case of Kato-Nakayama space.

\begin{construction}[A cosheaf of stacks $X^{\log}_{\mbb H}$] Let $\mc F\in \hcoShv(X,{\mscr D}(\mbb Z)^{\le 0})$ be a ${\mscr D}(\mbb Z)^{\le 0}$-valued cosheaf. Recall the colimit preserving functor $-\otimes_{\mbb Z}\mbb H \colon {\mscr D}(\mbb Z)^{\le 0} \ra \Ab(\Stk_{/\mbb Z})$ (\Cref{rem:tensoring with a group scheme}). It induces a functor 
	$$
	-\otimes_{\mbb Z}\mbb H \colon \hcoShv(X,{\mscr D}(\mbb Z)^{\le 0}) \ra \hcoShv(X,\Ab(\Stk_{/\mbb Z})).
	$$
	Recall also the map $i\colon \ul{\mbb Z}\ra \mbb H$ in $\Ab(\Stk_{/\mbb Z})$ (\Cref{constr:map from Z to H}). For any $\pi\colon Y\ra X$ it induces a commutative diagram
	\begin{equation}\label{eq:base change to H}
	\xymatrix{C_{\le 1}^\sing(\pi,\mbb Z) \ar[r]^(.43){\sim}\ar[d]&C_{\le 1}^\sing(\pi,\mbb Z)\otimes_{\mbb Z} \ul\bZ \ar[r]^{\id\otimes i}\ar[d]& C_{\le 1}^\sing(\pi,\mbb Z)\otimes_{\mbb Z} \mbb H\ar[d]\\
	C_{\le 0}^\sing(\pi,\mbb Z) \ar[r]^(.43){\sim}& C_{\le 0}^\sing(\pi,\mbb Z)\otimes_{\mbb Z}\ul \bZ \ar[r]^{\id\otimes i}& C_{\le 0}^\sing(\pi,\mbb Z)\otimes_{\mbb Z} \mbb H}
	\end{equation}
	(here we view $C_{\le i}^\sing(\pi,\mbb Z)$ as a cosheaf of algebraic stacks via the functor $\iota$ from \Cref{not:cosheaves}).
	Recall that for $\pi\colon X^{\log} \ra X$ by \Cref{cor:toric map} the cosheaf $[X^{\log},\pi]$ is equivalent to the fiber product 
	$$
	\xymatrix{[X^{\log},\pi] \ar[d]\ar[r]& K(C_{\le 1}^\sing(\pi,\mbb Z))\ar[d] \\
	[X,\id]\times \{1\}\ar[r]& [X,\id]\times \bZ}	
	$$
	as in \Cref{eq:commutative square for any cosheaf of spaces}, with right vertical arrow identified with $K$ applied to $C_{\le 1}^\sing(\pi,\mbb Z)\ra C_{\le 0}^\sing(\pi,\mbb Z)$. By composing with the outer square of \Cref{eq:base change to H} we get a commutative diagram 
		\begin{equation}\label{eq:diagram of cosheaves}
	\xymatrix{[X^{\log},\pi] \ar[d]\ar[r]& K(C_{\le 1}^\sing(\pi,\mbb Z))\ar[d] \ar[r]& K(C_{\le 1}^\sing(\pi,\mbb Z)\otimes_{\mbb Z} \mbb H)\ar[d] \\
		[X,\id]\times \{1\}\ar[r]& [X,\id]\times \bZ \ar[r]& [X,\id]\times \mbb H; }
	\end{equation}
	here we identified $K(C_{\le 0}^\sing(\pi,\mbb Z)\otimes_{\mbb Z} \mbb H)$ with $[X,\id]\times \mbb H$ (and we did not distinguish between $\mbb H$ and $K(\mbb H)$).
We then finally define the cosheaf of stacks $X^{\log}_{\mbb H}$ as the fiber product
$$
\xymatrix{X^{\log}_{\mbb H} \ar[r]\ar[d]& K(C_{\le 1}^\sing(\pi,\mbb Z)\otimes_{\mbb Z} \mbb H)\ar[d]\\
	[X,\id]\times \{1\} \ar[r]^{\id\times i}& [X,\id]\times \mbb H.}
$$
  Outer commutative square in \Cref{eq:diagram of cosheaves} induces a natural map 
	$$
	i_{X}\colon [X^{\log},\pi] \ra X_{\mbb H}^{\log}
	$$
	of cosheaves of algebraic stacks on $X$.
\end{construction}	

\begin{rem}
	Similarly to \Cref{rem:gerbe structure on Y} one can view $X_{\mbb H}^{\log}$ as a gerbe, but over the cosheaf of group schemes given by $\mc L_\pi^\vee\otimes_{{\mbb Z}} \mbb H$; more precisely, by construction, it is induced from the $\mc L_\pi^\vee$-gerbe given by $X^{\log}$ via the map $\mc L_\pi^\vee \ra \mc L_\pi^\vee\otimes_{{\mbb Z}} \mbb H$ induced by $\ul{\mbb Z}\ra \mbb H$. Note also that $\mc L_\pi$ in this case is identified with $\ol{\mc M^{\gr}}$.
\end{rem}	

Finally, we would like to reformulate \Cref{thm:cohomology of Kato-Nakayama space} in terms of the map $i_X$. 

\begin{construction}[$\mc O$-cohomology of a cosheaf of stacks] Recall the functor
	$$
	\mscr O_{\DAlg}\colon \Stk^\op_{/\mbb Z} \ra \DAlg(\mbb Z) 
	$$
	from \Cref{constr:O-cohomology}. It is limit commuting, and thus induces a functor 
	$$
	\mscr O_{\DAlg}\colon \hcoShv(X,  \Stk_{/\mbb Z})^\op \ra \hShv(X,{\mscr D}(\mbb Z)).
	$$
	
\end{construction}	

\begin{ex}\label{ex:cohomology of structure sheaf for the case of space}
	Let $\pi\colon Y\ra X$ be a continuous map and assume that $Y$ is locally weakly contractible. Let $\iota([Y,\pi])\in \hcoShv(X,  \Stk_{/\mbb Z})$ be the cosheaf of stacks represented by $Y$. Then 
	$$
	\mscr O_{\DAlg}(\iota([Y,\pi]))\simeq R\pi_{\DAlg,*} \ul{\mbb Z},
	$$
	where pushforward is taken in the category of $\DAlg(\mbb Z)$-valued hypersheaves. Indeed, the sections on a given open $\mscr O_{\DAlg}(\iota([Y,\pi]))(U)\in \DAlg(\mbb Z)$ are naturally identified with $C^*_\sing(\pi^{-1}(U),\mbb Z)$, while sections of $R\pi_{\DAlg,*} \ul{\mbb Z}$ are naturally identified with the {\v C}ech cohomology $\RG(\pi^{-1}(U),\ul{\mbb Z})$. One has a functorial in $U$ map $\RG(\pi^{-1}(U),\ul{\mbb Z})\ra C^*_\sing(\pi^{-1}(U),\mbb Z)$ which is an equivalence by \Cref{lem:sing vs Cech}.
\end{ex}

\begin{ex} Let us consider the cosheafification of the constant copresheaf on $X$ given by $K(\mbb H)$: it is naturally equivalent to $[X,\id]\times \mbb H$. Assume $X$ is locally weakly contractible. Then $\mscr O_{\DAlg}(X\times \mbb H)$ is described as the sheafification of the constant presheaf given by $\Bin(\mbb Z)$, or, in other words, the sheaf of derived commutative algebras that underlies $\mbf L\mc Bin_X(\ul \bZ)$.
\end{ex}		

\begin{lem}\label{lem:main iso through cosheaves} Let $(X,\mc M)$ be an fs log complex analytic space. Then the map 
	$$
	i_X\colon [X^{\log},\pi] \ra X^{\log}_{\mbb H}
	$$
	of cosheaves of stacks on $X$ induces an equivalence
	$$
	i_X^*\colon \mscr O_{\DAlg}(X^{\log}_{\mbb H}) \ra \mscr O_{\DAlg}([X^{\log},\pi]),
	$$
	of $\DAlg(\mbb Z)$-valued sheaves on $X$, which is explicitly given by
	$$
	\mbf L\mc Bin_X^{\mr{coaug}}(R^{\le 1}\pi_*\ul \bZ)\xra{\sim} R\pi_{\DAlg,*} \ul{\mbb Z}.
	$$
\end{lem}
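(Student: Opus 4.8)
The plan is to compute $\mscr O_{\DAlg}$ on both cosheaves of stacks appearing in the statement and to match the induced map $i_X^*$ with the equivalence furnished by \Cref{thm:cohomology of Kato-Nakayama space}. For the source of $i_X$, the space $X^{\log}$ is locally weakly contractible, since by \Cref{lem: Xlog is real analytic} the map $\pi$ is locally semi-analytic, so \Cref{ex:cohomology of structure sheaf for the case of space} applies and yields a natural equivalence $\mscr O_{\DAlg}([X^{\log},\pi]) \simeq R\pi_{\DAlg,*}\ul{\mbb Z}$ of $\DAlg(\mbb Z)$-valued sheaves on $X$. This disposes of the target of $i_X^*$, so the whole content is to compute $\mscr O_{\DAlg}(X^{\log}_{\mbb H})$ and to compare the two.

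For $X^{\log}_{\mbb H}$ I would apply $\mscr O_{\DAlg}$ to its defining fiber-product square. As $\mscr O_{\DAlg}\colon \hcoShv(X,\Stk_{/\mbb Z})^{\op}\ra \hShv(X,{\mscr D}(\mbb Z))$ is induced by a limit-preserving functor (\Cref{constr:O-cohomology}), it produces a commutative square of sheaves of derived algebras, and the first step is to see that this square is cocartesian, i.e. a pushout. The corners are then identified as follows: $\mscr O_{\DAlg}([X,\id]\times\{1\})\simeq \ul{\mbb Z}$; the algebra $\mscr O_{\DAlg}([X,\id]\times\mbb H)$ is the one underlying $\mbf L\mc Bin_X(\ul{\mbb Z})$, with the map to $\ul{\mbb Z}$ induced by the basepoint $i(1)$ being exactly $\ev_1$; and, using the relative form of \Cref{rem:LBin as cohomology of a stack} together with the duality $(C^\sing_{\le 1}(\pi,\mbb Z))^\vee\simeq R^{\le 1}\pi_*\ul{\mbb Z}$ (the fiberwise version of $(C_*^\sing(-,\mbb Z))^\vee\simeq C^*_\sing(-,\mbb Z)$ from \Cref{rem: maps to EM-space}), the corner $\mscr O_{\DAlg}(K(C^\sing_{\le 1}(\pi,\mbb Z)\otimes_{\mbb Z}\mbb H))$ is the algebra underlying $\mbf L\mc Bin_X(R^{\le 1}\pi_*\ul{\mbb Z})$. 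By the definition of $\mbf L\mc Bin^{\mr{coaug}}_X$ as the pushout in \Cref{Bin* pushout}, the resulting pushout is the derived commutative algebra underlying $\mbf L\mc Bin^{\mr{coaug}}_X(R^{\le 1}\pi_*\ul{\mbb Z})$; here \Cref{rem:all forgetful functors commute with pull/push} ensures that passing to underlying objects is compatible with all the functors involved.

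To identify $i_X^*$ with the (underlying) equivalence of \Cref{Bin* main th}/\Cref{thm:cohomology of Kato-Nakayama space}, and thereby conclude that it is an equivalence, I would check on stalks, where equivalences of sheaves may be tested. Using proper base change and \Cref{cor:toric map}, the local picture at a point $x\in X$ is governed by the fiber $\pi^{-1}(x)\simeq \mbb T$, a compact real torus. Choosing a basepoint $t\in\mbb T$ splits $C^\sing_{\le 1}(\mbb T,\mbb Z)\simeq \mbb Z\oplus H_1^\sing(\mbb T,\mbb Z)[1]$ and collapses the defining fiber product, exhibiting $X^{\log}_{\mbb H}$ locally as $K(\mbb H\otimes_{\mbb Z}H_1^\sing(\mbb T,\mbb Z)[1])$; under this splitting the map $i_X^*$ becomes precisely the equivalence $\mbf L\Bin(M^\vee)\xra{\sim}C^*_\sing(K(M),\mbb Z)$ of \Cref{prop: description of free coconnective binomial algebras} for $M=H_1^\sing(\mbb T,\mbb Z)[1]$ (note $K(M)\simeq \mbb T$). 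Since this canonical map is the same one to which the map $\beta$ of \Cref{Bin* main th} restricts, the two agree, and $i_X^*$ is a stalkwise equivalence. Rewriting $R^{\le 1}\pi_*\ul{\mbb Z}\simeq \eexp(\alpha)$ via \Cref{lem:exp is the first truncation} then recovers exactly the stated formula.

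The main obstacle is the cocartesianness claimed in the second paragraph: a priori $\mscr O_{\DAlg}$ only carries colimits of stacks to limits of algebras, whereas $X^{\log}_{\mbb H}$ is defined by a limit (a fiber product), so one needs an Eilenberg--Moore/base-change input to see a pushout appear. I expect the cleanest route is not to prove a general Eilenberg--Moore theorem for $\mscr O$-cohomology of higher stacks, but to bypass it: reduce the pushout assertion itself to stalks and to the fibers $\pi^{-1}(x)\simeq\mbb T$ exactly as above, where, after the splitting, it collapses to the already-established absolute identifications in \Cref{rem:LBin as cohomology of a stack} and \Cref{prop: description of free coconnective binomial algebras}. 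This keeps the entire argument within the results of the preceding sections.
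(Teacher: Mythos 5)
Your proposal is correct and follows essentially the same route as the paper: identify $\mscr O_{\DAlg}([X^{\log},\pi])$ with $R\pi_{\DAlg,*}\ul{\mbb Z}$, identify the corners of the defining square with $\ul{\mbb Z}$, $\mbf L\mc Bin_X(\ul{\mbb Z})$ (with $\ev_1$) and $\mbf L\mc Bin_X(R^{\le 1}\pi_*\ul{\mbb Z})$, and then handle the Eilenberg--Moore issue by checking that the comparison map from the pushout to $\mscr O_{\DAlg}(X^{\log}_{\mbb H})$ is an equivalence locally on opens whose preimage is homotopy equivalent to a torus, where everything splits and reduces to \Cref{prop: description of free coconnective binomial algebras}. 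The paper merely organizes the endgame slightly differently --- by two-out-of-three, using that the composite with $i_X^*$ is the equivalence of \Cref{thm:cohomology of Kato-Nakayama space}, and by hypersheafifying over a base of such opens rather than literally passing to stalks --- but this is the same argument.
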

\begin{proof}
	First of all, by \Cref{ex:cohomology of structure sheaf for the case of space}  we can identify $ \mscr O_{\DAlg}(X^{\log})$ and  $R\pi_{\DAlg,*} \ul{\mbb Z}$. Moreover, via \Cref{rem:LBin as cohomology of a stack} $\mscr O_{\DAlg}(K(C_{\le 1}^\sing(\pi,\mbb Z)\otimes_{\mbb Z} \mbb H))$ can be naturally identified with (the underlying sheaf of derived commutative algebras of) $\mbf L\mc Bin_X(R^{\le 1}\pi_*\ul \bZ)$, while $\mscr O_{\DAlg}(X\times \mbb H)\simeq \mbf L\mc Bin_X(\ul \bZ)$. Moreover, $\mscr O_{\DAlg}(X\times \{1\})\simeq \ul \bZ$ and the map $\mscr O_{\DAlg}(X\times \mbb H) \ra \mscr O_{\DAlg}(X\times \{1\})$  is given exactly by the "evaluation at 1" $\ev_1\colon  \mbf L\mc Bin_X(\ul \bZ) \ra \ul \bZ$. This way we obtain the map
	$$
	\mbf L\mc Bin_X^{\mr{coaug}}(R^{\le 1}\pi_*\ul \bZ)\coloneqq \mbf L\mc Bin_X(R^{\le 1}\pi_*\ul \bZ) \otimes_{\mbf L\mc Bin_X(\ul \bZ)} \ul \bZ \tto R\pi_{\DAlg,*} \ul{\mbb Z}
	$$
	via the map of cosheaves
	$$
	[X^{\log},\pi] \ra X^{\log}_{\mbb H} \coloneqq K(C_{\le 1}^\sing(\pi,\mbb Z)\otimes_{\mbb Z} \mbb H)\times_{(X\times \mbb H)} (X\times\{1\}).
	$$
	We know that this map is an equivalence by \Cref{thm:cohomology of Kato-Nakayama space}. It thus remains to show that the natural map $\mbf L\mc Bin_X^{\mr{coaug}}(R^{\le 1}\pi_*\ul \bZ) \ra \mscr O_{\DAlg}(X^{\log}_{\mbb H})$ is also an equivalence. By \Cref{lem: Xlog is real analytic} there is a base of opens $\mc B$, where for any $U\in\mc B$ the preimage $\pi^{-1}(U)$ is homotopy equivalent to a torus. On such $U$ the map $\mbf L\mc Bin_X^{\mr{coaug}}(R^{\le 1}\pi_*\ul \bZ(U)) \ra \mscr O_{\DAlg}(X^{\log}_{\mbb H}(U))$ is an equivalence by \Cref{prop: description of free coconnective binomial algebras}; it then follows that the map is an equivalence in general by passing to hypersheafification. 
\end{proof}	

\begin{rem}
	In fact, via a version\footnote{To our knowledge this is a part of work in progress by Mathew-Mondal; they show that the functor $\DAlg(R)^\op\ra \Stk_{\!/\!R}$ given by sending $A\mapsto \Map_{\DAlg(R)}(-,A)$ gives a fully faithful embedding of a certain natural subcategory of commutative derived rings.} of the theory of affine stacks of To\"en \cite{Toen_affine_stacks} one can view $X^{\log}_{\mbb H}$ as the "affinization" of $X^{\log}$.
\end{rem}

\addcontentsline{toc}{section}{References}
\printbibliography

\bigskip

\noindent Dmitry~Kubrak, {\sc CNRS, Laboratoire de Mathématiques d'Orsay, Université Paris-Saclay, Bâtiment 307, F-91405 Orsay, France},
\href{mailto:dmkubrak@gmail.com}{dmkubrak@gmail.com}

\smallskip

\noindent 
Georgii~Shuklin, {\sc School of Mathematics and Statistics, University of Sheffield, Hicks Building, Hounsfield Road, Sheffield S3 7RH, United Kingdom }, \href{mailto:shuklin.ium@yandex.ru}{shuklin.ium@yandex.ru}

\smallskip

\noindent 
Alexander~Zakharov, {\sc Faculty of Mathematics, Higher School of Economics, Usacheva ulitsa 6, Moscow 119048, Russia; Chennai Mathematical Institute, H1, SIPCOT IT Park, Siruseri Kelambakkam 603103, India}, \href{mailto:xaxa3217@gmail.com}{xaxa3217@gmail.com}

\end{document}